\newtheorem{theorem}{Theorem}[section]
\newtheorem{proposition}[theorem]{Proposition}
\newtheorem{lemma}[theorem]{Lemma}
\newtheorem{corollary}[theorem]{Corollary}
\newtheorem{remark}[theorem]{Remark}
\numberwithin{equation}{section}
\newcommand \Kcal {\mathcal K}
\newcommand \Hcal {\mathcal H}
\newcommand \xb {\bar {x}}
\newcommand \delb {\bar {\del}}
\newcommand \gb {\bar g}
\newcommand \hb{\bar h}
\newcommand \Tb {\overline {T}}
\newcommand \Phib{\overline{\Phi}}
\newcommand \Psib{\overline{\Psi}}
\newcommand \mb{\overline m}
\newcommand \Qb{\overline Q}
\newcommand \omegab{\bar{\omega}}
\newcommand \rhob{\bar{\rho}}
\newcommand \del \partial
\newcommand \delu {\underline{\del}}
\newcommand \Ec{E_{\text{con}}}
\newcommand \RR{\mathbb{R}}
\newcommand {\vep}{\varepsilon}
\def\hlinew#1{%
  \noalign{\ifnum0=`}\fi\hrule \@height #1 \futurelet
   \reserved@a\@xhline}
\let\oldmarginpar\marginpar
\renewcommand\marginpar[1]{\-\oldmarginpar[\raggedleft\footnotesize #1]%
{\raggedright\footnotesize #1}}
\begin{document}
\title{A conformal-type energy inequality on hyperboloids\\and its application to quasi-linear wave equation in $\RR^{3+1}$
\footnote{The present work is supported by the China Postdoctoral Science Foundation, via the Special Financial Grant (2017T100732). }}

\author{Yue MA\footnote{School of Mathematics and Statistics, Xi'an Jiaotong University, yuemath@xjtu.edu.cn.}\ \&\ Hongjing HUANG\footnote{The First Academy, 2667 Bruton Boulevard Orlando, Florida, United States. Email: hongjinghuang@thefirstacademy.org}}
\maketitle

\begin{abstract}
 In the present work, we will develop a conformal inequality in the hyperbolic foliation context which is analogous to the conformal inequality in the classical time-constant foliation context. Then as an application, we will apply this a priori estimate to the problem of global existence of quasi-linear wave equations in three spatial dimensions under null condition. With the aid of this inequality, we can establish more precise decay estimates on the global solution.
\end{abstract}

\section{Introduction}
In this article we will develop a conformal inequality in the hyperbolic foliation context analogue to the classical time-constant foliation context. Then we will apply this estimate to the problem of global existence of regular solution associated to small regular initial data (also called the {\sl global stability } for short in the following text) for quasi-linear wave equation in three spatial dimensions. More precisely, we are going to regard the following quasi-linear wave equation:
\begin{equation}\label{eq main}
\Box u  + Q^{\alpha\beta\gamma}\del_{\gamma}u\del_{\alpha}\del_{\beta}u = 0
\end{equation}
where $Q$ is supposed to be a {\sl null} cubic form.

One can also consider a system with semi-linear terms such as:
\begin{equation}\label{eq main'}
\Box u + Q^{\alpha\beta\gamma}\del_{\gamma}u\del_{\alpha}\del_{\beta}u = N^{\alpha\beta}\del_{\alpha}u\del_{\beta}u
\end{equation}
with $N$ a {\sl null} quadratic form. But through an algebraic observation one can show that by a change of known
$$
v = u + \frac{\sigma}{2}u^2, \quad \sigma = N^{00}
$$
the semi-linear term can be eliminated with some high-order correction terms, which are negligible in dimension three.

\subsection{The conformal inequality on hyperboloids}
In the classical flat foliation context, the conformal energy inequality is a well-known $L^2$ type estimate which controls more quantities than the classical energy inequality. We recall that for
$$
\Box u = f,
$$
the conformal energy $\Ec(t,u)$ is defined as
$$
\Ec(t,u) : =\int_{\RR^3}\big(|Su|^2 + \sum_a|L_au|^2 + \sum_{a\neq b}|\Omega_{ab}u|^2 + u^2\big)(x,t)\ dx
$$
where
$$
Su: = t\del_tu + x^a\del_au, \quad L_a u: = x^a\del_tu + t\del_au,\quad \Omega_{ab}u := x^a\del_bu - x^b\del_au.
$$
And by using the multiplier $\left((r^2+t^2)\del_t + 2x^at\del_a + 2t \right)u$, we have the following estimate (see for example \cite{A1} section 6.7):
There exists a positive constant $C$ such that, for all $u\in C^2\left(\RR_x^3\times[0,T)\right)$, sufficiently decaying when $|x|\rightarrow +\infty$, and all $t<T$,
\begin{equation}\label{eq 1 30-05-2017}
\Ec(t,u)^{1/2}\leq C\Ec(0,u)^{1/2} + C\int_0^t\big\|\sqrt{r^2+t^2}f(x,t)\big\|_{L^2_x(\RR^n)}\ dx.
\end{equation}

From the above estimates we can see the following facts. For the homogeneous wave equation $\Box u = 0$, if we differentiate the equation with respect to $\del^IL^J$,
$$
\Box \del^IL^Ju = 0
$$
and by \eqref{eq 1 30-05-2017}, we see that $\Ec(t,\del^IL^Ju)^{1/2}$ is bounded by the initial conformal energy. This leads to the fact that
\begin{equation}\label{eq 1 01-11-2017}
\|S\del^IL^J u(t,\cdot )\|_{L^2(\RR^3)},\quad \|L_a\del^IL^J u(t,\cdot )\|_{L^2(\RR^3)},\quad \|\Omega_{ab}\del^IL^J u(t,\cdot )\|_{L^2(\RR^3)},\quad
\|\del^IL^J u(t,\cdot)\|_{L^2(\RR^3)}
\end{equation}
are bounded. Then by some estimates on commutators and the Klainerman-Sobolev type inequality, we will see that
$$
Su,\quad L_au,\quad \Omega_{ab}u
$$
are bounded by $(|t-r|+1)^{-1/2}(t+1)^{-1}$, and especially:
\begin{equation}\label{eq 1 29-08-2017}
u\sim (t+1)^{-1}(|t-r|+1)^{-1/2}, \quad \del_{\alpha} u\sim (t+1)^{-1}(|t-r|+1)^{-3/2}.
\end{equation}
Compared with the classical energy-vector field argument, this conformal energy-vector field method supplies better decay bounds on the solution $u$: the classical energy only gives the following decay bounds:
\begin{equation}\label{eq 2 01-11-2017}
\del_{\alpha}u\sim (t+1)^{-1}(|t-r|+1)^{-1/2}.
\end{equation}

In the present article, we will establish a parallel estimate in the hyperbolic foliation context. More precisely, we will use the following multiplier:
$$
(s/t)\left((t^2+r^2)\del_t + 2x^at\del_a + 2t\right)u
$$
where $s = \sqrt{t^2-r^2}$ is the hyperbolic time. In the flat (Minkowski metric) case, this energy is written as
$$
\Ec(s,u) = :\int_{\Hcal_s}\big|s(s/t)^2\del_tu\big|^2 + (s/t)^2\sum_a\big|L_au\big|^2 + (s/t)u^2 dx.
$$
Here $\Hcal_s = \{x\in\RR^{1+3}|t = \sqrt{s^2+r^2}\}$, and for a $f$ defined in $\RR^{1+3}$, we define
$$
\int_{\Hcal_s}fdx := \int_{\RR^3}f(\sqrt{s^2+|x|^2},x)dx.
$$
For the linear equation
$$
\Box u = f
$$
the following estimate holds:
\begin{equation}
\Ec(s,u)^{1/2}\leq \Ec(s_0,u) + C\int_{s_0}^s \tau \|f\|_{L^2(\Hcal_{\tau})}d\tau
\end{equation}
For the homogeneous linear wave equation $\Box u = 0$, the above quantity is conserved. For the quasi-linear wave equation \eqref{eq main}, we will prove that this energy is also bounded up to the highest order.

Combined with the global Sobolev's inequality, the above energy bounds leads to the following terms bounded by $C(t+1)^{-3/2}$:
$$
(s/t)L_a u ,\quad s(s/t)^2\del_t \quad (s/t)u,
$$
and these bounds gives the following decay rate:
$$
u\sim (t+1)^{-1}(|t-r|+1)^{-1/2}, \quad \del_{\alpha} u\sim (t+1)^{-1}(|t-r|+1)^{-3/2}
$$
which coincides with the classical conformal energy bounds.  We will prove that in the quasi-linear case, these decay rates still hold.

\subsection{The global existence result for quasi-linear wave equation}
The problem on the stability of quasi-linear wave equations or systems has attracted lots of attentions of the mathematical community. Our method to be presented belongs to the ``vector-field method'' which was introduced by S. Klainerman for wave equation (\cite{Kl1}) and for Klein-Gordon equation (\cite{Kl2}). This method is then extended to many different cases.

The global existence of \eqref{eq main} has been established by S. Klainerman in \cite{Kl1}.
His method is based on the time-constant foliation and standard energy inequalities.

The hyperbolic foliation and hyperbolic variables are introduced by S. Klainerman firstly for the analysis on Klein-Gordon equation in \cite{Kl2}, see also \cite{H1} for the ``alternative energy method''. This method is then revisited and applied by P. LeFloch et al. in \cite{PLF-MY-book} on system composed by wave equations and Klein-Gordon equations. The application of the hyperbolic foliation has the advantage that it does not require the scaling invariance of the system (for example, the wave-Klein-Gordon system).

However, the scaling invariance of the wave equation does supply more conserved quantities. The conformal inequality \eqref{eq 1 30-05-2017} is essentially due to the scaling invariance of the wave equation (i.e. it does not hold for Klein-Gordon equation, which does not enjoy the scaling invariance), and it leads to better decay estimates (\eqref{eq 1 29-08-2017} compared with \eqref{eq 2 01-11-2017} supplied by the classical energy). In this article, we show that this conformal energy inequality on hyperboloids also leads to the global stability of \eqref{eq main} and it gives more precise decay rate for the global solution.
%
%
%

\section{The conformal inequality on hyperboloids: flat case}
\subsection{The hyperbolic variables and hyperbolic frame}
In this subsection we briefly recall the notion of the hyperbolic frame.

We denote by $\Kcal = \{(t,x)\in\RR^{1+3}|t>|x|+1\}$ the interior of the light cone. In this region we introduce the following parametrization by the hyperbolic variables:
$$
\xb^0 := s := \sqrt{t^2-|x|^2},\quad \xb^a: = x^a.
$$
The canonical frame associate to this parametrization is called the hyperbolic frame, denoted by:
$$
\delb_0: = \del_s = \frac{s}{t}\del_t,\quad \delb_a : = \frac{x^a}{t}\del_t + \del_a.
$$
The transition matrix between the hyperbolic frame and the natural frame is
$$
\delb_\alpha = \Phib_{\alpha}^{\beta}\del_{\beta}
$$
with
$$
\left(\Phib_{\alpha}^{\beta}\right)_{\alpha\beta} :=
\left(
\begin{array}{cccc}
s/t &0 &0 &0
\\
x^1/t &1 &0 &0
\\
x^2/t &0 &1 &0
\\
x^3/t &0 &0 &1
\end{array}
\right)
$$
and its inverse is
$$
\left(\Psib_{\alpha}^{\beta}\right)_{\alpha\beta} =
\left(
\begin{array}{cccc}
t/s &0 &0 &0
\\
-x^1/s &1 &0 &0
\\
-x^2/s &0 &1 &0
\\
-x^3/s &0 &0 &1
\end{array}
\right)
$$
thus
$$
\del_{\alpha} = \Psib_{\alpha}^{\beta}\delb_{\beta}.
$$

Now we introduce the following notation. Let $T$ be a two-tensor, we recall that it can be written in the natural frame $\{\del_{\alpha}\}$ or in the hyperbolic frame $\{\delb_{\alpha}\}$. That is
$$
T = T^{\alpha\beta}\del_{\alpha}\otimes\del_{\beta} = \Tb^{\alpha\beta}\delb_{\alpha}\otimes\delb_{\beta}
$$
with
$$
\Tb^{\alpha\beta} = T^{\alpha'\beta'}\Psib_{\alpha'}^{\alpha}\Psib_{\beta'}^{\beta}.
$$
For a cubic form $Q$, we also have similar notation:
$$
Q = Q^{\alpha\beta\gamma}\del_{\alpha}\otimes\del_{\beta}\otimes\del_{\gamma} = \Qb^{\alpha\beta\gamma}\delb_{\alpha}\otimes\delb_{\beta}\otimes\delb_{\gamma},
$$
and
$$
\Qb^{\alpha\beta\gamma} = Q^{\alpha'\beta'\gamma'}\Psib_{\alpha'}^{\alpha}\Psib_{\beta'}^{\beta}\Psib_{\gamma'}^{\gamma}.
$$

We also recall the dural co-frame to the hyperbolic frame:
$$
\omegab^0 = (t/s)dt - \sum_a(x^a/s)dx^a,\quad \omegab^a = dx^a.
$$

We recall the Minkowski metric $m^{\alpha\beta}$ written in the hyperbolic frame:
$$
\mb^{\alpha\beta} =
\left(
\begin{array}{cccc}
1 &x^1/s &x^2/s &x^3/s
\\
x^1/s &-1 &0 &0
\\
x^2/s &0 &-1 &0
\\
x^3/s &0 &0 &-1
\end{array}
\right).
$$
And we remark that for a second order differential operator $g^{\alpha\beta}\del_{\alpha}\del_{\beta}$, we see that
$$
g^{\alpha\beta}\del_{\alpha}\del_{\beta}
= g^{\alpha\beta}\Psib_{\alpha}^{\alpha'}\Psib_{\beta}^{\beta'}\delb_{\alpha'}\delb_{\beta'}
+ g^{\alpha\beta}\del_{\alpha}\left(\Psib_{\beta}^{\beta'}\right)\delb_{\beta'} = \gb^{\alpha\beta}\delb_{\alpha}\delb_{\beta}
+ g^{\alpha\beta}\del_{\alpha}\left(\Psib_{\beta}^{\beta'}\right)\delb_{\beta'}.
$$
Here we list out $\del_{\alpha}\left(\Psib_{\beta}^{\beta'}\right)$:
\begin{subequations}
\begin{equation}\label{eq 2 14-08-2017}
\del_t\left(\Psib_0^0\right) = \frac{-r^2}{s^3} = -s^{-1}\Psib_0^0\Psib_0^0 + s^{-1},\quad \del_t\left(\Psib_a^0\right) = \frac{x^a t}{s^3} = -s^{-1}\Psib_0^0\Psib_a^0
\end{equation}
and
\begin{equation}\label{eq 3 14-08-2017}
\del_a\left(\Psib_0^0\right) = \frac{tx^a}{s^3} = -s^{-1}\Psib_0^0\Psib_a^0,\quad \del_a\left(\Psib_b^0\right) = -\frac{\delta_{ab}}{s} - s^{-1}\Psib_a^0\Psib_b^0.
\end{equation}
\end{subequations}
The rest components of $\del_{\alpha}\left(\Psib_{\beta}^{\beta'}\right)$ are zero.

Now we recall that
$$
\Box = m^{\alpha\beta}\del_{\alpha}\del_{\beta}
$$
thus in hyperbolic frame:
\begin{equation}\label{eq 1 12-05-2017}
\aligned
\Box =& \mb^{\alpha\beta}\delb_{\alpha}\delb_{\beta} + m^{\alpha\beta}\del_{\alpha}\left(\Psib_{\beta}^{\beta'}\right)\delb_{\beta'}
=\delb_s\delb_s + 2(x^a/s)\delb_s\delb_a- \sum_a\delb_a\delb_a + \frac{3}{s}\delb_s.
\endaligned
\end{equation}

\subsection{Hyperbolic decomposition of $\Box$}
We write \eqref{eq 1 12-05-2017} into the following form:
\begin{equation}\label{eq1-11-02-2017}
\aligned
\Box u =& \delb_s\left(\delb_s + (2x^a/s)\delb_a\right)u - \sum_a\delb_a\delb_au + \frac{2x^a}{s^2}\delb_au + \frac{3}{s}\delb_su
\\
=&\delb_s\left(\delb_s + (2x^a/s)\delb_a\right)u - \sum_a\delb_a\delb_au + s^{-1}\left(\delb_s + (2x^a/s)\delb_a\right)u + \frac{2}{s}\delb_su
\\
=&s^{-1}\del_s\left(s\delb_s u+ 2x^a\delb_au\right) - \sum_a\delb_a\delb_au + \frac{2}{s}\delb_su
\endaligned
\end{equation}
We denote by
$$
Ku := \left(s\delb_s + 2x^a\delb_a\right)u = \frac{t^2+r^2}{t}\del_tu + 2r\del_ru
$$

\subsection{The energy identity}
We make the following calculation:
\begin{equation}\label{eq1-10-05-2017}
\aligned
sKu \cdot \Box u =& \left(s\left(\delb_s + (2x^b/s)\delb_b\right)u\right)\delb_s\left(s\left(\delb_s + (2x^a/s)\delb_a\right)u\right)
\\
&-\sum_a\delb_a\left(s^2\left(\delb_su + (2x^b/s)\delb_bu\right)\cdot\delb_au\right) + s^2\sum_a\delb_a\left(\left(\delb_su + (2x^b/s)\delb_bu\right)\right)\delb_au
\\
&+2s\delb_su\cdot \left(\delb_su + (2x^b/s)\delb_bu\right)
\\
=& \frac{1}{2}\delb_s\left(|Ku|^2\right)
 +\sum_a\left(s^2\delb_a\delb_su\cdot \delb_au + 2s|\delb_au|^2 + s^2(2x^b/s)\delb_a\delb_bu\cdot \delb_a u\right)
\\
&+2s\delb_su\cdot \left(\delb_su + (2x^b/s)\delb_bu\right) - \sum_a\delb_a\left(s^2\left(\delb_su + (2x^b/s)\delb_bu\right)\cdot\delb_au\right)
\\
=& \frac{1}{2}\delb_s\left(|Ku|^2\right) + \frac{1}{2}\sum_a\delb_s\left(|s\delb_au|^2\right)
\\
& + 2s\left(\delb_su\cdot \left(\delb_su + (2x^b/s)\delb_bu\right) - \sum_a|\delb_au|^2\right)
 \\
 &+ \sum_a\delb_b\left(sx^b|\delb_au|^2\right) - \sum_a\delb_a\left(s^2\left(\delb_su + (2x^b/s)\delb_bu\right)\cdot\delb_au\right).
\endaligned
\end{equation}

We also remark that

$$
\aligned
u\cdot\Box u =&\del_s(u\delb_s u) - (\delb_su)^2 + s^{-1}\delb_s\left(2x^au\delb_a u\right)- (2x^a/s)\delb_au\delb_su
\\
&- \sum_a\delb_a(u\delb_au) + \sum_a(\delb_au)^2
+ \frac{n}{2}s^{-1}\delb_s (u^2)
\\
=&s^{-1}\delb_s(su\delb_su) + s^{-1}\delb_s\left(u^2\right) + s^{-1}\delb_s\left(2x^au\delb_au\right) - \sum_a\delb_a\left(u\delb_au\right)
\\
&-\left(|\delb_su|^2 + (2x^a/s)\delb_su\delb_au - \sum_a|\delb_au|^2 \right)
\endaligned
$$
thus
$$
\aligned
su\cdot\Box u =& \delb_s(su\delb_su) + \delb_s(u^2) + \delb_s\left(2x^a u\delb_au\right) - \sum_a\delb_a(su\delb_au)
\\
&-s\left((\delb_su)^2 + 2(x^a/s)\delb_su\delb_au - \sum_a(\delb_au)^2\right).
\endaligned
$$

So we see that
\begin{equation}\label{eq 2 10-05-2017}
\aligned
&2su\cdot \Box u + 2s\left((\delb_su)^2 + 2(x^a/s)\delb_su\delb_au - \sum_a(\delb_au)^2\right)
\\
=& \delb_s\left(2su\delb_su + 4x^au\delb_au+ 2u^2\right)
- \sum_a\delb_a\left(2su\delb_au\right).
\endaligned
\end{equation}
%

Then we combine \eqref{eq1-10-05-2017} and \eqref{eq 2 10-05-2017},
\begin{equation}\label{eq 3 10-05-2017}
2s\left(Ku + 2u\right)\Box u = \delb_s\left(|Ku|^2+\sum_a|s\delb_au|^2 + 4uKu + 4u^2\right) + \delb_a(v^a)
\end{equation}
with
$$
v^a := -4su\delb_au + 2sx^a\sum_{b}|\delb_bu|^2 - 2s\delb_au\cdot Ku
$$

We define the {\bf flat energy}
\begin{equation}\label{eq 4 10-05-2017}
\aligned
\Ec(s,u):=&\int_{\Hcal_s}\left((Ku)^2+\sum_a(s\delb_au)^2 + 4uKu + 4u^2\right)dx
\\
=&\int_{\Hcal_s}\left(\sum_a(s\delb_au)^2 +\left(Ku + 2u\right)^2\right)dx
\endaligned
\end{equation}
Then \eqref{eq 3 10-05-2017} leads to 
$$
2\int_{\Hcal_s}s\left(K+2\right)u\Box u dx = \frac{d}{ds}\Ec(s,u)
$$
Remark that $\|(Ku+2u)\|_{L^2(\Hcal_s)}\leq \Ec(s,u)^{1/2}$,
$$
\frac{d}{ds}\Ec(s,u)^{1/2}\leq 2s\|\Box u\|_{L^2(\Hcal_s)}
$$
which leads to
\begin{equation}\label{eq 5 10-05-2017}
\Ec(s,u)^{1/2}\leq \Ec(s_0,u)^{1/2} + 2\int_{s_0}^s\tau\|\Box u\|_{L^2(\Hcal_\tau)} d\tau.
\end{equation}
We conclude the above calculation by the following {\bf conformal energy estimate in flat case}
\begin{proposition}
Let $u$ be a function defined in $\Kcal_{[s_0,s_1]}$, sufficiently regular and vanishes near the conical boundary $\del\Kcal := \{(t,x)|t = r+1\}$. Then the estimate \eqref{eq 5 10-05-2017} holds.
\end{proposition}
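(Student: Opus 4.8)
The plan is to integrate the pointwise divergence identity \eqref{eq 3 10-05-2017} over the hyperboloids and then to run a Gronwall-type argument on $\Ec(s,u)^{1/2}$; all the algebraic content is already contained in \eqref{eq1-10-05-2017}--\eqref{eq 3 10-05-2017}, so the work left is the geometric bookkeeping of the integration and the final ODE.

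First I would record the elementary observation that makes the hyperbolic frame well suited to the foliation $\{\Hcal_s\}$: in the chart $(s,x^1,x^2,x^3)$ on $\Kcal$, for a sufficiently regular $F=F(t,x)$ with restriction $F_s(x):=F(\sqrt{s^2+|x|^2},x)$, the chain rule gives
\[
\frac{d}{ds}F_s(x)=(\delb_s F)(\sqrt{s^2+|x|^2},x),\qquad \frac{\del}{\del x^a}F_s(x)=(\delb_a F)(\sqrt{s^2+|x|^2},x).
\]
Hence, with the flat measure $dx$ on $\Hcal_s$ used in the definition $\int_{\Hcal_s}f\,dx:=\int_{\RR^3}f(\sqrt{s^2+|x|^2},x)\,dx$, differentiation under the integral sign yields $\frac{d}{ds}\int_{\Hcal_s}F\,dx=\int_{\Hcal_s}\delb_s F\,dx$, while $\int_{\Hcal_s}\delb_a(V^a)\,dx=\int_{\RR^3}\del_{x^a}(V^a_s)\,dx$ vanishes by the divergence theorem as soon as $x\mapsto V^a_s(x)$ has compact support.

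Next I would check the support hypothesis. For $s>1$ the slice $\Hcal_s\cap\Kcal$ is the bounded set $\{|x|<(s^2-1)/2\}$, and the assumption that $u$ vanishes in a neighbourhood of $\del\Kcal=\{t=r+1\}$ means precisely that on each $\Hcal_s$, $s\in[s_0,s_1]$, the function $u$ (extended by zero) is supported in a compact subset of that ball, away from its edge; the same is then true of $Ku$, $\delb_a u$, and hence of the field $v^a$ in \eqref{eq 3 10-05-2017}. So integrating \eqref{eq 3 10-05-2017} over $\Hcal_s$, using the two displayed identities and the definition \eqref{eq 4 10-05-2017}, kills the $\delb_a(v^a)$ term and gives the energy identity
\[
\frac{d}{ds}\Ec(s,u)=2\int_{\Hcal_s}s\,(Ku+2u)\,\Box u\,dx .
\]

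Finally, since \eqref{eq 4 10-05-2017} writes $\Ec(s,u)$ as a sum of squares it is nonnegative and $\|Ku+2u\|_{L^2(\Hcal_s)}\le\Ec(s,u)^{1/2}$; Cauchy--Schwarz on the right-hand side then yields $\frac{d}{ds}\Ec(s,u)\le 2s\,\Ec(s,u)^{1/2}\|\Box u\|_{L^2(\Hcal_s)}$, hence (after the standard $\Ec\rightsquigarrow\Ec+\eps$ regularization, to differentiate the square root past the zeros of $\Ec$, and letting $\eps\to0$) $\frac{d}{ds}\Ec(s,u)^{1/2}\le 2s\,\|\Box u\|_{L^2(\Hcal_s)}$. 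Integrating in $s$ from $s_0$ to $s$ gives \eqref{eq 5 10-05-2017}. I do not expect a genuine obstacle: the one point needing care is the geometric identification in the first step --- that $\delb_s$ and $\delb_a$ act as honest coordinate derivatives on the $\{\Hcal_s\}$-chart, so that $\int_{\Hcal_s}\delb_s(\cdot)=\frac{d}{ds}\int_{\Hcal_s}(\cdot)$ and $\int_{\Hcal_s}\delb_a(v^a)=0$ --- together with confirming that ``sufficiently regular'' should be read as $C^2$ (so that $v^a\in C^1$ and the energy density is $C^1$ in $s$) and that vanishing near $\del\Kcal$ is exactly what removes the flux term; if one only assumed decay one would instead exhaust by balls and let the radius tend to the edge of the slice.
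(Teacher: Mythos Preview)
Your proof is correct and follows the same route as the paper: integrate the divergence identity \eqref{eq 3 10-05-2017} over $\Hcal_s$ (using that $\delb_s$, $\delb_a$ are the coordinate vector fields of the $(s,x)$ chart so the $\delb_a(v^a)$ term drops by compact support), obtain $\frac{d}{ds}\Ec(s,u)=2\int_{\Hcal_s}s(Ku+2u)\Box u\,dx$, then use $\|Ku+2u\|_{L^2(\Hcal_s)}\le\Ec(s,u)^{1/2}$ and Cauchy--Schwarz to reach the differential inequality and integrate. You have in fact supplied more justification than the paper does (the chain-rule identification, the support check, and the $\eps$-regularization for the square root), but the argument is the same.
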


\subsection{Analysis on the flat energy}
It is clear that the flat energy can control the following quantities:
$$
\|Ku + 2u\|_{L^2(\Hcal_s)},\quad s\|\delb_au\|_{L^2(\Hcal_s)}
$$
and in this subsection we will prove the following proporty:
\begin{proposition}
Let $n=3$ and $u$ be a sufficiently regular function defined in the region $\Kcal_{[s_0,s_1]}$ and vanishes near the conical boundary. Then the following inequality holds for $s_0 \leq s\leq s_1$:
\begin{equation}\label{eq 6 23-05-2017}
\|(s/r)u\|_{L^2(\Hcal_s)}\leq 2\Ec(s,u)^{1/2},
\end{equation}
\begin{equation}\label{eq 1 25-05-2017}
\|s^2t^{-1}\delb_su\|_{L^2(\Hcal_s)}\leq C\Ec(s,u)^{1/2}
\end{equation}
and
\begin{equation}\label{eq 7 22-07-2017}
\|s(s/t)^2\del_a u\|_{L^2(\Hcal_s)}\leq C\Ec(s,u)^{1/2}
\end{equation}
with $C$ a universal constant.
\end{proposition}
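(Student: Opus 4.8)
The plan is to prove the three bounds in the order stated: the first inequality \eqref{eq 6 23-05-2017} is the only one requiring genuine analysis (a sharp weighted one–dimensional Hardy inequality along the radial direction of $\Hcal_s$), and the other two then follow by elementary algebra in the hyperbolic frame, each bootstrapped on the previous one.

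For \eqref{eq 6 23-05-2017}, write $\tilde u(x):=u(\sqrt{s^2+|x|^2},x)$ for the restriction of $u$ to $\Hcal_s$; by the chain rule $\del_a\tilde u=\delb_a u$ on $\Hcal_s$, and by hypothesis $\tilde u$ is regular and compactly supported in $\RR^3$ (it vanishes for $r$ near $(s^2-1)/2$, since $u$ vanishes near $\del\Kcal$). Passing to polar coordinates $x=r\omega$, $\omega\in S^2$, and keeping in mind that $s$ is constant on $\Hcal_s$, one fixes $\omega$ and integrates by parts in $r$:
$$
\int_0^\infty \tilde u(r\omega)^2\,dr=-2\int_0^\infty r\,\tilde u(r\omega)\,\del_r\tilde u(r\omega)\,dr ,
$$
the boundary term at $r=0$ vanishing by regularity and the one at $r=\infty$ by compact support. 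Cauchy--Schwarz then gives $\int_0^\infty\tilde u^2\,dr\le 4\int_0^\infty r^2(\del_r\tilde u)^2\,dr$; multiplying by $s^2$, integrating $d\sigma(\omega)$ over $S^2$, and recombining into $dx=r^2\,dr\,d\sigma$ yields
$$
\int_{\Hcal_s}(s/r)^2u^2\,dx\le 4\int_{\Hcal_s}s^2(\del_r\tilde u)^2\,dx\le 4\int_{\Hcal_s}s^2\sum_a(\delb_a u)^2\,dx\le 4\,\Ec(s,u) ,
$$
where the middle step uses $|\del_r\tilde u|=|\omega^a\delb_a u|\le\big(\sum_a(\delb_a u)^2\big)^{1/2}$ and the last is the definition of $\Ec$. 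Taking square roots gives \eqref{eq 6 23-05-2017} with the stated constant $2$.

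For \eqref{eq 1 25-05-2017}, I would use $Ku=s\delb_s u+2x^a\delb_a u$ to write
$$
s^2t^{-1}\delb_s u=\frac{s}{t}\,(s\delb_s u)=\frac{s}{t}\,(Ku+2u)-\frac{2s}{t}\,u-\frac{2s}{t}\,x^a\delb_a u .
$$
Since $s/t\le1$, the first term has $L^2(\Hcal_s)$-norm $\le\|Ku+2u\|_{L^2(\Hcal_s)}\le\Ec(s,u)^{1/2}$; since $r<t$ in $\Kcal$ one has $(s/t)|u|\le(s/r)|u|$, so the second term is bounded by $2\|(s/r)u\|_{L^2(\Hcal_s)}\le4\,\Ec(s,u)^{1/2}$ via \eqref{eq 6 23-05-2017}; and Cauchy--Schwarz together with $r\le t$ gives $(s/t)|x^a\delb_a u|\le s\big(\sum_a(\delb_a u)^2\big)^{1/2}$, of norm $\le\Ec(s,u)^{1/2}$. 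Adding these proves \eqref{eq 1 25-05-2017} (with, say, $C=7$). For \eqref{eq 7 22-07-2017}, from $\del_a=\delb_a-(x^a/t)\del_t$ and $\del_t=(t/s)\delb_s$ one gets $\del_a u=\delb_a u-(x^a/s)\delb_s u$, hence
$$
s(s/t)^2\del_a u=(s/t)^2\,s\delb_a u-(s/t)^2x^a\delb_s u ;
$$
the first term is $\le\|s\delb_a u\|_{L^2(\Hcal_s)}\le\Ec(s,u)^{1/2}$ because $(s/t)^2\le1$, and since $|x^a|\le r\le t$ the second satisfies $(s/t)^2|x^a\delb_s u|\le(s^2/t)|\delb_s u|=|s^2t^{-1}\delb_s u|$, so its norm is controlled by \eqref{eq 1 25-05-2017}. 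This gives \eqref{eq 7 22-07-2017} (with $C=8$).

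The main obstacle is the first step: one must recognise \eqref{eq 6 23-05-2017} as a weighted Hardy inequality, justify the integration by parts carefully (the boundary term at the origin is where the regularity hypothesis is used, the one at infinity where the vanishing near $\del\Kcal$ is used), and keep track of the constant so as to recover exactly the factor $2$; note that only the $s\delb_a u$ part of the energy enters here. Once this is in hand, the remaining two estimates are routine frame computations, obtained by expressing $\delb_s u$ through $Ku$, $u$, $\delb_a u$ and then $\del_a u$ through $\delb_a u$, $\delb_s u$, and using everywhere only the trivial bounds $s/t\le 1$ and $r\le t$ valid in $\Kcal$.
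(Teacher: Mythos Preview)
Your proposal is correct and follows essentially the same route as the paper: the paper obtains \eqref{eq 6 23-05-2017} by applying the classical Hardy inequality in $\RR^3$ to $\tilde u_s(x):=su(\sqrt{s^2+|x|^2},x)$, whereas you prove the needed Hardy bound from scratch via the radial integration by parts (your argument in fact yields the marginally sharper $\|(s/r)u\|_{L^2(\Hcal_s)}^2\le 4\sum_a\|s\delb_a u\|_{L^2(\Hcal_s)}^2$). For \eqref{eq 1 25-05-2017} and \eqref{eq 7 22-07-2017} your computations coincide with the paper's, down to the constant $C=7$ in the second inequality.
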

\begin{proof}
For \eqref{eq 6 23-05-2017}, we recall the Hardy's inequality for $\RR^3$. Let $w$ be a sufficiently regular function defined in $\RR^3$ and decreases sufficiently fast at infinity, then
\begin{equation}\label{eq 2 25-05-2017}
\|w/r\|_{L^2(\RR^3)}\leq 2\sum_a^3\|\del_a w\|_{L^2(\RR^3)}.
\end{equation}
We define (for a fixed $s$) $\tilde{u}_s:\RR^3\rightarrow \RR$:
$$
\tilde{u}_s(x) := su(\sqrt{s^2+r^2},x).
$$
We see that
$$
\del_a \tilde{u}_s = s\delb_au
$$
and we apply \eqref{eq 2 25-05-2017} with $w = \tilde{u}_s$ (remark that $\tilde{u}_s$ is compactly supported), this leads to
$$
\|su/r\|_{L^2(\Hcal_s)}\leq 2\sum_a\|s\delb_au\|_{L^2(\Hcal_s)}\leq 2\Ec(s,u)^{1/2}
$$
which is \eqref{eq 6 23-05-2017}.

To establish \eqref{eq 1 25-05-2017}, we remark that
$$
Ku + 2u = s\delb_su + 2x^a\delb_au + 2u
$$
which leads to
$$
\aligned
(s^2/t)|\delb_su|\leq (s/t)|Ku+2u| + 2\sum_a|(x^a/t)s\delb_au| + 2|(s/t)u|
\endaligned
$$
thus
$$
\aligned
\|(s^2/t)\delb_su\|_{L^2(\Hcal_s)}\leq& \|(s/t)(Ku+2u)\|_{L^2(\Hcal_s)} + 2\sum_a\left\|s\delb_a u\right\|_{L^2(\Hcal_s)} + 2\|(s/r)u\|_{L^2(\Hcal_s)}
\\
\leq& 7\Ec(s,u)^{1/2}.
\endaligned
$$

For the third one, we remark that
$$
s(s/t)^2\del_a u = s(s/t)^2\delb_au - s(s/t)^2(x^a/t)\del_tu
$$
which leads to
$$
\|s(s/t)^2\del_au\|_{L^2(\Hcal_s)}\leq \|s\delb_au\|_{L^2(\Hcal_s)} + \|s(s/t)\delb_su\|_{L^2(\Hcal_s)}\leq C\Ec(s,u)^{1/2}.
$$

\end{proof}
\begin{remark}
We see that when $n=3$ and when the flat energy is satisfies the following increasing condition:
$$
\Ec(s,u)^{1/2}\leq Cs^{\delta}
$$
then
$$
\|sr^{-1}u\|_{L^2(\Hcal_s)}\leq  Cs^{\delta}.
$$
\end{remark}

\section{The conformal inequality on hyperboloids: curved case}
\subsection{Differential identity}\label{subsec energy-indentity}
We suppose that $g^{\alpha\beta}$ is a (symmetric) Lorantzian metric defined in $\Kcal$, sufficiently regular, and coincides with the Minkowski metric near the light cone $\del\Kcal$. Then we remark the following calculation:
$$
\aligned
g^{\alpha\beta}\del_{\alpha}\del_{\beta}u =& \gb^{\alpha\beta}\delb_{\alpha}\delb_{\beta}u + g^{\alpha\beta}\del_{\alpha}\left(\Psib_{\beta}^{\beta'}\right)\delb_{\beta'}u
\\
=& \gb^{\alpha\beta}\delb_{\alpha}\delb_{\beta}u + g^{\alpha\beta}\del_{\alpha}\left(\Psib_\beta^0\right)\delb_su.
\endaligned
$$
and we thus have
$$
\aligned
g^{\alpha\beta}\del_{\alpha}\del_{\beta}u =& \delb_s\left(\gb^{00}\delb_su + 2\gb^{0a}\delb_au\right) - \delb_s\gb^{00}\delb_su - 2\delb_s\gb^{a0}\delb_au
\\
&+\gb^{ab}\delb_a\delb_bu + g^{\alpha\beta}\del_{\alpha}\left(\Psib_\beta^0\right)\delb_su
\\
=&\delb_s\left(\gb^{00}\delb_su + 2\gb^{0a}\delb_au\right) + s^{-1}\left(\gb^{00}\delb_su + 2\gb^{a0}\delb_au\right) + \gb^{ab}\delb_a\delb_bu
\\
&-\del_s\gb^{00}\delb_su - 2s^{-1}\left(\gb^{a0} + s\delb_s\gb^{a0}\right)\delb_au
+ \left( g^{\alpha\beta}\del_{\alpha}\left(\Psib_\beta^0\right) - s^{-1}\gb^{00}\right)\delb_su
\\
=&s^{-1}\delb_s\left(s\left(\gb^{00}\delb_su + 2\gb^{0a}\delb_au\right)\right) + \gb^{ab}\delb_a\delb_bu
\\
&-\delb_s\gb^{00}\delb_su - 2s^{-1}\left(\gb^{a0} + s\delb_s\gb^{a0}\right)\delb_au
+ \left( g^{\alpha\beta}\del_{\alpha}\left(\Psib_\beta^0\right) - s^{-1}\gb^{00}\right)\delb_su.
\endaligned
$$
We denote by
$$
\mathscr{K}_g = s\left(\gb^{00}\delb_s + 2\gb^{a0}\delb_a\right).
$$
and we calculate the following relations:
$$
\gb^{0a} + s\del_s\gb^{0a} = s\Psib_{\alpha}^0\del_sg^{a\alpha} + (s/t)g^{0a}
$$
and
$$
\del_s\gb^{00} = \Psib_{\alpha}^0\Psib_{\beta}^0\del_sg^{\alpha\beta} - 2s^{-3}\left(r^2\gb^{00} + x^ax^bg^{ab}\right) - 2g^{a0}\Psib_a^0\frac{t^2+r^2}{ts^2}.
$$
and
\begin{equation}\label{eq 3 25-05-2017}
\aligned
g^{\alpha\beta}\del_{\alpha}\Psib_{\beta}^0 =& -s^{-1}\left(g^{00}(r/s)^2 - 2g^{a0}\frac{t}{s}\cdot\frac{x^a}{s} + g^{ab}\left(\frac{x^a}{s}\cdot\frac{x^b}{s} + \delta_{ab}\right)\right)
\\
=& -s^{-1}\left(g^{00}(t/s)^2 - 2g^{a0}\frac{t}{s}\cdot\frac{x^a}{s} + g^{ab}\frac{x^a}{s}\cdot\frac{x^b}{s}\right) + s^{-1}\left(g^{00} - \sum_ag^{aa}\right)
\\
=& -s^{-1}\gb^{00} + s^{-1}\left(g^{00} - \sum_ag^{aa}\right).
\endaligned
\end{equation}

Then as the flat case, we use the multiplier $s\mathscr{K}_gu$ and see that
$$
\aligned
s\mathscr{K}_gu\cdot s^{-1}\del_s\left(\mathscr{K}_gu\right) = \frac{1}{2}\delb_s\left(|\mathscr{K}_gu|^2\right)
\endaligned
$$
$$
\aligned
s\mathscr{K}_gu\cdot \gb^{ab}\delb_a\delb_bu =& s\delb_a\left(\mathscr{K}_gu\cdot \gb^{ab}\delb_bu\right) - s\delb_a(\mathscr{K}_gu)\gb^{ab}\delb_bu - s\delb_a\gb^{ab}\cdot \mathscr{K}_gu\delb_bu
\endaligned
$$
where
$$
\aligned
s\delb_a(\mathscr{K}_gu)\gb^{ab}\delb_bu =& s^2\delb_a\left(\gb^{00}\delb_su + 2\gb^{c0}\delb_cu\right)\cdot \gb^{ab}\delb_bu
\\
=&s^2\gb^{00}\delb_a\delb_su\cdot\gb^{ab}\delb_bu + 2s^2\gb^{c0}\delb_a\delb_cu\cdot \gb^{ab}\delb_bu
\\
&+s^2\delb_a\gb^{00}\cdot \gb^{ab}\delb_su\delb_bu + 2s^2\delb_a\gb^{c0}\cdot \gb^{ab}\delb_cu\delb_bu
\\
=&\frac{s^2}{2}\delb_s\left(\gb^{00}\gb^{ab}\delb_au\delb_bu\right) + s^2\delb_c\left(\gb^{c0}\gb^{ab}\delb_au\delb_bu\right)
\\
&-\frac{s^2}{2}\delb_s\left(\gb^{00}\gb^{ab}\right)\delb_au\delb_bu - s^2\delb_c\left(\gb^{c0}\gb^{ab}\right)\delb_au\delb_bu
\\
&+s^2\delb_a\gb^{00}\cdot \gb^{ab}\delb_su\delb_bu + 2s^2\delb_a\gb^{c0}\cdot \gb^{ab}\delb_cu\delb_bu
\\
=&\frac{1}{2}\delb_s\left(s^2\gb^{00}\gb^{ab}\delb_au\delb_bu\right) + \delb_c\left(s^2\gb^{c0}\gb^{ab}\delb_au\delb_bu\right)
\\
&-s\gb^{00}\gb^{ab}\delb_au\delb_bu - \frac{s^2}{2}\delb_s\left(\gb^{00}\gb^{ab}\right)\delb_au\delb_bu -s^2\delb_c\left(\gb^{c0}\gb^{ab}\right)\delb_au\delb_bu
\\
&+s^2\delb_a\gb^{00}\cdot \gb^{ab}\delb_su\delb_bu + 2s^2\delb_a\gb^{c0}\cdot \gb^{ab}\delb_cu\delb_bu,
\endaligned
$$
$$
s\mathscr{K}_gu\cdot \left(g^{\alpha\beta}\del_{\alpha}\left(\Psib_{\beta}^0\right)-s^{-1}\gb^{00}\right)\delb_su = s\left(sg^{\alpha\beta}\del_{\alpha}\left(\Psib_{\beta}^0\right) - \gb^{00}\right)\left(\gb^{00}\delb_su\delb_su + 2\gb^{a0}\delb_au\delb_su\right).
$$

Thus we see that
\begin{equation}\label{eq 1 21-05-2017}
\aligned
s\mathscr{K}_gu\cdot g^{\alpha\beta}\del_{\alpha}\del_{\beta}u =& \frac{1}{2}\delb_s\left(|\mathscr{K}_gu|^2 - s^2\gb^{00}\gb^{ab}\delb_au\delb_bu \right) + \delb_a(v_g^a)
\\
&+ s\left(\gb^{00}\gb^{ab} + s\delb_c\left(\gb^{0c}\gb^{ab}\right) - 2s\delb_c\gb^{0a}\cdot\gb^{cb}\right)\delb_au\delb_bu
\\
&+\left(sg^{\alpha\beta}\del_{\alpha}\left(\Psib_{\beta}^0\right) - \gb^{00}\right)\mathscr{K}_gu\cdot \delb_su
\\
&\quad + \frac{s^2}{2}\delb_s\left(\gb^{00}\gb^{ab}\right)\delb_au\delb_bu - s^2\delb_a\gb^{00}\cdot\gb^{ab}\delb_su\delb_bu
\\
&\quad - s\mathscr{K}_gu\cdot \delb_s\gb^{00}\delb_su - 2\mathscr{K}_gu\cdot \delb_s\left(s\gb^{a0}\right)\cdot \delb_au
- s\delb_a\gb^{ab}\cdot \delb_b u \mathscr{K}_gu
\endaligned
\end{equation}
with
$$
v_g^a = s\mathscr{K}_g\cdot \gb^{ab}\delb_b u - s^2\gb^{a0}\gb^{cb}\delb_cu\delb_bu,
$$
$$
\aligned
su\cdot g^{\alpha\beta}\del_{\alpha}\del_{\beta}u =& su\cdot \gb^{\alpha\beta}\delb_{\alpha}\delb_{\beta}u + su\cdot g^{\alpha\beta}\del_{\alpha}\left(\Psib_{\beta}^0\right)\delb_su
\\
=&\delb_s\left(u\cdot \mathscr{K}_g u + \frac{1}{2}\left(sg^{\alpha\beta}\del_{\alpha}\left(\Psib_\beta^0\right)-\delb_s(s\gb^{00})\right)u^2\right)
+\delb_a\left(s u\gb^{ab}\delb_bu\right)
\\
&-s\gb^{\alpha\beta}\delb_{\alpha}u\delb_{\beta}u
\\
&+\frac{1}{2}\delb_s\left(\delb_s(s\gb^{00})-sg^{\alpha\beta}\del_{\alpha}\left(\Psib_\beta^0\right)\right)u^2
-2\delb_s\left(s\gb^{a0}\right)u\delb_au - \delb_a\left(s\gb^{ab}\right)u\delb_bu.
\endaligned
$$
We denote by
$$
N_g := sg^{\alpha\beta}\del_{\alpha}\left(\Psib_\beta^0\right) - \delb_s(s\gb^{00})
$$
and by \eqref{eq 3 25-05-2017} we see that
$$
N_g = g^{00} - \sum_ag^{aa} - 2\gb^{00} - s\delb_s\gb^{00}.
$$
When $g^{\alpha\beta}  = m^{\alpha\beta}$, we see that $N_m = n-1$.

Then we see that
\begin{equation}\label{eq 2 21-05-2017}
\aligned
N_gsu\cdot g^{\alpha\beta}\del_{\alpha}\del_{\beta}u
=& -N_gs\gb^{\alpha\beta}\delb_{\alpha}u\delb_{\beta}u
+\delb_s\left(N_gu\cdot \mathscr{K}_gu + \frac{1}{2}N_g^2u^2\right)
+\delb_a\left(N_g\cdot s u\gb^{ab}\delb_bu\right)
\\
&-\delb_sN_g\cdot \left(u\cdot \mathscr{K}_gu + \frac{1}{2}N_gu^2\right) - \delb_aN_g\cdot su\gb^{ab}\delb_bu
\\
&-\frac{1}{2}N_g\delb_sN_g\cdot u^2 -N_g\left(2\delb_s\left(s\gb^{a0}\right)u\delb_au + \delb_a\left(s\gb^{ab}\right)u\delb_bu\right).
\endaligned
\end{equation}

We combine \eqref{eq 1 21-05-2017} and \eqref{eq 2 21-05-2017}, and see that
\begin{equation}\label{eq 3 21-05-2017}
\aligned
s\left(\mathscr{K}_gu + N_gu\right)\cdot g^{\alpha\beta}\del_{\alpha}\del_{\beta}u
=& \frac{1}{2}\delb_s\left(|\mathscr{K}_gu + N_gu|^2  - s^2\gb^{00}\gb^{ab}\delb_au\delb_bu\right)
+ \delb_a(w_g^a)
\\
& + R_g(\nabla u,\nabla u) + S_g[\nabla u]\cdot (\mathscr{K}_g+N_g)u + T_g[u]
\endaligned
\end{equation}
where $R_g(\nabla u,\nabla u)$ is a quadratic form acting on the gradient of $u$:
$$
\aligned
R_g(\nabla u,\nabla u) :=& s\left(\gb^{00}\gb^{ab} + s\delb_c\left(\gb^{0c}\gb^{ab}\right) - 2s\delb_c\gb^{0a}\cdot\gb^{cb}\right)\delb_au\delb_bu
\\
&+(sg^{\alpha\beta}\del_{\alpha}\Psib_{\beta}^0-\delb_s(s\gb^{00}))\mathscr{K}_gu\cdot \delb_su - N_gs\gb^{\alpha\beta}\delb_{\alpha}u\delb_{\beta}u,
\\
& + \frac{s^2}{2}\delb_s\left(\gb^{00}\gb^{ab}\right)\delb_au\delb_bu - s^2\delb_a\gb^{00}\gb^{ab}\delb_su\delb_bu
\\
=&sL_g^{ab}\delb_a u \delb_bu + N_g \mathscr{K}_gu\cdot \delb_su - N_gs\gb^{\alpha\beta}\delb_{\alpha}u\delb_{\beta}u
\\
& + \frac{s^2}{2}\delb_s\left(\gb^{00}\gb^{ab}\right)\delb_au\delb_bu - s^2\delb_a\gb^{00}\gb^{ab}\delb_su\delb_bu
\endaligned
$$
with
$$
L_g^{ab}: = \gb^{00}\gb^{ab} + s\delb_c\left(\gb^{0c}\gb^{ab}\right) - 2s\delb_c\gb^{0a}\cdot\gb^{cb},
$$
$$
\aligned
\mathscr{K}_g u\cdot S_g[\nabla u] :=& - (\mathscr{K}_g + N_g)u\cdot\left(2\delb_s(s\gb^{a0})\delb_au + s\delb_a\gb^{ab}\delb_b u\right)
\endaligned
$$
and
$$
\aligned
T_g[u] := -\delb_sN_g\cdot u\left(\mathscr{K}_g+N_g\right)u  - su\cdot \gb^{ab}\delb_aN_g\delb_b u.
\endaligned
$$
In \eqref{eq 3 21-05-2017},
$$
\aligned
w_g^a =& v_g^a + N_gsu\cdot \gb^{ab}\delb_bu
\\
=& s\mathscr{K}_g\cdot \gb^{ab}\delb_b u - s^2\gb^{a0}\gb^{cb}\delb_cu\delb_bu + N_gsu\cdot \gb^{ab}\delb_bu.
\endaligned
$$

We analyse the structure of $R_g(\nabla u,\nabla u)$:  remark the coefficient $L_g^{ab}$ satisfies the following property: when $g^{\alpha\beta} = m^{\alpha\beta}$,
$$
L_m^{ab} = 2\mb^{ab}.
$$
We also see that
$$
\aligned
L_g^{ab} - L_m^{ab} =& \gb^{00}\gb^{ab} - \mb^{00}\mb^{ab} + s\delb_c\left(\gb^{0c}\gb^{ab} - \mb^{0c}\mb^{ab}\right) - 2s\delb_c\gb^{0a}\gb^{cb} + 2s\delb_c\mb^{0a}\mb^{cb}\\
\\
=&\gb^{00}\hb^{ab} + \hb^{00}\mb^{ab} + s\delb_c\left(\hb^{0c}\gb^{ab}\right) + s\delb_c\left(\hb^{ab}\mb^{0c}\right) - 2s\left(\delb_c\gb^{0a}\hb^{cb} + \delb_c\hb^{0a}\mb^{cb}\right)
\endaligned
$$
On the other hand, we see that
$$
N_m = 2
$$
and
$$
N_g - N_m = h^{00} - \sum_ah^{aa} - 2\hb^{00} - s\delb_s\hb^{00}
$$
and
$$
\aligned
sN_g\gb^{\alpha\beta}\delb_{\alpha}u\delb_{\beta}u =& sN_g\left(\gb^{00}\delb_su + 2\gb^{a0}\delb_a u\right)\delb_su + sN_g\gb^{ab}\delb_au\delb_bu
=N_g\mathscr{K}_gu \delb_su + sN_g\gb^{ab}\delb_au\delb_bu.
\endaligned
$$
Thus we see that
\begin{equation}\label{eq R(du,du)}
\aligned
R_g(\nabla u,\nabla u) =& s\left(L_g^{ab}-L_m^{ab}\right)\delb_au\delb_bu  - s\left(N_g\gb^{ab}-2\mb^{ab}\right)\delb_au\delb_bu
\\
& + \frac{s^2}{2}\delb_s\left(\gb^{00}\gb^{ab}\right)\delb_au\delb_bu - s^2\delb_a\gb^{00}\gb^{ab}\delb_su\delb_bu
\\
=& s\left(L_g^{ab}-L_m^{ab}\right)\delb_au\delb_bu - s(N_g-N_m)\gb^{ab}\delb_au\delb_bu - 2s\hb^{ab}\delb_au\delb_bu
\\
&+ \frac{s^2}{2}\delb_s\left(\gb^{00}\gb^{ab}\right)\delb_au\delb_bu - s^2\delb_a\gb^{00}\gb^{ab}\delb_su\delb_bu.
\endaligned
\end{equation}

\subsection{Energy estimate in curved case}
We integrate the identity \eqref{eq 2 21-05-2017} in the region $\Kcal_{[s_0,s]}$, remark that we suppose that $u$ is sufficiently regular and vanishes near the conical boundary. By Stokes formula:
\begin{equation}\label{eq 1 23-05-2017}
\aligned
\int_{\Kcal_{[s_0,s]}} s\left(\mathscr{K}_gu + N_gu\right)\cdot g^{\alpha\beta}\del_{\alpha}\del_{\beta}u \ dxds =& \frac{1}{2}\left(E_{\text{con},g}(s,u) - E_{\text{con},g}(s_0,u)\right)
\\
&+ \int_{\Kcal_{[s_0,s]}}\left(R_g(\nabla u,\nabla u) + \mathscr{K}_g u\cdot S_g[\nabla u] + T_g[u]\right)\,dxds
\endaligned
\end{equation}
with
$$
E_{\text{con},g} := \int_{\Hcal_s}\left(|\mathscr{K}_gu + N_g u|^2 - s^2\gb^{ab}\delb_au\delb_bu\right)\,dxds,
$$
called the {\bf curved energy}.

Then we derive \eqref{eq 1 23-05-2017} with respect to $s$ and see that
$$
\aligned
\int_{\Hcal_s}s\left(\mathscr{K}_gu + N_gu\right)\cdot g^{\alpha\beta}\del_{\alpha}\del_{\beta}u \ dx =& \frac{d}{2ds}E_{\text{con},g}(s,u)
\\
&+ \int_{\Hcal_s}\left(R_g(\nabla u,\nabla u) + \mathscr{K}_gu\cdot S_g[u] + T_g[u]\right)\ dx
\endaligned
$$
thus
\begin{equation}\label{eq 2 23-05-2017}
\aligned
E_{\text{con},g}(s,u)^{1/2}\frac{d}{ds}E_{\text{con},g}(s,u)^{1/2}
\leq& \|s\left(\mathscr{K}_gu + N_gu\right)\cdot g^{\alpha\beta}\del_{\alpha}\del_{\beta}u\|_{L^1(\Hcal_s)}
\\
& + \|R_g(\nabla u,\nabla u) + \mathscr{K}_g u\cdot S_g[u] + T_g[u]\|_{L^1(\Hcal_s)}
\endaligned
\end{equation}
We suppose that there exists a $\kappa\geq 1$ such that
\begin{equation}\label{eq 3 23-05-2017}
\kappa^{-1} E_{\text{con},g}(s,u)^{1/2}\leq \Ec(s,u)^{1/2} \leq \kappa E_{\text{con},g}(s,u)^{1/2} ,
\end{equation}
\begin{equation}\label{eq 4 23-05-2017}
\|R_g(\nabla u,\nabla u) + \mathscr{K}_gu\cdot S_g[u] + T_g[u]\|_{L^1(\Hcal_s)}\leq \Ec(s,u)^{1/2}M_g(s,u).
\end{equation}
Then we see that from \eqref{eq 2 23-05-2017}
$$
\frac{d}{ds}E_{\text{con},g}(s,u)^{1/2}\leq \|sg^{\alpha\beta}\del_{\alpha}\del_{\beta}u\|_{L^2(\Hcal_s)} + M_g(s,u)
$$
Thus
\begin{equation}\label{eq 5 23-05-2017}
E_{\text{con},g}(s,u)^{1/2}\leq E_{\text{con},g}(s_0,u)^{1/2} + \int_{s_0}^s\left(\tau\|g^{\alpha\beta}\del_{\alpha}\del_{\beta}u\|_{L^2(\Hcal_\tau)} + M_g(\tau,u)\right)\, d\tau
\end{equation}
which (combined with \eqref{eq 3 23-05-2017}) leads to
\begin{equation}\label{eq energy}
\Ec(s,u)^{1/2}\leq \kappa^2\Ec(s_0,u)^{1/2} + \kappa\int_{s_0}^s\left(\tau\|g^{\alpha\beta}\del_{\alpha}\del_{\beta}u\|_{L^2(\Hcal_\tau)} + M_g(\tau,u)\right)\, d\tau.
\end{equation}
which is the conformal energy estimate in curved case.

\subsection{Analysis on curved energy}
In this subsection we will analyse the structure of the curved energy and more precisely, we will give a sufficient condition for \eqref{eq 3 23-05-2017} .
\begin{proposition}\label{prop 1 24-05-2017}
There exists a constant $\vep_s$ (with the $s$ stands for ``structure'') which depends only on $n$ such that when
\begin{equation}\label{eq 1 24-05-2017}
\left\{
\aligned
&|\hb^{00}|\leq (s/t)\vep_s,\quad |s\delb_s\hb^{00}|\leq \vep_s(s/t),
\\
&|h^{\alpha\beta}|\leq (s/t)\vep_s.
\endaligned
\right.
\end{equation}
then \eqref{eq 3 23-05-2017} holds.
\end{proposition}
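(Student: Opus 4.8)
The plan is to estimate $E_{\text{con},g}(s,u)$ directly against the flat energy $\Ec(s,u)$, exploiting that $\Ec$ is exactly $E_{\text{con},g}$ in the case $g=m$: one checks $\mathscr{K}_m u=Ku$, $N_m=n-1$ and $-s^2\mb^{ab}\delb_a u\delb_b u=\sum_a(s\delb_a u)^2$, so the definition of $E_{\text{con},g}$ collapses to \eqref{eq 4 10-05-2017} when $h:=g-m$ vanishes. For general $g$, since $\gb^{00}=1+\hb^{00}$ and $\gb^{a0}=x^a/s+\hb^{a0}$, we may write $\mathscr{K}_g u+N_g u=(Ku+2u)+\Theta$ with $\Theta:=s\hb^{00}\delb_s u+2s\hb^{a0}\delb_a u+(N_g-2)u$ (recall $N_m=n-1=2$), whence $|\mathscr{K}_g u+N_g u|^2-|Ku+2u|^2=\Theta\,(2(Ku+2u)+\Theta)$, while $-s^2\gb^{ab}\delb_a u\delb_b u-\sum_a(s\delb_a u)^2=-s^2\hb^{ab}\delb_a u\delb_b u$ (keeping instead the $\gb^{00}$ factor that appears in \eqref{eq 3 21-05-2017} only adds a further $O(\hb^{00})$ term of the same type). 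In this way $E_{\text{con},g}(s,u)-\Ec(s,u)$ becomes a finite sum of error terms, each the product of a perturbation factor among $\hb^{00}$, $\hb^{a0}$, $\hb^{ab}$, $s\delb_s\hb^{00}$, $N_g-2$ with a quadratic expression in $Ku+2u$, $s\delb_a u$, $s\delb_s u$, $u$.

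The second step is to turn the hypotheses \eqref{eq 1 24-05-2017}, which are stated on the \emph{natural}-frame components $h^{\alpha\beta}$ together with a separate control on $\hb^{00}$, into bounds on the hyperbolic-frame components that actually occur. Using the explicit matrix $\Psib$ one finds $\hb^{ab}=h^{ab}$ and $\hb^{a0}=(t/s)h^{a0}-(x^b/s)h^{ab}$, so the weight $(s/t)$ in $|h^{\alpha\beta}|\le(s/t)\vep_s$ precisely neutralises the $t/s$, giving $|\hb^{ab}|\le\vep_s$ and $|\hb^{a0}|\le C_n\vep_s$. The one component carrying a genuine $(t/s)^2$ weight is $\hb^{00}=(t/s)^2h^{00}-2(t/s)(x^a/s)h^{0a}+(x^a/s)(x^b/s)h^{ab}$, which is exactly why the proposition imposes the additional bounds on $\hb^{00}$ and $s\delb_s\hb^{00}$; together with $N_g-2=h^{00}-\sum_a h^{aa}-2\hb^{00}-s\delb_s\hb^{00}$ these yield $|N_g-2|\le C_n(s/t)\vep_s$.

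The third step absorbs every error term into $C_n\vep_s\,\Ec(s,u)$ by invoking the analysis of the flat energy from Section~2. The decisive weight matchings are: $|s\hb^{00}\delb_s u|\le\vep_s\,|s^2t^{-1}\delb_s u|$ with $\|s^2t^{-1}\delb_s u\|_{L^2(\Hcal_s)}\le C_n\Ec(s,u)^{1/2}$ by \eqref{eq 1 25-05-2017}; $|s\hb^{a0}\delb_a u|\le C_n\vep_s\,|s\delb_a u|$ with $\|s\delb_a u\|_{L^2(\Hcal_s)}\le\Ec(s,u)^{1/2}$ from \eqref{eq 4 10-05-2017}; and $|(N_g-2)u|\le C_n\vep_s\,(s/t)|u|\le C_n\vep_s\,(s/r)|u|$ (since $r\le t$ on $\Hcal_s$) with $\|(s/r)u\|_{L^2(\Hcal_s)}\le 2\Ec(s,u)^{1/2}$ by \eqref{eq 6 23-05-2017}. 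Consequently $\|\Theta\|_{L^2(\Hcal_s)}\le C_n\vep_s\,\Ec(s,u)^{1/2}$, so by Cauchy--Schwarz $\|\Theta(2(Ku+2u)+\Theta)\|_{L^1(\Hcal_s)}\le 2\|\Theta\|_{L^2}\|Ku+2u\|_{L^2}+\|\Theta\|_{L^2}^2\le C_n\vep_s\,\Ec(s,u)$, and likewise $\|s^2\hb^{ab}\delb_a u\delb_b u\|_{L^1(\Hcal_s)}\le C_n\vep_s\sum_a\|s\delb_a u\|_{L^2}^2\le C_n\vep_s\,\Ec(s,u)$. Summing, $|E_{\text{con},g}(s,u)-\Ec(s,u)|\le C_n\vep_s\,\Ec(s,u)$; choosing $\vep_s=\vep_s(n)$ so that $C_n\vep_s\le\demi$ yields $\demi\Ec\le E_{\text{con},g}\le\frac{3}{2}\Ec$ (in particular $E_{\text{con},g}\ge 0$), which is precisely \eqref{eq 3 23-05-2017} with $\kappa=\sqrt2$.

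The real obstacle is the second step: one has to recognise that $\hb^{00}$ (and the derived quantities $s\delb_s\hb^{00}$ and $N_g$) is the only component whose hyperbolic-frame smallness is \emph{not} a consequence of $|h^{\alpha\beta}|\le(s/t)\vep_s$ --- because of the $(t/s)^2$ in $\Psib^0_0\Psib^0_0$ --- and one has to verify that the weights prescribed in \eqref{eq 1 24-05-2017} are exactly those needed so that the error terms land on quantities the flat energy controls, most notably $(s^2/t)\delb_s u$ rather than the uncontrolled $s\delb_s u$. Once that calibration is understood, the remaining bounds are routine applications of Cauchy--Schwarz and of the Hardy-type inequality \eqref{eq 6 23-05-2017}.
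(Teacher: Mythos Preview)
Your proof is correct and follows essentially the same approach as the paper: both estimate $E_{\text{con},g}-\Ec$ by bounding the perturbation pieces $\hb^{ab}$, $\hb^{a0}$, $\hb^{00}$, $N_g-2$ under \eqref{eq 1 24-05-2017} and absorbing the resulting error terms via the flat-energy controls \eqref{eq 6 23-05-2017}, \eqref{eq 1 25-05-2017}. The only organizational difference is that the paper compares $\mathscr{K}_gu+N_gu$ first to $\gb^{00}(Ku+2u)$ (so that no bare $\delb_s u$ term appears, the slack being absorbed into $|\hb^{00}||Ku+2u|$), whereas you compare directly to $Ku+2u$ and handle the resulting $s\hb^{00}\delb_su$ term via \eqref{eq 1 25-05-2017}; these are equivalent since \eqref{eq 1 25-05-2017} is itself derived from the $Ku+2u$ bound.
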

\begin{proof}
Recall the structure of the curved energy, we consider first the term $\gb^{ab}\delb_au\delb_bu$. We see that
$$
\gb^{ab} = g^{\alpha\beta}\Psib_{\alpha}^a\Psib_{\beta}^b = g^{ab},
$$
in the same way $\hb^{ab} = h^{ab}$.
Thus there exists a positive constant $\vep_s$ such that if $|h^{ab}|\leq \vep_s$,
\begin{equation}\label{eq 1 16-08-2017}
\big\|s^2\gb^{ab}\delb_au\delb_bu - \sum_a|s\delb_au|^2\big\|_{L^2(\Hcal_s)}\leq C\vep_s \left\|\sum_a|s\delb_au|^2\right\|_{L^2(\Hcal_s)}
\leq C\vep_s \Ec(s,u)^{1/2}.
\end{equation}

Then we regard the first term in $E_{\text{con},g}$. We first remark that
$$
\aligned
N_g - 2 =& g^{00} - \sum_ag^{aa} - 2\gb^{00} - s\delb_s\gb^{00} - \left(m^{00} - \sum_am^{aa} - 2\mb^{00} - s\delb_s\mb^{00}\right)
\\
=& h^{00} - \sum_ah^{aa} - 2\hb^{00} - s\delb_s\hb^{00}.
\endaligned
$$
Then we see that (taking \eqref{eq 1 24-05-2017} with $\vep_s$ sufficiently small)
\begin{equation}\label{eq 1 18-08-2017}
|N_g|\leq 2 + \left|h^{00} - \sum_ah^{aa} - 2\hb^{00} - s\delb_s\hb^{00}\right|\leq 3.
\end{equation}

We see that
\begin{equation}\label{eq 2 24-05-2017}
\aligned
\mathscr{K}_g u + N_g u - \gb^{00}\left(Ku + 2u\right)
=& 2s\left(\gb^{a0} -\gb^{00}\mb^{a0} \right)\delb_au + \left(N_g - 2\gb^{00}\right)u
\\
=& 2s\left(\gb^{a0} - \mb^{a0} + \mb^{00}\mb^{a0} - \gb^{00}\mb^{a0}\right)\delb_au
+ \left(N_g - 2\gb^{00}\right)u
\\
=&2s\left(\hb^{a0}+\hb^{00}\mb^{a0}\right)\delb_au + \left(N_g - 2\gb^{00}\right)u
\endaligned
\end{equation}
where we remark that
$$
\hb^{\alpha\beta} = \gb^{\alpha\beta} - \mb^{\alpha\beta},\quad \text{and especially}\quad \hb^{00} = \gb^{00} - 1.
$$

Thus when $|\hb^{a0}|\leq \vep_s$ and $|\hb^{00}|\leq \vep_s (s/t)$, we see that for the first term in right-hand-side of \eqref{eq 2 24-05-2017}:
$$
\aligned
2s\left|\hb^{a0}+\hb^{00}\mb^{a0}\right|\delb_au \leq & C\vep_s\sum_a|s\delb_a u|.
\endaligned
$$
We remark that
$$
\hb^{a0} = h^{\alpha\beta}\Psib_{\alpha}^a\Psib_{\beta}^0 = h^{a\beta}\Psib_{\beta}^0 = h^{a0}(t/s) - h^{ab}(x^b/s)
$$
thus by \eqref{eq 1 24-05-2017}, $|\hb^{a0}|\leq C\vep_s$.

On the other hand, we see that by \eqref{eq 3 25-05-2017},
$$
\aligned
\left(N_g - 2\gb^{00}\right)u =& \left(-2\hb^{00} +  h^{00} - \sum_ah^{aa} - 2\hb^{00} - s\delb_s\hb^{00}\right)u
\\
=& \left(-4\hb^{00} + h^{00} - \sum_a\hb^{aa} - s\delb_s\hb^{00} \right)u.
\endaligned
$$
We see that under the assumption \eqref{eq 1 24-05-2017},
$$
\left|\left(N_g - 2\gb^{00}\right)u\right|\leq C\vep_s (s/t)|u|
$$
thus
$$
\left|\mathscr{K}_g u + N_g u - \gb^{00}\left(Ku + 2u\right)\right|\leq C\vep_s\sum_a|s\delb_au| + C\vep_s (s/t)|u|.
$$
Then we see that
\begin{equation}\label{eq 3 16-08-2017}
\aligned
\left|(\mathscr{K}_gu + N_gu) - (Ku+2u)\right|\leq& \left|(\mathscr{K}_gu + N_gu) - \gb^{00}(Ku+2u)\right| + |\hb^{00}||Ku+2u|
\\
\leq & C\vep_s\sum_a|s\delu_au| + C\vep_s(s/t)|u| + C\vep_s|Ku+2u|
\endaligned
\end{equation}
This leads to (recall \eqref{eq 6 23-05-2017}):
\begin{equation}\label{eq 2 16-08-2017}
\left\|(\mathscr{K}_gu + N_gu) - \left(Ku + 2u\right)\right\|_{L^2(\Hcal_s)}\leq C\vep_s\Ec(s,u)^{1/2}
\end{equation}
Then we see that (by \eqref{eq 1 16-08-2017} and \eqref{eq 2 16-08-2017})
\begin{equation}\label{eq 4 16-08-2017}
(1-C\vep_s)\Ec(s,u)^{1/2}\leq E_{\text{con},g}(s,u)^{1/2} \leq (1+C\vep_s)\Ec(s,u)^{1/2}
\end{equation}
Then we take $\vep_s$ sufficiently small and the desired result is established.
\end{proof}

\section{Commutators and decay estimate}
\subsection{Global Sobolev inequality on hyperboloids}
In this subsection we recall the Klainerman-Sobolev inequality on hyperboloids due to H\"ormander \cite{H1}:
\begin{proposition}
Let $u$ be a sufficiently regular function defined in $\Kcal$ and vanishes near the conical boundary $\del_K\Kcal$. Then the following estimate holds:
\begin{equation}\label{ineq 1 sobolev}
\sup_{\Hcal_s}\big(t^{3/2}|u|\big)\leq C\sum_{|I|+|J|\leq 2}\|\del^IL^Ju\|_{L^2(\Hcal_s)}.
\end{equation}
Here $C$ is a positive constant independent of $u$.
\end{proposition}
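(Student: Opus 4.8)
The plan is to pull the estimate back to $\RR^3$ and deduce it from the ordinary Sobolev embedding $H^2\hookrightarrow L^\infty$, applied on a Euclidean ball whose radius is tuned to the point at which $u$ is evaluated. Write $\langle x\rangle_s:=\sqrt{s^2+|x|^2}$ and introduce the profile $w_s:\RR^3\to\RR$, $w_s(x):=u(\langle x\rangle_s,x)$; since $u$ is $C^2$ and vanishes near the conical boundary, $w_s$ is $C^2$ and compactly supported. As $\langle x\rangle_s=t$ on $\Hcal_s$, we have $\sup_{\Hcal_s}(t^{3/2}|u|)=\sup_{x\in\RR^3}(\langle x\rangle_s^{3/2}|w_s(x)|)$, and by the very definition of $\int_{\Hcal_s}\cdot\, dx$ the norm $\|g\|_{L^2(\Hcal_s)}$ is the flat norm of $x\mapsto g(\langle x\rangle_s,x)$ in $L^2(\RR^3)$. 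Note $\langle x\rangle_s\geq s>0$ throughout.

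The first step is the dictionary between Euclidean derivatives of $w_s$ and the Lorentz boosts $L_a=x^a\del_t+t\del_a$. For any $C^1$ function $F$ on $\Kcal$, $\del_{x^a}\big(F(\langle x\rangle_s,x)\big)=(\delb_aF)|_{\Hcal_s}$, hence $\del_{x^a}w_s=(\delb_au)|_{\Hcal_s}=t^{-1}(L_au)|_{\Hcal_s}$, i.e.\ $\langle x\rangle_s\,\del_{x^a}w_s=(L_au)|_{\Hcal_s}$. Iterating once and using the identity $L_aL_bu=t^2\delb_a\delb_bu+x^a\delb_bu$, one gets $\langle x\rangle_s^2\,\del_{x^a}\del_{x^b}w_s=(L_aL_bu)|_{\Hcal_s}-(x^a/t)(L_bu)|_{\Hcal_s}$; since $|x^a/t|\le1$ on $\Hcal_s$ this yields the pointwise bound
$$
\langle x\rangle_s^{\,|I|}\,\big|\del_x^Iw_s\big| \lesssim \sum_{|J|\le|I|}\big|(L^Ju)|_{\Hcal_s}\big|\qquad(|I|\le 2).
$$
Integrating over $\RR^3$ and recalling the definition of the $\Hcal_s$-norm, all the weighted norms $\big\|\langle x\rangle_s^{\,|I|}\del_x^Iw_s\big\|_{L^2(\RR^3)}$ with $|I|\le 2$ are controlled by $\sum_{|J|\le 2}\|L^Ju\|_{L^2(\Hcal_s)}$, hence a fortiori by the right-hand side of \eqref{ineq 1 sobolev}.

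The second step is the choice of ball. Fix $x_0\in\RR^3$ and set $R:=\langle x_0\rangle_s\ (\ge s>0)$. Since $|x_0|\le R$ and $s\le R$, an elementary case distinction (according to whether $|x_0|\le R/2$) gives $\tfrac12 R\le\langle x\rangle_s\le 2R$ for every $x\in B:=B(x_0,R/2)$, so on $B$ the weight $\langle x\rangle_s$ is comparable to $R$. The rescaled Sobolev inequality on $B$ (at scale $\rho=R/2$) then gives
$$
|w_s(x_0)| \le \|w_s\|_{L^\infty(B)} \lesssim \sum_{|I|\le 2}\rho^{\,|I|-3/2}\|\del_x^Iw_s\|_{L^2(B)} \lesssim R^{-3/2}\sum_{|I|\le 2}\big\|\langle x\rangle_s^{\,|I|}\del_x^Iw_s\big\|_{L^2(B)} .
$$
Enlarging $B$ to $\RR^3$ and using the first step, $R^{3/2}|w_s(x_0)|\lesssim\sum_{|I|+|J|\le 2}\|\del^IL^Ju\|_{L^2(\Hcal_s)}$; taking the supremum over $x_0$ is precisely \eqref{ineq 1 sobolev}.

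Nothing here is genuinely hard — the result is classical (H\"ormander) — and the one place that wants care is the weight bookkeeping of the second paragraph: one must check that every Euclidean derivative landing on $w_s$ is paid for by exactly one factor $\langle x\rangle_s^{-1}$ after conversion to the boosts $L_a$, and that the lower-order commutator terms of type $(x^a/t)L_bu$ are harmless because $|x|\le\langle x\rangle_s$. The conceptual point that produces the $t^{3/2}$ gain is to work on a ball of radius $\sim t_0$ instead of a unit ball, so that the weights $\langle x\rangle_s^{\,|I|}$ exactly cancel the scaling factor $\rho^{\,|I|-3/2}$. (Equivalently one could conjugate $u$ by the Lorentz boost carrying $(t_0,x_0)$ to $(s,0,0,0)$ and use a fixed unit ball, but the direct rescaling avoids having to track the unbounded entries of that boost.)
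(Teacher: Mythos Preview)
Your argument is correct. The paper itself does not give a proof of this proposition: it simply cites H\"ormander's Lemma~7.6.1 and Section~5.1 of LeFloch--Ma for ``this slightly improved version''. Your proof is precisely the standard argument behind those references --- pull back to $\RR^3$ via $w_s(x)=u(\sqrt{s^2+|x|^2},x)$, convert Euclidean derivatives to boosts through $\delb_a=t^{-1}L_a$, and apply the scaled $H^2\hookrightarrow L^\infty$ embedding on a ball of radius comparable to $t_0$.

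Two minor remarks. First, your proof actually yields the sharper bound $\sup_{\Hcal_s}(t^{3/2}|u|)\le C\sum_{|J|\le 2}\|L^Ju\|_{L^2(\Hcal_s)}$, using only the boosts and no $\del^I$; you note this implicitly when you write ``a fortiori''. This is indeed the form found in H\"ormander. Second, in your second-derivative identity there is a harmless index transposition (differentiating $\partial_{x^a}w_s=(\delb_a u)|_{\Hcal_s}$ in $x^b$ gives $(\delb_b\delb_a u)|_{\Hcal_s}$, so one gets $L_bL_a u-(x^b/t)L_a u$ rather than $L_aL_b u-(x^a/t)L_b u$); since the $\delb_a$ commute and the estimate is symmetric in the indices, this has no effect on the conclusion.
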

This result is essentially due to H\"ormander (see in detail \cite{H1}(1997), Lemma 7.6.1). For this slightly improved version, see \cite{PLF-MY-book} section 5.1. This inequality helps us to get decay estimate via $L^2$ norm. So in the following we need to control on hyperboloids the $L^2$ norm of the following quantities for $|I'|+|J'|\leq 2$
$$
\del^{I'}L^{J'}\left((s^2/t)\delb_s\del^IL^J u\right),\quad \del^{I'}L^{J'}\left(s\delb_a\del^IL^J u\right),\quad \del^{I'}L^{J'}\left((s/t)\del^IL^Ju\right),
$$
$$
\del^{I'}L^{J'}\left(s^2\delb_s\delb_a\del^IL^Ju\right),\quad \del^{I'}L^{J'}\left(st\delb_a\delb_b\del^IL^Ju\right),
$$
$$
\del^{I'}L^{J'}\left((s^2/t)\del^IL^J\delb_s u\right),\quad \del^{I'}L^{J'} \left((s/t)\del^IL^Ju\right),\quad \del^{I'}L^{J'}\left(s\del^IL^J\delb_au\right). 
$$
and
$$
\del^{I'}L^{J'}\left(s^2\del^IL^J\delb_s\delb_au\right),\quad\del^{I'}L^{J'}\left(st\del^IL^J\delb_a\delb_b u\right).
$$
These are to be bounded by the conformal energies $\Ec(s,\del^{I''}L^{J''}u)$ with $|I''|+|J''|\leq |I|+|J|+2$. To do so, we need some estimates on commutators, which are studied in the following sections.
%
%

\subsection{Commutators I}
In this section we calculate the following quantities:
$$
[L_a,\del_{\alpha}], \quad [L_a,\del^I],\quad [L^J,\del^I],
$$
$$
[\del_{\alpha},\delb_\alpha],\quad [\del^I,\delb_{\alpha}]
$$
and
$$
[L_a,\delb_{\alpha}],\quad [L^J,\delb_{\alpha}].
$$

We begin with the first group. It is easy to see that
$$
[L_a,\del_t] = -\del_a,\quad [L_a,\del_b] = -\delta_{ab}\del_b
$$
and we denote by
$$
[L_a,\del_{\alpha}] = \theta_{a\alpha}^\beta \del_{\beta}
$$
where $\theta_{a\alpha}^{\beta}$ are constants.

{\bf Important convention:} in the following we often make summation over multi-indices of order less than an integer. For the convenience of expression, we only give the upper bound of the order of a multi-index and omit the lower bound. For example when we write
$$
\sum_{|I|\leq N}
$$
we always mean
$$
\sum_{0\leq|I|\leq N}
$$
which is a sum over all multi-index of order from zero to $N$. In certain case, we take the sum from a positive order. In this case we will write
$$
\sum_{n\leq |I|\leq N}.
$$
We make the convention that when $n>N$, this sum is taken as zero.

Then we have the following decompositions:
\begin{lemma}\label{lem 1 19-08-2017}
Let $u$ be a function defined in $\Kcal$, sufficiently regular. Then the following identities hold:
\begin{equation}\label{eq 1 19-08-2017}
[L_a,\del^I]u = \sum_{|I'|=|I|}\theta_{aI'}^{I}\del^{I'}u,
\end{equation}
\begin{equation}\label{eq 2 19-08-2017}
 [L^J,\del^I]u = \sum_{|I'|=|I|\atop |J'|<|J| }\theta_{I'J'}^{JI}\del^{I'}L^{J'}u,
\end{equation}
\begin{equation}\label{eq 6 22-08-2017}
[L^J,\del_\alpha]u = \sum_{|J'|<|J|}\theta_{\alpha J'}^{J\beta}\del_{\beta}L^{J'}.
\end{equation}
where $\theta_{aI'}^{I}$ ,$\theta_{I'J'}^{JI}$ and $\theta_{\alpha J'}^{J\beta}$ are constants.
\end{lemma}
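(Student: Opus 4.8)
The plan is to prove the three identities in order, each by induction, using the previous one as input. For \eqref{eq 1 19-08-2017} I would induct on $|I|$. The base case $|I|=0$ is trivial (the commutator vanishes), and $|I|=1$ is exactly the relation $[L_a,\del_\alpha]=\theta_{a\alpha}^\beta\del_\beta$ recalled just above the statement. For the inductive step, write $\del^I = \del_\alpha\del^{I'}$ with $|I'|=|I|-1$ and expand
\begin{equation*}
[L_a,\del_\alpha\del^{I'}]u = [L_a,\del_\alpha]\del^{I'}u + \del_\alpha[L_a,\del^{I'}]u.
\end{equation*}
The first term is $\theta_{a\alpha}^\beta\del_\beta\del^{I'}u$, a sum of $|I|$-th order derivatives with constant coefficients. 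For the second term, apply the induction hypothesis to get $\del_\alpha\big(\sum_{|I''|=|I'|}\theta_{aI''}^{I'}\del^{I''}u\big)$; since the $\theta$'s are constants, $\del_\alpha$ passes through and this is again a constant-coefficient sum of $|I|$-th order derivatives. Collecting terms gives \eqref{eq 1 19-08-2017} with suitable constants $\theta_{aI'}^I$.

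For \eqref{eq 2 19-08-2017} I would induct on $|J|$, the case $|J|=1$ being \eqref{eq 1 19-08-2017}. Writing $L^J = L_a L^{J_0}$ with $|J_0|=|J|-1$, expand
\begin{equation*}
[L_aL^{J_0},\del^I]u = L_a[L^{J_0},\del^I]u + [L_a,\del^I]L^{J_0}u.
\end{equation*}
By induction the first term is $L_a\big(\sum_{|I'|=|I|,\,|J'|<|J_0|}\theta_{I'J'}^{J_0I}\del^{I'}L^{J'}u\big)$; here the point is that $L_a$ does \emph{not} commute with $\del^{I'}$, so I rewrite $L_a\del^{I'}L^{J'}u = \del^{I'}L_aL^{J'}u - [L_a,\del^{I'}]L^{J'}u$ (wait --- sign: $= \del^{I'}L_aL^{J'}u + [L_a,\del^{I'}]L^{J'}u$ is wrong; it is $\del^{I'}L_aL^{J'}u - [\del^{I'},L_a]L^{J'}u = \del^{I'}L_aL^{J'}u + [L_a,\del^{I'}]L^{J'}u$) and then apply \eqref{eq 1 19-08-2017} to the commutator. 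The main term $\del^{I'}L_aL^{J'}u$ has order $|I'|=|I|$ in $\del$ and $|J'|+1\le |J_0|\le |J|-1<|J|$ in $L$, as required; the correction $[L_a,\del^{I'}]L^{J'}u = \sum_{|I''|=|I'|}\theta_{aI''}^{I'}\del^{I''}L^{J'}u$ has the same orders. The second term $[L_a,\del^I]L^{J_0}u = \sum_{|I'|=|I|}\theta_{aI'}^I\del^{I'}L^{J_0}u$ has $\del$-order $|I|$ and $L$-order $|J_0|=|J|-1<|J|$. Collecting everything and absorbing all the composed constant matrices into new constants $\theta_{I'J'}^{JI}$ yields \eqref{eq 2 19-08-2017}. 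Identity \eqref{eq 6 22-08-2017} is the special case $|I|=1$: taking $\del^I = \del_\alpha$ in the argument above (or inducting on $|J|$ directly via $[L_aL^{J_0},\del_\alpha] = L_a[L^{J_0},\del_\alpha] + [L_a,\del_\alpha]L^{J_0}$ and commuting the $L_a$ past the single $\del_\beta$) gives the stated form with $|J'|<|J|$.

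The only subtlety --- and the step I would be most careful about --- is the bookkeeping of orders when $L_a$ is moved past $\del^{I'}$ in the inductive step for \eqref{eq 2 19-08-2017}: one must check that this reshuffling never raises the $\del$-order above $|I|$ (it does not, since $[L_a,\del^{I'}]$ preserves $\del$-order exactly by \eqref{eq 1 19-08-2017}) and never raises the $L$-order to $|J|$ or beyond (it does not, because each such move trades one $\del$ for nothing while the $L$-count only ever reaches $|J_0|+1 = |J|-1+1$... no: we start from $|J'| \le |J_0|-1$ in the worst case from the induction hypothesis applied to $L^{J_0}$, commute one $L_a$ in, reaching $|J'|+1 \le |J_0| = |J|-1$). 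Everything else is the routine linear algebra of composing constant coefficient matrices, which the statement wisely hides inside the symbols $\theta$.
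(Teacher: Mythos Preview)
Your proposal is correct and follows essentially the same approach as the paper: induction on $|I|$ for \eqref{eq 1 19-08-2017}, induction on $|J|$ for \eqref{eq 2 19-08-2017} using \eqref{eq 1 19-08-2017}, and \eqref{eq 6 22-08-2017} as the special case $|I|=1$. The only cosmetic difference is that in the inductive step for \eqref{eq 2 19-08-2017} you write $L^J=L_aL^{J_0}$ (prepending $L_a$) whereas the paper writes $L^{J}L_a$ (appending $L_a$); both lead to the same reshuffling via \eqref{eq 1 19-08-2017} and the induction hypothesis, and your order-bookkeeping is correct.
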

\begin{proof}
These are by induction. For the first it is an induction on $|I|$. When $|I|=1$ it is already proved. We suppose that \eqref{eq 1 19-08-2017} holds for $|I|\leq m$. Then we consider
$$
\aligned
\,[L_a,\del_{\alpha}\del^I]u =& [L_a,\del_{\alpha}]\del^Iu + \del_{\alpha}[L_a,\del^I]u
\\
=&\theta_{a\alpha}^{\beta}\del_{\beta}\del^Iu + \sum_{|I'|=|I|}\del_{\alpha}\left(\theta_{aI'}^I\del^{I'}u\right)
=\theta_{a\alpha}^{\beta}\del_{\beta}\del^Iu + \sum_{|I'|=|I|}\theta_{aI'}^I\del_{\alpha}\del^{I'}u.
\endaligned
$$
which concludes \eqref{eq 1 19-08-2017} for $|I| = m+1$.

For \eqref{eq 2 19-08-2017}, it is an induction on $|J|$. For $|J|=1$ it is already proved. Suppose that \eqref{eq 2 19-08-2017} holds for $|J|\leq m$. Then we consider
$$
\aligned
\,[L^J L_a,\del^I]u =& L^J[L_a,\del^I]u + [L^J,\del^I]L_au
=L^J\left(\sum_{|I'|=|I|}\theta_{aI'}^{I}\del^{I'}u\right) + \sum_{|I'|=|I|\atop |J'|<|J|}\theta^{JI}_{I'J'}\del^{I'}L^{J'}L_au
\\
=&\sum_{|I'|=|I|}\theta_{aI'}^IL^J\del^{I'}u + \sum_{|I'|=|I|\atop |J'|<|J|}\theta^{JI}_{I'J'}\del^{I'}L^{J'}L_au
\\
=&\sum_{|I'|=|I|}\theta_{aI'}^I\del^{I'}L^Ju + \sum_{|I'|=|I|}\theta_{aI'}^I[L^J,\del^{I'}]u
+ \sum_{|I'|=|I|\atop |J'|<|J|}\theta^{JI}_{I'J'}\del^{I'}L^{J'}L_au
\\
=&\sum_{|I'|=|I|}\theta_{aI'}^I\del^{I'}L^J u
+ \sum_{|I'|=|I|}\theta_{aI'}^I\sum_{|I''|=|I'|\atop |J''|<|J|}\theta_{I''J''}^{JI'}\del^{I''}L^{J''} u
+ \sum_{|I'|=|I|\atop |J'|<|J|}\theta^{JI}_{I'J'}\del^{I'}L^{J'}L_au
\\
=&\sum_{|I'|=|I|}\theta_{aI'}^I\del^{I'}L^Ju
+ \sum_{{|I'|=|I|,|I''|=|I'|}\atop |J''|<|J|}\theta_{aI'}^I\theta_{I''J''}^{JI'}\del^{I''}L^{J''}u
+ \sum_{|I'|=|I|\atop |J'|<|J|}\theta^{JI}_{I'J'}\del^{I'}L^{J'}L_au.
\endaligned
$$
This concludes \eqref{eq 2 19-08-2017} for $|J| = m+1$.

For \eqref{eq 6 22-08-2017} is a direct result of \eqref{eq 6 22-08-2017}.
\end{proof}

Here we also state a simple but frequently used result:\begin{lemma}\label{lem 2 29-08-2017}
The following identity holds:
\begin{equation}\label{eq 6 29-08-2017}
\del^{I_1}L^{J_1}\del^{I_2}L^{J_2}u = \sum_{|I|= |I_1|+|I_2|\atop |J|\leq|J_1|+|J_2|}\zeta^{I_1I_2J_1J_2}_{IJ}\del^IL^Ju
\end{equation}
where $\zeta^{I_1I_2J_1J_2}_{IJ}$ are constants
\end{lemma}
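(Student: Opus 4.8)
The plan is to commute every Lorentz boost $L$ to the right of every partial derivative $\del$; once that is done the left-hand side is automatically a sum of operators in the canonical form $\del^IL^J$, and the only thing to verify is the bookkeeping of the orders $|I|$ and $|J|$. All of the analytic content needed is already contained in Lemma~\ref{lem 1 19-08-2017}.

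First I would isolate the one block that is out of order, namely the factor $L^{J_1}\del^{I_2}$ sitting in the middle of $\del^{I_1}L^{J_1}\del^{I_2}L^{J_2}u$. Applying the commutator identity \eqref{eq 2 19-08-2017} to the function $L^{J_2}u$ gives
\[
L^{J_1}\del^{I_2}L^{J_2}u = \del^{I_2}L^{J_1}L^{J_2}u + \sum_{|I_2'|=|I_2|,\ |J_1'|<|J_1|}\theta^{J_1I_2}_{I_2'J_1'}\,\del^{I_2'}L^{J_1'}L^{J_2}u,
\]
and pre-composing with $\del^{I_1}$ yields
\[
\del^{I_1}L^{J_1}\del^{I_2}L^{J_2}u = \del^{I_1}\del^{I_2}L^{J_1}L^{J_2}u + \sum_{|I_2'|=|I_2|,\ |J_1'|<|J_1|}\theta^{J_1I_2}_{I_2'J_1'}\,\del^{I_1}\del^{I_2'}L^{J_1'}L^{J_2}u.
\]

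Second I would note that every term on the right is already in canonical form. Since partial derivatives commute, the composition $\del^{I_1}\del^{I_2}$ (and likewise $\del^{I_1}\del^{I_2'}$) is literally $\del^{I}$ for the multi-index $I$ obtained by concatenation, hence of order exactly $|I_1|+|I_2|$; and a product of $|J_1|$ boosts followed by $|J_2|$ boosts is by definition an operator $L^{J}$ with $|J|=|J_1|+|J_2|$, while $L^{J_1'}L^{J_2}$ has order $|J_1'|+|J_2|<|J_1|+|J_2|$. Thus each term has the form $\del^IL^Ju$ with $|I|=|I_1|+|I_2|$ and $|J|\le|J_1|+|J_2|$; collecting these terms and recalling that the $\theta^{J_1I_2}_{I_2'J_1'}$ are constants produces the asserted constants $\zeta^{I_1I_2J_1J_2}_{IJ}$.

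There is no real obstacle: the statement is a pure reorganization of operators. The only point worth attention — and the reason the summation is over $|J|\le|J_1|+|J_2|$ and not an equality — is that commuting $L$'s past $\del$'s can only lower the total $L$-order (through the $|J_1'|<|J_1|$ terms in \eqref{eq 2 19-08-2017}) while it never alters the total $\del$-order. If an inductive presentation is preferred, the same argument may be run by induction on $|J_1|$: the case $|J_1|=0$ is trivial, and the inductive step peels one $L_a$ off the left of $L^{J_1}$, commutes it past $\del^{I_2}$ using \eqref{eq 1 19-08-2017}, and applies the induction hypothesis to the two resulting terms, each of which has $|J_1|$ decreased by one.
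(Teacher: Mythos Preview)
Your proof is correct and follows essentially the same approach as the paper: both apply \eqref{eq 2 19-08-2017} to the block $L^{J_1}\del^{I_2}$ (equivalently, write $L^{J_1}\del^{I_2}=\del^{I_2}L^{J_1}+[L^{J_1},\del^{I_2}]$), then pre-compose with $\del^{I_1}$ and read off the orders. Your version is somewhat more detailed in the bookkeeping, but the argument is the same.
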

\begin{proof}
This is by \eqref{eq 2 19-08-2017}. We see that
$$
\aligned
\del^{I_1}L^{J_1}\del^{I_2}L^{J_2}u =& \del^{I_1}\del^{I_2}L^{J_1}L^{J_2}u + \del^{I_1}\left([L^{J_1},\del^{I_2}]L^{J_2}u\right)
\\
=& \del^{I_1}\del^{I_2}L^{J_1}L^{J_2}u + \sum_{|I_2'|=|I_2|\atop |J_1'|<|J_1|}\theta_{I_2'J_1'}^{J_1I_2}\del^{I_1}\del^{I_2'}L^{J_1'}L^{J_2}u
\endaligned
$$
where $\theta_{I_2'J_1'}^{J_1I_2}$ are constants. This proves the desired result.
\end{proof}

Now we consider the commutator of $[\del_{\alpha},\delb_a]$. For the convenience of expression, we introduce the following notion of homogeneous function. Let $u$ be a $C^{\infty}$ function defined in $\{r<t\}$ and satisfies the following condition:
\begin{equation}\label{eq 2 26-06-2017}
u(\lambda,\lambda x) = \lambda^{n} u(1,x),\quad n\in \mathbb{Z}.
\end{equation}
for all $\lambda>0$ and $\del^I u(1,x)$ are bounded in $\{|x|\leq 1\}$. Such function $u$ is called a {\bf homogeneous function of degree $n$}.  For example $x^a/t$ is a homogeneous function of degree zero. We state the following property of a homogeneous function of degree $n$:
\begin{lemma}\label{lem 1 26-06-2017}
Let $u$ be a homogeneous function of degree $n$. Then $\del^IL^Ju$ is homogeneous of degree $n-|I|$, and the following estimates holds in $\Kcal$:
\begin{equation}\label{eq 1 lem 1 26-06-2017}
\left|\del^IL^J u\right|\leq Ct^{n-|I|}
\end{equation}
with $C$ a constant determined by $u, I$ and $J$.
\end{lemma}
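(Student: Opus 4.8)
The plan is to reduce the whole statement to two one-line scaling facts about the generators $\del_\alpha$ and $L_a$, and then to iterate. First I would rewrite the homogeneity condition \eqref{eq 2 26-06-2017}: taking $\lambda = t$ in $u(\lambda,\lambda x)=\lambda^n u(1,x)$ shows it is equivalent to the representation $u(t,x)=t^n u(1,x/t)$, valid for $t>0$. So ``homogeneous of degree $n$'' is a statement about how $u$ transforms under the dilation $\mu_\lambda(t,x):=(\lambda t,\lambda x)$, namely $u\circ\mu_\lambda=\lambda^n u$, supplemented by the requirement that all $\del^I u(1,\cdot)$ are bounded on $\{|x|\le 1\}$.

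Next I would prove that $\del_\alpha u$ is homogeneous of degree $n-1$ and $L_a u$ is homogeneous of degree $n$. For the first, the chain rule gives $\del_\alpha(u\circ\mu_\lambda)=\lambda\,(\del_\alpha u)\circ\mu_\lambda$; comparing with $\del_\alpha(\lambda^n u)=\lambda^n\del_\alpha u$ and cancelling one power of $\lambda$ yields $(\del_\alpha u)\circ\mu_\lambda=\lambda^{n-1}\del_\alpha u$. For $L_a=x^a\del_t+t\del_a$ one computes at the point $\mu_\lambda(t,x)$ that $(L_au)\circ\mu_\lambda=(\lambda x^a)\,(\del_tu)\circ\mu_\lambda+(\lambda t)\,(\del_au)\circ\mu_\lambda=\lambda^n L_a u$, using the degree $n-1$ of $\del_t u$ and $\del_a u$ just obtained. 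The boundedness clause is inherited: for $\del_\alpha u$ it is immediate because $\del^I\del_\alpha u=\del^{I'}u$ with $|I'|=|I|+1$; for $L_a u$ one expands $\del^I(L_au)$ by the Leibniz rule, and every resulting term is (a derivative of the coefficient $x^a$ or $t$, hence a function bounded on $\{t=1,|x|\le1\}$) times some $\del^{\bullet}u(1,\cdot)$, which is bounded by hypothesis.

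Then an induction on $|I|+|J|$ finishes the first assertion: peeling off the outermost operator in $\del^IL^Ju$ and applying the appropriate case above to the inductive hypothesis shows $\del^IL^Ju$ is homogeneous of degree $n-|I|$ (each $\del$ lowers the degree by one, each $L_a$ leaves it unchanged, and there are $|I|$ of the former). If one prefers, Lemma \ref{lem 1 19-08-2017} lets one first push all the $L$'s past the $\del$'s, but this is not needed. Finally, for the pointwise bound, set $v:=\del^IL^Ju$, which is homogeneous of degree $m:=n-|I|$, so $v(t,x)=t^m v(1,x/t)$ for $t>0$. On $\Kcal$ one has $t>|x|+1>0$ and $|x/t|<1$, whence $|v(t,x)|\le t^m\sup_{|y|\le1}|v(1,y)|$, and the supremum is finite by the boundedness clause for $v$. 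This is exactly $|\del^IL^Ju|\le Ct^{n-|I|}$ with $C$ depending only on $u,I,J$.

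The only mildly delicate point is checking, in the second step, that the ``bounded derivatives at $t=1$'' part of the definition survives an application of $L_a$, since $L_a$ has non-constant coefficients; this is routine Leibniz-rule bookkeeping. Everything else is the chain rule together with the identity $v(t,x)=t^m v(1,x/t)$ on the interior of the light cone.
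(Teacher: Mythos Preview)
Your proof is correct and follows essentially the same approach as the paper: show that $\del_\alpha$ lowers the homogeneity degree by one while $L_a$ preserves it, induct, and then read off the bound from the representation $v(t,x)=t^m v(1,x/t)$ together with $|x/t|<1$ on $\Kcal$. The only difference is cosmetic: the paper verifies the degree shifts by explicitly differentiating the formula $u(t,x)=t^n u(1,x/t)$, whereas you package the same computation as a chain-rule identity for the pullback $u\circ\mu_\lambda$.
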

\begin{proof}
We remak that
\begin{equation}\label{eq 1 proof lem 1 26-06-2017}
\left|\del^Iu\right|\leq Ct^{n-|I|}
\end{equation}
which is checked directly:
$$
\aligned
u(t,x) =& t^nu(1,x/t) \Rightarrow \del_t u(t,x) = nt^{n-1}u(1,x/t) + t^n(-x^a/t^2)\del_au(1,x/t)
\\
=& t^{n-1}\left(nu(1,x/t) - (x^a/t)\del_au(1,x/t)\right)
\endaligned
$$
and
$$
\del_au = t^n(-x^a/t^2)\del_au(1,x/t) = -t^{n-1}(x^a/t)\del_au(1,x/t)
$$
where we remark that
$$
nu(1,x/t) - (x^a/t)\del_au(1,x/t),\quad (x^a/t)\del_au(1,x/t)
$$
are homogeneous of degree zero.
Thus $\del_\alpha u$ is homogeneous of degree $n-1$. Then by recurrence we see that $\del^I u$ is homogeneous of degree $n-|I|$. This leads to \eqref{eq 1 proof lem 1 26-06-2017}.

Then we prove that
\begin{equation}\label{eq 2 proof lem 1 26-06-2017}
L^Ju \quad \text{is homogeneous of degree n}
\end{equation}
This is also checked directly by
$$
L_a u(t,x) = \left(t\del_a+x^a\del_t\right) u(t,x) = -t^{n}(x^a/t)\del_au(1,x/t) + t^n(x^a/t)\left(nu(1,x/t) - (x^a/t)\del_au(1,x/t)\right)
$$
which is homogeneous of degree $n$. Thus $L^Ju$ is also homogeneous of degree zero, which leads to \eqref{eq 2 proof lem 1 26-06-2017}.

The desired result is a combination of \eqref{eq 1 proof lem 1 26-06-2017} and \eqref{eq 2 proof lem 1 26-06-2017}.
\end{proof}

Now we see that
$$
[\del_t,\delb_a] = -\frac{x^a}{t^2}\del_t,\quad [\del_a,\delb_b] = \frac{\delta_{ab}}{t}\del_t,
$$
Then we denote by
$$
[\del_{\alpha},\delb_b] = \sigma_{\alpha b}\del_t 
$$
with $\sigma_{\alpha\beta}$  homogeneous functions of degree $-1$.
Then we establish the following result:
\begin{lemma}\label{lem 2 19-08-2017} For $|I|\geq 1$,
\begin{equation}\label{eq 3 19-08-2017}
[\del^I,\delb_a] = \sum_{1\leq |J|\leq |I|}\sigma_{aJ}^I\del^J
\end{equation}
with $\sigma_{aJ}^I$ a homogeneous function of degree $|J|-|I|-1$.
\end{lemma}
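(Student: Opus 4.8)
The plan is to prove \eqref{eq 3 19-08-2017} by induction on $|I|$, following the same pattern as the proof of Lemma \ref{lem 1 19-08-2017} but additionally tracking the homogeneity degree of each coefficient with the help of Lemma \ref{lem 1 26-06-2017}. The base case $|I| = 1$ is exactly the relations recorded just above, namely $[\del_t,\delb_a] = -\frac{x^a}{t^2}\del_t$, $[\del_b,\delb_a] = \frac{\delta_{ab}}{t}\del_t$, i.e. $[\del_\alpha,\delb_a] = \sigma_{\alpha a}\del_t$ with $\sigma_{\alpha a}$ homogeneous of degree $-1$; this is of the form \eqref{eq 3 19-08-2017}, since for $|I| = 1$ the only admissible $J$ has order one and then $|J| - |I| - 1 = -1$.

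For the inductive step, suppose \eqref{eq 3 19-08-2017} holds for all multi-indices of order at most $m$ (with $m\ge 1$), and write a multi-index $I$ of order $m+1$ as $\del^I = \del_\alpha\del^{I'}$ with $|I'| = m$. I would use the commutator Leibniz rule
$$
[\del_\alpha\del^{I'},\delb_a] = \del_\alpha\,[\del^{I'},\delb_a] + [\del_\alpha,\delb_a]\,\del^{I'}.
$$
The last term equals $\sigma_{\alpha a}\,\del_t\del^{I'}$; here $\del_t\del^{I'} = \del^J$ with $|J| = m+1 = |I|$ and the coefficient $\sigma_{\alpha a}$ is homogeneous of degree $-1 = |J| - |I| - 1$, so it contributes precisely the top-order term. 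For the first term, the induction hypothesis gives $[\del^{I'},\delb_a] = \sum_{1\le|J|\le m}\sigma_{aJ}^{I'}\del^J$ with $\sigma_{aJ}^{I'}$ homogeneous of degree $|J| - m - 1$, whence a single application of the Leibniz rule yields
$$
\del_\alpha\,[\del^{I'},\delb_a] = \sum_{1\le|J|\le m}\big(\del_\alpha\sigma_{aJ}^{I'}\big)\,\del^J + \sum_{1\le|J|\le m}\sigma_{aJ}^{I'}\,\del_\alpha\del^J.
$$
By Lemma \ref{lem 1 26-06-2017}, $\del_\alpha\sigma_{aJ}^{I'}$ is homogeneous of degree $(|J| - m - 1) - 1 = |J| - |I| - 1$, so the first sum is a combination of terms $\sigma\,\del^J$ with $1\le|J|\le m\le|I|$ and $\sigma$ of the required degree; in the second sum, writing $\del_\alpha\del^J = \del^{J'}$ with $|J'| = |J| + 1 \in \{2,\dots,m+1\}\subset\{1,\dots,|I|\}$, the coefficient $\sigma_{aJ}^{I'}$ has degree $|J| - m - 1 = |J'| - |I| - 1$, again as required.

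It remains to collect all contributions sharing a common derivative $\del^J$ and add their coefficients. Since the defining relation \eqref{eq 2 26-06-2017} and the boundedness of the derivatives at $t = 1$ are both linear conditions, a finite sum of functions homogeneous of a fixed degree is homogeneous of that same degree, so each aggregated coefficient $\sigma_{aJ}^I$ is homogeneous of degree $|J| - |I| - 1$; moreover no term with $|J| = 0$ survives, since the undifferentiated term has order $m+1\ge 2$ and both sums inherit the lower bound $|J|\ge 1$ from the induction hypothesis. This establishes \eqref{eq 3 19-08-2017} for $|I| = m+1$ and closes the induction. The argument is routine; the only point that needs attention is the degree bookkeeping — namely that differentiating a coefficient drops its homogeneity degree by one, which exactly compensates for the increase of $|I|$ while $|J|$ stays fixed, and that the ``undifferentiated'' term $[\del_\alpha,\delb_a]\del^{I'}$ is what produces the borderline index $|J| = |I|$.
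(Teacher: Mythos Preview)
Your proof is correct and follows essentially the same approach as the paper: induction on $|I|$ via the commutator Leibniz identity $[\del_\alpha\del^{I'},\delb_a]=\del_\alpha[\del^{I'},\delb_a]+[\del_\alpha,\delb_a]\del^{I'}$, with the degree bookkeeping handled by Lemma~\ref{lem 1 26-06-2017}. Your write-up is a bit more explicit about collecting terms and about why no $|J|=0$ term appears, but the underlying argument is identical.
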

\begin{proof}
This is by induction on $|I|$. For $|I|=1$ we see that holds. We suppose that \eqref{eq 3 19-08-2017} holds for $|I|\leq m$, we consider
$$
\aligned
\,[\del_{\alpha}\del^I,\delb_a] =& \del_{\alpha}\left(\sum_{1\leq |J|\leq |I|}\sigma_{aJ}^I\del^J\right) + [\del_{\alpha},\delb_a]\del^I
\\
=& \sum_{1\leq |J|\leq |I|}\sigma_{aJ}^I\del_{\alpha}\del^J + \sum_{1\leq|J|\leq|I|}\del_{\alpha}\sigma_{aJ}^I\del^J + \sigma_{\alpha a}\del_t\del^I.
\endaligned
$$
We see that $\del_{\alpha}\del^J$ is of order $|J|+1$, $\del^t\del^I$ is of order $|I|+1$; we recall that $\del_{\alpha}\sigma_{aJ}^I$ is homogeneous of order $|J|-|I|-2 = |J| - (|I|+1)-1$ (by the assumption of induction combined with lemma \ref{lem 1 26-06-2017}), and $\sigma_{\alpha a}$ is homogeneous of degree $-1$. Thus we see that the \eqref{eq 3 19-08-2017} is proved in the case $|I| = m+1$.
\end{proof}

Now we calculate:
$$
[L_a,\delb_b] = -\frac{x^b}{t}\delb_a = \Psib_b^0\delb_a.
$$
We denote by
$$
[L_a,\delb_b] = \eta_{b}\delb_a
$$
where $\eta_b$ is a homogeneous function of degree zero. Now we establish the following result:
\begin{lemma}\label{lem 1 21-08-2017}
Let $|I| = n$, then
\begin{equation}\label{eq 2 21-08-2017}
[L^I,\delb_a] = \sum_{|J|<|I|}\eta_{aJ}^{Ib}\delb_bL^J
\end{equation}
where $\eta_{aJ}^{Ib}$ are homogeneous functions of degree zero.
\end{lemma}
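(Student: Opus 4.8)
The plan is to argue by induction on $|I|$, in exactly the same spirit as the proofs of Lemma~\ref{lem 1 19-08-2017} and Lemma~\ref{lem 2 19-08-2017}. The base case $|I|=1$ is precisely the identity $[L_a,\delb_b]=\eta_b\delb_a$ recorded just above the statement, in which $\eta_b=-x^b/t$ is manifestly homogeneous of degree zero and the $L$-string on the right-hand side is empty (the sum runs over $|J|=0$).

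For the inductive step I would assume \eqref{eq 2 21-08-2017} for every multi-index of length $\leq m$ and take $|I'|=m+1$, writing $L^{I'}=L_bL^I$ with $|I|=m$. Then
$$
[L^{I'},\delb_a]=L_b[L^I,\delb_a]+[L_b,\delb_a]L^I .
$$
The second term is immediately of the required form, $[L_b,\delb_a]L^I=\eta_a\,\delb_bL^I$, with an $L$-string of length $m<m+1$. For the first term I would insert the inductive hypothesis $[L^I,\delb_a]=\sum_{|J|<m}\eta_{aJ}^{Ic}\delb_cL^J$ and distribute $L_b$, producing $\sum_{|J|<m}(L_b\eta_{aJ}^{Ic})\,\delb_cL^J+\sum_{|J|<m}\eta_{aJ}^{Ic}\,L_b\delb_cL^J$. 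In the first sum $L_b\eta_{aJ}^{Ic}$ is again homogeneous of degree zero by Lemma~\ref{lem 1 26-06-2017} and the $L$-string has length $<m$. In the second sum I would commute once more, $L_b\delb_cL^J=\delb_cL_bL^J+[L_b,\delb_c]L^J=\delb_c(L_bL^J)+\eta_c\,\delb_bL^J$, where the first piece contributes $\eta_{aJ}^{Ic}\,\delb_c(L_bL^J)$ with an $L$-string of length $|J|+1\leq m$, and the second piece contributes the degree-zero coefficient $\eta_{aJ}^{Ic}\eta_c$ times $\delb_bL^J$ with an $L$-string of length $<m$. Collecting the contributions and relabelling the multi-indices yields an expression of the form \eqref{eq 2 21-08-2017} with $|J|<|I'|$, which closes the induction.

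The argument is entirely routine; the only point that requires attention is the bookkeeping of homogeneity degrees. One must invoke Lemma~\ref{lem 1 26-06-2017} to see that a boost $L_b$ applied to a homogeneous function of degree zero is again homogeneous of degree zero, and that a product of two such functions remains homogeneous of degree zero, so that every coefficient generated along the recursion stays in the admissible class; and one must check that each commutation with $\delb_c$ or $\delb_a$ raises the length of the $L$-string by at most one, keeping it strictly below $|I'|$. Both facts are immediate from the relations stated above, so I do not expect any genuine obstacle here.
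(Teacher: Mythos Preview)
Your proof is correct and follows essentially the same inductive scheme as the paper: both write the new operator as $L_b L^I$, split the commutator as $L_b[L^I,\delb_a]+[L_b,\delb_a]L^I$, apply the inductive hypothesis to the first piece, and then commute $L_b$ past the resulting $\delb_c$'s using the base relation $[L_b,\delb_c]=\eta_c\delb_b$. Your bookkeeping of homogeneity degrees via Lemma~\ref{lem 1 26-06-2017} and of the $L$-string lengths matches the paper's, so there is nothing to add.
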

\begin{proof}
This is by induction on $|I|$. We see that for $|I|=1$ \eqref{eq 2 21-08-2017} holds. Suppose that \eqref{eq 2 21-08-2017} holds for $|I|\leq m$, now we consider
$$
\aligned
\,[L_aL^I,\delb_b] =& L_a([L^I,\delb_b]) + [L_a,\delb_b]L^I = L_a\left(\sum_{|I'|<|I|}\eta_{bI'}^{Ic}\delb_cL^{I'}\right) + \eta_b\delb_aL^{I}
\\
=& \sum_{|I'|<|I|}L_a\eta_{bI'}^{Ic}\delb_cL^{I'} + \sum_{|I'|<|I|}\eta_{bI'}^{Ic}L_a\delb_cL^{I'} + \eta_b\delb_aL^{I}
\\
=& \sum_{|I'|<|I|}L_a\eta_{bI'}^{Ic}\delb_cL^{I'} + \sum_{|I'|<|I|}\eta_{bI'}^{Ic}\delb_c L_a L^{I'} + \sum_{|I'|<|I|}\eta_{bI'}^{Ic}[L_a,\delb_c]L^{I'} + \eta_b\delb_aL^{I}
\\
=& \sum_{|I'|<|I|}L_a\eta_{bI'}^{Ic}\delb_cL^{I'} + \sum_{|I'|<|I|}\eta_{bI'}^{Ic}\delb_c L_a L^{I'} + \sum_{|I'|<|I|}\eta_{bI'}^{Ic}[L_a,\delb_c]L^{I'} + \eta_b\delb_aL^{I}
\\
=& \sum_{|I'|<|I|}L_a\eta_{bI'}^{Ic}\delb_cL^{I'} + \sum_{|I'|<|I|}\eta_{bI'}^{Ic}\delb_c L_a L^{I'} + \sum_{|I'|<|I|}\eta_{bI'}^{Ic}\eta_a\delb_c L^{I'} + \eta_b\delb_aL^{I}.
\endaligned
$$
We recall that $\eta_{bI'}^{Ic}$ homogeneous of degree zero so  $L^J\eta_{bI'}^{Ic}$ is again homogeneous of degree zero. This concludes \eqref{eq 2 21-08-2017} for the case $|I| = m+1$.
\end{proof}

Now we are ready to establish the following result:
\begin{lemma}\label{lem 2 21-08-2017}
\begin{equation}\label{eq 1 23-08-2017}
[\del^IL^J,\delb_a] = \sum_{|I'|\leq|I|\atop |J'|<|J|}\rhob_{a I'J'}^{IJc}\delb_c\del^{I'}L^{J'} + \sum_{1\leq|I'|\leq |I|}\rho_{a I'}^{IJ}\del^{I'}L^{J}
\end{equation}
\end{lemma}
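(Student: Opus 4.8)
The plan is to combine the two commutator decompositions already established — \eqref{eq 2 21-08-2017} for $[L^J,\delb_a]$ and \eqref{eq 3 19-08-2017} for $[\del^I,\delb_a]$ — by means of the elementary identity
\[
[\del^IL^J,\delb_a] = [\del^I,\delb_a]L^J + \del^I[L^J,\delb_a],
\]
the Leibniz rule, and the behaviour of homogeneous coefficients under the fields $\del_\alpha$ and $L_a$ (Lemma~\ref{lem 1 26-06-2017}); beyond the inductions already carried out for those lemmas, only an auxiliary induction on the order of natural-frame derivative strings will be needed. The first term is immediate: by \eqref{eq 3 19-08-2017},
\[
[\del^I,\delb_a]L^J = \Big(\sum_{1\le|M|\le|I|}\sigma^I_{aM}\del^M\Big)L^J = \sum_{1\le|M|\le|I|}\sigma^I_{aM}\,\del^ML^J,
\]
which is already in normal order and furnishes the second sum of \eqref{eq 1 23-08-2017}, with $\rho^{IJ}_{aI'}:=\sigma^I_{aI'}$ homogeneous of degree $|I'|-|I|-1$ (and, incidentally, independent of $J$). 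In particular, when $|J|=0$ the first sum of \eqref{eq 1 23-08-2017} is empty and one recovers exactly \eqref{eq 3 19-08-2017}.

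For the second term, insert \eqref{eq 2 21-08-2017}, so that $\del^I[L^J,\delb_a] = \sum_{|J'|<|J|}\del^I\big(\eta^{Jc}_{aJ'}\,\delb_c L^{J'}u\big)$ with $\eta^{Jc}_{aJ'}$ homogeneous of degree $0$; it then suffices to analyse $\del^I(\eta\,\delb_c w)$ for $\eta$ homogeneous and $w$ arbitrary (later $w=L^{J'}u$). I would prove, by induction on $|I|$, that
\[
\del^I(\eta\,\delb_c w) = \sum_{|I'|\le|I|}\widetilde\rho^{\,I}_{I'\gamma}\,\delb_\gamma\del^{I'}w,
\]
with $\widetilde\rho^{\,I}_{I'\gamma}$ homogeneous of degree $\deg\eta+|I'|-|I|$. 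In the inductive step one applies a further $\del_\alpha$: where it lands on the coefficient it keeps it homogeneous (Lemma~\ref{lem 1 26-06-2017}); where it must cross $\delb_c$ it produces, through \eqref{eq 3 19-08-2017}, remainder terms $\sigma^{\cdot}_{cM}\del^M w$ with $|M|\ge 1$, and since $|M|\ge1$ one peels off the leftmost natural derivative and rewrites it in the hyperbolic frame via $\del_\beta=\Psib^\gamma_\beta\delb_\gamma$ (the $\Psib^\gamma_\beta$ being homogeneous of degree $0$), which lowers the natural-derivative count by one and makes the orders and degrees match. Applying this with $w=L^{J'}u$, where $\del^{I'}L^{J'}u$ is already in normal order, yields the first sum of \eqref{eq 1 23-08-2017}; the $\delb_s$-contributions are kept as part of the hyperbolic-frame field $\delb_\gamma$, or, if one prefers to display only spatial $\delb_c$, are converted back through $\del_t=(t/s)\delb_s$, $\del_a=\delb_a-(x^a/s)\delb_s$ and $L_a=t\delb_a$.

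The main obstacle is not conceptual but organisational: one must keep track of the homogeneity degree of every coefficient through each Leibniz expansion and each frame change, and, above all, verify that the stray natural-frame derivative strings generated by the $[\del^M,\delb_c]$ remainders never leave the ranges $|I'|\le|I|$ and $|J'|<|J|$ once pushed back into the hyperbolic frame — which is exactly what the auxiliary induction on the $\del$-order controls, using that each conversion strictly decreases the natural-derivative count while only inserting degree-$0$ homogeneous factors. When needed, Lemma~\ref{lem 2 29-08-2017} together with \eqref{eq 2 19-08-2017} is invoked to restore any residual product $\del^{I_1}L^{J_1}\del^{I_2}L^{J_2}$ to the normal order $\del^IL^J$.
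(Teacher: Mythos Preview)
Your initial decomposition $[\del^IL^J,\delb_a]=[\del^I,\delb_a]L^J+\del^I[L^J,\delb_a]$ and the treatment of the first term match the paper exactly. The difficulty is in your handling of $\del^I[L^J,\delb_a]$.

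There is a genuine gap in the auxiliary induction. You write ``one peels off the leftmost natural derivative and rewrites it in the hyperbolic frame via $\del_\beta=\Psib^\gamma_\beta\delb_\gamma$ (the $\Psib^\gamma_\beta$ being homogeneous of degree $0$)''. This is false: the components $\Psib^0_0=t/s$ and $\Psib^0_a=-x^a/s$ are \emph{not} homogeneous functions in the paper's sense (Definition preceding Lemma~\ref{lem 1 26-06-2017}), since their derivatives at $t=1$ blow up as $r\to 1$; indeed the paper treats $s/t$ and $t/s$ separately (see \eqref{eq 2 20-08-2017}, \eqref{eq 3 02-09-2017}) precisely because Lemma~\ref{lem 1 26-06-2017} does not apply to them. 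Thus once a $\delb_s$ enters, the coefficient in front of it is no longer homogeneous of the degree you claim, and neither conversion you propose (keeping $\delb_\gamma$, or going back via $\del_t=(t/s)\delb_s$) yields coefficients satisfying the required homogeneity of degree $|I'|-|I|$.

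The paper's route is both simpler and avoids this trap: after inserting \eqref{eq 2 21-08-2017} one applies Leibniz to $\del^I(\eta^{Jc}_{aJ'}\delb_cL^{J'})=\sum_{I_1+I_2=I}\del^{I_1}\eta^{Jc}_{aJ'}\cdot\del^{I_2}\delb_cL^{J'}$, and then commutes the \emph{natural}-frame string $\del^{I_2}$ past $\delb_c$ in a single application of \eqref{eq 3 19-08-2017}, producing $\delb_c\del^{I_2}L^{J'}$ plus remainder terms $\sigma^{I_2}_{cI_2'}\del^{I_2'}L^{J'}$ whose coefficients are genuinely homogeneous (of degree $|I_2'|-|I_2|-1$). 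No $\delb_s$ ever appears, no frame conversion is needed, and no auxiliary induction is required --- one Leibniz expansion and one use of the already-proved commutator lemma suffice.
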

where $\rhob_{a I'J'}^{IJc}$ are homogeneous functions of degree $(|I'|-|I|)$ and $\rho_{a I'}^{IJ}$ are homogeneous functions of degree $(|I'|-|I|-1)$. Furthermore, in $\Kcal$ for a function $u$ sufficiently regular, we have
\begin{equation}\label{eq 2 23-08-2017}
\left|[\del^IL^J,\delb_a]u\right|\leq C\sum_{{c,|I'|\leq|I|}\atop |J'|<|J|}|\delb_c\del^{I'}L^{J'}u| + Ct^{-1}\sum_{1\leq|I'|\leq |I|}\left|\del^{I'}L^J u\right|.
\end{equation}
\begin{proof}
We remark that
$$
\aligned
\,[\del^IL^J,\delb_a] =& \del^I\left([L^J,\delb_a]\right) + [\del^I,\delb_a]L^J
\\
=&\del^I\left(\sum_{|J'|<|J|}\eta_{aJ'}^{Jc}\delb_cL^{J'}\right) + \sum_{1\leq |I'|\leq |I|}\sigma_{aI'}^I\del^{I'}L^J
\\
=&\sum_{I_1+I_2=I\atop |J'|<|J|}\del^{I_1}\eta_{aJ'}^{Jc}\cdot \del^{I_2}\delb_cL^{J'} + \sum_{1\leq |I'|\leq |I|}\sigma_{aI'}^I\del^{I'}L^J
\\
=&\sum_{I_1+I_2=I\atop |J'|<|J|}\del^{I_1}\eta_{aJ'}^{Jc}\cdot \delb_c\del^{I_2}L^{J'}
+ \sum_{I_1+I_2=I\atop |J'|<|J|}\del^{I_1}\eta_{aJ'}^{Jc}[\del^{I_2},\delb_c]L^{J'}
+ \sum_{1\leq |I'|\leq |I|}\sigma_{aI'}^I\del^{I'}L^J
\\
=&\sum_{I_1+I_2=I\atop |J'|<|J|}\del^{I_1}\eta_{aJ'}^{Jc}\cdot \delb_c\del^{I_2}L^{J'}
+ \sum_{I_1+I_2=I\atop |J'|<|J|}\del^{I_1}\eta_{aJ'}^{Jc}\left(\sum_{1\leq |I_2'|\leq |I_2|}\sigma_{cI_2'}^{I_2}\del^{I_2'}L^{J'}\right)
\\
&+ \sum_{1\leq |I'|\leq |I|}\sigma_{aI'}^I\del^{I'}L^J
\\
=&\sum_{I_1+I_2=I\atop |J'|<|J|}\del^{I_1}\eta_{aJ'}^{Jc}\cdot \delb_c\del^{I_2}L^{J'}
+ \sum_{{I_1+I_2=I,|J'|<|J|}\atop 1\leq |I_2'|\leq|I_2| }\del^{I_1}\eta_{aJ'}^{Jc}\cdot \sigma_{cI_2'}^{I_2}\del^{I_2'}L^{J'}
\\
&+ \sum_{1\leq |I'|\leq|I|}\sigma_{aI'}^I\del^{I'}L^J.
\endaligned
$$
Now we recall that $\del^{I_1}\eta_{aJ'}^{Jc}$ is homogeneous of degree $-|I_1| = |I_2|-|I|$, $\del^{I_1}\eta_{aJ'}^{Jc}\cdot \sigma_{cI_2'}^{I_2}$ is homogeneous of degree $-|I_1| + |I_2'|-|I_2|-1 = |I_2'|-|I|-1$ and $\sigma_{aI'}^I$ is homogeneous of degree $|I'|-|I|-1$. Thus the desired result is established.

\eqref{eq 2 23-08-2017} is direct by \eqref{eq 1 23-08-2017}.
\end{proof}

\subsection{Commutators II}
In this subsection we consider the following quantities:
$$
\del^IL^J(s/t), \quad \del^IL^J s.
$$
And then based on these calculation, we analyse $[\del^I,\delb_s]$.

We first remark the following result:
\begin{equation}\label{eq 5 19-08-2017}
\del_t (s/t) = \frac{r^2}{t^2} s^{-1}, \quad \del_a (s/t) = -\frac{x^a}{t} s^{-1}.
\end{equation}
We denote by
$$
\del_{\alpha}(s/t) = \pi_{\alpha}s^{-1}
$$
with $\pi_{\alpha}$ a homogeneous function of degree zero.
\begin{equation}\label{eq 7 19-08-2017}
\del_t s = \frac{t}{s}, \quad \del_a s = -\frac{x^a}{s} = -\frac{x^a}{t}(t/s).
\end{equation}
We denote by
$$
\del_{\alpha}s = \rho_{\alpha}(s/t)^{-1}
$$
with $\rho_{\alpha}$ a homogeneous function of degree zero.
We also recall
$$
L_a(s/t) = -\frac{x^a}{t}(s/t),\quad L_a s = 0.
$$

We first establish the following relation:
\begin{lemma}\label{lem 3 19-08-2017}
\begin{equation}\label{eq 4 19-08-2017}
L^J(s/t) = \lambda^J(s/t)
\end{equation}
with $\lambda^J$ a homogeneous function of degree zero.
\end{lemma}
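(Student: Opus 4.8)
The plan is to prove this by a straightforward induction on $|J|$, the base cases being already recorded just above the statement: for $|J| = 0$ one has $\lambda^J \equiv 1$, and for $|J| = 1$ the identity $L_a(s/t) = -(x^a/t)(s/t)$ exhibits $\lambda^{J} = -x^a/t$, which is homogeneous of degree zero.

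For the inductive step I would assume \eqref{eq 4 19-08-2017} holds for all multi-indices of length $\le m$, take $|J| = m$, and compute, for any index $b$,
\begin{equation*}
L_bL^J(s/t) = L_b\big(\lambda^J (s/t)\big) = \big(L_b\lambda^J\big)(s/t) + \lambda^J\, L_b(s/t) = \Big(L_b\lambda^J - \frac{x^b}{t}\,\lambda^J\Big)(s/t),
\end{equation*}
using the Leibniz rule and $L_b(s/t) = -(x^b/t)(s/t)$. It then remains to check that $\lambda^{L_bJ} := L_b\lambda^J - (x^b/t)\lambda^J$ is again homogeneous of degree zero. By the inductive hypothesis $\lambda^J$ is homogeneous of degree zero, so Lemma \ref{lem 1 26-06-2017} (applied with empty differentiation multi-index) shows $L_b\lambda^J$ is homogeneous of degree zero; and $x^b/t$ is homogeneous of degree zero, so the product $(x^b/t)\lambda^J$ is as well. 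Hence $\lambda^{L_bJ}$ is homogeneous of degree zero and \eqref{eq 4 19-08-2017} holds for length $m+1$.

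I do not expect a genuine obstacle here. The single point that deserves a line of justification is the closure of the class of degree-zero homogeneous functions under the two operations used above: closure under $L_b$ is exactly Lemma \ref{lem 1 26-06-2017}, while closure under products follows directly from the scaling relation \eqref{eq 2 26-06-2017} together with the Leibniz rule, which also yields the boundedness of the $\del^I$ of the product on $\{|x|\le 1\}$ required by the definition. With these in hand the coefficient $\lambda^J$ is explicitly the polynomial in the functions $x^a/t$ generated by the recursion $\lambda^{L_bJ} = L_b\lambda^J - (x^b/t)\lambda^J$, with $\lambda^{\emptyset} = 1$.
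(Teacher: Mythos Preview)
Your proof is correct and follows essentially the same approach as the paper: induction on $|J|$, using the Leibniz rule together with $L_b(s/t) = -(x^b/t)(s/t)$ and Lemma~\ref{lem 1 26-06-2017} to check that the new coefficient is homogeneous of degree zero. In fact your version is slightly more careful, since the paper's displayed computation drops the factor $\lambda^J$ in the last term (writing $L_a\lambda^J - (x^a/t)$ instead of $L_a\lambda^J - (x^a/t)\lambda^J$), which is a typo that you have corrected.
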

\begin{proof}
This is by induction. It is clear that \eqref{eq 4 19-08-2017} holds for $|J|=1$. Then we consider
$$
L_a L^J(s/t) = L_a\left(\lambda^J(s/t)\right) = L_a\lambda^J \cdot (s/t) + \lambda^JL_a(s/t) = \left(L_a\lambda^J  - (x^a/t)\right)(s/t).
$$
We see that by \eqref{lem 1 26-06-2017}, $L_a\lambda^J$ is homogeneous of degree zero. Furthermore $x^a/t$ is also homogeneous of degree zero. Thus $\left(L_a\lambda^J  - (x^a/t)\right)$ is homogeneous of degree zero.
\end{proof}

Then we establish the following result:
%
\begin{lemma} For $|I|\geq 1$,
\begin{equation}\label{eq 6 19-08-2017}
\del^I (s/t) = \sum_{1\leq k\leq |I|}\pi^I_k(s/t)^{-k+1}s^{-k}
\end{equation}
with $\pi$ a sum of finite many homogeneous functions of degree $(k-|I|)$. Furthermore, in $\mathcal{K}$,
\begin{equation}\label{eq 8 19-08-2017}
\left|\del^I(s/t)\right|\leq Cs^{-1}.
\end{equation}
\end{lemma}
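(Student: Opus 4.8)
The plan is to establish \eqref{eq 6 19-08-2017} by induction on $|I|$ and then read off \eqref{eq 8 19-08-2017} from it using the geometry of $\Kcal$.

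For the identity, the base case $|I| = 1$ is precisely \eqref{eq 5 19-08-2017}: there $\del_\alpha(s/t) = \pi_\alpha s^{-1}$ with $\pi_\alpha$ homogeneous of degree $0 = 1-|I|$, which is the claimed form with the single index $k = 1$. For the inductive step, assume \eqref{eq 6 19-08-2017} for a multi-index $I$ of length $m$ and apply $\del_\beta$ to a generic summand $\pi^I_k (s/t)^{-k+1} s^{-k}$. Leibniz produces three pieces. Differentiating $\pi^I_k$ gives, by Lemma \ref{lem 1 26-06-2017}, a finite sum of homogeneous functions of degree $(k-m)-1 = k-(m+1)$ times the unchanged factor $(s/t)^{-k+1}s^{-k}$, i.e. a term in slot $k$ of the desired expansion for $|I|+1$. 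Differentiating $(s/t)^{-k+1}$ yields $(-k+1)(s/t)^{-k}\del_\beta(s/t) = (-k+1)\pi_\beta(s/t)^{-k}s^{-1}$ by \eqref{eq 5 19-08-2017}, and differentiating $s^{-k}$ yields $-k s^{-k-1}\del_\beta s = -k\rho_\beta(s/t)^{-1}s^{-k-1}$ by \eqref{eq 7 19-08-2017}; since $\pi_\beta$ and $\rho_\beta$ are homogeneous of degree $0$, both of these produce a homogeneous function of degree $(k-m) = (k+1)-(m+1)$ times $(s/t)^{-(k+1)+1}s^{-(k+1)}$, i.e. a term in slot $k+1$. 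Summing over $1 \le k \le m$, the slots that appear are $1,\dots,m+1$, with coefficients finite sums of homogeneous functions of the prescribed degree; this is \eqref{eq 6 19-08-2017} for $|I| = m+1$.

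For \eqref{eq 8 19-08-2017} I would first record two elementary facts about $\Kcal = \{t > |x|+1\}$. Since $t - r > 1$ and $t + r > 1$, we have $s^2 = (t-r)(t+r) > 1$, so $s > 1$ on $\Kcal$; and since $t + r = s^2/(t-r) < s^2$, we have $t < s^2$ on $\Kcal$. Now rewrite a summand of \eqref{eq 6 19-08-2017} as $\big(\pi^I_k t^{k-1}\big)\, s^{-2k+1}$, using $(s/t)^{-k+1}s^{-k} = t^{k-1}s^{-2k+1}$; the factor $\pi^I_k t^{k-1}$ is homogeneous of degree $2k-|I|-1$, hence bounded in modulus by $C t^{2k-|I|-1}$ via Lemma \ref{lem 1 26-06-2017}. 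If $2k-|I|-1 \le 0$ this is $\le C$ because $t \ge 1$, so the summand is $\le C s^{-2k+1}\le Cs^{-1}$; if $2k-|I|-1 > 0$ then $t^{2k-|I|-1} < s^{2(2k-|I|-1)}$ because $t < s^2$, so the summand is $\le C s^{2k-2|I|-1}\le Cs^{-1}$, where the last step uses $k \le |I|$ and $s > 1$. Summing the finitely many summands yields \eqref{eq 8 19-08-2017}.

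The induction for \eqref{eq 6 19-08-2017} is pure bookkeeping: the whole content is that differentiating $(s/t)$ costs one factor of $s^{-1}$ while differentiating $s^{-1}$ costs one factor of $(s/t)^{-1}s^{-1}$, so each differentiation moves a term at most one slot to the right, the homogeneity degrees being tracked through Lemma \ref{lem 1 26-06-2017}. The only step that is not completely mechanical is the bound \eqref{eq 8 19-08-2017}: a summand carries $s^{-2k+1}$ together with a coefficient that may grow as a positive power of $t$, so no uniform bound holds on all of $\{r < t\}$; what rescues the estimate is the geometry of the truncated cone, specifically $s > 1$ and $t < s^2$ on $\Kcal$, which converts the dangerous powers of $t$ into negative powers of $s$. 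I expect this $t \lesssim s^2$ trade-off to be the only real subtlety.
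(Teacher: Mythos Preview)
Your proof is correct and follows essentially the same route as the paper. The induction for \eqref{eq 6 19-08-2017} is identical, and for \eqref{eq 8 19-08-2017} you use the same two facts about $\Kcal$ (namely $s>1$ and $t<s^2$); the only cosmetic difference is that the paper bounds $|\pi^I_k|\leq C$ immediately (since $\pi^I_k$ is homogeneous of nonpositive degree $k-|I|$ and $t\geq 1$) and writes each summand as $C(t/s^2)^{k-1}s^{-1}\leq Cs^{-1}$, whereas you keep the $|I|$--dependence and do a case split on the sign of $2k-|I|-1$---slightly longer, but equivalent.
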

\begin{proof}
This is also by induction on $|I|$. For $|I|=1$, we see that it is established by \eqref{eq 5 19-08-2017}. We suppose that \eqref{eq 6 19-08-2017} holds for $|I|\leq m$, and we consider (where we use \eqref{eq 5 19-08-2017} and \eqref{eq 7 19-08-2017})
$$
\aligned
\del_{\alpha}\del^I(s/t) =& \del_{\alpha}\left(\sum_{1\leq k\leq |I|}\pi^I_k(s/t)^{-k+1}s^{-k}\right)
\\
=& \sum_{1\leq k\leq |I|}\del_{\alpha}(\pi^I_k)\cdot (s/t)^{-k+1}s^{-k} + \sum_{1\leq k\leq|I|}\pi^I_k\del_{\alpha}\left((s/t)^{-k+1}\right)\cdot s^{-k}
 \\
 &+ \sum_{1\leq k\leq|I|}\pi^I_k(s/t)^{-k+1}\del_{\alpha}(s^{-k})
\\
=& \sum_{1\leq k\leq |I|}\del_{\alpha}(\pi^I_k)\cdot (s/t)^{-k+1}s^{-k} + \sum_{1\leq k\leq|I|}\pi^I_k(-k+1)(s/t)^{-k}\del_{\alpha}(s/t)\cdot s^{-k}
\\
&+ \sum_{1\leq k\leq|I|}\pi^I_k(s/t)^{-k+1}(-k)s^{-k-1}\del_{\alpha}s
\\
=&\sum_{1\leq k\leq |I|}\del_{\alpha}(\pi^I_k)\cdot (s/t)^{-k+1}s^{-k} +  \sum_{1\leq k\leq|I|}(-k+1)\pi^I_k(s/t)^{-(k+1)-1}\pi_{\alpha}\cdot s^{-(k+1)}
\\
&-\sum_{1\leq k\leq|I|}k\pi^I_k(s/t)^{-(k+1)+1}s^{-(k+1)}\rho_{\alpha}
\\
=& \del_{\alpha}(\pi_1^I)s^{-1}
+ \sum_{2\leq k \leq |I|}\left(\del_{\alpha}(\pi_K^I)+(1-k)\pi_{k-1}^I\pi_{\alpha} + (1-k)\pi_{k-1}^I\rho_{\alpha}\right)(s/t)^{-k+1}s^{-k}
\\
& + \left((1-|I|)\pi_{\alpha} - |I|\rho_{\alpha}\right)\pi_{|I|}^I(s/t)^{-(|I|+1)+1}s^{-(|I|+1)}.
\endaligned
$$
WE check that for each term the coefficients are homogeneous of degree $(k-(|I|+1))$, and this concludes the case where $|I| = m+1$.

For \eqref{eq 8 19-08-2017}, we see that in \eqref{eq 6 19-08-2017},
$$
\left|\pi_k^I(s/t)^{-k+1}s^{-k}\right|\leq Ct^{k-1}s^{-2k+1} = Ct^{k-1}s^{-2k+2}s^{-1} = C(t/s^2)^{k-1} s^{-1}.
$$
We remark that in $\mathcal{K}$, $t/s^2$ is bounded. Then \eqref{eq 8 19-08-2017} is established.
\end{proof}

Now we observe the quantity $\del^Is$ (for $|I|\geq 1$). We see that
$$
\del^I(s) = \del^I(t\cdot (s/t)) = \sum_{I_1+I_2=I}\del^{I_1}t\cdot \del^{I_2}(s/t).
$$
We see that for $|I_1|\geq 2$, $\del^{I_1}t = 0$. Thus we see
$$
\sum_{I_1+I_2=I}\del^{I_1}t\cdot \del^{I_2}(s/t) = t\del^I(s/t) + \sum_{|I_1|=1\atop I_2+I_1=I}\del^{I_1}t\cdot \del^{I_2}(s/t)
$$
where the second term does not exist when $|I|\leq 0$. Then combined with \eqref{eq 8 19-08-2017}, we see that
\begin{equation}\label{eq 1 20-08-2017}
|\del^I s|\leq\left\{
\aligned
&C(t/s),\quad |I|\geq 1
\\
&s,\quad |I|=0.
\endaligned
\right.
\end{equation}
\begin{remark}
In the following application, we see that in $\Kcal$, because $s^2\geq t$, we have $|\del^Is|\leq Cs$. Furthermore, we see that when $|J|\geq 1$,
$$
\del^IL^J s = 0.
$$
\end{remark}

Combine \eqref{eq 4 19-08-2017} and \eqref{eq 6 19-08-2017}, we see that
\begin{lemma}
In $\Kcal$,
\begin{equation}\label{eq 2 20-08-2017}
|\del^IL^J(s/t)|\leq
\left\{
\aligned
&Cs^{-1},\quad |I|\geq 1,
\\
&Cs/t,\quad |I|=0.
\endaligned
\right.
\end{equation}
Let $n\in\mathbb{N}^*$. Then
\begin{equation}\label{eq 4 23-08-2017}
|\del^IL^J \left((s/t)^n\right)|\leq
\left\{
\aligned
&Cs^{-1}(s/t)^{n-1},\quad |I|\geq 1,
\\
&C(s/t)^n,\quad |I|=0.
\endaligned
\right.
\end{equation}
\end{lemma}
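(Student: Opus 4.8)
The plan is to reduce the whole statement to two facts already established: Lemma~\ref{lem 3 19-08-2017}, which gives $L^J(s/t)=\lambda^J\,(s/t)$ with $\lambda^J$ homogeneous of degree zero, and estimate \eqref{eq 8 19-08-2017}, namely $|\del^I(s/t)|\leq Cs^{-1}$ in $\Kcal$ for $|I|\geq 1$. The only genuinely geometric input is that in $\Kcal$ one has $s^2\geq t$: on $\Kcal$ we have $t-r>1$, hence $s^2=(t-r)(t+r)>t+r\geq t$, so in particular $t/s^2\leq 1$ and $s^2/t^2\leq 1$ there. I will prove the general inequality \eqref{eq 4 23-08-2017} for $n\geq 1$, and then observe that \eqref{eq 2 20-08-2017} is just the case $n=1$.

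First I would handle $|I|=0$. By a Leibniz expansion over $J_1+\dots+J_n=J$ together with Lemma~\ref{lem 3 19-08-2017}, $L^J\big((s/t)^n\big)=\sum\big(\prod_i L^{J_i}(s/t)\big)=\big(\sum\prod_i\lambda^{J_i}\big)(s/t)^n=:\mu^J(s/t)^n$, and $\mu^J$, being a finite sum of products of degree-zero homogeneous functions, is itself homogeneous of degree zero; by \eqref{eq 1 lem 1 26-06-2017} it is bounded in $\Kcal$, so $|L^J((s/t)^n)|\leq C(s/t)^n$.

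For $|I|\geq 1$ I would write $\del^IL^J\big((s/t)^n\big)=\del^I\big(\mu^J(s/t)^n\big)$ and expand by Leibniz as $\sum_{I_1+I_2=I}(\text{const})\,\del^{I_1}\mu^J\cdot\del^{I_2}\big((s/t)^n\big)$. Since $\del^{I_1}\mu^J$ is homogeneous of degree $-|I_1|$, \eqref{eq 1 lem 1 26-06-2017} gives $|\del^{I_1}\mu^J|\leq Ct^{-|I_1|}$, which is $\leq C$ always and $\leq Ct^{-1}$ as soon as $|I_1|\geq 1$. For the other factor I expand $\del^{I_2}\big((s/t)^n\big)$ by Leibniz into products of exactly $n$ factors $\del^{I_2^{(j)}}(s/t)$; using $|\del^{I_2^{(j)}}(s/t)|\leq Cs^{-1}$ when $|I_2^{(j)}|\geq 1$ (by \eqref{eq 8 19-08-2017}) and $=s/t$ otherwise, a term with $k$ differentiated factors is bounded by $Cs^{-k}(s/t)^{n-k}=C(t/s^2)^{k-1}s^{-1}(s/t)^{n-1}\leq Cs^{-1}(s/t)^{n-1}$ whenever $k\geq 1$, thanks to $t/s^2\leq 1$ in $\Kcal$. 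If $|I_2|\geq 1$ then $k\geq 1$, so such terms are $\leq Cs^{-1}(s/t)^{n-1}$; the single remaining term, with $|I_2|=0$ (hence $|I_1|=|I|\geq 1$), is $\leq Ct^{-1}(s/t)^n=C(s^2/t^2)s^{-1}(s/t)^{n-1}\leq Cs^{-1}(s/t)^{n-1}$. Summing yields \eqref{eq 4 23-08-2017}.

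The only point that needs care is the bookkeeping of homogeneity degrees through the two nested Leibniz expansions and the verification that the worst weight $(t/s^2)^{k-1}$ produced by the factors $s^{-k}(s/t)^{n-k}$ stays bounded; this is precisely where the definition $\Kcal=\{t>r+1\}$ (rather than the full light cone) matters, since it forces $s^2\geq t$. Everything else is a direct application of the Leibniz rule together with the already-proven Lemmas~\ref{lem 3 19-08-2017} and~\ref{lem 1 26-06-2017}, so no new induction is needed beyond those lemmas.
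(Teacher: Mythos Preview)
Your proof is correct and uses the same ingredients as the paper: Lemma~\ref{lem 3 19-08-2017}, the bound \eqref{eq 8 19-08-2017}, and the inequality $s^2\geq t$ in $\Kcal$. The only difference is organizational: the paper first establishes \eqref{eq 2 20-08-2017} (writing $\del^I=\del_\alpha\del^{I'}$ and expanding $\del^{I'}(\lambda^J(s/t))$ by Leibniz) and then deduces \eqref{eq 4 23-08-2017} from it via the $n$-fold Leibniz expansion $\del^IL^J((s/t)^n)=\sum\prod_k\del^{I_k}L^{J_k}(s/t)$, whereas you go directly to general $n$ by introducing $\mu^J$ and recover $n=1$ as a special case; the resulting term-by-term estimates are identical.
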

\begin{proof}
If $|I|=0$, by \eqref{eq 4 19-08-2017}, \eqref{eq 2 20-08-2017} is established. When $|I|\geq 1$, we denote by $\del^I = \del_{\alpha}\del^{I'}$ with $|I'|\geq 0$. Then
$$
\aligned
\del^IL^J(s/t) =& \del_{\alpha}\del^{I'}L^J(s/t) = \del_{\alpha}\del^{I'}\left(\lambda^J(s/t)\right) = \del_{\alpha}\left(\sum_{I_1'+I_2'=I'}\del^{I_1'}\lambda^J\cdot \del^{I_2'}(s/t)\right)
\\
=& \sum_{I_1'+I_2'=I'}\del_{\alpha}\del^{I_1'}\lambda^J\cdot \del^{I_2'}(s/t) + \sum_{I_1'+I_2'=I'}\del^{I_1'}\lambda^J\cdot \del_{\alpha}\del^{I_2'}(s/t).
\endaligned
$$
Now we recall that
$$
|\del^I(s/t)| \leq \left\{
\aligned
&Cs^{-1},\quad |I|\geq 1,
\\
&s/t,\quad |I|=0.
\endaligned
\right.
$$
and the fact that $\big|\del_{\alpha}\del^{I_1'}\lambda^J\big|\leq Ct^{-1}\leq Cs^{-1}$.
Thus \eqref{eq 2 20-08-2017} is established.

For \eqref{eq 4 22-08-2017}, we see that
$$
\del^IL^J\left((s/t)^n\right) = \sum_{I_1+\cdots I_n=I\atop J_1+\cdots J_n=J}\del^{I_1}L^{J_1}(s/t)\cdots \del^{I_n}L^{J_n}(s/t).
$$
We apply \eqref{eq 2 20-08-2017} on each factor, \eqref{eq 4 23-08-2017} is established.
\end{proof}

Then we make the following estimate:
$$
\del^IL^J(s^2/t) = \del^IL^J(s\cdot (s/t)) = \sum_{I_1+I_2=I\atop J_1+J_2=J}\del^{I_1}L^{J_1}s \cdot \del^{I_2}L^{J_2}(s/t).
$$
Then we see that by when $|J_1|\geq 1$, we see that $\del^{I_1}L^{J_1}s=0$. So we see that
$$
\del^IL^J(s^2/t) = \sum_{I_1+I_2=I\atop J_1+J_2=J}\del^{I_1}s \cdot \del^{I_2}L^{J}(s/t).
$$
Now we see that when $|I_1|\geq 1$, we see that
$$
|\del^{I_1}s \cdot \del^{I_2}L^{J}(s/t)|\leq C.
$$
When $|I_1|=0$, suppose that $|I|\geq 1$,  thus
$$
|s\cdot\del^{I}L^{J}(s/t)| \leq C.
$$
When $|I|=0$, we see that
$$
|sL^J(s/t)\leq Cs^2/t.
$$
Thus we conclude that
\begin{equation}\label{eq 4 22-08-2017}
|\del^IL^J(s^2/t)|\leq
\left\{
\aligned
&C,\quad |I|\geq 1,
\\
&Cs^2/t,\quad |I|=0.
\endaligned
\right.
\end{equation}

Then we also establish the following result:
\begin{lemma}\label{lem 4 31-08-2017}
In $\Kcal$ the following bound holds:
\begin{equation}\label{eq 3 31-08-2017}
\left|\del^I (s^{-n})\right|\leq
\left\{
\aligned
 &Cts^{-n-2},\quad |I|\geq 1,
 \\
 &s^{-n},\quad |I| =0.
\endaligned
\right.
\end{equation}
and
\begin{equation}\label{eq 3 02-09-2017}
\left|\del^IL^J(t/s)^n\right|\leq
\left\{
\aligned
&C(t/s)^{n+1}s^{-1},\quad |I|\geq 1,
\\
& (t/s)^n,\quad |I|=0.
\endaligned
\right.
\end{equation}
\end{lemma}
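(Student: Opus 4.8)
The plan is to establish both bounds by induction on $|I|$; the case $|I|=0$ is immediate since the quantities are then $s^{-n}$ and $(t/s)^n$ themselves. Everything rests on the homogeneity calculus of Lemma~\ref{lem 1 26-06-2017} applied to the elementary formulas $\del_{\alpha}(s/t)=\pi_{\alpha}s^{-1}$ and $\del_{\alpha}s=\rho_{\alpha}(t/s)$ from \eqref{eq 5 19-08-2017}--\eqref{eq 7 19-08-2017} (with $\pi_{\alpha},\rho_{\alpha}$ homogeneous of degree zero), together with the derived identity $\del_{\alpha}(t/s)=-(t/s)^2\pi_{\alpha}s^{-1}$. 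Since in $\Kcal$ one has the chain $1<s\leq t\leq s^2$, all decay will be produced at the very end by a short comparison of exponents. The device that makes the induction run is to track, at each order, an explicit \emph{normal form} which is stable under one more application of $\del_{\beta}$.

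I would do \eqref{eq 3 02-09-2017} first. An induction on $|J|$ modelled on Lemma~\ref{lem 3 19-08-2017}, using $L_a(t/s)=(x^a/t)(t/s)$ and the Leibniz rule, gives $L^J(t/s)^n=\mu^J(t/s)^n$ with $\mu^J$ homogeneous of degree zero, which handles the case $|I|=0$. For $|I|\geq 1$ I claim
$$
\del^IL^J(t/s)^n=\sum_{0\leq k\leq|I|}\gamma^{IJ}_k\,(t/s)^{n+k}s^{-k},\qquad\text{each }\gamma^{IJ}_k\text{ homogeneous of degree }k-|I|.
$$
The base case $|I|=1$ is $\del_{\alpha}\bigl(\mu^J(t/s)^n\bigr)=(\del_{\alpha}\mu^J)(t/s)^n-n\,\mu^J\pi_{\alpha}(t/s)^{n+1}s^{-1}$. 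For the step, applying $\del_{\beta}$ to a summand $\gamma^{IJ}_k(t/s)^{n+k}s^{-k}$ gives three contributions: differentiating $\gamma^{IJ}_k$ keeps the index $k$ and lowers the degree by one; differentiating $(t/s)^{n+k}$ uses $\del_{\beta}(t/s)=-(t/s)^2\pi_{\beta}s^{-1}$ and yields an index-$(k+1)$ term with a degree-zero extra factor; differentiating $s^{-k}$ uses $\del_{\beta}s=\rho_{\beta}(t/s)$ and again yields an index-$(k+1)$ term. Hence the normal form survives at order $|I|+1$. Finally, each summand obeys $|\gamma^{IJ}_k(t/s)^{n+k}s^{-k}|\leq Ct^{2k-|I|}s^{-n-2k}$, and dividing by the target $(t/s)^{n+1}s^{-1}=t^{n+1}s^{-n-2}$ leaves $t^{2k-|I|-1}s^{-2(k-1)}$, which is bounded on $\Kcal$ for $0\leq k\leq|I|$ after separating $2k-|I|-1\geq 0$ from $2k-|I|-1<0$ and invoking $1<s\leq t\leq s^2$. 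This proves \eqref{eq 3 02-09-2017}.

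Then \eqref{eq 3 31-08-2017} follows from $s^{-n}=t^{-n}(t/s)^n$: Leibniz writes $\del^I(s^{-n})$ as a finite sum of $\del^{I_1}(t^{-n})\,\del^{I_2}\bigl((t/s)^n\bigr)$ with $|I_1|+|I_2|=|I|$; since $t^{-n}$ is homogeneous of degree $-n$ we have $|\del^{I_1}(t^{-n})|\leq Ct^{-n-|I_1|}$, and combining with $|\del^{I_2}(t/s)^n|\leq C(t/s)^{n+1}s^{-1}$ when $|I_2|\geq 1$ and $\leq C(t/s)^n$ when $|I_2|=0$, each term is $\leq Cts^{-n-2}$ once one simplifies with $s\leq t\leq s^2$. (Equivalently, a direct induction yields $\del^I(s^{-n})=\sum_{1\leq j\leq|I|}\delta^I_j\,t^js^{-n-2j}$ with $\delta^I_j$ homogeneous of degree $j-|I|$, closing by the same three-term computation and the same exponent check.)

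I expect the only real obstacle to be the bookkeeping of exponents in these normal forms: one must check that the surplus powers of $t$ generated whenever a derivative lands on a power of $s$ are exactly paid for by $t\leq s^2$, so that the bound does not deteriorate as $|I|$ grows. This reduces to the two-case estimate that $t^{2k-|I|-1}s^{-2(k-1)}$ (and its analogue with $j$) stays bounded in $\Kcal$ throughout the range of the summation index; using $1<s\leq t\leq s^2$ this is immediate, and the rest of the argument is routine homogeneity calculus.
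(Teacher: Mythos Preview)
Your argument is correct, but the route is the reverse of the paper's. The paper proves \eqref{eq 3 31-08-2017} first, by Fa\`a di Bruno applied to $f(x)=x^{-n}$: writing $\del^I(s^{-n})$ as a sum of $f^{(k)}(s)\,\del^{I_1}s\cdots\del^{I_k}s$, and invoking the already--established bound $|\del^{I_j}s|\leq C(t/s)$ from \eqref{eq 1 20-08-2017}, each term is $\leq Cs^{-n}(t/s^2)^k\leq Cts^{-n-2}$. It then deduces \eqref{eq 3 02-09-2017} by the factorisation $(t/s)^n=t^n\cdot s^{-n}$ and Leibniz. You go the other way: you first establish \eqref{eq 3 02-09-2017} by building a normal form for $\del^IL^J(t/s)^n$ (in the exact spirit of the paper's own normal form \eqref{eq 6 19-08-2017} for $\del^I(s/t)$), and then get \eqref{eq 3 31-08-2017} from the dual factorisation $s^{-n}=t^{-n}(t/s)^n$. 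Both directions are equally short once the machinery is in place; the paper's Fa\`a di Bruno argument is slicker because it reuses \eqref{eq 1 20-08-2017} directly, while your normal form is more structural and self-contained.

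One small slip: in your bound of a summand you write $Ct^{2k-|I|}s^{-n-2k}$, but the correct expression is $Ct^{\,n+2k-|I|}s^{-n-2k}$ (the factor $(t/s)^{n+k}$ contributes $t^{n+k}$, not $t^{k}$). This is harmless, since the ratio you then compute against the target $(t/s)^{n+1}s^{-1}=t^{n+1}s^{-n-2}$ is nevertheless correct, and your case split on the sign of $2k-|I|-1$ goes through.
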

\begin{proof}
This is by the identity of Fa\`a di Bruno. We denote by
$$
\aligned
f:\RR^+ &\rightarrow \RR^+
\\
x &\rightarrow x^{-n}.
\endaligned
$$
Then we see that for $|I|\geq 1$,
$$
\del^I f(s) = \sum_{1\leq k\leq |I|}\sum_{I_1+I_2+\cdots I_k = I}f^{(k)}(s)\del^{I_1}s\del^{I_2}s\cdots \del^{I_k}s.
$$
We see that in the above expression,
$$
\big|f^{(k)}(s)\big|\leq Cs^{-n-k}, \quad |\del^{I_j}s|\leq C(t/s)
$$
This we see that
$$
\big|f^{(k)}(s)\del^{I_1}s\del^{I_2}s\cdots \del^{I_k}s\big|\leq C s^{-k-n}(t/s)^k\leq Cs^{-n}(t/s^2)^k.
$$
Recall that $k\geq 1$ and in $\Kcal$, $s^2\geq t$, \eqref{eq 3 31-08-2017} is established.


For \eqref{eq 3 02-09-2017}, we see that
$$
\aligned
\del^IL^J\left(t^ns^{-n}\right) =& \sum_{I_1+I_2=I\atop J_1+J_2=J}\del^{I_1}L^{J_1}t^{n}\cdot\del^{I_2}L^{J_2}(s^{-n}).
\endaligned
$$
Then we see that $t^n$ is homogeneous of degree $n$ thus $\left|\del^{I_1}L^{J_1}t^{n}\right|\leq Ct^{n-|I_1|}$. Thus by \eqref{eq 3 31-08-2017}, \eqref{eq 3 02-09-2017} is proved.

\end{proof}

For simplicity of expression, we introduce the following notation:
$$
\Lambda^{IJ}: = \del^IL^J(s/t).
$$
Then by \eqref{eq 6 29-08-2017}, the following estimate is direct:
\begin{equation}\label{eq 5 05-05-2017}
\del^IL^J\Lambda^{I'J'}\leq
\left\{
\aligned
&Cs^{-1}\quad |I|+|I'|\geq 1,
\\
&C(s/t)\quad |I|+|I'|=0.
\endaligned
\right.
\end{equation}

Now we are ready to calculate the commutator $[\del^IL^J,\delb_s]$. We have the following result:
\begin{lemma}\label{lem 2 22-08-2017}
Let $u$ be a function defined in $\Kcal$, sufficiently regular and vanishes near the conical boundary. Then the following estimate holds:
\begin{equation}\label{eq 8 22-08-2017}
\left|[\del^IL^J,\delb_s]u\right| \leq  Cs^{-1}\sum_{|I'|<|I|}|\del_t\del^{I'}L^Ju| + C(s/t)\sum_{{\alpha,|I'|\leq|I|}\atop|J'|<J}|\del_\alpha\del^{I'}L^{J'}u|.
\end{equation}
where $C$ is a constant determined by $I,J$.
\end{lemma}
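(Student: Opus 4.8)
The plan is to unwind the commutator using the single identity $\delb_s=(s/t)\del_t$ and then bound the resulting pieces with the derivative estimates for $(s/t)$ and the commutator formula for $[L^J,\del_\alpha]$ proved above. Since $\delb_s u=(s/t)\del_t u$, the Leibniz rule (expanding $\del^IL^J$ acting on a product, cf. Lemma \ref{lem 2 29-08-2017}) gives
\[
[\del^IL^J,\delb_s]u=\sum_{|I_1|+|I_2|=|I|\atop|J_1|+|J_2|=|J|}\del^{I_1}L^{J_1}(s/t)\cdot\del^{I_2}L^{J_2}\del_tu\;-\;(s/t)\,\del_t\del^IL^Ju .
\]
The term with $(I_1,J_1)=(0,0)$ is $(s/t)\del^IL^J\del_tu$; I would then commute $\del_t$ across $L^J$ by \eqref{eq 6 22-08-2017}, writing $L^J\del_tu=\del_tL^Ju+[L^J,\del_t]u$ with $[L^J,\del_t]u=\sum_{|J'|<|J|}\theta_{0J'}^{J\beta}\del_\beta L^{J'}u$ and the $\theta$'s constant, so that $(s/t)\del_t\del^IL^Ju$ cancels the last term and one is left with the clean decomposition
\[
[\del^IL^J,\delb_s]u=(s/t)\,\del^I[L^J,\del_t]u\;+\;\sum_{(I_1,J_1)\neq(0,0)}\del^{I_1}L^{J_1}(s/t)\cdot\del^{I_2}L^{J_2}\del_tu .
\]

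Next I would estimate the two pieces. For the first, $(s/t)\del^I[L^J,\del_t]u=\sum_{|J'|<|J|}\theta_{0J'}^{J\beta}(s/t)\del_\beta\del^IL^{J'}u$, which already has the form of the second term of \eqref{eq 8 22-08-2017} (with $|I'|=|I|$). For the sum I would first write $\del^{I_2}L^{J_2}\del_tu=\del_t\del^{I_2}L^{J_2}u+\del^{I_2}[L^{J_2},\del_t]u$ and expand $[L^{J_2},\del_t]$ once more by \eqref{eq 6 22-08-2017}; every surviving term then has the shape $\del^{I_1}L^{J_1}(s/t)\cdot\del_\alpha\del^{I_2}L^{\widetilde J}u$ with $|I_1|+|I_2|=|I|$, $|J_1|+|J_2|=|J|$, $|\widetilde J|\le|J_2|$ and $(I_1,J_1)\neq(0,0)$. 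Then I would split on $|I_1|$. If $|I_1|\ge1$, then \eqref{eq 2 20-08-2017} gives $|\del^{I_1}L^{J_1}(s/t)|\le Cs^{-1}$ and $|I_2|<|I|$; the only contribution that retains all of $L^J$ is $J_1=0$, $\widetilde J=J_2=J$, $\alpha=0$, producing exactly $Cs^{-1}|\del_t\del^{I_2}L^Ju|$ with $|I_2|<|I|$, i.e.\ the first term of \eqref{eq 8 22-08-2017}; every other contribution has $|\widetilde J|<|J|$ and $|I_2|<|I|$ and is bounded by $Cs^{-1}|\del_\alpha\del^{I_2}L^{\widetilde J}u|$, which, since $s^2\ge t$ on $\Kcal$ and hence $s^{-1}\le s/t$, is absorbed into the second term. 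If $|I_1|=0$, then $|J_1|\ge1$, $I_2=I$ and $|J_2|<|J|$, and \eqref{eq 4 19-08-2017} gives $|L^{J_1}(s/t)|\le C(s/t)$, so those terms are $\le C(s/t)|\del_\alpha\del^IL^{\widetilde J}u|$ with $|\widetilde J|<|J|$, again inside the second term. Summing the finitely many contributions yields \eqref{eq 8 22-08-2017}.

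The whole argument is bookkeeping, so there is no genuine analytic difficulty; the point requiring the most care — the nearest thing to an obstacle — is deciding precisely which summands still carry the full word $L^J$. Those must be routed into the sharper first term $Cs^{-1}\sum_{|I'|<|I|}|\del_t\del^{I'}L^Ju|$, and one has to verify they necessarily come with a strictly reduced number of $\del$-derivatives; conversely every $s^{-1}$-weighted term that has dropped an $L$-derivative must be checked to be either lower order in $\del$ or strictly lower in $L$-order, so that the crude bound $s^{-1}\le s/t$ on $\Kcal$ suffices to absorb it into the second term. Once this distinction is in place, \eqref{eq 6 22-08-2017}, \eqref{eq 4 19-08-2017} and \eqref{eq 2 20-08-2017} close the estimate.
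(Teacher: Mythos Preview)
Your argument is correct and follows essentially the same route as the paper: expand $[\del^IL^J,(s/t)\del_t]$ by Leibniz, commute $\del_t$ through $L^{J_2}$ via \eqref{eq 6 22-08-2017}, and then split the resulting sum according to whether $|I_1|\ge 1$ (using $|\del^{I_1}L^{J_1}(s/t)|\le Cs^{-1}$) or $|I_1|=0$ (using $|L^{J_1}(s/t)|\le C(s/t)$), with the key observation that the only terms still carrying the full $L^J$ necessarily have $|I_2|<|I|$ and land in the first sum of \eqref{eq 8 22-08-2017}. The paper's presentation differs only cosmetically, writing the decomposition in terms of $\Lambda^{I_1J_1}:=\del^{I_1}L^{J_1}(s/t)$ and organizing the two sums slightly differently before invoking the same bounds.
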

\begin{proof}
We remark that
$$
\aligned
\,[\del^IL^J,\delb_s] =& [\del^IL^J,(s/t)\del_t] = \sum_{I_1+I_2=I,J_1+J_2=J\atop|I_2|+|J_2|<|I|+|J|}\Lambda^{I_1J_1}\del^{I_2}L^{J_2}\del_t + (s/t)[\del^IL^J,\del_t]
\\
=&\sum_{I_1+I_2=I,J_1+J_2=J\atop|I_2|+|J_2|<|I|+|J|}\Lambda^{I_1J_1}\del_t\del^{I_2}L^{J_2}
+ \sum_{I_1+I_2=I,J_1+J_2=J\atop|I_2|+|J_2|<|I|+|J|}\Lambda^{I_1J_1}\del^{I_2}[L^{J_2},\del_t] + (s/t)\del^I[L^J,\del_t]
\\
=&\sum_{I_1+I_2=I,J_1+J_2=J\atop|I_2|+|J_2|<|I|+|J|}\Lambda^{I_1J_1}\del_t\del^{I_2}L^{J_2}
+ \sum_{I_1+I_2=I\atop J_1+J_2=J}\Lambda^{I_1J_1}\del^{I_2}[L^{J_2},\del_t]
\\
=&\sum_{I_1+I_2=I,J_1+J_2=J\atop|I_2|+|J_2|<|I|+|J|}\Lambda^{I_1J_1}\del_t\del^{I_2}L^{J_2}
 +\sum_{I_1+I_2=I, J_1+J_2=J\atop|J_2'|<|J_2|}\Lambda^{I_1J_1}\theta_{0J_2'}^{J_2\beta}\del_{\beta}\del^{I_2}L^{J_2'}.
\endaligned
$$
We decompose the first term in right-hand-side and see that
$$
\aligned
\,[\del^IL^J,\delb_s]
=&\sum_{I_1+I_2=I\atop|I_2|<|I|}\Lambda^{I_1O}\del_t\del^{I_2}L^{J}
\\
&+\sum_{I_1+I_2=I,J_1+J_2=J\atop|J_2|<|J|}\Lambda^{I_1J_1}\del_t\del^{I_2}L^{J_2}
+\sum_{I_1+I_2=I, J_1+J_2=J\atop|J_2'|<|J_2|}\Lambda^{I_1J_1}\theta_{0J_2'}^{J_2\beta}\del_{\beta}\del^{I_2}L^{J_2'}.
\endaligned
$$
Were $\Lambda^{IO}$ means in $\del^I(s/t)$ where $|J|=0$.

Recall the bound on $\Lambda^{IJ}$, the desired result is direct.
\end{proof}

Finally  we establish the following estimates:
\begin{lemma}\label{lem 6 31-08-2017}
Let $u$ be a function defined in $\Kcal$, sufficiently regular, then
\begin{equation}\label{eq 8 31-08-2017}
\big|[\del^IL^J,\delb_s\delb_s]u\big|\leq C\sum_{\alpha,\beta,|I'|\leq|I|,|J'|\leq|J|\atop |I'|+|J'|<|I|+|J|}|(s/t)^2\del_{\alpha}\del_{\beta}\del^{I'}L^{J'}u|
+ C\sum_{\alpha,|I'|\leq|I|,|J'|\leq|J|\atop |I'|+|J'|<|I|+|J|}|t^{-1}\del_{\alpha}\del^{I'}L^{J'}u|.
\end{equation}
\end{lemma}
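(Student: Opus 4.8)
The plan is to compute $[\del^IL^J,\delb_s\delb_s]u$ from an exact formula for $\delb_s\delb_s$, to expand everything by the Leibniz rule, and to exploit a cancellation of the principal terms, after which the already-established bounds on $\del^IL^J(s/t)$ and the commutator identity \eqref{eq 6 22-08-2017} finish the job. Since $\delb_s=(s/t)\del_t$ and $\del_t(s/t)=r^2t^{-2}s^{-1}$ by \eqref{eq 5 19-08-2017}, one has for any sufficiently regular $v$
$$\delb_s\delb_s v=\phi\,\del_t\del_tv+\psi\,\del_tv,\qquad \phi:=(s/t)^2,\quad \psi:=r^2t^{-3},$$
where $\psi$ is a homogeneous function of degree $-1$ in the sense of \eqref{eq 2 26-06-2017}. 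Applying $\del^IL^J$ and subtracting $\delb_s\delb_s(\del^IL^Ju)$ gives
$$[\del^IL^J,\delb_s\delb_s]u=\big(\del^IL^J(\phi\,\del_t\del_tu)-\phi\,\del_t\del_t\del^IL^Ju\big)+\big(\del^IL^J(\psi\,\del_tu)-\psi\,\del_t\del^IL^Ju\big),$$
so it suffices to bound the two brackets.

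Next, by the Leibniz rule $\del^IL^J(\phi\,\del_t\del_tu)$ is a finite linear combination of terms $(\del^{I_1}L^{J_1}\phi)\,(\del^{I_2}L^{J_2}\del_t\del_tu)$ with $|I_1|+|I_2|=|I|$ and $|J_1|+|J_2|=|J|$, and likewise for the $\psi$-bracket. In each factor $\del^{I_2}L^{J_2}\del_t\del_tu$ I move the two $\del_t$'s to the front: iterating $[L^{J_2},\del_t]=\sum_{|J'|<|J_2|}\theta^{J_2\beta}_{0J'}\del_\beta L^{J'}$ from \eqref{eq 6 22-08-2017} shows that $\del^{I_2}L^{J_2}\del_t\del_tu$ is a constant-coefficient combination of terms $\del_\gamma\del_{\gamma'}\del^{I_2}L^{J''}u$ with $|J''|\le|J_2|$, and $\del^{I_2}L^{J_2}\del_tu$ is a combination of $\del_\gamma\del^{I_2}L^{J''}u$ with $|J''|\le|J_2|$. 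The single contribution with $I_1=J_1=0$ in which no commutator is invoked is exactly $\phi\,\del_t\del_t\del^IL^Ju$ (resp.\ $\psi\,\del_t\del^IL^Ju$), which is cancelled by the subtracted term. Hence every surviving term has either $|I_2|<|I|$ (a derivative fell on the coefficient) or $|J''|<|J|$ (a commutator was taken), and therefore $|I_2|+|J''|<|I|+|J|$ in all cases.

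It then remains to estimate the coefficients. Recall that $s^2\ge t$ on $\Kcal$, so $s^{-1}(s/t)=t^{-1}\le(s/t)^2$; combined with \eqref{eq 4 23-08-2017} (with $n=2$) this yields $|\del^{I_1}L^{J_1}\phi|\le C(s/t)^2$ for all $I_1,J_1$. Thus each surviving $\phi$-term is bounded by $C\,\big|(s/t)^2\del_\gamma\del_{\gamma'}\del^{I_2}L^{J''}u\big|$ with $|I_2|+|J''|<|I|+|J|$, i.e.\ a term of the first sum in \eqref{eq 8 31-08-2017}. For the $\psi$-bracket, since $\psi$ is homogeneous of degree $-1$, Lemma \ref{lem 1 26-06-2017} gives $|\del^{I_1}L^{J_1}\psi|\le Ct^{-1}$, so each surviving $\psi$-term is bounded by $C\,\big|t^{-1}\del_\gamma\del^{I_2}L^{J''}u\big|$ with $|I_2|+|J''|<|I|+|J|$, i.e.\ a term of the second sum. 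Adding these bounds produces \eqref{eq 8 31-08-2017}.

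The one genuinely delicate point is the coefficient step: the terms created when $\del^I$ differentiates the factor $(s/t)^2$ carry two surviving derivatives on $u$ but enjoy only the bound $t^{-1}$ on their coefficient, and they must be stored in the first sum of \eqref{eq 8 31-08-2017} — which pairs $(s/t)^2$ with two extra derivatives — rather than in a nonexistent ``$t^{-1}$ with two derivatives'' sum; this is legitimate precisely because $t\le s^2$ on $\Kcal$. The cancellation of the two principal terms is what forces the strict drop $|I_2|+|J''|<|I|+|J|$ in the total order, ensuring the output lands in the stated sums. One could instead write $[\del^IL^J,\delb_s\delb_s]=[\del^IL^J,\delb_s]\delb_s+\delb_s[\del^IL^J,\delb_s]$ and apply the identity from the proof of Lemma \ref{lem 2 22-08-2017} twice, but the bookkeeping is essentially the same.
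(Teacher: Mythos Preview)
Your proof is correct and follows essentially the same approach as the paper: both start from the identity $\delb_s\delb_s=(s/t)^2\del_t\del_t+(r^2/t^3)\del_t$, split the commutator accordingly, use the Leibniz rule together with \eqref{eq 6 22-08-2017} to move the $\del_t$'s to the front, and bound the coefficients via \eqref{eq 4 23-08-2017} and the homogeneity of $r^2/t^3$. The only organizational difference is that the paper separates the Leibniz remainder from the pure commutator $(s/t)^2[\del^IL^J,\del_t\del_t]u$ and bounds them in turn, whereas you treat both at once by tracking the single cancellation of the principal term; your explicit remark that $t^{-1}\le(s/t)^2$ on $\Kcal$ is exactly what underlies the paper's use of $|\del^IL^J(s/t)^2|\le C(s/t)^2$.
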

\begin{proof}
We remark that
$$
\delb_s\delb_s = (s/t)^2\del_t\del_t + t^{-1}(r/t)^2\del_t.
$$
Then we see that
$$
[\del^IL^J,\delb_s\delb_s]u = [\del^IL^J,(s/t)^2\del_t\del_t]u + [\del^IL^J ,t^{-1}(r/t)^2\del_t]u =: T_1+T_2
$$
We see that
$$
[\del^IL^J,(s/t)^2\del_t\del_t]u = \sum_{{I_1+I_2=I, J_1+J_2=J}\atop |I_2|+|J_2|\leq |I|+|J|-1}\del^{I_1}L^{J_1}(s/t)^2\del^{I_2}L^{J_2}\del_t\del_tu
+ (s/t)^2[\del^IL^J,\del_t\del_t]u.
$$
We see that by \eqref{eq 6 22-08-2017}:
\begin{equation}\label{eq 2 01-09-2017}
\aligned
\,[\del^IL^J,\del_t\del_t]u =& \del_t\left([\del^{I}L^{J},\del_t]u\right) + [\del^{I}L^{J},\del_t]\del_tu
\\
=&\sum_{|J'|<|J|}\theta_{0J'}^{J\beta}\del_t\del_{\beta}\del^{I}L^{J'}u
 + \sum_{|J'|<|J|}\theta_{0J'}^{J\beta}\del_{\beta}\del^{I}L^{J'}\del_tu
 \\
=&2\sum_{|J'|<|J|}\theta_{0J'}^{J\beta}\del_t\del_{\beta}\del^{I}L^{J'}u
+\sum_{|J'|<|J|}\theta_{0J'}^{J\beta}\del_{\beta}\del^{I}\left([L^{J'},\del_t] u\right)
\\
=& 2\sum_{|J'|<|J|}\theta_{0J'}^{J\beta}\del_t\del_{\beta}\del^{I}L^{J'}u
 + \sum_{|J'|<|J|\atop|J''|<|J'|}\theta_{0J'}^{J_2\beta}\theta_{0J''}^{J'\gamma}\del_{\beta}\del_{\gamma}\del^{I}L^{J''}u.
\endaligned
\end{equation}
This leads to
\begin{equation}\label{eq 1 01-09-2017}
[\del^{I}L^{J},\del_t\del_t u]\leq C\sum_{\alpha,\beta\atop|J'|<|J|}|\del_{\alpha}\del_{\beta}\del^{I}L^{J'}u|
\end{equation}
By the above inequality we also see that:
$$
\aligned
|\del^{I_2}L^{J_2}\del_t\del_tu| \leq& |\del_t\del_t\del^{I_2}L^{J_2}u| + |[\del^{I_2}L^{J_2},\del_t\del_t]u|
\\
\leq& C\sum_{{\alpha,\beta}\atop|J_2'|\leq|J_2|}|\del_{\alpha}\del_{\beta}\del^{I_2}L^{J_2'}u|
\endaligned
$$
Thus we see that (recall $|\del^IL^J(s/t)^2|\leq C(s/t)^2$)
$$
|T_1|\leq C\sum_{\alpha,\beta,|I'|\leq |I|,|J'|\leq |J|\atop|I'|+|J'|\leq|I|+|J|-1}|(s/t)^2\del_{\alpha}\del_{\beta}\del^{I'}L^{J'}u|
$$

For the term $T_2$, by \eqref{eq 6 22-08-2017} and the fact that $r^2/t^3$ is homogeneous of degree $-1$:
$$
\aligned
|T_2|\leq& \sum_{I_1+I_2=I,J_1+J_2=J\atop|I_2|+|J_2|\leq |I|+|J|-1}\big|\del^{I_1}L^{J_1}(r^2/t^3)\cdot\del^{I_2}L^{J_2}\del_tu\big| + \big|(r^2/t^3)[\del^IL^J,\del_t]u\big|
\\
\leq& \sum_{\alpha,|I'|\leq|I|,|J'|\leq |J|\atop |I'|+|J'|\leq|I|+|J|-1}|t^{-1}\del_\alpha\del^{I'}L^{J'} u|.
\endaligned
$$

The bounds on $T_1$ and $T_2$ concludes the desired result.

\end{proof}

\begin{lemma}\label{lem 3 03-09-2017}
Let $u$ be a function defined in $\Kcal$ and  sufficiently regular. Then the following estimate holds:
\begin{equation}
\aligned
\big|[\del^IL^J,\delb_s\delb_a]u\big|
\leq& C\sum_{|I''|\leq|I|\atop|J''|\leq|J|}\left(t^{-1}|\delb_s\del^{I''}L^{J''}u| + (s/t^2)|\del^{I''}L^{J''}u|\right)
\\
&+ C(s/t^2)\sum_{a,|I''|\leq|I|\atop|J''|\leq|J|}|\delb_a\del^{I''}L^{J''}u|.
\endaligned
\end{equation}
\end{lemma}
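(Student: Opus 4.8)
The plan is to reduce this second–order commutator to the first–order commutator lemmas already proven, together with the homogeneity bounds of the ``Commutators II'' subsection. The key preliminary observation is the operator identity
\[
\delb_s\delb_a=t^{-1}L_a\delb_s ,
\]
which follows at once from $L_a=t\,\delb_a$ and $[\delb_s,L_a]=(s/t)\,\delb_a$ (alternatively one may expand $\delb_s\delb_a=(sx^a/t^2)\del_t\del_t+(s/t)\del_a\del_t-(sx^a/t^3)\del_t$ in the natural frame and proceed exactly as in the proof of Lemma \ref{lem 6 31-08-2017}). Using it,
\[
[\del^IL^J,\delb_s\delb_a]u=\sum_{\substack{I_1+I_2=I,\,J_1+J_2=J\\ (I_1,J_1)\neq(0,0)}}\big(\del^{I_1}L^{J_1}t^{-1}\big)\del^{I_2}L^{J_2}\big(L_a\delb_s u\big)+t^{-1}\,[\del^IL^J,L_a\delb_s]u .
\]

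In both pieces I would peel things apart with the commutators already established: push $L_a$ through $\del^IL^J$ using \eqref{eq 1 19-08-2017} and \eqref{eq 6 22-08-2017} (this only reshuffles the admissible fields and does not raise the total order), and move $\delb_s$ to the outside using Lemma \ref{lem 2 22-08-2017}. The partial derivatives that are produced along the way are rewritten in the hyperbolic frame via $\del_t=(t/s)\delb_s$ and $\del_b=t^{-1}L_b-(x^b/t)\del_t$, so that every surviving spatial partial carries a free $t^{-1}$ and every surviving $\del_t$ turns into $\delb_s$ up to the bounded factor $t/s$. Since $t^{-1}$ is homogeneous of degree $-1$, one has $|\del^{I_1}L^{J_1}t^{-1}|\le Ct^{-1}$ when $|I_1|=0$ and $|\del^{I_1}L^{J_1}t^{-1}|\le Ct^{-2}\le C(s/t^2)$ when $|I_1|\ge1$, which is precisely the weight needed on each term of the explicit sum.

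I expect the only real difficulty to be the weight bookkeeping rather than any structural point: at every branch of these nested commutators one must check that the accumulated coefficient is never worse than $t^{-1}$ in front of a $\delb_s\del^{I''}L^{J''}u$, or $(s/t^2)$ in front of a $\del^{I''}L^{J''}u$ or a $\delb_a\del^{I''}L^{J''}u$, and that the surviving multi–indices still obey $|I''|\le|I|$, $|J''|\le|J|$. The two facts that close all the cases are $\delb_a=t^{-1}L_a$ (so a spatial hyperbolic derivative is automatically accompanied by a $t^{-1}$) and the inequalities $s\ge1$, $s^2\ge t$ valid in $\Kcal$, which convert the ``bad'' factors $s^{-1}$, $1/s^2$, $t^{-2}$ thrown up by Lemma \ref{lem 2 22-08-2017} into the admissible weights $t^{-1}$ and $s/t^2$ (for instance $s^{-1}(t/s)=t/s^2\le1$, $1/s^2\le1/t$, $t^{-2}\le s/t^2$).
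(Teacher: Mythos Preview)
Your approach is genuinely different from the paper's and the core identity $\delb_s\delb_a = t^{-1}L_a\delb_s$ is correct (it follows from $\delb_a=t^{-1}L_a$ together with the fact that $\delb_s,\delb_a$ commute as partials in the hyperbolic variables). The paper instead uses the standard Leibniz splitting
\[
[\del^IL^J,\delb_s\delb_a]u \;=\; [\del^IL^J,\delb_s]\,\delb_a u \;+\; \delb_s\big([\del^IL^J,\delb_a]u\big),
\]
handling the first piece with Lemma~\ref{lem 2 22-08-2017} applied to $\delb_a u=t^{-1}L_a u$, and the second by differentiating the decomposition \eqref{eq 1 23-08-2017}. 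Your route trades this for a Leibniz expansion on the scalar $t^{-1}$ and the commutator $[\del^IL^J,L_a\delb_s]$; this is arguably more symmetric and avoids re-opening the structure of $[\del^IL^J,\delb_a]$, at the cost of having to handle the boost commutator $[\del^IL^J,L_a]$, which the paper never isolates explicitly.

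One point deserves care. You write that pushing $L_a$ through $\del^IL^J$ ``does not raise the total order''. That is true for $|I|+|J|$, but the lemma as stated carries the \emph{separate} constraints $|I''|\le|I|$ and $|J''|\le|J|$. In the branch $|J_1|=0$, $|I_1|\ge1$ of your explicit sum you are left with $\del^{I_2}L^{J}L_a\delb_s u$, which naively has $|J|+1$ boosts. To stay within the separate constraints you must convert $L_a=t\,\delb_a$ (the $t$ is absorbed by the extra $t^{-1}$ in the coefficient $\del^{I_1}t^{-1}=O(t^{-2})$) rather than count $L_a$ as a boost; you hint at this with ``$\delb_a=t^{-1}L_a$'' but the mechanism should be stated explicitly. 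In fact the paper's own argument for the $[\del^IL^J,\delb_s]\delb_a u$ piece ends with the bound $C(s/t^2)\sum_{|I'|+|J'|\le|I|+|J|}|\del_\alpha\del^{I'}L^{J'}u|$, i.e.\ only the total-order constraint, which is all that the downstream applications (Lemmas~\ref{lem 2 24-08-2017} and~\ref{lem 1 07-09-2017}) actually use. So your ``total order'' reading is consistent with what is really proved and needed; just be aware that matching the lemma's displayed index ranges verbatim requires the $L_a=t\,\delb_a$ conversion in that branch.
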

\begin{proof}
$$
[\del^IL^J,\delb_s\delb_a]u = [\del^IL^J,\delb_s]\delb_au + \delb_s\left([\del^IL^J,\delb_a]u\right)
$$
For the first term in right-hand-side of the above equation, we see that by \eqref{eq 8 22-08-2017},
$$
\aligned
\big|[\del^IL^J,\delb_s]\delb_au\big|\leq& Cs^{-1}\sum_{|I'|<|I|}|\del_t\del^{I'}L^J\delb_au| + C(s/t)\sum_{{\alpha,|I'|\leq|I|}\atop|J'|<J}|\del_\alpha\del^{I'}L^{J'}\delb_au|
\\
= &Cs^{-1}\sum_{|I'|<|I|}|\del_t\del^{I'}L^J\left(t^{-1}L_au\right)|
+ C(s/t)\sum_{{\alpha,|I'|\leq|I|}\atop|J'|<J}|\del_\alpha\del^{I'}L^{J'}\left(t^{-1}L_a\right)u|
\\
\leq& Cs^{-1}t^{-1}\sum_{|I'|<|I|\atop |J'|\leq|J|}|\del_t\del^{I'}L^JL_au|
+ C(s/t^2)\sum_{{\alpha,|I'|\leq|I|}\atop|J'|<J}|\del_\alpha\del^{I'}L^{J'}L_au|
\\
\leq& C(s/t^2)\sum_{{\alpha\atop|I'|+|J'|\leq|I|+|J|}}|\del_\alpha\del^{I'}L^{J'}u|
\endaligned
$$

For the term $\delb_s\left([\del^IL^J,\delb_a]u\right)$, we see that by applying \eqref{eq 1 23-08-2017},
\begin{equation}\label{eq 1 05-09-2017}
\aligned
\delb_s\left([\del^IL^J,\delb_a]u\right) =&\sum_{|I'|\leq|I|\atop |J'|<|J|}\delb_s\left(\rhob_{a I'J'}^{IJc}\delb_c\del^{I'}L^{J'}u\right) + \sum_{1\leq|I'|\leq |I|}\delb_s\left(\rho_{a I'}^{IJ}\del^{I'}L^{J}u\right)
\endaligned
\end{equation}
Now we for the first term in the right-hand-side, we see that
$$
\delb_s\left(\rhob_{a I'J'}^{IJc}\delb_c\del^{I'}L^{J'}u\right) = \delb_s\rhob_{a I'J'}^{IJc}\cdot \delb_c\del^{I'}L^{J'}u
+ \rhob_{a I'J'}^{IJc}\delb_s\delb_c\del^{I'}L^{J'}u
$$
We see that
$$
\left|\delb_s\rhob_{a I'J'}^{IJc}\right|\leq C(s/t^2)
$$
thus
\begin{equation}\label{eq 2 05-09-2017}
\left|\delb_s\rhob_{a I'J'}^{IJc}\cdot \delb_c\del^{I'}L^{J'}u\right|\leq C(s/t^2)\big|\delb_c\del^{I'}L^{J'}u \big|,
\end{equation}

Now we consider $\rhob_{a I'J'}^{IJc}\delb_s\delb_c\del^{I'}L^{J'}u$. This is by the following calculation (by \eqref{eq 1 19-08-2017}):
$$
\aligned
\delb_s\delb_c\del^{I'}L^{J'}u =& \delb_s\left(t^{-1}L_c\del^{I'}L^{J'}u\right) = \delb_s\left(t^{-1}\del^{I'}L_cL^{J'}u\right) + \sum_{|I''|=I'}\delb_s\left(t^{-1}\theta_{cI''}^{I'}\del^{I''}L^{J'}u\right)
\endaligned
$$
Thus we see that
$$
\big|\rhob_{a I'J'}^{IJc}\delb_s\delb_c\del^{I'}L^{J'}u\big|
\leq C\sum_{|I''|\leq|I|\atop |J''|\leq|J|}\left((s/t^2)|\del^{I''}L^{J''}u| + t^{-1}|\delb_s\del^{I''}L^{J''}u|\right).
$$

For the second term in \eqref{eq 1 05-09-2017}, we see that
$$
\delb_s\left(\rho_{a I'}^{IJ}\del^{I'}L^{J}u\right) = \delb_s\rho_{a I'}^{IJ}\cdot \del^{I'}L^{J}u + \rho_{a I'}^{IJ}\cdot\delb_s\del^{I'}L^Ju
$$
and recall that $\rho_{a I'}^{IJ}$ is homogeneous of degree $\leq -1$. Thus we see that $|\delb_s\rho_{a I'}^{IJ}|\leq Cs/t^2$. So we see that
$$
\big|\delb_s\left(\rho_{a I'}^{IJ}\del^{I'}L^{J}u\right)\big|\leq C\sum_{|I''|\leq|I|\atop|J'|\leq|J|}\left(\big|t^{-1}\delb_s\del^{I''}L^{J''}u\big|
  + (s/t^2)\big|\del^{I''}L^{J''}u\big|\right).
$$
Thus we see that the desired estimate is established.
\end{proof}

\begin{lemma}\label{lem 1 05-09-2017}
Let $u$ be a function defined in $\Kcal$ sufficiently regular. Then the following estimate holds:
\begin{equation}\label{eq 3 05-09-2017}
\big|[\del^IL^J,\delb_a\delb_b]u\big|\leq C\sum_{c,|I'|\leq|I|\atop|J'|\leq|J|}\left(t^{-1}|\delb_c\del^{I'}L^{J'}u| + t^{-2}|\del^{I'}L^{J'}u|\right).
\end{equation}
\end{lemma}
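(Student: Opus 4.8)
The strategy is to argue as in the proof of Lemma~\ref{lem 3 03-09-2017}, starting from
$$[\del^IL^J,\delb_a\delb_b]u = [\del^IL^J,\delb_a]\delb_bu + \delb_a\big([\del^IL^J,\delb_b]u\big),$$
and to estimate the two pieces separately. The only new elementary input is that $\delb_c v=t^{-1}L_cv$ and $\delb_c t=x^c/t$, so that for any sufficiently regular $v$
$$\delb_c\delb_dv = \delb_c\big(t^{-1}L_dv\big) = -\,x^ct^{-3}L_dv + t^{-1}\delb_c(L_dv).$$
This identity converts a second hyperbolic derivative into a first-order operator plus a single extra boost, at the price of one power of $t^{-1}$, and it is the device that keeps the whole estimate inside the index ranges allowed by \eqref{eq 3 05-09-2017}.

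For the term $\delb_a\big([\del^IL^J,\delb_b]u\big)$ I would first invoke \eqref{eq 1 23-08-2017} of Lemma~\ref{lem 2 21-08-2017} to write
$$[\del^IL^J,\delb_b]u = \sum_{|I'|\le|I|,\ |J'|<|J|}\rhob_{bI'J'}^{IJc}\,\delb_c\del^{I'}L^{J'}u + \sum_{1\le|I'|\le|I|}\rho_{bI'}^{IJ}\,\del^{I'}L^Ju,$$
where $\rhob_{bI'J'}^{IJc}$ is homogeneous of degree $|I'|-|I|\le 0$ and $\rho_{bI'}^{IJ}$ of degree $|I'|-|I|-1\le-1$ (so $|\rho_{bI'}^{IJ}|\le Ct^{-1}$), and then apply $\delb_a$ term by term. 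When $\delb_a$ hits a coefficient it lowers the homogeneity degree by one (via $\delb_a=t^{-1}L_a$ and Lemma~\ref{lem 1 26-06-2017}), producing directly the $t^{-1}|\delb_c\del^{I'}L^{J'}u|$ and $t^{-2}|\del^{I'}L^{J'}u|$ contributions. When $\delb_a$ hits $\delb_c\del^{I'}L^{J'}u$ I would use the identity of the first paragraph with $v=\del^{I'}L^{J'}u$ and then expand $L_c\del^{I'}L^{J'}u=\del^{I'}L^{(c,J')}u+\sum_{|I''|=|I'|}\theta\,\del^{I''}L^{J'}u$ by \eqref{eq 1 19-08-2017}, writing $L^{(c,J')}$ for the boost string obtained by prepending $L_c$ to $L^{J'}$; the crucial point is that $|J'|<|J|$, so $|(c,J')|=|J'|+1\le|J|$ and $|I''|=|I'|\le|I|$, and (using $|x^a|\le t$ in $\Kcal$) every resulting term has the admissible form. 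When $\delb_a$ hits $\del^{I'}L^Ju$ one simply gets $\rho_{bI'}^{IJ}\,\delb_a\del^{I'}L^Ju$, already admissible.

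For the term $[\del^IL^J,\delb_a]\delb_bu$ I would again apply \eqref{eq 1 23-08-2017}, now with $\delb_bu$ in place of $u$, obtaining a sum of $\rhob\,\delb_c\del^{I'}L^{J'}(\delb_bu)$ with $|I'|\le|I|$, $|J'|<|J|$, and of $\rho\,\del^{I'}L^J(\delb_bu)$ with $1\le|I'|\le|I|$. The essential move is to push the $\delb_b$ to the \emph{outside} before anything else: by Lemma~\ref{lem 1 21-08-2017} ($[L^{J'},\delb_b]u$ is a homogeneous-of-degree-$0$ combination of $\delb_eL^{J''}u$ with $|J''|<|J'|$) and Lemma~\ref{lem 2 19-08-2017} ($[\del^{I'},\delb_b]$ is a combination of $\del^{(\cdot)}$ with coefficients homogeneous of degree $\le-1$), one writes $\del^{I'}L^{J'}(\delb_bu)=\delb_b\del^{I'}L^{J'}u+(\text{commutator terms})$ in which the commutator terms carry either one fewer boost than $L^{J'}$ or an extra $t^{-1}$. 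One must \emph{not} replace $\delb_bu$ by $t^{-1}L_bu$, since that inserts an extra boost \emph{inside} $L^{J'}$ and breaks the count. Applying $\delb_c$ to $\delb_b\del^{I'}L^{J'}u$ then yields a $\delb_c\delb_b$, which the identity of the first paragraph reduces exactly as in the previous piece, once more relying on $|J'|<|J|$; the $\del^{I'}L^J(\delb_bu)$ term is treated identically, with the prefactor $\rho$ ($|\rho|\le Ct^{-1}$) absorbing the rest. Summing all contributions and dominating the homogeneous coefficients by the bounds of Lemma~\ref{lem 1 26-06-2017} yields the right-hand side of \eqref{eq 3 05-09-2017}.

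The step I expect to be the real obstacle is precisely this multi-index bookkeeping. A careless computation threatens two failures that \eqref{eq 3 05-09-2017} forbids: raising $|J|$ to $|J|+1$ (each conversion of a second $\delb$ uses $L_\cdot=t\delb_\cdot$, which formally adds a boost), and producing a $\delb_s$-derivative weighted by $s^{-1}$ (any passage to the natural frame brings in $\del_t=(t/s)\delb_s$). Both are defused by the same discipline: keep every $\delb_a$ as an outer operator, and convert second-order hyperbolic derivatives only through $\delb_c\delb_d=t^{-1}\delb_cL_d-x^ct^{-2}\delb_d$; then the added boost always lands on some $L^{J'}$ with $|J'|<|J|$ and stays within $|J|$, and no bare $\del_t$ — equivalently no $\delb_s$ — is ever created. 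This is also why one cannot simply quote the coarser estimate of Lemma~\ref{lem 3 03-09-2017}, whose right-hand side does tolerate a $\delb_s$: the sharper bound \eqref{eq 3 05-09-2017} forces exactly the organization above.
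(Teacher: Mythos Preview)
Your proposal is correct and follows essentially the same decomposition as the paper: split $[\del^IL^J,\delb_a\delb_b]u = [\del^IL^J,\delb_a]\delb_bu + \delb_a\big([\del^IL^J,\delb_b]u\big)$, expand each commutator via \eqref{eq 1 23-08-2017}, and reduce the resulting $\delb_c\delb_d$ factors through $\delb_d = t^{-1}L_d$ together with \eqref{eq 1 19-08-2017}. Your treatment of the second piece $\delb_a\big([\del^IL^J,\delb_b]u\big)$ matches the paper's almost verbatim.

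The one point where your route diverges is the $\rhob$--sum in the first piece, namely $\rhob_{aI'J'}^{IJc}\,\delb_c\del^{I'}L^{J'}(\delb_bu)$ with $|J'|<|J|$. You insist on first commuting $\delb_b$ to the outside and warn that the substitution $\delb_bu=t^{-1}L_bu$ ``inserts an extra boost inside $L^{J'}$ and breaks the count.'' This warning is unfounded here: since $|J'|<|J|$, the string $L^{J'}L_b$ has order $|J'|+1\le|J|$, which is exactly within the admissible range of \eqref{eq 3 05-09-2017}. The paper makes precisely this substitution, writing $\delb_c\del^{I'}L^{J'}(t^{-1}L_bu)$ and expanding by homogeneity of $t^{-1}$; this is the shorter path. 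Your concern would be legitimate only for the $\rho$--sum, where the boost index is already the full $J$ --- and there both you and the paper handle $\del^{I'}L^J\delb_bu$ via the commutator $[\del^{I'}L^J,\delb_b]$ (i.e.\ \eqref{eq 2 23-08-2017}) rather than by substitution. So your alternative for the $\rhob$--sum is valid but unnecessarily circuitous; otherwise the two arguments coincide.
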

\begin{proof}
We remark that
\begin{equation}\label{eq 4 05-09-2017}
\aligned
\,[\del^IL^J,\delb_a\delb_b]u =& [\del^IL^J,\delb_a] \delb_bu + \delb_a\left([\del^IL^J,\delb_b]u\right)
\endaligned
\end{equation}

For the first term in right-hand-side of the above equation, we see that by \eqref{eq 1 23-08-2017},
$$
\aligned
\,[\del^IL^J,\delb_a] \delb_bu =& \sum_{|I'|\leq|I|\atop |J'|<|J|}\rhob_{a I'J'}^{IJc}\delb_c\del^{I'}L^{J'}\delb_bu
 + \sum_{1\leq|I'|\leq |I|}\rho_{a I'}^{IJ}\del^{I'}L^{J}\delb_bu
\endaligned
$$
We see that (by homogeneity of $\rhob_{a I'J'}^{IJc}$ and $t^{-1}$)
$$
\aligned
\big|\rhob_{a I'J'}^{IJc}\delb_c\del^{I'}L^{J'}\delb_bu\big| =&\big|\rhob_{a I'J'}^{IJc}\delb_c\del^{I'}L^{J'}\left(t^{-1}L_bu\right)\big|
\\
\leq &C\sum_{c,|I''|\leq|I|\atop|J''|<|J|}\left(t^{-1}|\delb_c\del^{I''}L^{J''}u| + t^{-2}|\del^{I''}L^{J''}u|\right).
\endaligned
$$
By \eqref{eq 2 23-08-2017}
$$
\aligned
\big|\rho_{aI'}^{IJ}\del^{I'}L^{J}\delb_bu\big|\leq&
Ct^{-1}\sum_{{c,|I''|\leq|I'|}\atop |J''|<|J|}|\delb_c\del^{I''}L^{J''}u| + Ct^{-2}\sum_{1\leq|I''|\leq |I'|}\left|\del^{I''}L^Ju\right|
\endaligned
$$

For the second term in right-hand-side of \eqref{eq 4 05-09-2017}, we see that by \eqref{eq 1 23-08-2017}
$$
\aligned
\delb_a\left([\del^IL^J,\delb_b]u\right) =&\sum_{|I'|\leq|I|\atop |J'|<|J|}\delb_a\left(\rhob_{b I'J'}^{IJc}\delb_c\del^{I'}L^{J'}u\right) + \sum_{1\leq|I'|\leq |I|}\delb_a\left(\rho_{b I'}^{IJ}\del^{I'}L^{J}u\right).
\endaligned
$$
By homogeneity, we see that
$$
\aligned
\left|\delb_a\left([\del^IL^J,\delb_b]u\right)\right|
\leq& C\sum_{c,|I'|\leq|I|\atop|J'|<|J|}\left(|\delb_a\delb_c\del^{I'}L^{J'}u| + t^{-1}|\delb_c\del^{I'}L^{J'}u|\right)
\\
&+C\sum _{1\leq|I'|\leq|I|}\left(t^{-1}|\delb_a\del^{I'}L^{J}u| + t^{-2}|\del^{I'}L^{J}u|\right).
\endaligned
$$
Now we see that
$$
\aligned
|\delb_a\delb_c\del^{I'}L^{J'}u| =& \big|\delb_a\left(t^{-1}L_c\del^{I'}L^{J'}u\right)\big| \leq C t^{-2}|L_c\del^{I'}L^{J'}u| + Ct^{-1}|\delb_aL_c\del^{I'}L^Ju|
\\
\leq& C\sum_{|I''|=|I'|\atop |J''|\leq|J'|}\left(t^{-2}|\del^{I''}L^{J''}u| + t^{-1}|\delb_a\del^{I''}L^{J''}u|\right)
\endaligned
$$

The above bounds conclude the desired result.
\end{proof}

\subsection{Estimates based on commutators I}
In this subsection, we will control the following terms
\begin{equation}\label{eq 3 24-08-2017}
\|(s^2/t)\del^IL^J\delb_su\|_{L^2(\Hcal_s)},\quad \|s\del^IL^J\delb_au\|_{L^2(\Hcal_s)},\quad \|(s/t)\del^IL^J u\|_{L^2(\Hcal_s)}
\end{equation}
where $|I|+|J|\leq N$.
\begin{equation}\label{eq 6 05-09-2017}
\|s^2\del^IL^J\delb_s\delb_au\|_{L^2(\Hcal_s)},\quad \|st\del^IL^J\delb_a\delb_b u\|_{L^2(\Hcal_s)}
\end{equation}
where $|I|+|J|\leq N-1$.
We have the following result
\begin{lemma}\label{lem 2 24-08-2017}
Let $u$ be a function defined in $\Kcal$, sufficiently regular. Then the terms in \eqref{eq 3 24-08-2017} and \eqref{eq 6 05-09-2017} are bounded by
$$
\sum_{|I|+|J|\leq N}\Ec(s,\del^IL^Ju)^{1/2}.
$$
\end{lemma}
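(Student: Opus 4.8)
\medskip
\noindent The plan is to commute $\del^IL^J$ onto $u$ in each of the quantities listed in \eqref{eq 3 24-08-2017} and \eqref{eq 6 05-09-2017}. After this commutation the principal part becomes a weighted hyperbolic derivative of $\del^IL^Ju$ of exactly one of the three types $(s^2/t)\delb_s(\cdot)$, $s\delb_a(\cdot)$, $(s/t)(\cdot)$, which are controlled directly: $s\delb_a\del^IL^Ju$ belongs to the flat energy by its definition \eqref{eq 4 10-05-2017}, $(s^2/t)\delb_s\del^IL^Ju$ is bounded by \eqref{eq 1 25-05-2017}, and $(s/t)\del^IL^Ju$ is bounded by \eqref{eq 6 23-05-2017} (using $s/t\leq s/r$ in $\Kcal$), each giving $\Ec(s,\del^IL^Ju)^{1/2}$. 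The commutator remainders are supplied by the lemmas of the previous two subsections, and the point of the argument is to check that, once multiplied by the relevant weight ($s^2/t$, $s$, $s/t$, $s^2$ or $st$), they again land in one of the three types applied to $\del^{I'}L^{J'}u$ with $|I'|+|J'|\leq N$. Throughout we use the elementary inequalities $0<s/t\leq 1$, $t\geq 1$ and $t/s^2<1$ valid in $\Kcal$, the transition relation $\del_\alpha=\Psib_\alpha^\beta\delb_\beta$, and the identities $t\delb_a=L_a$, $\delb_a s=0$, $\delb_a t=x^a/t$, $\delb_s t=s/t$.

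\medskip
\noindent For the first-order terms \eqref{eq 3 24-08-2017}: I write $s\del^IL^J\delb_au=s\delb_a\del^IL^Ju+s[\del^IL^J,\delb_a]u$ and feed the commutator through Lemma~\ref{lem 2 21-08-2017}, whose output is a combination of $s\,|\delb_c\del^{I'}L^{J'}u|$ and $(s/t)\,|\del^{I'}L^{J'}u|$ with $|I'|+|J'|\leq|I|+|J|\leq N$ --- already of the required types. Similarly $(s^2/t)\del^IL^J\delb_su=(s^2/t)\delb_s\del^IL^Ju+(s^2/t)[\del^IL^J,\delb_s]u$, and in the commutator (Lemma~\ref{lem 2 22-08-2017}) I re-express each natural-frame derivative via $\del_\alpha=\Psib_\alpha^\beta\delb_\beta$, so that $|\del_\alpha w|\leq (t/s)|\delb_s w|+\sum_b|\delb_b w|$; multiplying out, the surviving terms are $(s^2/t)|\delb_s\del^{I'}L^{J'}u|$ together with $(s^3/t^2)|\delb_b\del^{I'}L^{J'}u|$, and the latter is of type $s\delb_b(\cdot)$ because $s^3/t^2=s(s/t)^2\leq s$. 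The term $(s/t)\del^IL^Ju$ needs no commutation at all.

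\medskip
\noindent For the second-order terms \eqref{eq 6 05-09-2017} the weights $st$ and $s^2$ are large, and it is the identity $t\delb_a=L_a$ that makes them affordable. For the principal part of $st\,\del^IL^J\delb_a\delb_bu$ I write $st\,\delb_a\delb_b v=s\delb_a(L_bv)-s(x^a/t)\delb_bv$ with $v=\del^IL^Ju$, then commute $L_b$ past $\del^I$ by Lemma~\ref{lem 1 19-08-2017}, so that $L_bv$ equals $\del^IL^{\hat J}u$ up to terms of the form $\del^{I'}L^Ju$, with $|\hat J|=|J|+1$ and all of total order $\leq N$ (recall $|I|+|J|\leq N-1$); this produces $s\delb_a$-quantities, and the term with $x^a/t$ is harmless since $|x^a/t|\leq 1$. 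For the principal part of $s^2\,\del^IL^J\delb_s\delb_au$ I use $t\delb_a=L_a$ and $\delb_s(t^{-1}w)=-(s/t^3)w+t^{-1}\delb_s w$ to get $s^2\delb_s\delb_a v=-(s/t)^3 L_av+(s^2/t)\delb_s(L_av)$, and again commute $L_a$ inside $\del^I$; then $(s/t)^3\leq s/t$ handles the first piece and \eqref{eq 1 25-05-2017} the second, all at order $\leq N$. The commutator remainders come from Lemma~\ref{lem 1 05-09-2017} for $\delb_a\delb_b$ (weights $st\cdot t^{-1}=s$ and $st\cdot t^{-2}=s/t$ land immediately in the types $s\delb_c(\cdot)$ and $(s/t)(\cdot)$) and from Lemma~\ref{lem 3 03-09-2017} for $\delb_s\delb_a$.

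\medskip
\noindent The step I expect to be most delicate is the weight count for the $\delb_s\delb_a$ commutator. Multiplying Lemma~\ref{lem 3 03-09-2017} by $s^2$, the $\delb_s$-type and $\delb_c$-type pieces go into the admissible families via $s^2\cdot t^{-1}=s^2/t$ and $s^2\cdot(s/t^2)=s^3/t^2\leq s$; but the pieces carrying no hyperbolic derivative on $u$ must be checked to have weight $\lesssim s/t^3$ --- one power of $t$ better than the coarse $s/t^2$ recorded in the statement of that lemma --- which follows from $\delb_s(t^{-1}w)=-(s/t^3)w+t^{-1}\delb_s w$ and from the homogeneity degree $\leq -1$ of the coefficients $\rho_{aI'}^{IJ}$ in \eqref{eq 1 23-08-2017}; then $s^2\cdot(s/t^3)=(s/t)^3\leq s/t$ places them in the $(s/t)\del^{I'}L^{J'}u$ family. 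With this in hand no induction is required: every term reduces in at most two of the moves above to one of $(s^2/t)\delb_s(\del^{I'}L^{J'}u)$, $s\delb_a(\del^{I'}L^{J'}u)$ or $(s/t)(\del^{I'}L^{J'}u)$ with $|I'|+|J'|\leq N$, each bounded by $\Ec(s,\del^{I'}L^{J'}u)^{1/2}$.
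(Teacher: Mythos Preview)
Your proposal is correct and follows essentially the same route as the paper: commute $\del^IL^J$ inside, reduce the principal parts of the second-order terms via $t\delb_a=L_a$ and Lemma~\ref{lem 1 19-08-2017}, and feed the commutators through Lemmas~\ref{lem 2 21-08-2017}, \ref{lem 2 22-08-2017}, \ref{lem 3 03-09-2017}, \ref{lem 1 05-09-2017}. The only cosmetic difference is that where you convert $\del_a$ to the hyperbolic frame by $|\del_a w|\leq (t/s)|\delb_s w|+\sum_c|\delb_c w|$, the paper instead invokes \eqref{eq 7 22-07-2017} to control $\|s(s/t)^2\del_a w\|_{L^2(\Hcal_s)}$ directly; both land in the same place. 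Your final paragraph is in fact more careful than the paper: the stated bound $(s/t^2)|\del^{I''}L^{J''}u|$ in Lemma~\ref{lem 3 03-09-2017} would give the insufficient weight $s^3/t^2$ after multiplication by $s^2$, and it is indeed the sharper $(s/t^3)$ (visible in that lemma's proof via $\delb_s(t^{-1})=-s/t^3$ and the homogeneity of $\rho_{aI'}^{IJ}$) that makes the term fit into the $(s/t)\del^{I''}L^{J''}u$ family.
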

\begin{proof}
These are by apply the decomposition of commutators. For the first term in \eqref{eq 3 24-08-2017},
$$
\aligned
\big|(s^2/t)\del^IL^J\delb_su\big|\leq& \big|(s^2/t)\delb_s\del^IL^Ju\big| + \big|(s^2/t)[\del^IL^J,\delb_s]u\big|
\\
\leq &\big|(s^2/t)\delb_s\del^IL^Ju\big| + C(s/t)\sum_{|I'|<|I|}|\del_t\del^{I'}L^Ju| + Cs(s/t)^2\sum_{{\alpha,|I'|\leq|I|}\atop|J'|<J}|\del_\alpha\del^{I'}L^{J'}u|
\\
\leq& \big|(s^2/t)\delb_s\del^IL^Ju\big| + C\sum_{|I'|<|I|}|(1+s^2/t)\delb_s\del^{I'}L^Ju|
 \\
 &+ C\sum_{{a,|I'|\leq|I|}\atop|J'|<J}|s(s/t)^2\del_a\del^{I'}L^{J'}u|
\endaligned
$$
thus (remark that for the second term, $s^2/t\geq 1\geq s/t$ in $\Kcal$)
$$
\big\|(s^2/t)\del^IL^J\delb_su\big\|_{L^2(\Hcal_s)}\leq C\sum_{|I|+|J|\leq N}\Ec(s,\del^IL^Ju)^{1/2}.
$$

For the second term we apply \eqref{eq 2 23-08-2017}, we omit the detail. The third is guaranteed by \eqref{eq 6 23-05-2017}.

For the first term in \eqref{eq 6 05-09-2017}, we see that
$$
\|s^2\del^IL^J\delb_s\delb_au\|_{L^2(\Hcal_s)}\leq \|s^2\delb_s\delb_a\del^IL^Ju\|_{L^2(\Hcal_s)} + \|s^2[\del^IL^J,\delb_s\delb_a]u\|_{L^2(\Hcal_s)}
$$
By lemma \ref{lem 3 03-09-2017}, we see that
$$
\|s^2[\del^IL^J,\delb_s\delb_a]u\|_{L^2(\Hcal_s)}\leq \sum_{|I'|+|J'|\leq N}\Ec(s,\del^{I'}L^{J'}u)^{1/2}.
$$
$$
\aligned
\|s^2\delb_s\delb_a\del^IL^Ju\|_{L^2(\Hcal_s)} =& \|s^2\delb_s\left(t^{-1}L_a\del^IL^Ju\right)\|_{L^2(\Hcal_s)}
\\
\leq &\|(s^2/t)\delb_sL_a\del^IL^Ju\|_{L^2(\Hcal_s)} + \|(s/t)^3L_a\del^IL^Ju\|_{L^2(\Hcal_s)}
\\
\leq &\|(s^2/t)\delb_s\del^IL_aL^Ju\|_{L^2(\Hcal_s)} + \|(s/t)^3\del^IL_aL^Ju\|_{L^2(\Hcal_s)}
\\
&+\|(s^2/t)\delb_s\left([L_a,\del^I]L^Ju\right)\|_{L^2(\Hcal_s)} + \|(s/t)^3[L_a,\del^I]L^Ju\|_{L^2(\Hcal_s)}
\endaligned
$$
Then by \eqref{eq 1 19-08-2017}, we see that
$$
\aligned
\|s^2\delb_s\delb_a\del^IL^Ju\|_{L^2(\Hcal_s)}\leq& \sum_{|I'|\leq|I|\atop|J'|\leq|J|+1}\|(s^2/t)\delb_s\del^{I'}L^{J'}u\|_{L^2(\Hcal_s)} + \|(s/t)^3\del^{I'}L^{J'}u\|_{L^2(\Hcal_s)}
\\
\leq&\sum_{|I'|+|J'|\leq N}\Ec(s,\del^{I'}L^{J'}u)^{1/2}
\endaligned
$$
and this proved the bound on the first term of \eqref{eq 6 05-09-2017}.

Now we regard the term $\del^IL^J\delb_a\delb_b u$.
\begin{equation}\label{eq 1 07-09-2017}
\aligned
\|st\del^IL^J\delb_a\delb_bu\|_{L^2(\Hcal_s)} =& \|st\delb_a\delb_b\del^IL^Ju\|_{L^2(\Hcal_s)} + \|st[\del^IL^J,\delb_a\delb_b]u\|_{L^2(\Hcal_s)}.
\endaligned
\end{equation}
For the first term in the right-hand-side of the above equation, we see that
$$
\aligned
\|st\delb_a\delb_b\del^IL^Ju\|_{L^2(\Hcal_s)} =& \|st\delb_a\left(t^{-1}L_b\del^IL^Ju\right)\|_{L^2(\Hcal_s)}
\\
\leq& \|s\delb_aL_b\del^IL^Ju\|_{L^2(\Hcal_s)} + \|(s/t)L_b\del^IL^Ju\|_{L^2(\Hcal_s)}
\\
\leq& \|s\delb_a\del^IL_b L^Ju\|_{L^2(\Hcal_s)} + \|(s/t)\del^I L_b L^Ju\|_{L^2(\Hcal_s)}
\\
&+\|s\delb_a\left([L_b,\del^I]L^Ju\right)\|_{L^2(\Hcal_s)} + \|(s/t)[L_b,\del^I]L^Ju\|_{L^2(\Hcal_s)}.
\endaligned
$$
Then, also by \eqref{eq 1 19-08-2017},
$$
\|st\delb_a\delb_b\del^IL^Ju\|_{L^2(\Hcal_s)} \leq C\sum_{|I'|=|I|\atop |J'|\leq|J|+1}\Ec(s,\del^{I'}L^{J'}u)^{1/2}.
$$
For the second term in right-hand-side of \eqref{eq 1 07-09-2017}, by applying lemma \ref{lem 1 05-09-2017}, we see that it is also bounded by
$$
C\sum_{|I|+|J|\leq N}\Ec(s,\del^IL^Ju)
$$
Thus the desired bound is established.
\end{proof}

We also establish a rough bound on $\del^IL^J\delb_s\delb_s$:
\begin{lemma}\label{lem 2 07-09-2017}
Let $u$ be a function defined in $\Kcal$, sufficiently regular and vanishes near the conical boundary. Then the following bound holds for $|I|+|J|\leq N-1$:
\begin{equation}\label{eq 5 07-09-2017}
\|s\delb_s\delb_s\del^IL^Ju\|_{L^2(\Hcal_s)} +\|s\del^IL^J\delb_s\delb_su\|_{L^2(\Hcal_s)}\leq C\sum_{|I'|+|J'|\leq N}\Ec(s,\del^{I'}L^{J'}u)^{1/2}.
\end{equation}
\end{lemma}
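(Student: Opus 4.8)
The plan is to avoid invoking the wave operator (which is not available for a general $u$) and instead pull one coordinate time--derivative out of $\delb_s\delb_s$, thereby reducing everything to the bound \eqref{eq 1 25-05-2017} for $\|s^2t^{-1}\delb_s(\,\cdot\,)\|_{L^2(\Hcal_s)}$. The key pointwise identity I would use, valid for any sufficiently regular $v$ on $\Kcal$, is
\[
s\,\delb_s\delb_s v \;=\; \frac{s^2}{t}\,\delb_s\big(\del_t v\big) \;+\; \frac{r^2}{t^2}\,\delb_s v ,
\]
obtained by writing $\delb_s v=(s/t)\del_t v$, applying $\delb_s=\del_s$ so that $\delb_s(s/t)=r^2/t^3$, and then rewriting $\del_t v=(t/s)\delb_s v$ in the lower-order remainder. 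Its merit is that the ``dangerous'' second $s$--derivative $\delb_s\delb_s v$, which cannot be controlled by the conformal energy of $v$ alone, is traded for $\delb_s$ applied to the \emph{once-differentiated} object $\del_t v$.

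For the first term of \eqref{eq 5 07-09-2017} I would take $v=\del^IL^Ju$ with $|I|+|J|\le N-1$, so that $\del_t\del^IL^Ju$ has order $|I|+1+|J|\le N$. The identity then gives
\[
\big\|s\delb_s\delb_s\del^IL^Ju\big\|_{L^2(\Hcal_s)}\le \big\|s^2t^{-1}\delb_s(\del_t\del^IL^Ju)\big\|_{L^2(\Hcal_s)}+\big\|(r/t)^2\delb_s\del^IL^Ju\big\|_{L^2(\Hcal_s)} .
\]
The first summand is $\le C\,\Ec(s,\del_t\del^IL^Ju)^{1/2}$ by \eqref{eq 1 25-05-2017}; for the second, in $\Kcal$ one has $r/t<1$ and $s^2/t>1$ (since $t-r>1$ forces $s^2=(t-r)(t+r)>t$), so it is $\le\|(s^2/t)\delb_s\del^IL^Ju\|_{L^2(\Hcal_s)}\le C\,\Ec(s,\del^IL^Ju)^{1/2}$, again by \eqref{eq 1 25-05-2017}. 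Summing over the relevant multi-indices finishes this term.

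For the second term I would write $\del^IL^J\delb_s\delb_su=\delb_s\delb_s\del^IL^Ju+[\del^IL^J,\delb_s\delb_s]u$, handle the first piece as above, and bound the commutator via \eqref{eq 8 31-08-2017}, which yields a finite sum of terms $s(s/t)^2\del_\alpha\del_\beta\del^{I'}L^{J'}u$ and $st^{-1}\del_\alpha\del^{I'}L^{J'}u$ with $|I'|+|J'|\le|I|+|J|-1\le N-2$. Using $\del_t=(t/s)\delb_s$, $\del_a=\delb_a-(x^a/s)\delb_s$, the relations $[\delb_\alpha,\delb_\beta]=0$ (the hyperbolic frame is a coordinate frame), and the homogeneous derivatives of these coefficients, each such term rewrites as a combination --- with coefficients that in $\Kcal$ are bounded by powers of $s/t\le1$ and $r/t\le1$ times at most one factor of $s$, $s^2$ or $st$ --- of the admissible quantities $s\delb_s\delb_s\del^{I'}L^{J'}u$, $s^2\delb_s\delb_a\del^{I'}L^{J'}u$, $st\delb_a\delb_b\del^{I'}L^{J'}u$, together with single weighted $\delb$--derivatives; the inequalities $r<t\le t^2$ and $t\le s^2$ absorb the weights. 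The first three are controlled by the first part of the present lemma (applicable since $|I'|+|J'|\le N-2$) and by Lemma \ref{lem 2 24-08-2017} together with the estimates in its proof, while the $st^{-1}\del_\alpha\del^{I'}L^{J'}u$ terms reduce to $\delb_s\del^{I'}L^{J'}u$ and $s\delb_a\del^{I'}L^{J'}u$, both bounded by the conformal energy.

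I expect the main obstacle to be conceptual rather than computational: one has to see that $\delb_s\delb_su$ is controllable at all without using the equation, which is precisely what the displayed identity accomplishes --- the conformal energy, through $\|s^2t^{-1}\delb_s(\,\cdot\,)\|$ applied to first-order derivatives, already sees the second $s$--derivative. The remaining work in the second part is routine but somewhat lengthy bookkeeping: faithfully converting each mixed-frame second derivative $\del_\alpha\del_\beta$ into admissible weighted hyperbolic derivatives and verifying that every weight is absorbed by the geometric inequalities of $\Kcal$.
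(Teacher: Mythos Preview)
Your argument is correct. For the first summand $\|s\delb_s\delb_s\del^IL^Ju\|_{L^2(\Hcal_s)}$ your identity
\[
s\,\delb_s\delb_s v = \frac{s^2}{t}\,\delb_s(\del_t v) + \frac{r^2}{t^2}\,\delb_s v
\]
is precisely the paper's decomposition \eqref{eq 9-07-09-2017}; the two proofs are essentially identical here.

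For the second summand the routes diverge. The paper never uses the commutator $[\del^IL^J,\delb_s\delb_s]$: instead it writes $\delb_s\delb_s u=(s/t)\del_t\delb_su$, applies Leibniz to $\del^IL^J\big((s/t)\del_t\delb_su\big)$, commutes $\del_t$ past $L^{J_2}$ via \eqref{eq 6 22-08-2017}, and lands directly on sums of $\del^{I_1}L^{J_1}(s/t)\cdot\del_\beta\del^{I_2}L^{J_2'}\delb_su$. Each of these is bounded by combining \eqref{eq 2 20-08-2017} (giving $|s\,\del^{I_1}L^{J_1}(s/t)|\le C(s^2/t)$ or $\le C$) with the estimates $\|(s^2/t)\del^{I'}L^{J'}\delb_su\|_{L^2(\Hcal_s)}$ already established in Lemma~\ref{lem 2 24-08-2017}. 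Your route via \eqref{eq 8 31-08-2017} and the subsequent reconversion of $s(s/t)^2\del_\alpha\del_\beta$ into $s\delb_s\delb_s$, $s^2\delb_s\delb_a$, $st\delb_a\delb_b$ is valid (the coefficient checks you sketch all go through in $\Kcal$), but it is a detour: you leave the hyperbolic frame through the commutator lemma and then have to come back. The paper's approach stays in the hyperbolic frame throughout and is shorter; yours is more modular in that every step is an invocation of a previously stated lemma.
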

\begin{proof}
For the first term, we recall that
\begin{equation}\label{eq 9-07-09-2017}
\aligned
s\delb_s\delb_s\del^IL^Ju =& s\delb_s\left((s/t)\del_t\del^IL^Ju\right)
=& s\delb_s(s/t)\cdot\del_t\del^IL^Ju + s(s/t)\delb_s\del_t\del^IL^Ju.
\endaligned
\end{equation}
We see that for the first term in right-hand-side, by \eqref{eq 2 20-08-2017},
$$
\|s\delb_s\left((s/t)\del_t\del^IL^Ju\right)\|_{L^2(\Hcal_s)}\leq C\|(s/t)\del_t\del^IL^Ju\|_{L^2(\Hcal_s)}
\leq  C\sum_{|I'|+|J'|\leq N}\Ec(s,\del^{I'}L^{J'}u)^{1/2}.
$$
For the second term in right-hand-side of \eqref{eq 9-07-09-2017},
We remark the following calculation (where we apply \eqref{eq 6 22-08-2017}):
\begin{equation}\label{eq 6 07-09-2017}
s\del^IL^J\delb_s\delb_su = \sum_{I_1+I_2=I\atop J_1+J_2=J}s\del^{I_1}L^{J_1}(s/t)\cdot \del_t\del^{I_2}L^{J_2}\delb_su
+ \sum_{I_1+I_2=I,J_1+J_2=J\atop |J_2'|<|J_2|}\!\!\!\!\!\!\!\!\!\!s\del^{I_1}L^{J_1}(s/t)\theta_{0J_2'}^{J_2\beta}\del_{\beta}\del^{I_1}L^{J_2'}\delb_su
\end{equation}
\end{proof}
Then by applying \eqref{eq 2 20-08-2017} and the bounds on terms in \eqref{eq 3 24-08-2017}, we see that the desired result is proved.

\subsection{Estimates based on commutators II}
In view of the global Sobolev inequality \eqref{ineq 1 sobolev}, to turn the $L^2$ bounds (supplied by the energy) into $L^{\infty}$ bounds, we need to  bound some terms. To do so , we need some preparations. Through out this subsection, we denote by $u$ a sufficiently regular function defined in $\Kcal$, and the following estimates are valid in $\Kcal$.

\begin{lemma}\label{lem 1 29-08-2017}
Let $u$ be a sufficiently regular function defined in $\Kcal$. Then the following estimates hold:
\begin{equation}\label{eq 5 29-08-2017}
\left|\del^{I'}L^{J'}\delb_s\del^IL^Ju\right|\leq C\sum_{|I''|\leq|I|+|I'|\atop |J''|\leq|J|+|J'|}\big|\delb_s\del^{I''}L^{J''}u\big|
+ C\sum_{{a,|I'|\leq|I'|+|I|}\atop{|J''|<|J'|+|J|}} |(s/t)\del_a\del^{I''}L^{J''}u|,
\end{equation}
\begin{equation}\label{eq 7 29-08-2017}
\left|\del^{I'}L^{J'}\delb_a\del^IL^Ju\right|\leq  C\sum_{{c,|I''|\leq|I|+|I'|}\atop{|J''|\leq|J|+|J'|}}\big|\delb_c\del^{I''}L^{J''}u\big|
+ Cs^{-1}\sum_{{1\leq|I''|\leq|I'|+|I|}\atop|J''|\leq|J'|+|J|}\big|(s/t)\del^{I''}L^{J''}u\big|.
\end{equation}
\end{lemma}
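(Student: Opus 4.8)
\emph{Proof strategy.} The plan is to prove both \eqref{eq 5 29-08-2017} and \eqref{eq 7 29-08-2017} by the same two moves, carried out in this order: (i) commute the outer string $\del^{I'}L^{J'}$ past the single hyperbolic derivative ($\delb_s$, resp.\ $\delb_a$) that stands between it and $\del^IL^Ju$, picking up a commutator; (ii) collapse the remaining product of Cartesian derivatives and Lorentz boosts into one admissible string $\del^{I''}L^{J''}$ by Lemma~\ref{lem 2 29-08-2017}. Along the way each weight that appears is normalized using the identities $\delb_s = (s/t)\del_t$, $\del_t = (t/s)\delb_s$, together with the inequality $t\le s^2$ valid throughout $\Kcal$ (cf.\ the remark after \eqref{eq 1 20-08-2017}).

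I would first treat \eqref{eq 7 29-08-2017}. Write $\del^{I'}L^{J'}\delb_a\del^IL^Ju = \delb_a\,\del^{I'}L^{J'}\del^IL^Ju + [\del^{I'}L^{J'},\delb_a]\del^IL^Ju$. For the first term, Lemma~\ref{lem 2 29-08-2017} (via \eqref{eq 6 29-08-2017}) expresses $\del^{I'}L^{J'}\del^IL^Ju$ as a constant‑coefficient sum of $\del^{I''}L^{J''}u$ with $|I''| = |I|+|I'|$ and $|J''|\le|J|+|J'|$; since $\delb_a$ passes through the constants this is already the $\delb_c$‑sum on the right of \eqref{eq 7 29-08-2017}. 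For the commutator I would invoke Lemma~\ref{lem 2 21-08-2017}, estimate \eqref{eq 2 23-08-2017}, applied with $\del^IL^Ju$ in place of $u$: the $\delb_c$‑piece is handled exactly as the term just treated (merge the inner string, keep $\delb_c$ outside), while the $t^{-1}$‑piece produces, after merging, $t^{-1}\sum|\del^{I''}L^{J''}u|$ with $1\le|I''|\le|I|+|I'|$ and $|J''|\le|J|+|J'|$; writing $t^{-1} = s^{-1}(s/t)$ gives the second sum on the right. For \eqref{eq 5 29-08-2017} the scheme is identical, with $\delb_a$ replaced by $\delb_s$ and Lemma~\ref{lem 2 21-08-2017} replaced by Lemma~\ref{lem 2 22-08-2017}. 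The ``clean'' term $\delb_s\,\del^{I'}L^{J'}\del^IL^Ju$, after merging, is the $\delb_s$‑sum on the right. For the commutator I would use \eqref{eq 8 22-08-2017} with $\del^IL^Ju$ in place of $u$: in the piece with coefficient $(s/t)$, merge the inner string and split the $\alpha$‑sum, using $(s/t)\del_t = \delb_s$ for $\alpha = 0$ (which joins the $\delb_s$‑sum) and noting that for $\alpha = a$ one gets precisely the $(s/t)\del_a$‑sum, with the strict bound $|J''|<|J'|+|J|$ inherited from the $|J_1|<|J'|$ in the commutator lemma; in the piece with coefficient $s^{-1}$, after merging and using $\del_t = (t/s)\delb_s$ one gets the factor $s^{-1}(t/s) = t/s^2$, bounded in $\Kcal$, so that term too joins the $\delb_s$‑sum.

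The only real work is the index bookkeeping: at each merging step one must check that the new multi‑indices obey $|I''|\le|I|+|I'|$ and $|J''|\le|J|+|J'|$, keeping the strict $J$‑inequalities where the commutator lemmas supply them, and one must confirm that the weights $s^{-1}$, $s/t$, $t^{-1}$, $t/s^2$ are all converted into the normalized quantities $\delb_s(\cdot)$, $(s/t)\del_a(\cdot)$, $s^{-1}(s/t)(\cdot)$ that appear in the statement. I do not expect any analytic obstacle beyond this, since every ingredient — the three commutator/merging lemmas and the bound $t\le s^2$ on $\Kcal$ — is already in hand.
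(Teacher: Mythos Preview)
Your proposal is correct and follows essentially the same approach as the paper: split off the commutator $[\del^{I'}L^{J'},\delb_s]$ (resp.\ $[\del^{I'}L^{J'},\delb_a]$), bound it via Lemma~\ref{lem 2 22-08-2017} (resp.\ Lemma~\ref{lem 2 21-08-2017}), merge the remaining strings via Lemma~\ref{lem 2 29-08-2017}, and normalize the weights using $(s/t)\del_t=\delb_s$, $t^{-1}=s^{-1}(s/t)$, and $t/s^2\le 1$ in $\Kcal$. The paper's proof is identical in structure, differing only in presenting \eqref{eq 5 29-08-2017} before \eqref{eq 7 29-08-2017}.
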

\begin{proof}
For the first term, we see that
$$
\aligned
\del^{I'}L^{J'}\delb_s\del^IL^Ju =& \delb_s\del^{I'}L^{J'}\del^IL^Ju + [\del^{I'}L^{J'},\delb_s]\del^IL^Ju
=: T_1 +T_2.
\endaligned
$$
Then we see that by \eqref{eq 6 29-08-2017}
$$
T_1 = \sum_{|I''|\leq|I'|+|I|\atop |J''|\leq|J'|+|J|}\zeta^{I'J'IJ}_{I''J''}\delb_s\del^{I''}L^{J''}u
$$
which leads to
$$
|T_1|\leq C\sum_{|I''|\leq|I|+|I'|\atop |J''|\leq|J|+|J'|}\big|\delb_s\del^{I''}L^{J''}u\big|.
$$
On the other hand,
$$
\aligned
|T_2| \leq& Cs^{-1}\sum_{|I''|<|I'|}\big|\del_t\del^{I''}L^{J'}\del^IL^Ju\big|
 + C(s/t)\sum_{{\alpha,|I''|\leq|I'|}\atop |J''|<|J'|}\big|\del_\alpha\del^{I''}L^{J''}\del^IL^Ju\big|
\\
=:& T_3+T_4 .
\endaligned
$$
We see that for each term of $T_3$, by \eqref{eq 6 29-08-2017} and the fact that in $\Kcal$ $s^2\geq t$,
$$
\aligned
s^{-1}\big|\del_t\del^{I''}L^{J'}\del^IL^Ju\big|
\leq&  C(t/s^2)\sum_{|I'''|\leq|I|\atop |J'''|\leq|J'|}\big|(s/t)\del_t\del^{I''}\del^{I'''}L^{J'''}L^Ju\big|
\\
\leq& C\sum_{|I'''|\leq|I|\atop |J'''|\leq|J'|}\big|\delb_s\del^{I''}\del^{I'''}L^{J'''}L^Ju\big|.
\endaligned
$$
Also, for  $T_4$, when $\alpha = 0$, $\del_{\alpha} = \del_t$, then
$$
\aligned
(s/t)\big|\del_t\del^{I''}L^{J''}\del^IL^Ju\big| \leq& C\sum_{|I'''|\leq |I|\atop|J'''|\leq|J''|}\big|(s/t)\del_t\del^{I''}\del^{I'''}L^{J'''}L^Ju\big|
\\
\leq& C\sum_{|I'''|\leq |I|\atop|J'''|\leq|J''|}\big|\delb_s\del^{I''}\del^{I'''}L^{J'''}L^Ju\big|.
\endaligned
$$
For $\alpha>0$, we denote by $\alpha = a$, then
$$
(s/t)\big|\del_a\del^{I''}L^{J''}\del^IL^Ju\big|\leq C\sum_{{a,|I'''|\leq |I|}\atop|J'''|\leq|J''|}\big|(s/t)\del_a\del^{I''}\del^{I'''}L^{J'''}L^Ju\big|
$$

This leads to
$$
|T_2|\leq C\sum_{|I''|\leq|I|+|I'|\atop |J''|\leq|J|+|J'|}\big|\delb_s\del^{I''}L^{J''}u\big| + C\sum_{{a,|I'''|\leq |I|}\atop|J'''|\leq|J''|}\big|(s/t)\del_a\del^{I''}\del^{I'''}L^{J'''}L^Ju\big|.
$$
The bounds of $T_1$ and $T_2$ leads to \eqref{eq 5 29-08-2017}.

For \eqref{eq 7 29-08-2017}, we see that
$$
\aligned
\del^{I'}L^{J'}\delb_a\del^IL^Ju =& \delb_a\del^{I'}L^{J'}\del^{I}L^Ju + [\del^{I'}L^{J'},\delb_a]\del^IL^Ju =: T_3 + T_4.
\endaligned
$$
We see that by \eqref{eq 6 29-08-2017}
$$
|T_3|\leq C\sum_{|I''|\leq|I|+|I'|\atop|J''|\leq |J|+|J'|}\big|\delb_a\del^{I''}L^{J''}u\big|.
$$
For $T_4$, we apply \eqref{eq 2 23-08-2017}:
$$
\aligned
|T_4|\leq & C\sum_{{c,|I''|\leq|I'|}\atop |J''|\leq|J'|}\big|\delb_c\del^{I''}L^{J''}\del^IL^Ju\big| + Ct^{-1}\sum_{1\leq|I''|\leq|I'|}\big|\del^{I''}L^{J'}\del^IL^Ju\big|.
\endaligned
$$
Also by \eqref{eq 6 29-08-2017}:
$$
\aligned
|T_4|\leq& C\sum_{{c,|I''|\leq|I|+|I'|}\atop{|J''|\leq|J|+|J'|}}\big|\delb_c\del^{I''}L^{J''}u\big|
 + Ct^{-1}\sum_{1\leq |I''|\leq|I'|+|I|}\big|\del^{I''}L^{J''}u\big|
\\
=& C\sum_{{c,|I''|\leq|I|+|I'|}\atop{|J''|\leq|J|+|J'|}}\big|\delb_c\del^{I''}L^{J''}u\big|
+ Cs^{-1}\sum_{{1\leq|I''|\leq|I'|+|I|}\atop|J''|\leq|J'|+|J|}\big|(s/t)\del^{I''}L^{J''}u\big|
\endaligned
$$
Now the bounds on $|T_3|$ and $|T_4|$ leads to \eqref{eq 7 29-08-2017}.
\end{proof}

Then the following bounds are direct:
\begin{lemma}\label{lem 2 31-08-2017}
The following terms:
\begin{equation}\label{eq 2 31-08-2017}
\|(s^2/t)\del^{I'}L^{J'}\delb_s\del^IL^Ju\|_{L^2(\Hcal_s)},\quad \|s\del^{I'}L^{J'}\delb_a\del^IL^Ju\|_{L^2(\Hcal_s)}
\end{equation}
are bounded by
$$
C\sum_{|I''|\leq|I|+|I'|\atop |J''|\leq|J|+|J'|}\Ec(s,\del^{I''}L^{J''}u)^{1/2}.
$$
\end{lemma}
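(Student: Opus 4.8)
The plan is to feed the pointwise commutator bounds of Lemma~\ref{lem 1 29-08-2017} into the weighted $L^2$ estimates on bare derivatives coming from the flat‑energy analysis, namely \eqref{eq 6 23-05-2017}, \eqref{eq 1 25-05-2017} and \eqref{eq 7 22-07-2017} (applied to $\del^{I''}L^{J''}u$) together with the very definition \eqref{eq 4 10-05-2017} of the flat energy. In other words, after multiplying the pointwise bounds by the correct power of $s$ and $t$, every term on the right‑hand side becomes one of the four quantities $(s^2/t)\delb_s$, $s\delb_a$, $s(s/t)^2\del_a$, $(s/t)$ acting on $\del^{I''}L^{J''}u$, each of which is already known to be $\le C\,\Ec(s,\del^{I''}L^{J''}u)^{1/2}$ in $L^2(\Hcal_s)$.

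For the first quantity I would apply \eqref{eq 5 29-08-2017} and multiply through by $(s^2/t)$, which gives pointwise
$$
(s^2/t)\big|\del^{I'}L^{J'}\delb_s\del^IL^Ju\big|\le C\!\!\sum_{|I''|\le|I|+|I'|\atop|J''|\le|J|+|J'|}\!\!\big|(s^2/t)\delb_s\del^{I''}L^{J''}u\big|+C\!\!\sum_{a,\,|I''|\le|I|+|I'|\atop|J''|<|J|+|J'|}\!\!\big|s(s/t)^2\del_a\del^{I''}L^{J''}u\big|,
$$
where I used $(s^2/t)(s/t)=s(s/t)^2$. Taking the $L^2(\Hcal_s)$ norm, the first family of terms is bounded by $\sum\Ec(s,\del^{I''}L^{J''}u)^{1/2}$ via \eqref{eq 1 25-05-2017}, and the second family by \eqref{eq 7 22-07-2017}; summing over the finitely many admissible pairs $(I'',J'')$ yields the bound for the first term in \eqref{eq 2 31-08-2017}.

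For the second quantity I would apply \eqref{eq 7 29-08-2017} and multiply by $s$, using $s\cdot s^{-1}=1$, to get pointwise
$$
s\big|\del^{I'}L^{J'}\delb_a\del^IL^Ju\big|\le C\!\!\sum_{c,\,|I''|\le|I|+|I'|\atop|J''|\le|J|+|J'|}\!\!\big|s\delb_c\del^{I''}L^{J''}u\big|+C\!\!\sum_{1\le|I''|\le|I|+|I'|\atop|J''|\le|J|+|J'|}\!\!\big|(s/t)\del^{I''}L^{J''}u\big|.
$$
In $L^2(\Hcal_s)$ the first family is controlled directly by the flat energy \eqref{eq 4 10-05-2017}, while the second is controlled by Hardy's inequality \eqref{eq 6 23-05-2017} after using the elementary inequality $s/t\le s/r$ valid in $\Kcal$; summation over $(I'',J'')$ finishes the proof. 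There is no genuine obstacle here — the only point requiring care is the weight bookkeeping, i.e.\ checking that multiplying the pointwise estimates of Lemma~\ref{lem 1 29-08-2017} by $s^2/t$, respectively $s$, reproduces exactly the weighted derivatives that the flat‑energy analysis already controls, which it does.
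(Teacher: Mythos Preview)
Your proof is correct and is exactly the argument the paper has in mind: the paper merely states that these bounds are ``direct'' from Lemma~\ref{lem 1 29-08-2017}, and what you have written is precisely that direct argument, multiplying \eqref{eq 5 29-08-2017} and \eqref{eq 7 29-08-2017} by the appropriate weights and invoking \eqref{eq 1 25-05-2017}, \eqref{eq 7 22-07-2017}, \eqref{eq 4 10-05-2017} and \eqref{eq 6 23-05-2017}. The weight bookkeeping you flagged is the only content, and you carried it out correctly.
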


Now we regard the following terms:
\begin{equation}\label{eq 1 24-08-2017}
\|\del^{I'}L^{J'}\left((s^2/t)\delb_s\del^IL^J u\right)\|_{L^2(\Hcal_s)},\quad \|\del^{I'}L^{J'}\left(s\delb_a\del^IL^Ju\right)\|_{L^2(\Hcal_s)}
\end{equation}
where $|I'|+|J'|\leq 2$ and $|I|+|J|\leq N-2$.
\begin{equation}\label{eq 2 24-08-2017}
\|\del^{I'}L^{J'}\left(s^2\delb_s\delb_a \del^IL^J u\right)\|_{L^2(\Hcal_s)},\quad \|\del^{I'}L^{J'}\left(st\delb_a\delb_b\del^IL^Ju\right)\|_{L^2(\Hcal_s)}
\end{equation}
where $|I'|+|J'|\leq 2$ and $|I|+|J|\leq N-3$. We have the following results:
\begin{lemma}\label{lem 1 24-08-2017}
The terms in \eqref{eq 1 24-08-2017} and \eqref{eq 2 24-08-2017} are bounded by
$$
C\sum_{|I'|+|J'|\leq N}\Ec(s,\del^{I'}L^{J'}u)^{1/2}.
$$
\end{lemma}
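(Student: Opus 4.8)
The plan is to expand $\del^{I'}L^{J'}$ over each product by the Leibniz rule and, for every resulting term, to separate a \emph{weight factor} $\del^{I_1}L^{J_1}(w)$ with $w\in\{s^2/t,\ s,\ s^2,\ st\}$ from a \emph{core factor} $\del^{I_2}L^{J_2}$ applied to $\delb_s\del^IL^Ju$, $\delb_a\del^IL^Ju$, $\delb_s\delb_a\del^IL^Ju$ or $\delb_a\delb_b\del^IL^Ju$, where $I_1+I_2=I'$, $J_1+J_2=J'$. The weight factors are harmless: by \eqref{eq 4 22-08-2017} one has $|\del^{I_1}L^{J_1}(s^2/t)|\leq C(s^2/t)$ in $\Kcal$ (since there $s^2/t\geq 1$), by \eqref{eq 1 20-08-2017} together with the remark that $\del^IL^Js=0$ once $|J|\geq 1$ one has $|\del^{I_1}L^{J_1}(s)|\leq Cs$, and combining these with the homogeneity of $t$ gives $|\del^{I_1}L^{J_1}(s^2)|\leq Cs^2$ and $|\del^{I_1}L^{J_1}(st)|\leq Cst$; in every case a spatial derivative only shrinks the weight (using $1\leq s^2/t$, $t/s\leq s$, $t\leq s^2$ in $\Kcal$, all consequences of $t>r+1$). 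Hence it suffices to treat the worst case $I_1=J_1=0$, i.e. to bound $(s^2/t)\,\del^{I_2}L^{J_2}\delb_s\del^IL^Ju$ and its analogues with $|I_2|+|J_2|\leq 2$.

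For the first batch \eqref{eq 1 24-08-2017} I would push the remaining derivatives $\del^{I_2}L^{J_2}$ across $\delb_s$ and $\delb_a$ using Lemma \ref{lem 1 29-08-2017}: by \eqref{eq 5 29-08-2017} the quantity $|\del^{I_2}L^{J_2}\delb_s\del^IL^Ju|$ is controlled by $\sum|\delb_s\del^{I''}L^{J''}u|+\sum|(s/t)\del_a\del^{I''}L^{J''}u|$, and by \eqref{eq 7 29-08-2017} the quantity $|\del^{I_2}L^{J_2}\delb_a\del^IL^Ju|$ by $\sum|\delb_c\del^{I''}L^{J''}u|+s^{-1}\sum|(s/t)\del^{I''}L^{J''}u|$, with total order $|I''|+|J''|\leq|I|+|J|+|I_2|+|J_2|\leq(N-2)+2=N$. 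Multiplying by the relevant weight turns these into $(s^2/t)\delb_sv$, $s(s/t)^2\del_av$, $s\delb_cv$ and $(s/t)v$ with $v=\del^{I''}L^{J''}u$; but $\|(s^2/t)\delb_sv\|_{L^2(\Hcal_s)}$, $\|s(s/t)^2\del_av\|_{L^2(\Hcal_s)}$, $\|s\delb_cv\|_{L^2(\Hcal_s)}$ and $\|(s/t)v\|_{L^2(\Hcal_s)}$ are bounded by $C\,\Ec(s,v)^{1/2}$ by \eqref{eq 1 25-05-2017}, \eqref{eq 7 22-07-2017}, the definition \eqref{eq 4 10-05-2017} of the flat energy, and \eqref{eq 6 23-05-2017} (using $r<t$ in $\Kcal$), respectively. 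Summing over the finitely many multi-indices produced by Leibniz and by Lemma \ref{lem 1 29-08-2017} gives the claim for \eqref{eq 1 24-08-2017}.

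The second batch \eqref{eq 2 24-08-2017} is treated the same way, with the core factor now carrying a second-order hyperbolic derivative $\delb_s\delb_a$ or $\delb_a\delb_b$: I would commute $\del^{I_2}L^{J_2}$ inward using the second-order commutator estimates of Lemma \ref{lem 3 03-09-2017} and Lemma \ref{lem 1 05-09-2017}, absorbing the resulting products of multi-indices via Lemma \ref{lem 2 29-08-2017}. The principal remainder is $\delb_s\delb_a\del^{I''}L^{J''}u$ or $\delb_a\delb_b\del^{I''}L^{J''}u$ with $|I''|+|J''|\leq N$, which after multiplication by $s^2$, resp. $st$, is controlled by $C\sum\Ec^{1/2}$ through the bounds on $\|s^2\del^IL^J\delb_s\delb_au\|_{L^2(\Hcal_s)}$ and $\|st\del^IL^J\delb_a\delb_bu\|_{L^2(\Hcal_s)}$ in Lemma \ref{lem 2 24-08-2017} (and Lemma \ref{lem 2 31-08-2017} for the mixed placement of $\del^{I'}L^{J'}$). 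The lower-order remainders emitted by Lemmas \ref{lem 3 03-09-2017} and \ref{lem 1 05-09-2017} carry an extra factor $t^{-1}$ or $s/t^2$, so after multiplication by $s^2$ or $st$ they collapse onto first-order weighted quantities already bounded in the first batch and in Lemma \ref{lem 2 24-08-2017}.

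The main obstacle is the bookkeeping of multi-indices and weights: one must verify that no branch of the Leibniz/commutator expansion ever exceeds total order $N$ (this is exactly the role of the hypotheses $|I|+|J|\leq N-2$, resp. $N-3$, against $|I'|+|J'|\leq 2$; the extra unit of slack in the second case is eaten by the second-order commutators, which trade a $\delb_a$ for $t^{-1}L_a$), and that every time a $\del$ or $L$ lands on a weight the compensating inequalities $1\leq s^2/t$, $t/s\leq s$, $t\leq s^2$ are genuinely available, which they are in $\Kcal$ since $t>r+1$ forces $s^2>t$. The one delicate point is the genuine second-order hyperbolic derivatives $s^2\delb_s\delb_a$ and $st\delb_a\delb_b$: there one must check that writing $\delb_a=t^{-1}L_a$, combined with the $L^2$ bounds already established in Lemmas \ref{lem 2 24-08-2017} and \ref{lem 2 31-08-2017}, really does reduce everything to $\Ec(s,\del^{I''}L^{J''}u)^{1/2}$ with $|I''|+|J''|\leq N$.
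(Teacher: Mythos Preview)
Your approach is correct and, for the first batch \eqref{eq 1 24-08-2017}, essentially identical to the paper's: Leibniz on the weight, then the bound $|\del^{I_1}L^{J_1}(s^2/t)|\leq C(s^2/t)$ (resp.\ $|\del^{I_1}L^{J_1}s|\leq Cs$), and finally Lemma~\ref{lem 2 31-08-2017} (which is just the $L^2$ packaging of the pointwise Lemma~\ref{lem 1 29-08-2017} you invoke).

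For the second batch \eqref{eq 2 24-08-2017} the paper takes a shorter route than you do. Instead of commuting $\del^{I_2}L^{J_2}$ past the second--order operator via Lemmas~\ref{lem 3 03-09-2017}--\ref{lem 1 05-09-2017}, the paper writes $\delb_a=t^{-1}L_a$ \emph{before} expanding, so that
\[
s^2\delb_s\delb_a\,v=(s^2/t)\,\delb_sL_a v-(s/t)^3 L_a v,
\qquad
st\,\delb_a\delb_b\,v=s\,\delb_aL_b v-(sx^a/t^2)L_b v,
\]
with $v=\del^IL^Ju$. This collapses \eqref{eq 2 24-08-2017} back onto the first--order case \eqref{eq 1 24-08-2017} at the cost of one extra boost $L_a$, which the hypothesis $|I|+|J|\leq N-3$ was designed to absorb. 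Your route via the second--order commutator lemmas also closes, but is longer: to bound the principal remainder $\|s^2\delb_s\delb_a\del^{I''}L^{J''}u\|_{L^2(\Hcal_s)}$ you must cite the \emph{intermediate} computation inside the proof of Lemma~\ref{lem 2 24-08-2017} (where exactly this quantity is bounded by the same $t^{-1}L_a$ trick), not the lemma's statement itself, which has the operators in the opposite order. Either way one lands on $\sum_{|I''|+|J''|\leq N}\Ec(s,\del^{I''}L^{J''}u)^{1/2}$; the paper's algebraic reduction simply avoids the detour through Lemmas~\ref{lem 3 03-09-2017}--\ref{lem 1 05-09-2017}.
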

\begin{proof}
For the first term in \eqref{eq 1 24-08-2017}, we see that by \eqref{eq 4 22-08-2017}, \eqref{eq 5 29-08-2017} and lemma \ref{lem 2 31-08-2017}:
$$
\aligned
&\left\|\del^{I'}L^{J'}\left((s^2/t)\delb_s\del^IL^J u\right)\right\|_{L^2(\Hcal_s)}
\leq \sum_{I_1'+I_2'=I'\atop J_1'+J_2'=J'}\left\|\del^{I_1'}L^{J_1'}(s^2/t)\cdot \del^{I_2'}L^{J_2'}\delb_s\del^IL^Ju\right\|_{L^2(\Hcal_s)}
\\
\leq& C\sum_{I_1'+I_2'=I'\atop J_1'+J_2'=J'}\left\|(s^2/t)\del^{I_2'}L^{J_2'}\delb_s\del^IL^Ju\right\|_{L^2(\Hcal_s)}
\leq C\sum_{|I''|\leq|I'|+|I|\atop |J''|\leq|J'|+|J|}\Ec(s,\del^{I''}L^{J''}u)^{1/2}
\\
=& C\sum_{|I''|+|J''|\leq N}\Ec(s,\del^{I''}L^{J''}u)^{1/2}.
\endaligned
$$
The second term of \eqref{eq 1 24-08-2017} is controlled by \eqref{eq 1 20-08-2017} (combined with the fact that in $\Kcal$, $s\geq t/s$), \eqref{eq 7 29-08-2017} and lemma \ref{lem 2 31-08-2017}:
$$
\aligned
&\|\del^{I'}L^{J'}\left(s\delb_a\del^IL^Ju\right)\|_{L^2(\Hcal_s)}
\\
\leq& C\sum_{I_1'+I_2'=I'\atop J_1'+J_2'=J'}\|\del^{I_1'}L^{J_1'}s\cdot\del^{I_2'}L^{J_2'}\delb_a\del^IL^Ju\|_{L^2(\Hcal_s)}
\leq  C\sum_{I_1'+I_2'=I'\atop J_1'+J_2'=J'}\|s\del^{I_2'}L^{J_2'}\delb_a\del^IL^Ju\|_{L^2(\Hcal_s)}
\\
\leq&  C\sum_{|I''|+|J''|\leq N}\Ec(s,\del^{I''}L^{J''}u)^{1/2}.
\endaligned
$$

The first term in \eqref{eq 2 24-08-2017} is bounded as following. First we remark that
$$
\aligned
\del^{I'}L^{J'}\left(s^2\delb_s\delb_a\del^IL^Ju\right)
=& \del^{I'}L^{J'}\left(s^2\delb_s\left(t^{-1}L_a\del^IL^Ju\right)\right)
\\
=&\del^{I'}L^{J'}\left((s^2/t)\delb_sL_a\del^IL^Ju\right) - \del^{I'}L^{J'}\left((s/t)^3L_a\del^IL^Ju\right)
\\
=& T_1 + T_2.
\endaligned
$$
For $T_1$, we see that by \eqref{eq 4 22-08-2017} and lemma \ref{lem 2 31-08-2017}, we see that
$$
\aligned
\|T_1\|\leq& C\sum_{I_1'+I_2'=I'\atop J_1'+J_2'=J'}\del^{I_1'}L^{J_1'}(s^2/t)\cdot \del^{I_2'}L^{J_2'}\delb_sL_a\del^IL^Ju
\\
\leq& C\sum_{|I_2'|\leq|I'|\atop|J_2'|\leq|J'|}\|(s^2/t)\del^{I_2'}L^{J_2'}\delb_sL_a\del^IL^Ju\|_{L^2(\Hcal_s)}
\\
\leq& C\sum_{|I|+|J|\leq N}\Ec(s,\del^{I''}L^{J''}u)^{1/2}.
\endaligned
$$
Here by
For the term $T_2$, we see that by \eqref{eq 6 29-08-2017} and lemma \ref{lem 2 31-08-2017}:
$$
\aligned
\|T_2\|
\leq& C\sum_{I_1'+I_2'+I_3'+I_4'=I'\atop J_1'+J_2'+J_3'+J_4'=J'}
\big\|\del^{I_1'}L^{J_1'}(s/t)\cdot\del^{I_2'}L^{J_2'}(s/t)\cdot\del^{I_3'}L^{J_3'}(s/t)\cdot\del^{I_4'}L^{J_4'}L_a\del^IL^Ju\big\|_{L^2(\Hcal_s)}
\\
\leq& C\sum_{|I_4'|\leq|I'|\atop |J_4'|\leq|J'|}\|(s/t)\del^{I_4'}L^{J_4'}L_a\del^IL^Ju\|_{L^2(\Hcal_s)}
\leq C\sum_{|I''|\leq|I_4'|+|I|\atop |J''|\leq|J_4''|+|J|+1}\|(s/t)\del^{I_4'}L^{J_4'}L_a\del^IL^Ju\|_{L^2(\Hcal_s)}
\\
\leq& C\sum_{|I''|+|J''|\leq N}\Ec(s,\del^{I''}L^{J''}u)^{1/2}.
\endaligned
$$
The bounds on $T_1$ and $T_2$ give the bounds of the first term of \eqref{eq 2 24-08-2017}.

Now we regard the second term in \eqref{eq 2 24-08-2017}:
$$
\aligned
&\del^{I'}L^{J'}\left(st\delb_a\delb_b\del^IL^Ju\right)
= \del^{I'}L^{J'}\left(st\delb_a\left(t^{-1}L_b\del^IL^Ju\right)\right)
\\
=&\del^{I'}L^{J'}\left(s\delb_aL_b\del^IL^Ju\right) - \del^{I'}L^{J'}\left((sx^a/t^2)L_a\del^IL^Ju\right)
\\
=&\sum_{I_1'+I_2'=I'\atop J_1'+J_2'=J'}\del^{I_1'}L^{J_1'}s\cdot\del^{I_2'}L^{J_2'}\delb_aL_b\del^IL^Ju
-\sum_{I_1'+I_2'+I_3=I'\atop J_1'+J_2'+J_3=J'}\del^{I_1'}L^{J_1'}(s/t)\del^{I_2'}L^{J_2'}(x^a/t)\del^{I_3'}L^{J_3'}L_a\del^IL^Ju
\\
=:& T_3 + T_4.
\endaligned
$$
For $T_3$, we see that by \eqref{eq 1 20-08-2017} together with the fact that in $\Kcal$, $t/s\leq s$  and \ref{lem 2 31-08-2017},
$$
\aligned
\|T_3\|\leq& C\sum_{|I_2'|\leq|I'|\atop|J_2'|\leq|J'|}\|s\del^{I_2'}L^{J_2'}\delb_aL_b\del^IL^Ju\|_{L^2(\Hcal_s)}
\leq C\sum_{|I''|+|J''|\leq N}\Ec(s,\del^{I''}L^{J''}u)^{1/2}.
\endaligned
$$
For $T_4$, we see that by \eqref{eq 2 20-08-2017} and lemma \ref{lem 1 26-06-2017} ($x^a/t$ is homogeneous of degree zero)
$$
\aligned
\|\del^{I_1'}L^{J_1'}(s/t)\del^{I_2'}L^{J_2'}(x^a/t)\del^{I_3'}L^{J_3'}L_a\del^IL^Ju\|_{L^2(\Hcal_s)}
\leq& C\|(s/t)\del^{I_3'}L^{J_3'}L_a\del^IL^Ju\|_{L^2(\Hcal_s)}
\\
\leq& C\sum_{|I''|+|J''|\leq N}\|(s/t)\del^{I''}L^{J''}u\|_{L^2(\Hcal_s)}
\\
\leq& C\sum_{|I''|+|J''|\leq N}\Ec(s,\del^{I''}L^{J''}u)^{1/2}
\endaligned
$$
The bounds on $T_3$ and $T_4$ concludes the second term in \eqref{eq 2 24-08-2017}.
\end{proof}

Now we list out a second group of terms:
\begin{equation}\label{eq 1 22-08-2017}
\|\del^{I'}L^{J'}\left((s^2/t)\del^IL^J\delb_s u\right)\|_{L^2(\Hcal_s)},\quad \|\del^{I'}L^{J'}\left(s\del^IL^J\delb_au\right)\|_{L^2(\Hcal_s)},\quad \|\del^{I'}L^{J'}\left((s/t)\del^IL^Ju\right)\|_{L^2(\Hcal_s)}
\end{equation}
where $|I'|+|J'|\leq 2$, $|I|+|J|\leq N-2$ and
\begin{equation}\label{eq 2 22-08-2017}
\|\del^{I'}L^{J'}\left(s^2\del^IL^J\delb_s\delb_a u\right)\|_{L^2(\Hcal_s)},\quad \|\del^{I'}L^{J'}\left(st\del^IL^J \delb_a\delb_b u\right)\|_{L^2(\Hcal_s)}
\end{equation}
where $|I'|+|J'|\leq 2$, $|I|+|J|\leq N-3$ with $N$ is an integer.  In general we have the following estimates:
\begin{lemma}\label{lem 2 05-09-2017}
In $\Kcal$, the terms in \eqref{eq 1 22-08-2017} and \eqref{eq 2 22-08-2017} are bounded by
$$
C\sum_{|I'|+|J'|\leq N}\Ec(s,\del^{I'}L^{J'}u)^{1/2}.
$$
\end{lemma}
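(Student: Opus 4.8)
The plan is to reduce everything to Lemma \ref{lem 2 24-08-2017}, which already controls precisely the quantities $\|(s^2/t)\del^IL^J\delb_su\|_{L^2(\Hcal_s)}$, $\|s\del^IL^J\delb_au\|_{L^2(\Hcal_s)}$, $\|(s/t)\del^IL^Ju\|_{L^2(\Hcal_s)}$ for $|I|+|J|\le N$, and $\|s^2\del^IL^J\delb_s\delb_au\|_{L^2(\Hcal_s)}$, $\|st\del^IL^J\delb_a\delb_bu\|_{L^2(\Hcal_s)}$ for $|I|+|J|\le N-1$ --- that is, the very same expressions as in \eqref{eq 1 22-08-2017}--\eqref{eq 2 22-08-2017} but without the extra outer operator $\del^{I'}L^{J'}$. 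Since here $|I'|+|J'|\le 2$ while $|I|+|J|\le N-2$ (resp. $N-3$), the total number of vector fields never exceeds $N$ (resp. $N-1$); the only task is to absorb the outer $\del^{I'}L^{J'}$ into the already-established estimates. Note in particular that, contrary to the "$\delb$-outermost" case treated in Lemma \ref{lem 1 24-08-2017}, no commutator between $\delb_s,\delb_a$ and $\del^IL^J$ is needed, because Lemma \ref{lem 2 24-08-2017} is stated with the hyperbolic derivatives innermost.

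Concretely, I write each term in \eqref{eq 1 22-08-2017}--\eqref{eq 2 22-08-2017} as $\del^{I'}L^{J'}\big(w\cdot\del^IL^J\delb_\bullet u\big)$ with weight $w\in\{s^2/t,\ s,\ s/t,\ s^2,\ st\}$ and $\delb_\bullet\in\{1,\delb_s,\delb_a,\delb_s\delb_a,\delb_a\delb_b\}$. Applying the Leibniz rule, this becomes a finite sum of terms $\big(\del^{I_1'}L^{J_1'}w\big)\cdot\big(\del^{I_2'}L^{J_2'}\del^IL^J\delb_\bullet u\big)$ with $|I_1'|+|I_2'|\le|I'|$, $|J_1'|+|J_2'|\le|J'|$. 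For the weight factor I use \eqref{eq 4 22-08-2017}, \eqref{eq 1 20-08-2017}, \eqref{eq 2 20-08-2017}, \eqref{eq 4 23-08-2017} and \eqref{eq 3 02-09-2017} together with the elementary inequalities $1\le s\le t\le s^2$ valid in $\Kcal$ to check that $|\del^{I_1'}L^{J_1'}w|\le C\,w$ in every case: for $|I_1'|=0$ this is immediate, and for $|I_1'|\ge1$ one gets for instance $|\del^{I_1'}(s^2/t)|\le C\le C(s^2/t)$, $|\del^{I_1'}s|\le C(t/s)\le Cs$, $|\del^{I_1'}L^{J_1'}(s/t)|\le Cs^{-1}\le C(s/t)$, and similarly for $s^2$ and $st$. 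Thus every term is bounded by $C\,\big|w\cdot\del^{I_2'}L^{J_2'}\del^IL^J\delb_\bullet u\big|$.

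Having pulled the weight out, I merge the two derivative blocks via \eqref{eq 6 29-08-2017}: $\del^{I_2'}L^{J_2'}\del^IL^J(\delb_\bullet u)=\sum\zeta\,\del^{I''}L^{J''}(\delb_\bullet u)$ with $|I''|=|I_2'|+|I|$, $|J''|\le|J_2'|+|J|$, hence $|I''|+|J''|\le|I'|+|J'|+|I|+|J|\le N$ for the terms of \eqref{eq 1 22-08-2017} and $\le N-1$ for those of \eqref{eq 2 22-08-2017}. Each term is then bounded by $C\,\big\|w\,\del^{I''}L^{J''}\delb_\bullet u\big\|_{L^2(\Hcal_s)}$, which is one of the quantities estimated in Lemma \ref{lem 2 24-08-2017} --- \eqref{eq 3 24-08-2017} for the one-$\delb$ weights $s^2/t,s,s/t$ and \eqref{eq 6 05-09-2017} for the two-$\delb$ weights $s^2,st$ --- and is therefore $\le C\sum_{|I''|+|J''|\le N}\Ec(s,\del^{I''}L^{J''}u)^{1/2}$. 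Summing the finitely many Leibniz terms yields the claim.

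The only genuinely delicate point is the weight bookkeeping: one must verify that every weight arising when derivatives land on $w$ is still dominated in $\Kcal$ by the weight appearing in Lemma \ref{lem 2 24-08-2017}. This is transparent for $s^2/t$, $s$, $s/t$ and $s^2$, but less so for $st$, since $L_a(st)=sx^a$ and $\del_t(st)=(t^2+s^2)/s$ do not vanish; here I would write $st=s^2\,(t/s)$, control the factor $(t/s)$ by \eqref{eq 3 02-09-2017}, and use $t\le s^2$ (so that $t/s^2\le1$ and $1\le s^2$) to see that all the weights produced stay $\le C\,st$. No new analytic ingredient is needed beyond the estimates already proved.
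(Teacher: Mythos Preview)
Your proposal is correct and follows essentially the same approach as the paper: Leibniz on the weight factor, bound $|\del^{I_1'}L^{J_1'}w|\le Cw$ via the weight estimates \eqref{eq 4 22-08-2017}, \eqref{eq 1 20-08-2017}, \eqref{eq 2 20-08-2017} (using $t\le s^2$ in $\Kcal$), merge the two derivative blocks by \eqref{eq 6 29-08-2017}, and conclude by Lemma~\ref{lem 2 24-08-2017}. The paper in fact omits the details for several terms, so your explicit check of the $st$ weight (for which the simpler decomposition $st=s\cdot t$, with $|\del^{I_1'}s|\le Cs$ and $|\del^{I_2'}L^{J'}t|\le Ct$ by homogeneity, would also work) is more thorough than what is written there.
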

\begin{proof}
These are by lemma \ref{lem 2 24-08-2017} and the corresponding estimates of commutators. For the first term in \eqref{eq 1 22-08-2017}, by \eqref{eq 6 29-08-2017}
$$
\aligned
\del^{I'}L^{J'}\left((s^2/t)\del^IL^J\delb_s u\right)
=& \sum_{I_1'+I_2'=I'\atop J_1'+J_2'=J'}\del^{I_1'}L^{J_1'}(s^2/t)\cdot\del^{I_2'}L^{J_2'}\del^{I_2}L^{J_2}\delb_s u
\\
=& \sum_{I_1'+I_2'=I'\atop J_1'+J_2'=J'}\sum_{|I''|=|I_2'|+|I|\atop|J''|\leq|J_2'|+|J|}
\del^{I_1'}L^{J_1'}(s^2/t)\cdot\theta_{I''J''}^{I_2'J_2'IJ}\del^{I''}L^{J''}\delb_s u.
\endaligned
$$
Then by \eqref{eq 4 22-08-2017} and lemma \ref{lem 2 24-08-2017}, the desired $L^2$ bound is direct.

The second term in \eqref{eq 1 22-08-2017} is bounded similarly by \eqref{eq 6 29-08-2017}, \eqref{eq 1 20-08-2017} and lemma \ref{lem 2 24-08-2017}, we omit the detail.

The third term in \eqref{eq 1 22-08-2017} is by \eqref{eq 6 29-08-2017}, \eqref{eq 2 20-08-2017} and lemma \ref{lem 2 24-08-2017}, we also omit the detail.

The first term in \eqref{eq 2 22-08-2017} is by \eqref{eq 1 20-08-2017} combined with lemma \ref{lem 2 24-08-2017}:
$$
\aligned
\|\del^{I'}L^{J'}\left(s^2\del^IL^J\delb_s\delb_au\right)\|_{L^2(\Hcal_s)}
\leq& \sum_{I_1'+I_2'+I_3'=I'\atop J_1'+J_2'+J_3'=J'}
\|\del^{I_1'}L^{J_1'}s\cdot\del^{I_2'}L^{J_2'}s\cdot\del^{I_3'}L^{J_3'}\delb_s\delb_a\del^IL^Ju\|_{L^2(\Hcal_s)}
\\
\leq& C\sum_{|I''|\leq|I'|\atop|J''|\leq|J'|}\|s^2\del^{I_3'}L^{J_3'}\delb_s\delb_a\del^IL^Ju\|_{L^2(\Hcal_s)}
\endaligned
$$
which is bounded by $C\sum_{|I'|+|J'|\leq N}\Ec(s,\del^{I'}L^{J'}u)^{1/2}$.

The second term in \eqref{eq 2 22-08-2017} is by \eqref{eq 1 20-08-2017} combined with lemma \ref{lem 2 24-08-2017}, we omit the detail.
\end{proof}

\subsection{Decay bounds from global Sobolev inequality and commutators}
In this section by applying lemma \ref{lem 1 24-08-2017} and lemma \ref{lem 2 05-09-2017} combined with \eqref{ineq 1 sobolev}, we will establish a series decay estimates. In general we have the following results:
\begin{proposition}[Decay estimates by energy bounds]\label{prop 1 29-08-2017}
Let $u$ be a function defined in $\Kcal$, sufficiently regular. Then  for $|I|+|J|\leq N-2$, the following terms:
\begin{subequations}
\begin{equation}\label{eq 3 29-08-2017}
\sup_{\Hcal_s}\{st^{1/2}|\del^IL^Ju|\},\quad \sup_{\Hcal_s}\{st^{3/2}\delb_a\del^IL^Ju\},\quad \sup_{\Hcal_s}\{s^2t^{1/2}\delb_s\del^IL^Ju\},
\end{equation}
and
\begin{equation}\label{eq 4 29-08-2017}
\sup_{\Hcal_s}\{st^{3/2}\del^IL^J\delb_au\},\quad \sup_{\Hcal_s}\{s^2t^{1/2}\del^IL^J\delb_su\},
\end{equation}
\end{subequations}
are bounded by
$$
C\sum_{|I|+|J|\leq N}\Ec(s,\del^IL^J u)^{1/2}.
$$

For $|I|+|J|\leq N-3$, the following terms:
\begin{subequations}
\begin{equation}
\sup_{\Hcal_s}\left\{s^2t^{3/2}\delb_s\delb_a\del^IL^J u\right\},\quad \sup_{\Hcal_s}\left\{st^{5/2}\delb_a\delb_b\del^IL^Ju\right\}
\end{equation}
and
\begin{equation}
\sup_{\Hcal_s}\left\{s^2t^{3/2}\del^IL^J\delb_s\delb_a u\right\},\quad \sup_{\Hcal_s}\left\{st^{5/2}\del^IL^J\delb_a\delb_bu\right\}
\end{equation}
\end{subequations}
are bounded by
$$
C\sum_{|I|+|J|\leq N}\Ec(s,\del^IL^J u)^{1/2}.
$$
\end{proposition}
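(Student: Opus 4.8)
The statement follows by combining the global Sobolev inequality \eqref{ineq 1 sobolev} with the $L^2$-bounds already established in Lemma \ref{lem 1 24-08-2017} and Lemma \ref{lem 2 05-09-2017}; no new estimate is needed beyond organising the weights correctly. The idea is that each quantity to be bounded is of the form $t^{3/2}|w|$ for an appropriate weighted version $w$ of $\del^IL^Ju$ (or of one of its hyperbolic derivatives), so that \eqref{ineq 1 sobolev} reduces the $L^\infty$ bound to $\sum_{|I'|+|J'|\leq 2}\|\del^{I'}L^{J'}w\|_{L^2(\Hcal_s)}$, which is precisely one of the families of terms controlled in those two lemmas.

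Concretely, for the first group ($|I|+|J|\leq N-2$) I would proceed case by case. For $\sup_{\Hcal_s}\{st^{1/2}|\del^IL^Ju|\}$, write $st^{1/2}|\del^IL^Ju| = t^{3/2}\,|(s/t)\del^IL^Ju|$ and apply \eqref{ineq 1 sobolev} with $w=(s/t)\del^IL^Ju$; since $u$ vanishes near $\del\Kcal$ so does $w$, and the resulting sum $\sum_{|I'|+|J'|\leq 2}\|\del^{I'}L^{J'}((s/t)\del^IL^Ju)\|_{L^2(\Hcal_s)}$ is the third family in \eqref{eq 1 22-08-2017}, hence bounded by $C\sum_{|I''|+|J''|\leq N}\Ec(s,\del^{I''}L^{J''}u)^{1/2}$ by Lemma \ref{lem 2 05-09-2017}. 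For $\sup_{\Hcal_s}\{st^{3/2}\delb_a\del^IL^Ju\}$ and $\sup_{\Hcal_s}\{s^2t^{1/2}\delb_s\del^IL^Ju\}$, take $w=s\delb_a\del^IL^Ju$ and $w=(s^2/t)\delb_s\del^IL^Ju$ respectively, so that the $L^2$-norms produced by \eqref{ineq 1 sobolev} are exactly the two families in \eqref{eq 1 24-08-2017}, controlled by Lemma \ref{lem 1 24-08-2017}. The two terms in \eqref{eq 4 29-08-2017} are handled the same way with $w=s\del^IL^J\delb_au$ and $w=(s^2/t)\del^IL^J\delb_su$, using the first two families of \eqref{eq 1 22-08-2017} and Lemma \ref{lem 2 05-09-2017}.

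For the second group ($|I|+|J|\leq N-3$) the scheme is identical, one order higher in the hyperbolic derivatives: one writes $s^2t^{3/2}|\delb_s\delb_a\del^IL^Ju| = t^{3/2}|s^2\delb_s\delb_a\del^IL^Ju|$ and $st^{5/2}|\delb_a\delb_b\del^IL^Ju| = t^{3/2}|st\delb_a\delb_b\del^IL^Ju|$, applies \eqref{ineq 1 sobolev} with the bracketed quantities as $w$, and invokes the two families in \eqref{eq 2 24-08-2017} via Lemma \ref{lem 1 24-08-2017}; the remaining two suprema use $w=s^2\del^IL^J\delb_s\delb_au$ and $w=st\del^IL^J\delb_a\delb_bu$ together with the families in \eqref{eq 2 22-08-2017} via Lemma \ref{lem 2 05-09-2017}. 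The loss of two orders of $\del,L$ in \eqref{ineq 1 sobolev} is exactly what produces the restrictions $|I|+|J|\leq N-2$ and $|I|+|J|\leq N-3$ in the statement.

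There is no genuine obstacle here: the only points requiring attention are bookkeeping ones, namely checking in each case that the auxiliary function $w$ is sufficiently regular and still vanishes near the conical boundary (immediate, since $u$ does and the weights $(s/t)$, $s$, $s^2/t$, $st$ are smooth on $\Kcal$), and verifying the elementary weight identities $t^{3/2}\cdot(\text{Sobolev weight}) = (\text{prescribed weight})$ in each supremum. All the substantive work has already been carried out in the commutator computations and the $L^2$-estimates that feed Lemmas \ref{lem 1 24-08-2017} and \ref{lem 2 05-09-2017}.
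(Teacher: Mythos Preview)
Your proposal is correct and follows exactly the approach indicated in the paper: the proposition is obtained by applying the global Sobolev inequality \eqref{ineq 1 sobolev} to the weighted functions $w$ you list, and the resulting $L^2$ norms are precisely the families \eqref{eq 1 24-08-2017}, \eqref{eq 2 24-08-2017}, \eqref{eq 1 22-08-2017}, \eqref{eq 2 22-08-2017} controlled by Lemmas \ref{lem 1 24-08-2017} and \ref{lem 2 05-09-2017}. The paper does not spell out these weight identifications, so your case-by-case verification is a welcome elaboration rather than a departure.
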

Furthermore, we have the following rough decay:
For $|I|+|J|\leq N-3$
\begin{equation}\label{eq 10 07-09-2017}
\sup_{\Hcal_s}\{st^{3/2}\del^IL^J\delb_s\delb_su\}\leq C\sum_{|I'|+|J'|\leq N}\Ec(s,\del^{I'}L^{J'}u)^{1/2}.
\end{equation}
This is by \eqref{eq 6 07-09-2017} combined with \eqref{eq 2 20-08-2017} and \eqref{eq 4 29-08-2017}.


\section{Estimates on Hessian form}
\subsection{Objective and algebraic preparation}
The purpose of this section is to give better $L^2$ and decay bounds on the following terms:
$$
\del_{\alpha}\del_{\beta}\del^IL^J u,\quad \del^IL^J \del_{\alpha}\del_{\beta}u
$$
We first make the following identities:
$$
\aligned
\del_t\del_t =& (t/s)^2\delb_s\delb_s - s^{-1}(r/s)^2\delb_s,
\endaligned
$$

$$
\aligned
\del_t\del_a =& \del_a\del_t = \left(\delb_a - (x^a/s)\delb_s\right)\del_t = -\frac{tx^a}{s^2}\delb_s\delb_s + s^{-1}L_a\delb_s + \frac{tx^a}{s^3}\delb_s
\endaligned
$$
and
$$
\aligned
\del_a\del_b =& \left(\delb_a - (x^a/s)\delb_s\right)\left(\delb_b - (x^b/s)\delb_s\right)
\\
=& \frac{x^ax^b}{s^2}\delb_s\delb_s + t^{-1}L_a\delb_b - \frac{x^a}{st}L_a\delb_s - \frac{x^a}{st}L_b\delb_s - s^{-1}\delb_s - \frac{x^ax^b}{s^3}\delb_s
\endaligned
$$
These identities show the fact that in the Hessian form, the component $\delb_s\delb_s$ has an essential contribution, because the rest terms as at least $s^{-1}$ as a supplementary decay factor. So we will concentrate on the bounds of $\delb_s\delb_s$. More precisely the above identities lead to the following result:
\begin{lemma}\label{lem 1 01-09-2017}
For $u$ a function defined in $\Kcal$, sufficiently regular, the following estimate holds:
\begin{equation}
\|s^2(s/t)^3\del_{\alpha}\del_{\beta}u\|_{L^2(\Hcal_s)}\leq C\|s^2(s/t)\delb_s\delb_s u\|_{L^2(\Hcal_s)} + C\sum_{|J|\leq 1}\Ec(s,L^J u)^{1/2}.
\end{equation}
\end{lemma}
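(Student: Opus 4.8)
The plan is to read off, from the three algebraic identities displayed just above the statement, that for each pair $(\alpha,\beta)$ one has
\[
\del_\alpha\del_\beta u = c_{\alpha\beta}\,\delb_s\delb_s u + (\text{lower order}),
\]
where $c_{\alpha\beta}$ is one of $(t/s)^2$, $-tx^a/s^2$, $x^ax^b/s^2$, hence $|c_{\alpha\beta}|\le (t/s)^2$ throughout $\Kcal$ because $|x^a|\le r\le t$, and where ``lower order'' denotes a finite sum of terms of the shape (coefficient)$\times\delb_s u$, (coefficient)$\times\del_a u$, (coefficient)$\times L_a\delb_s u$ or (coefficient)$\times L_a\delb_b u$, each coefficient carrying at least one extra factor $s^{-1}$ or $t^{-1}$ relative to $c_{\alpha\beta}$. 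Multiplying by $s^2(s/t)^3$ and using $s^2(s/t)^3(t/s)^2=s^2(s/t)$, the principal term contributes at most $C|s^2(s/t)\delb_s\delb_s u|$ pointwise; taking the $L^2(\Hcal_s)$ norm gives the first term on the right-hand side of the asserted inequality.

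For the scalar lower order terms I would use only the crude inequalities $s\le t$ and $t\le s^2$ (valid in $\Kcal$), together with $|x^a/t|\le 1$ and $|x^ax^b/s^2|\le (t/s)^2$, to verify that after multiplication by $s^2(s/t)^3$ each such term is pointwise dominated either by $C|(s^2/t)\delb_s u|$ or by $C|s(s/t)^2\del_a u|$. By \eqref{eq 1 25-05-2017} and \eqref{eq 7 22-07-2017} these are bounded in $L^2(\Hcal_s)$ by $C\Ec(s,u)^{1/2}$.

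For the terms carrying an $L_a$ I would first commute $L_a$ through $\delb_s$, resp. $\delb_b$, via $[L_a,\delb_s]=-(x^a/t)\delb_s-(s/t)\del_a$ and the relation $[L_a,\delb_b]=-(x^b/t)\delb_a$ already recorded in the excerpt. This rewrites $L_a\delb_s u$ as $\delb_s(L_a u)$ plus scalar lower order terms in $u$, and $L_a\delb_b u$ as $\delb_b(L_a u)$ plus a scalar lower order term in $u$; the scalar remainders are absorbed exactly as in the preceding paragraph. For the $\delb_s(L_a u)$ and $\delb_b(L_a u)$ pieces, multiplying by $s^2(s/t)^3$ times the coefficient of the identities (which supplies an extra $s^{-1}$, resp. $t^{-1}$) and invoking $s\le t$ once more produces pointwise bounds by $C|(s^2/t)\delb_s(L_a u)|$, resp. $C|s\delb_b(L_a u)|$. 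These are controlled in $L^2(\Hcal_s)$ by $C\Ec(s,L_a u)^{1/2}$ --- the first by \eqref{eq 1 25-05-2017} applied to $L_a u$, the second directly from the definition \eqref{eq 4 10-05-2017} of the flat energy. Summing over $\alpha,\beta$ and $a$ yields $C\sum_{|J|\le 1}\Ec(s,L^J u)^{1/2}$.

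The only real work is the bookkeeping in the last two paragraphs: for each of the dozen-odd lower order terms one must check that, after multiplication by $s^2(s/t)^3$ and use of $s\le t$ and $t\le s^2$, the accumulated powers of $s/t$, $s^{-1}$, $t^{-1}$ and the bounded factors $x^a/t$, $x^ax^b/s^2$ collapse to one of the three weights $(s^2/t)\delb_s$, $s\delb_a$, $s(s/t)^2\del_a$ that are controlled by the flat energies of $u$ and of $L_a u$. I expect this case analysis to be the main effort, but it is purely mechanical; there is no analytic obstacle, since the flat-energy bounds of Section~2.4 (themselves consequences of Hardy's inequality and the energy identity) supply everything needed.
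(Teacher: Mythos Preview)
Your proposal is correct and follows essentially the same route as the paper: both start from the three algebraic identities for $\del_t\del_t$, $\del_t\del_a$, $\del_a\del_b$, isolate the $\delb_s\delb_s$ principal part (whose coefficient is bounded by $(t/s)^2$), and bound the remaining terms by $\Ec(s,u)^{1/2}$ and $\Ec(s,L_au)^{1/2}$. The only cosmetic difference is that the paper does not commute $L_a$ through $\delb_s$ and $\delb_b$ explicitly but instead invokes its Lemma~\ref{lem 2 24-08-2017} (which bounds $\|(s^2/t)L_a\delb_su\|_{L^2(\Hcal_s)}$ and $\|sL_a\delb_bu\|_{L^2(\Hcal_s)}$ directly); since the proof of that lemma is precisely the commutation you carry out inline, the two arguments are the same in content.
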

\begin{proof}
This is by direct calculation. We see that
$$
\aligned
\|s^2(s/t)^3\del_t\del_tu\|_{L^2(\Hcal_s)}\leq& \|s^2(s/t)\delb_s\delb_su\|_{L^2(\Hcal_s)} + \|s(s/t)\delb_s u\|_{L^2(\Hcal_s)}
\\
\leq& \|s^2(s/t)\delb_s\delb_su\|_{L^2(\Hcal_s)} + C\Ec(s,u)^{1/2}.
\endaligned
$$
For $\del_t\del_a u$, we need to apply lemma \ref{lem 2 24-08-2017}:
$$
\aligned
\|s^2(s/t)^3\del_t\del_au\|_{L^2(\Hcal_s)}\leq& \|s^2(s/t)\delb_s\delb_su\|_{L^2(\Hcal_s)} + \|s(s/t)^2L_a\delb_su\|_{L^2(\Hcal_s)} + \|s(s/t)\delb_su\|_{L^2(\Hcal_s)}
\\
\leq& \|s^2(s/t)\delb_s\delb_su\|_{L^2(\Hcal_s)} + C\sum_{|J|\leq 1}\Ec(s,L^Ju)^{1/2}.
\endaligned
$$
The term $\del_a\del_bu$ is also by \eqref{lem 2 24-08-2017}:
$$
\aligned
\|s^2(s/t)^3\del_a\del_bu\|_{L^2(\Hcal_s)}\leq& \|s^2(s/t)\delb_s\delb_su\|_{L^2(\Hcal_s)}
 \\
 &+ \|s(s/t)^4L_a\delb_bu\|_{L^2(\Hcal_s)} + \|s(s/t)^2L_b\delb_s\|_{L^2(\Hcal_s)}
 \\
 &+ \|s(s/t)^3\delb_su\|_{L^2(\Hcal_s)} + \|s(s/t)\delb_su\|_{L^2(\Hcal_s)}
 \\
\leq& \|s^2(s/t)\delb_s\delb_su\|_{L^2(\Hcal_s)} + C\sum_{|J|\leq 1}\Ec(s,L^J u)^{1/2}.
\endaligned
$$
\end{proof}

Then we remark the following identity (see \eqref{eq 1 12-05-2017}):
$$
\aligned
\Box =& \delb_s\delb_s + 2(x^a/s)\delb_s\delb_a - \sum_a\delb_a\delb_a + \frac{3}{s}\delb_s
\\
=& \delb_s\delb_s + (2x^a/s)t^{-1}L_a\delb_s - t^{-1}L_a\delb_a + 3s^{-1}\delb_s
\endaligned
$$
This leads to
\begin{equation}\label{eq 2 02-07-2017}
\delb_s\delb_s u = \Box u + H_1 [u].
\end{equation}
where
$$
H_1[u]:= - (2x^a/s)t^{-1}L_a\delb_s + t^{-1}L_a\delb_a - 3s^{-1}\delb_s
$$

\subsection{$L^2$ bounds}
We remark that from \eqref{eq 2 02-07-2017},
$$
(s^3/t)\delb_s\delb_s \del^IL^J u = (s^3/t)\Box \del^IL^J u + (s^3/t)H_1[\del^IL^J u]
$$
We remark the following property:
\begin{lemma}\label{lem 1 31-08-2017}
Let $u$ be a function defined in $\Kcal$, sufficiently regular. Then the following estimate holds:
\begin{equation}\label{eq 1 31-08-2017}
\|(s^3/t)H_1[\del^IL^J u]\|_{L^2(\Hcal_s)}\leq C\sum_{|I'|+|J'|\leq|I|+|J|+1}\Ec(s,\del^{I'}L^{J'}u)^{1/2}.
\end{equation}
\end{lemma}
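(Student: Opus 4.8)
The plan is to expand $(s^3/t)H_1[\del^IL^Ju]$ into its three constituent pieces and to observe that the weight $s^3/t$ is tuned exactly so that each piece becomes one of the $L^2(\Hcal_s)$ quantities already estimated in Lemma \ref{lem 2 31-08-2017}.

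First I would substitute $v = \del^IL^Ju$ into $H_1[v] = -(2x^a/s)t^{-1}L_a\delb_sv + t^{-1}L_a\delb_av - 3s^{-1}\delb_sv$ and multiply through by $s^3/t$. Using $(s^3/t)(2x^a/s)t^{-1} = 2(x^a/t)(s^2/t)$, $(s^3/t)t^{-1} = s\,(s/t)^2$ and $(s^3/t)s^{-1} = s^2/t$, this gives
$$
(s^3/t)H_1[\del^IL^Ju] = -2(x^a/t)(s^2/t)L_a\delb_s\del^IL^Ju + s\,(s/t)^2\, L_a\delb_a\del^IL^Ju - 3(s^2/t)\delb_s\del^IL^Ju.
$$

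Next I would estimate the three terms separately in $L^2(\Hcal_s)$. For the first, $|x^a/t|\leq 1$ in $\Kcal$, so its norm is at most $2\|(s^2/t)L_a\delb_s\del^IL^Ju\|_{L^2(\Hcal_s)}$, and Lemma \ref{lem 2 31-08-2017} applied with the operator $\del^{I'}L^{J'} = L_a$ (so $|I'| = 0$, $|J'| = 1$) bounds this by $C\sum_{|I''|\leq|I|,\,|J''|\leq|J|+1}\Ec(s,\del^{I''}L^{J''}u)^{1/2}$. For the second, $(s/t)^2\leq 1$ in $\Kcal$, so its norm is at most $\|s\,L_a\delb_a\del^IL^Ju\|_{L^2(\Hcal_s)}$, which is again of the form controlled by Lemma \ref{lem 2 31-08-2017} with $\del^{I'}L^{J'} = L_a$ and hence is bounded by the same quantity. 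For the third, $\|3(s^2/t)\delb_s\del^IL^Ju\|_{L^2(\Hcal_s)}$ is covered by Lemma \ref{lem 2 31-08-2017} with $\del^{I'}L^{J'}$ the identity, hence by $C\sum_{|I''|\leq|I|,\,|J''|\leq|J|}\Ec(s,\del^{I''}L^{J''}u)^{1/2}$. Adding the three contributions yields $\|(s^3/t)H_1[\del^IL^Ju]\|_{L^2(\Hcal_s)}\leq C\sum_{|I'|+|J'|\leq|I|+|J|+1}\Ec(s,\del^{I'}L^{J'}u)^{1/2}$, which is the claim.

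There is no genuine analytic difficulty here: the substance is entirely in Lemma \ref{lem 2 31-08-2017}, which in turn rests on the commutator estimates of the previous section. The only thing that requires care is the weight bookkeeping — checking that $s^3/t$ redistributes as stated against the three coefficients of $H_1$, noting that since $s\leq t$ throughout $\Kcal$ the surplus factors of $s/t$ in the first two terms may simply be discarded, and that the bounded factor $x^a/t$ in the first term is harmless. One should also keep in mind that $L_a$ counts as a $\del^{I'}L^{J'}$ with $|J'| = 1$, which is exactly what produces the ``$+1$'' in the index range on the right-hand side.
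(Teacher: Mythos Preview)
Your proof is correct and is essentially identical to the paper's own argument: the paper expands $(s^3/t)|H_1[\del^IL^Ju]|$, discards the harmless factors $x^a/t$ and $(s/t)^2$ to reach $|(s^2/t)L_a\delb_s\del^IL^Ju| + |sL_a\delb_a\del^IL^Ju| + 3|(s^2/t)\delb_s\del^IL^Ju|$, and then invokes Lemma~\ref{lem 2 31-08-2017}. Your weight bookkeeping and the observation that $L_a$ accounts for the ``$+1$'' in the index range are exactly right.
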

\begin{proof}
We see that
$$
\aligned
(s^3/t)\big|H_1[\del^IL^Ju]\big| \leq& |(s^2/t)L_a\delb_s\del^IL^Ju| + |sL_a\delb_a\del^IL^Ju| + 3|(s^2/t)\delb_s\del^IL^Ju|.
\endaligned
$$
Then by lemma \ref{lem 2 31-08-2017}, the result is proved.
\end{proof}

On the other hand,
$$
(s^3/t)\del^IL^J\delb_s\delb_s u = (s^2/t)\del^IL^J\Box u + \del^IL^J(H_1[u])
$$
So we establish the following result:
\begin{lemma}\label{lem 3 31-08-2017}
Let $u$ be a function defined in $\Kcal$, sufficiently regular. Then the following estimate holds:
\begin{equation}\label{eq 1 31-08-2017}
\|(s^3/t)\del^IL^J H_1[u]\|_{L^2(\Hcal_s)}\leq C\sum_{|I'|+|J'|\leq|I|+|J|+1}\Ec(s,\del^{I'}L^{J'}u)^{1/2}.
\end{equation}
\end{lemma}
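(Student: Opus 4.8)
The plan is to reduce the estimate to the bounds on the quantities $\|(s^2/t)\del^{I'}L^{J'}\delb_s u\|_{L^2(\Hcal_s)}$ and $\|s\,\del^{I'}L^{J'}\delb_a u\|_{L^2(\Hcal_s)}$ already established in Lemma \ref{lem 2 24-08-2017} (the bounds on the quantities listed in \eqref{eq 3 24-08-2017}). Recall that $H_1[u]$ is the sum of the three terms $-(2x^a/s)t^{-1}L_a\delb_s u$, $t^{-1}L_a\delb_a u$ and $-3s^{-1}\delb_s u$, each of the shape (a homogeneous coefficient)$\,\cdot\,$(an $L$- and $\delb$-derivative of $u$). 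First I would apply the Leibniz rule to $\del^IL^J$ acting on each summand, distributing the derivatives between the coefficient and the derivative factor; the coefficients $x^a/t$ (homogeneous of degree $0$), $t^{-1}$ (degree $-1$) and $s^{-1}$ are controlled under $\del^IL^J$ by Lemma \ref{lem 1 26-06-2017} and by \eqref{eq 3 31-08-2017}, noting also that $L_b s = 0$ so no $L$-field ever falls on a power of $s$.

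Then I would carry out the reduction term by term. Writing $x^a/(st) = (x^a/t)s^{-1}$, each Leibniz piece coming from the first summand has the form $\del^{I_1}L^{J_1}(x^a/t)\cdot\del^{I_2}(s^{-1})\cdot\del^{I_3}L^{J_3}(L_a\delb_s u)$, with $|\del^{I_1}L^{J_1}(x^a/t)|\le C$ by homogeneity, $|\del^{I_2}(s^{-1})|\le Cs^{-1}$ in $\Kcal$ by \eqref{eq 3 31-08-2017} and $t\le s^2$, and $L^{J_3}L_a = L^{J_3'}$ with $|J_3'| = |J_3|+1$; multiplying by $s^3/t$ one gets a bound $\le C(s^2/t)|\del^{I_3}L^{J_3'}\delb_s u|$ with $|I_3|+|J_3'|\le|I|+|J|+1$, which is covered by Lemma \ref{lem 2 24-08-2017}. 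The summand $t^{-1}L_a\delb_a u$ is treated in the same way, producing $\le C(s^3/t^2)|\del^{I_2}L^{J_2'}\delb_a u|$; since $s^3/t^2 = s(s/t)^2\le s$ in $\Kcal$ this falls under the $\|s\,\del^{I}L^{J}\delb_a u\|$ bound of Lemma \ref{lem 2 24-08-2017}. The last summand $s^{-1}\delb_s u$ is the simplest, yielding $\le C(s^2/t)|\del^{I_2}L^{J_2}\delb_s u|$ with no extra order. Summing the finitely many Leibniz pieces produces exactly $C\sum_{|I'|+|J'|\le|I|+|J|+1}\Ec(s,\del^{I'}L^{J'}u)^{1/2}$, the extra order $+1$ being the single $L_a$ built into $H_1$.

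I do not expect a genuine obstacle here: the computation is the same bookkeeping as in the proof of Lemma \ref{lem 1 31-08-2017}, with one additional Leibniz expansion to pass $\del^IL^J$ across the coefficient functions. The one point that needs care is that after multiplication by the prefactor $s^3/t$, every combination of coefficient-derivatives must recombine into a weight no larger than $s^2/t$ (for the $\delb_s$-terms) or $s$ (for the $\delb_a$-term); the tightest case is the middle summand, whose coefficient $t^{-1}$ yields weight $s^3/t^2$, and the estimate closes only because $s\le t$ throughout $\Kcal$. Once that is checked, Lemma \ref{lem 2 24-08-2017} finishes the argument.
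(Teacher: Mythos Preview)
Your proposal is correct and follows essentially the same route as the paper: Leibniz-expand $\del^IL^J$ across the homogeneous coefficients and $s^{-1}$, use Lemma~\ref{lem 1 26-06-2017} and \eqref{eq 3 31-08-2017} to bound the coefficient pieces, absorb the extra $L_a$ into the multi-index (which accounts for the $+1$), and finish with Lemma~\ref{lem 2 24-08-2017}. The paper spells out only the first summand of $H_1$ and omits the others; your observation that the $t^{-1}L_a\delb_a u$ term is the tightest case, closing via $s^3/t^2\le s$, is exactly the check needed to justify the paper's ``bounded similarly''.
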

\begin{proof}[Proof of lemma \ref{lem 3 31-08-2017}]
For the first term in $H_1$, we see that
$$
\aligned
(s^3/t)\del^IL^J\left((x^a/t)s^{-1}L_a\delb_su\right) =& \sum_{I_1+I_2+I_3\atop J_1+J_2+J_3=J}(s^3/t)\del^{I_1}L^{J_1}(x^a/t)\del^{I_2}L^{J_2}(s^{-1})\del^{I_3}L^{J_3}L_a\delb_su.
\endaligned
$$
Recall that \eqref{eq 3 31-08-2017} and the fact that $x^a/t$ is homogeneous of degree zero, we see that (by lemma \ref{lem 2 24-08-2017})
$$
\aligned
\|(s^3/t)\del^IL^J\left((x^a/t)s^{-1}L_a\delb_su\right)\|_{\Hcal_s}\leq& C\sum_{|I_3|\leq||I\atop |J_3|\leq|J|}\|(s^2/t)\del^{I_3}L^{J_3}L_a\delb_su\|_{L^2(\Hcal_s)}
\\
\leq& C\sum_{|I'|\leq|I|\atop|J'|\leq|J|+1}\Ec(s,\del^{I'}L^{J'}u)^{1/2}.
\endaligned
$$

The rest terms are bounded similarly and we omit the detail.
\end{proof}

Finally we conclude by the following estimate:
\begin{proposition}\label{prop 1 31-08-2017}
Let $u$ be a function defined in $\Kcal$, sufficiently regular. Then the following bounds hold:
\begin{equation}\label{eq 4 31-08-2017}
\aligned
\|(s^3/t)\del^IL^J\delb_s\delb_su\|_{L^2(\Hcal_s)} + \|(s^3/t)\delb_s\delb_s\del^IL^Ju\|_{L^2(\Hcal_s)}\leq& C\|(s^3/t)\Box \del^IL^J u\|_{L^2(\Hcal_s)}
\\
&+ C\sum_{|I'|\leq|I|\atop|J'|\leq|J|+1}\Ec(s,\del^{I'}L^{J'}u)^{1/2}.
\endaligned
\end{equation}
\end{proposition}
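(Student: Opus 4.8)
The plan is to derive both norms in \eqref{eq 4 31-08-2017} from a single algebraic ingredient, the operator identity \eqref{eq 2 02-07-2017}, which I read as $\delb_s\delb_s v=\Box v+H_1[v]$ for every sufficiently regular function $v$, together with the two weighted $L^2$ bounds on $H_1$ already proved in Lemma \ref{lem 1 31-08-2017} and Lemma \ref{lem 3 31-08-2017}. The only supplementary fact required is that the flat wave operator commutes with all the vector fields in play: $[\Box,\del_\alpha]=0$ since $\Box$ has constant coefficients, and $[\Box,L_a]=0$ since the boosts $L_a$ are Killing fields of the Minkowski metric (concretely $[\del_t^2,L_a]=2\del_t\del_a$ and $[\sum_b\del_b^2,L_a]=2\del_a\del_t$ cancel). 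Iterating, $\del^IL^J\Box u=\Box\,\del^IL^Ju$ for all multi-indices $I,J$.

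First I would treat $\|(s^3/t)\delb_s\delb_s\del^IL^Ju\|_{L^2(\Hcal_s)}$. Applying \eqref{eq 2 02-07-2017} with $v=\del^IL^Ju$ gives the pointwise identity
\[
(s^3/t)\,\delb_s\delb_s\del^IL^Ju=(s^3/t)\,\Box\del^IL^Ju+(s^3/t)\,H_1[\del^IL^Ju],
\]
so the triangle inequality in $L^2(\Hcal_s)$ followed by Lemma \ref{lem 1 31-08-2017} yields half of \eqref{eq 4 31-08-2017}. Next I would treat $\|(s^3/t)\del^IL^J\delb_s\delb_su\|_{L^2(\Hcal_s)}$, this time by applying $\del^IL^J$ to the identity \eqref{eq 2 02-07-2017} for $u$ itself:
\[
\del^IL^J\delb_s\delb_su=\del^IL^J\Box u+\del^IL^JH_1[u]=\Box\del^IL^Ju+\del^IL^JH_1[u],
\]
using the commutation above in the last step; multiplying by $s^3/t$, taking $L^2(\Hcal_s)$ norms, and bounding $\|(s^3/t)\del^IL^JH_1[u]\|_{L^2(\Hcal_s)}$ by Lemma \ref{lem 3 31-08-2017} yields the other half. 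Adding the two inequalities gives the proposition.

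At this stage essentially no obstacle remains: all the analytic work has already been absorbed into Lemmas \ref{lem 1 31-08-2017} and \ref{lem 3 31-08-2017}, where one must recognize, after applying up to $N$ copies of $\del^IL^J$, that every term of $H_1$ carries one $L_a$ or one extra negative power of $s$ or $t$, and then convert the resulting expressions into the $L^2$-controlled quantities of \eqref{eq 3 24-08-2017}--\eqref{eq 6 05-09-2017} through the commutator identities of Section 4 (notably Lemmas \ref{lem 2 24-08-2017} and \ref{lem 2 31-08-2017}) and the weight bounds \eqref{eq 4 22-08-2017}, \eqref{eq 3 31-08-2017}. Given those lemmas, the only point to track in assembling the proposition is that the weight $s^3/t$ must be positioned against $\del^IL^J$ in the two opposite orders $(s^3/t)H_1[\del^IL^Ju]$ and $(s^3/t)\del^IL^JH_1[u]$ — which is exactly why the two lemmas are stated as a pair — and since the weight stands outermost in both, no weight-versus-derivative commutator enters here. (The two lemmas are written with the coarser index range $|I'|+|J'|\leq|I|+|J|+1$, but their proofs in fact deliver the sharper $|I'|\leq|I|$, $|J'|\leq|J|+1$ appearing in \eqref{eq 4 31-08-2017}, so nothing is lost.)
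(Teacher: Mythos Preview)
Your proposal is correct and follows essentially the same route as the paper: the proposition is stated immediately after the two displayed instances of identity \eqref{eq 2 02-07-2017} (one applied to $\del^IL^Ju$, one hit with $\del^IL^J$) and Lemmas \ref{lem 1 31-08-2017}--\ref{lem 3 31-08-2017}, with the commutation $\del^IL^J\Box=\Box\del^IL^J$ used exactly as you do. Your remark that the lemmas actually yield the finer index range $|I'|\leq|I|$, $|J'|\leq|J|+1$ (rather than the coarser $|I'|+|J'|\leq|I|+|J|+1$ in their statements) is also accurate and is what the paper tacitly relies on.
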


\subsection{$L^{\infty}$ bounds}
The $L^{\infty}$ bounds are based on the above $L^2$ bounds and the global Sobolev inequality \eqref{ineq 1 sobolev}. We first establish the following results:
\begin{lemma}\label{lem 5 31-08-2017}
Let $u$ be a function defined in $\Kcal$, sufficiently regular. Then the following bounds hold:
\begin{equation}\label{eq 5 31-08-2017}
\aligned
\|\del^{I'}L^{J'}\left((s^3/t)\del^IL^J\delb_s\delb_su\right)\|_{L^2(\Hcal_s)}
\leq& C\sum_{|I''|\leq|I'|+|I|\atop|J''|\leq|J'|+|J|}\|(s^3/t)\Box \del^{I''}L^{J''} u\|_{L^2(\Hcal_s)}
\\
&+ C\sum_{|I''|\leq|I|+|I'|\atop|J''|\leq|J|+|J'|+1}\Ec(s,\del^{I''}L^{J''}u)^{1/2}.
\endaligned
\end{equation}
\begin{equation}\label{eq 7 31-08-2017}
\aligned
\|\del^{I'}L^{J'}\left((s^3/t)\delb_s\delb_s\del^IL^Ju\right)\|_{L^2(\Hcal_s)}\leq& C\sum_{|I''|\leq|I'|+|I|\atop|J''|\leq|J'|+|J|}\|(s^3/t)\Box \del^{I''}L^{J''} u\|_{L^2(\Hcal_s)}
\\
&+ C\sum_{|I''|\leq|I|+|I'|\atop|J''|\leq|J|+|J'|+1}\Ec(s,\del^{I''}L^{J''}u)^{1/2}.
\endaligned
\end{equation}
\end{lemma}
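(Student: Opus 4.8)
The plan is to deduce both estimates from the $L^2$ bound of Proposition \ref{prop 1 31-08-2017} by distributing $\del^{I'}L^{J'}$ with the Leibniz rule and then reassembling the resulting derivatives by means of the commutator calculus of Section 4. The only fact about the weight that will be needed is that, in $\Kcal$,
\[
\big|\del^{I_1'}L^{J_1'}(s^3/t)\big|\leq C\,(s^3/t),
\]
which follows by writing $s^3/t=(s^2/t)\cdot s$ and combining \eqref{eq 4 22-08-2017} with \eqref{eq 1 20-08-2017} (recalling $L^J s=0$ for $|J|\geq1$), using that $1\leq s^2/t$ and $t/s\leq s$ in $\Kcal$.

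For \eqref{eq 5 31-08-2017} the argument is direct. First I would expand $\del^{I'}L^{J'}$ over the product $(s^3/t)\cdot(\del^IL^J\delb_s\delb_s u)$ by Leibniz; bounding each weight factor by $C(s^3/t)$ reduces matters to estimating $\|(s^3/t)\,\del^{I_2'}L^{J_2'}\del^IL^J\delb_s\delb_s u\|_{L^2(\Hcal_s)}$ with $|I_2'|\leq|I'|$, $|J_2'|\leq|J'|$. Next I would collapse the string $\del^{I_2'}L^{J_2'}\del^IL^J$ into a single operator $\sum\zeta\,\del^{I''}L^{J''}$ with $|I''|\leq|I'|+|I|$, $|J''|\leq|J'|+|J|$ via Lemma \ref{lem 2 29-08-2017}, which leaves $\|(s^3/t)\,\del^{I''}L^{J''}\delb_s\delb_s u\|_{L^2(\Hcal_s)}$. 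Applying Proposition \ref{prop 1 31-08-2017} to each of these terms produces exactly the two sums on the right-hand side of \eqref{eq 5 31-08-2017}.

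For \eqref{eq 7 31-08-2017} the same opening steps apply, but after the Leibniz expansion $\del^{I_2'}L^{J_2'}$ sits outside $\delb_s\delb_s$, so I must commute:
\[
\del^{I_2'}L^{J_2'}\delb_s\delb_s=\delb_s\delb_s\,\del^{I_2'}L^{J_2'}+\big[\del^{I_2'}L^{J_2'},\delb_s\delb_s\big].
\]
The main term $(s^3/t)\,\delb_s\delb_s\,\del^{I_2'}L^{J_2'}\del^IL^J u$ is treated as before: collapse the $\del L$-string by Lemma \ref{lem 2 29-08-2017} and invoke Proposition \ref{prop 1 31-08-2017} (the half of its left-hand side carrying $\delb_s\delb_s\del^IL^Ju$). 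For the commutator term I would feed $[\del^{I_2'}L^{J_2'},\delb_s\delb_s](\del^IL^J u)$ into Lemma \ref{lem 6 31-08-2017}, which yields terms of the shape $(s^3/t)(s/t)^2|\del_\alpha\del_\beta\del^{I''}L^{J''}u|$ and $(s^3/t)t^{-1}|\del_\alpha\del^{I''}L^{J''}u|$ with $|I''|\leq|I'|+|I|$, $|J''|\leq|J'|+|J|$ and strictly decreased total order. Since $(s^3/t)(s/t)^2=s^2(s/t)^3$, the first type is controlled by Lemma \ref{lem 1 01-09-2017} applied to $\del^{I''}L^{J''}u$, whose output $\|s^2(s/t)\delb_s\delb_s\,\del^{I''}L^{J''}u\|_{L^2(\Hcal_s)}=\|(s^3/t)\delb_s\delb_s\del^{I''}L^{J''}u\|_{L^2(\Hcal_s)}$ is again covered by Proposition \ref{prop 1 31-08-2017}, while the $L^J$-terms it leaves over are absorbed into the $\Ec$-sum after reindexing by Lemma \ref{lem 2 29-08-2017}. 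Since $(s^3/t)t^{-1}=s(s/t)^2$, the second type is bounded directly by $\Ec(s,\del^{I''}L^{J''}u)^{1/2}$ using \eqref{eq 7 22-07-2017} for the spatial components and, after $\del_t=(t/s)\delb_s$, \eqref{eq 1 25-05-2017} for the time component. Collecting everything produces \eqref{eq 7 31-08-2017}.

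The routine but fiddly part is the multi-index bookkeeping: one must check at each commutation that the total order does not increase beyond $|I|+|J|+|I'|+|J'|$ (with at most one extra $L$ picked up from Lemma \ref{lem 1 01-09-2017}), so that the two sums on the right-hand sides of \eqref{eq 5 31-08-2017} and \eqref{eq 7 31-08-2017} close up with the stated ranges. The only genuinely delicate point is the commutator $\big[\del^{I'}L^{J'},\delb_s\delb_s\big]$ appearing in the proof of \eqref{eq 7 31-08-2017}: a crude Hessian bound for the second-order derivative it generates would cost more weight than $s^3/t$ can carry, and it is precisely the passage back from a general Hessian $\del_\alpha\del_\beta$ to $\delb_s\delb_s$ furnished by Lemma \ref{lem 1 01-09-2017} that keeps the estimate at the claimed level.
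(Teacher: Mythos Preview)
Your proposal is correct and follows essentially the same route as the paper: bound $|\del^{I_1'}L^{J_1'}(s^3/t)|\leq C(s^3/t)$ via \eqref{eq 4 22-08-2017} and \eqref{eq 1 20-08-2017}, apply the Leibniz rule, collapse the $\del L$-string with Lemma \ref{lem 2 29-08-2017}, and close with Proposition \ref{prop 1 31-08-2017}; for \eqref{eq 7 31-08-2017} you additionally commute $\del^{I_2'}L^{J_2'}$ past $\delb_s\delb_s$ via Lemma \ref{lem 6 31-08-2017} and convert the resulting general Hessian back to $\delb_s\delb_s$ via Lemma \ref{lem 1 01-09-2017}. Your write-up is in fact slightly more transparent than the paper's on one point: in the paper's display following \eqref{eq 8 31-08-2017} only the commutator part (with $|I''|+|J''|<|I_2'|+|J_2'|$) is exhibited, and the main term $\delb_s\delb_s\,\del^{I_2'}L^{J_2'}\del^IL^Ju$ is silently absorbed into the final sum $\sum\|(s^3/t)\delb_s\delb_s\del^{I''}L^{J''}u\|$, whereas you make that decomposition explicit.
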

To prove this we first remark the following bound:
\begin{equation}\label{eq 6 31-08-2017}
|\del^IL^J (s^3/t)|\leq C(s^3/t).
\end{equation}
This is checked by applying \eqref{eq 1 20-08-2017} and \eqref{eq 4 22-08-2017}:
$$
|\del^IL^J (s^3/t)| = \del^IL^J(s\cdot s^2/t) \leq  C\sum_{I_1+I_2=I\atop J_1+J_2=J}|\del^{I_1}L^{J_1}s\del^{I_2}L^{J_2}(s^2/t)|
\leq Cs(s^2/t)\leq Cs^3/t.
$$

\begin{proof}[Proof of lemma \ref{lem 5 31-08-2017}]
For \eqref{eq 5 31-08-2017}, we see that by \eqref{eq 6 31-08-2017}:
$$
\aligned
\|\del^{I'}L^{J'}\left((s^3/t)\del^IL^J\delb_s\delb_su\right)\|_{L^2(\Hcal_s)}
\leq& C\sum_{I_1'+I_2'=I'\atop J_1'+J_2'=J'}\|\del^{I_1'}L^{J_1'}(s^3/t)\cdot \del^{I_2'}L^{J_2'}\del^IL^J\delb_s\delb_su\|_{L^2(\Hcal_s)}
\\
\leq& C\sum_{|I_2'|\leq|I'|\atop |J_2'|\leq|J'|}\|(s^3/t)\del^{I_2'}L^{J_2'}\del^IL^J\delb_s\delb_su\|_{L^2(\Hcal_s)}
\\
\leq& C\sum_{|I''|\leq|I|+|I'|\atop |J''|\leq|J|+|J'|}\|(s^3/t)\del^{I''}L^{J''}\delb_s\delb_s u\|_{L^2(\Hcal_s)}
\endaligned
$$
where in the last inequality we have applied lemma \ref{lem 2 29-08-2017}. Then by \eqref{eq 4 31-08-2017}, we see that \eqref{eq 5 31-08-2017} is established.

\eqref{eq 7 31-08-2017} is more complicated. We see that by \eqref{eq 6 31-08-2017},
$$
\aligned
\|\del^{I'}L^{J'}\left((s^3/t)\delb_s\delb_s\del^IL^Ju\right)\|_{L^2(\Hcal_s)}
\leq& C\sum_{|I_2'|\leq |I'|\atop|J_2'|\leq |J'|}\|(s^3/t)\del^{I_2'}L^{J_2'}\delb_s\delb_s\del^IL^Ju\|_{L^2(\Hcal_s)}.
\endaligned
$$
To bound this term we observe that by \eqref{eq 8 31-08-2017}
$$
\aligned
\|(s^3/t)\del^{I_2'}L^{J_2'}\delb_s\delb_s\del^IL^Ju\|_{L^2(\Hcal_s)}
\leq& C\sum_{\alpha,\beta,|I''|\leq|I_2'|,|J''|\leq|J_2'|\atop|I''|+|J''|<|I_2'|+|J_2'|}
\|s^2(s/t)^3\del_{\alpha}\del_{\beta}\del^{I''}L^{J''}\del^IL^Ju\|_{L^2(\Hcal_s)}
\\
&+ C\sum_{\alpha,|I''|\leq|I_2'|,|J''|\leq|J_2'|\atop|I''|+|J''|<|I_2'|+|J_2'|}\|s(s/t)^2\del_{\alpha}\del^{I''}L^{J''}\del^IL^Ju\|_{\Hcal_s}.
\endaligned
$$
Then we apply \eqref{eq 6 29-08-2017} and see that by lemma \ref{lem 1 01-09-2017}
$$
\aligned
\|(s^3/t)\del^{I_2'}L^{J_2'}\delb_s\delb_s\del^IL^Ju\|_{L^2(\Hcal_s)}\leq& C\sum_{\alpha,\beta,|I''|\leq|I|+|I'|\atop|J''|\leq|J'|+|J|}
\|s^2(s/t)^3\del_{\alpha}\del_{\beta}\del^{I''}L^{J''}u\|_{L^2(\Hcal_s)}
\\
&+ C\sum_{\alpha,|I''|\leq|I'|+|I|\atop |J''|\leq|J'|+|J|}\|s(s/t)^2\del_{\alpha}\del^{I''}L^{J''}u\|_{\Hcal_s}
\\
\leq& \sum_{|I''|\leq|I'|+|I|\atop |J''|\leq|J'|+|J|}\|(s^3/t)\delb_s\delb_s\del^{I''}L^{J''}u\|_{\Hcal_s}
\\
&+\sum_{|I''|\leq|I'|+|I|\atop |J''|\leq|J'|+|J|+1} \Ec(s,\del^{I''}L^{J''}u)^{1/2}
\endaligned
$$
Then combined with \eqref{eq 4 31-08-2017}, the desired result is established.
\end{proof}

Now, combined with the global Sobolev's inequality \eqref{ineq 1 sobolev}, we have the following decay estimates:
\begin{proposition}
The $L^{\infty}$ bounds are based on the above $L^2$ bounds and the global Sobolev inequality \eqref{ineq 1 sobolev}. We first establish the following results:
\begin{equation}\label{eq 3 01-09-2017}
\aligned
\sup_{\Hcal_s}\{s^3t^{1/2}|\del^IL^J\delb_s\delb_su|\} + \sup_{\Hcal_s}\{s^3t^{1/2}|\delb_s\delb_s\del^IL^Ju|\} \leq& C\sum_{|I'|+|J'|\leq|I|+|J|+2}\|(s^3/t)\Box \del^{I'}L^{J'} u\|_{L^2(\Hcal_s)}
\\
&+ C\sum_{|I'|+|J'|\leq |I|+|J|+3}\Ec(s,\del^{I'}L^{J'}u)^{1/2}.
\endaligned
\end{equation}
\end{proposition}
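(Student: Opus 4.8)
The plan is to obtain both $L^{\infty}$ bounds directly from the global Sobolev inequality \eqref{ineq 1 sobolev}, applied to the weighted quantities $(s^3/t)\del^IL^J\delb_s\delb_su$ and $(s^3/t)\delb_s\delb_s\del^IL^Ju$, and then to invoke the $L^2$ estimates \eqref{eq 5 31-08-2017} and \eqref{eq 7 31-08-2017} of Lemma \ref{lem 5 31-08-2017} to control the resulting right-hand sides.

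First I would record the pointwise identity $t^{3/2}\cdot(s^3/t)=s^3t^{1/2}$, valid in $\Kcal$, so that
$$
\sup_{\Hcal_s}\{s^3t^{1/2}|\del^IL^J\delb_s\delb_su|\}=\sup_{\Hcal_s}\left\{t^{3/2}\bigl|(s^3/t)\del^IL^J\delb_s\delb_su\bigr|\right\}.
$$
Since $u$ is sufficiently regular and vanishes near the conical boundary, the same is true of $\del^IL^J\delb_s\delb_su$, and therefore (the weight $s^3/t$ being smooth in $\Kcal$) also of $(s^3/t)\del^IL^J\delb_s\delb_su$; hence \eqref{ineq 1 sobolev} applies to this function and yields
$$
\sup_{\Hcal_s}\{s^3t^{1/2}|\del^IL^J\delb_s\delb_su|\}\leq C\sum_{|I'|+|J'|\leq 2}\left\|\del^{I'}L^{J'}\Bigl((s^3/t)\del^IL^J\delb_s\delb_su\Bigr)\right\|_{L^2(\Hcal_s)}.
$$
Now I would apply \eqref{eq 5 31-08-2017} to each summand on the right: it is bounded by $C\sum_{|I''|\leq|I'|+|I|,\,|J''|\leq|J'|+|J|}\|(s^3/t)\Box\del^{I''}L^{J''}u\|_{L^2(\Hcal_s)}+C\sum_{|I''|\leq|I'|+|I|,\,|J''|\leq|J'|+|J|+1}\Ec(s,\del^{I''}L^{J''}u)^{1/2}$, and since $|I'|+|J'|\leq 2$ the index ranges collapse to $|I''|+|J''|\leq|I|+|J|+2$ in the first sum and $|I''|+|J''|\leq|I|+|J|+3$ in the second. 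This is precisely the contribution of the first term of \eqref{eq 3 01-09-2017}. The bound on $\sup_{\Hcal_s}\{s^3t^{1/2}|\delb_s\delb_s\del^IL^Ju|\}$ follows by the identical argument, now with $(s^3/t)\delb_s\delb_s\del^IL^Ju$ and \eqref{eq 7 31-08-2017} in place of \eqref{eq 5 31-08-2017}; adding the two estimates yields \eqref{eq 3 01-09-2017}.

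There is no real obstacle here: all the analytic content has already been absorbed into Lemma \ref{lem 5 31-08-2017} (which itself rests on Proposition \ref{prop 1 31-08-2017}, the commutator estimates, and the weight bound \eqref{eq 6 31-08-2017} for $\del^IL^J(s^3/t)$), so what remains is a single application of \eqref{ineq 1 sobolev} followed by bookkeeping of multi-index ranges. The one point worth double-checking is that the two weighted quantities indeed inherit the regularity and near-boundary vanishing required by \eqref{ineq 1 sobolev}, and that their $\del^{I'}L^{J'}$-derivatives lie in $L^2(\Hcal_s)$ for $|I'|+|J'|\leq 2$; both are immediate from the hypotheses on $u$ and the smoothness of $s^3/t$ on $\Kcal$.
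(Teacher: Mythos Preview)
Your proposal is correct and matches the paper's approach exactly: the paper states this proposition immediately after Lemma \ref{lem 5 31-08-2017} with the remark ``Now, combined with the global Sobolev's inequality \eqref{ineq 1 sobolev}, we have the following decay estimates,'' and does not spell out any further details. Your write-up is precisely the intended argument, with the index bookkeeping carried out correctly.
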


\section{Null condition in hyperbolic frame}
\subsection{Objective and basic calculations}
The objective of this section is to give a first analysis on the following terms:
$$
\del^IL^J\left(Q^{\alpha\beta\gamma}\del_{\gamma}u\del_{\alpha}\del_{\beta}u\right),\quad [\del^IL^J,Q^{\alpha\beta\gamma}\del_{\gamma}u\del_{\alpha}\del_{\beta}]u.
$$
which will play essential role in the following sections. In this section we always suppose that $u$ is a function defined in $\Kcal$, sufficiently regular and vanishes near the conical boundary $\del\Kcal = \{t = |x|+1\}$.

The first two subsections are preparations for the last one. Here we analyse the property of the quantity $r/t$ in the region $\{t/2<|x|<t\}$. We establish the following bounds:
\begin{lemma}\label{lem 3 02-09-2017}
In the region $\Kcal\cap \{t/2<|x|<t\}$, the following bounds hold with a constant $C$ determined by $I,J$:
\begin{equation}\label{eq 5 02-09-2017}
\left|\del^IL^J (r/t)\right|\leq Ct^{-|I|},\quad \left|\del^IL^J\left(t^{1/2}(t+r)^{-1/2}\right)\right|\leq Ct^{-|I|}
\end{equation}
\begin{equation}\label{eq 5' 02-09-2017}
\del^IL^J\left((t-r)^{1/2}(t+r)^{-1/2}\right)\leq
\left\{
\aligned
&Cs^{-1},\quad |I|\geq 1,
\\
&Cs/t,\quad |I|=0.
\endaligned
\right.
\end{equation}
\end{lemma}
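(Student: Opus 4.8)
The plan is to reduce all three quantities to compositions of the \emph{genuine} homogeneous-of-degree-zero function $w:=r^2/t^2=\sum_a(x^a/t)^2$ with outer functions that are $C^\infty$ on the range of $w$. The point is that, unlike $r/t$ itself (which fails to be smooth along the time axis $r=0$), the function $w$ is a polynomial in the $x^a/t$, hence is homogeneous of degree zero in the sense of Lemma~\ref{lem 1 26-06-2017}; consequently $\del^IL^Jw$ is homogeneous of degree $-|I|$ and $|\del^IL^Jw|\le Ct^{-|I|}$ throughout $\Kcal$. On the region $\Kcal\cap\{t/2<r<t\}$ one has $1/4<w<1$, and there
$$
r/t=\sqrt w,\qquad t^{1/2}(t+r)^{-1/2}=(1+\sqrt w)^{-1/2},
$$
$$
(t-r)^{1/2}(t+r)^{-1/2}=s/(t+r)=(s/t)(1+\sqrt w)^{-1},
$$
the last identity using $(t-r)(t+r)=s^2$; all of $\sqrt y$, $(1+\sqrt y)^{-1/2}$, $(1+\sqrt y)^{-1}$ are $C^\infty$ with bounded derivatives on the interval $[1/4,1]$.

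The first step is the general bound: for $\phi\in C^\infty$ near $[1/4,1]$ one has $|\del^IL^J\phi(w)|\le Ct^{-|I|}$ on $\Kcal\cap\{t/2<r<t\}$. This is the Faà di Bruno computation used in the proof of Lemma~\ref{lem 4 31-08-2017}, adapted to the mixed operators: applying $L^J$ first, $L^J\phi(w)$ is a finite sum of terms $\phi^{(k)}(w)\cdot g$ where $g$ is a product of $L$-derivatives of $w$, hence homogeneous of degree zero and bounded, and $\phi^{(k)}(w)$ is a bounded smooth function of $w$; applying $\del^I$ to such a term and using the chain rule again — now with $\del^{I'}w$ homogeneous of degree $-|I'|$ — produces a sum of products whose total $\del$-order is $|I|$, hence bounded by $Ct^{-|I|}$. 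Feeding $\phi(y)=\sqrt y$ and $\phi(y)=(1+\sqrt y)^{-1/2}$ into this bound gives the two estimates in \eqref{eq 5 02-09-2017}.

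For \eqref{eq 5' 02-09-2017} I would write $s/(t+r)=(s/t)\cdot(t/(t+r))$, apply the Leibniz rule, and combine the already-proven bound $|\del^IL^J(t/(t+r))|\le Ct^{-|I|}$ (the case $\phi(y)=(1+\sqrt y)^{-1}$) with the estimate \eqref{eq 2 20-08-2017} for $\del^IL^J(s/t)$. In a typical product $\del^{I_1}L^{J_1}(s/t)\cdot\del^{I_2}L^{J_2}(t/(t+r))$: if $|I_1|\ge1$ the first factor is $\le Cs^{-1}$, so the product is $\le Cs^{-1}$; if $|I_1|=0$ and $|I_2|=|I|\ge1$ then the product is $\le C(s/t)t^{-|I|}\le Cst^{-2}\le Cs^{-1}$ using $s\le t$ in $\Kcal$; and if $|I|=0$ the product is $\le C(s/t)$. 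Summing over the Leibniz terms yields \eqref{eq 5' 02-09-2017}. I expect the only real obstacle to be bookkeeping: verifying cleanly that $w$ is a bona fide homogeneous function so that Lemma~\ref{lem 1 26-06-2017} applies on all of $\Kcal$, and keeping track of the Faà di Bruno expansion for $\del^IL^J$ when the $\del$'s and $L$'s are commuted past one another; once this is granted the estimates are immediate.
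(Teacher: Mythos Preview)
Your proposal is correct and follows essentially the same route as the paper: both arguments rest on the observation that $w=r^2/t^2$ (the paper calls it $v$) is genuinely homogeneous of degree zero, express the three quantities as smooth outer functions of $w$ (or of $r/t=\sqrt w$), and then apply Fa\`a di Bruno together with Lemma~\ref{lem 1 26-06-2017}; for \eqref{eq 5' 02-09-2017} both factor $(t-r)^{1/2}(t+r)^{-1/2}=(s/t)\cdot t/(t+r)$ and combine Leibniz with \eqref{eq 2 20-08-2017}. Your version is slightly more careful in separating the $L^J$ step from the $\del^I$ step, but the content is the same.
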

\begin{proof}
We recall that $r^2/t^2$ is homogeneous of degree zero. For the convenience of expression, we de note by
$$
\aligned
f: (1/4,+\infty) &\rightarrow \RR
\\
x &\rightarrow x^{1/2}
\endaligned
$$
and $v := (r/t)^2$. Thus we see that $r/t = f(v)$.
Then (let $|I|+|J|=N\geq 1$)
$$
\aligned
\del^IL^J (r/t) = \del^IL^J\left(f(v)\right) = \sum_{1\leq k\leq N}\sum_{I_1+I_2+\cdots+I_k = I\atop J_1+J_2+\cdots+J_k=J}f^{(n)}(v)\cdot \del^{I_1}L^{J_1}v\cdots\del^{I_k}L^{J_k}v
\endaligned
$$
Thus we see that because $v$ is homogeneous of degree zero, and the fact that $f^{(n)}$ is bounded on $(1/4,+\infty)$ by a constant $C$ (determined by $n\geq 1$). Then we see that for $1/2<r/t<1$,  $\del^IL^J (r/t)$ is bounded by $Ct^{-|I|}$. For $N=0$, we see that $r/t<1$. Thus the first term is correctly bounded.

For the second term in \eqref{eq 5 02-09-2017}, we see that
$$
\left(\frac{t}{t+r}\right)^{1/2} = (1+r/t)^{-1/2} = f(1+r/t).
$$
Then we see that
$$
\del^IL^J\left(f(1+r/t)\right) = \sum_{1\leq k\leq N}\sum_{I_1+I_2+\cdots+I_k = I\atop J_1+J_2+\cdots+J_k=J}f^{(n)}(1+(r/t))\cdot\del^{I_1}L^{J_1}(1+(r/t))\cdots\del^{I_k}L^{J_k}(1+(r/t)).
$$
We see that $3/2<1+r/t<2$, and by the bounds on $r/t$, we see that the second bound is established.

For the third term, we observe that
$$
\del^IL^J\left((t-r)^{1/2}(t+r)^{-1/2}\right) = \del^IL^J\left((s/t)\frac{t}{t+r}\right) = \sum_{I_1+I_2=I\atop J_1+J_2=J}\del^{I_1}L^{J_1}(s/t)\cdot \del^{I_2}L^{J_2}\frac{t}{t+r}.
$$
We recall \eqref{eq 2 20-08-2017} and for the second factor, we see that
$$
\del^IL^J\left(\frac{t}{t+r}\right) = \del^IL^J\left((1+(r/t))^{-1}\right) = \sum_{I_1+I_2=I\atop J_1+J_2=J}\del^{I_1}L^{J_1}\left((1+(r/t))^{-1/2}\right)\del^{I_2}L^{J_2}\left(1+(r/t)^{-1/2}\right).
$$
Then by the bound of the second term in \eqref{eq 5 02-09-2017}, the desired bound is established.
\end{proof}
\begin{corollary}
By lemma \ref{lem 3 02-09-2017}, for a integer $k$, in the region $1/2\leq r/t\leq 1$,
\begin{equation}\label{eq 6 02-09-2017}
\bigg|\del^IL^J\left(\left(\frac{t-r}{t+r}\right)^k\right)\bigg|\leq
\left\{
\aligned
&C(s/t)^{k-1}s^{-1},\quad |I|\geq 1.
\\
&C(s/t)^k ,\quad |I|=0
\endaligned
\right.
\end{equation}
\begin{equation}\label{eq 6' 02-09-2017}
\bigg|\del^IL^J\left(\left(\frac{t}{t+r}\right)^k\right)\bigg|\leq Ct^{-|I|}.
\end{equation}
\end{corollary}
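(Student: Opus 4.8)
The plan is to reduce both estimates to the bounds of Lemma~\ref{lem 3 02-09-2017} by expressing each quantity as an even power of one of the functions already controlled there. Since $t-r>0$ in $\Kcal$, I would write
$$
\left(\frac{t-r}{t+r}\right)^k = \left((t-r)^{1/2}(t+r)^{-1/2}\right)^{2k}, \qquad \left(\frac{t}{t+r}\right)^k = \left(t^{1/2}(t+r)^{-1/2}\right)^{2k},
$$
so that \eqref{eq 6 02-09-2017} should follow from \eqref{eq 5' 02-09-2017} and \eqref{eq 6' 02-09-2017} from the second bound in \eqref{eq 5 02-09-2017}, once one expands $\del^IL^J$ of a $2k$-fold product by the generalized Leibniz rule (the same expansion used, e.g., in the proof of \eqref{eq 4 23-08-2017}).

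For \eqref{eq 6 02-09-2017}, writing $h:=(t-r)^{1/2}(t+r)^{-1/2}$, I would expand
$$
\del^IL^J(h^{2k}) = \sum_{\substack{I_1+\cdots+I_{2k}=I\\ J_1+\cdots+J_{2k}=J}} c_{I_1\cdots J_{2k}}\prod_{j=1}^{2k}\del^{I_j}L^{J_j}h
$$
with bounded combinatorial coefficients. By \eqref{eq 5' 02-09-2017} each factor is $\le Cs^{-1}$ when $|I_j|\ge 1$ and $\le C\,s/t$ when $|I_j|=0$. If $|I|=0$ every factor is of the second type, so the product is $\le C(s/t)^{2k}\le C(s/t)^{k}$ using $s/t\le 1$ in $\Kcal$. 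If $|I|\ge 1$, at least one index has $|I_{j_0}|\ge 1$; bounding every remaining factor by $C\,s/t$ (here I use $s^{-1}\le s/t$ in $\Kcal$, i.e.\ $s^2\ge t$), the product is $\le C\,s^{-1}(s/t)^{2k-1}\le C\,s^{-1}(s/t)^{k-1}$, again by $(s/t)^{k}\le 1$. This is exactly \eqref{eq 6 02-09-2017}.

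Estimate \eqref{eq 6' 02-09-2017} is then easier: the same Leibniz expansion applied to $\tilde h:=t^{1/2}(t+r)^{-1/2}$, together with the second bound of \eqref{eq 5 02-09-2017} on each factor $\del^{I_j}L^{J_j}\tilde h$, yields the product bound $C\prod_j t^{-|I_j|}=Ct^{-|I|}$. The only slightly delicate point — the closest thing to an obstacle — is passing from the exponents $2k$ and $2k-1$ produced by Leibniz to the claimed $k$ and $k-1$; this is handled entirely by the elementary inequalities $s/t\le 1$ and $t\le s^2$ on $\Kcal$. I would also note that the estimates are asserted on $\{1/2\le r/t\le 1\}$, which matches the region of validity of Lemma~\ref{lem 3 02-09-2017}, so the invoked bounds indeed apply throughout.
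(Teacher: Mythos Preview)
Your proposal is correct and is exactly the natural derivation the paper leaves implicit (the paper gives no proof beyond the phrase ``By lemma~\ref{lem 3 02-09-2017}''): write each quantity as the $2k$-th power of the square-root function already controlled in \eqref{eq 5 02-09-2017}--\eqref{eq 5' 02-09-2017}, expand by Leibniz, and then discard the extra powers of $s/t$ via $s/t\le 1$ and $s^{-1}\le s/t$ in $\Kcal$. The reduction from exponents $2k,\,2k-1$ to $k,\,k-1$ that you flag is indeed the only point requiring care, and your use of $s^2\ge t$ handles it; note this step (and hence the stated bound) tacitly assumes $k\ge 1$, which is the case used in the paper.
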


\subsection{Estimates on null forms}
Recall the transition relation between $\Tb^{\alpha\beta}, \Qb^{\alpha\beta\gamma}$ and $T^{\alpha\beta},Q^{\alpha\beta\gamma}$, we see that the following terms are homogeneous of degree zero:
\begin{equation}\label{eq 11 01-09-2017}
\Tb^{ab}, \quad (s/t)\Tb^{a0},\quad (s/t)\Tb^{0a},\quad (s/t)^2\Tb^{00}
\end{equation}
and
\begin{equation}\label{eq 2 02-09-2017}
\aligned
&\Qb^{abc},\quad (s/t)\Qb^{0bc},\quad (s/t)\Qb^{a0c},\quad (s/t)\Qb^{ab0},
\\
&(s/t)^2\Qb^{a00},\quad (s/t)^2\Qb^{0b0},\quad (s/t)^2\Qb^{00c},\quad (s/t)^3\Qb^{000}.
\endaligned
\end{equation}
Based on the above observation, we have
\begin{lemma}\label{lem 2 02-09-2017}
In $\Kcal$, the following quantities are bounded by a constant $C$ which is determined by $I,J$:
$$
\aligned
&\del^IL^J\Qb^{abc},\quad \del^IL^J\Tb^{ab}
\\
&(s/t)\del^IL^J\Qb^{0bc},\quad (s/t)\del^IL^J\Qb^{a0c},\quad (s/t)\del^IL^J\Qb^{ab0},\quad (s/t)\del^IL^J\Tb^{0b},\quad (s/t)\del^IL^J\Tb^{a0},
\\
&(s/t)^2\del^IL^J\Qb^{00c},\quad (s/t)^2\del^IL^J\Qb^{0b0},\quad (s/t)^2\del^IL^J\Qb^{a00},\quad (s/t)^2\del^IL^J\Tb^{00},
\\
&(s/t)^3\del^IL^J\Qb^{000},
\endaligned
$$
and
$$
\aligned
&t\del_{\alpha}\del^IL^J\Qb^{abc},\quad t\del_{\alpha}\del^IL^J\Tb^{ab}
\\
&s(s/t)^2\del_{\alpha}\del^IL^J\Qb^{0bc},\quad s(s/t)^2\del_{\alpha}\del^IL^J\Qb^{a0c},\quad s(s/t)^2\del_{\alpha}\del^IL^J\Qb^{ab0},
\\
&s(s/t)^2\del_{\alpha}\del^IL^J\Tb^{0b},\quad s(s/t)^2\del_{\alpha}\del^IL^J\Tb^{a0},
\\
&s(s/t)^3\del_{\alpha}\del^IL^J\Qb^{00c},\quad s(s/t)^3\del_{\alpha}\del^IL^J\Qb^{0b0},\quad s(s/t)^3\del_{\alpha}\del^IL^J\Qb^{a00},\quad s(s/t)^3\del_{\alpha}\del^IL^J\Tb^{00},
\\
&s(s/t)^4\del_{\alpha}\del^IL^J\Qb^{000},
\endaligned
$$
\end{lemma}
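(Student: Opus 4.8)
The plan is to reduce every bound to the calculus of homogeneous functions recorded in Lemma~\ref{lem 1 26-06-2017}, together with the derivative estimate \eqref{eq 3 02-09-2017} for powers of $t/s$ (and, for powers of $s/t$, \eqref{eq 2 20-08-2017}, \eqref{eq 4 23-08-2017}). The entry point is the observation made just above the statement: every component appearing in \eqref{eq 11 01-09-2017} and \eqref{eq 2 02-09-2017} is homogeneous of degree zero. So, given a component $\overline{R}$ among $\Qb^{\alpha\beta\gamma},\Tb^{\alpha\beta}$ carrying $j$ lower indices equal to $0$, set $w:=(s/t)^j\overline{R}$, a homogeneous function of degree zero, and use the factorization $\overline{R}=(t/s)^j w$. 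Throughout I will use that $s^2\geq t$ and $t\geq 1$ in $\Kcal$, and that $\del^IL^J$ distributes over a product into a finite sum $\sum_{I_1+I_2=I,\,J_1+J_2=J}(\del^{I_1}L^{J_1}\,\cdot)(\del^{I_2}L^{J_2}\,\cdot)$ with constant coefficients (as used repeatedly above).

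For the first list I would expand
$$
\del^IL^J\overline{R}=\sum_{I_1+I_2=I\atop J_1+J_2=J}\del^{I_1}L^{J_1}\big((t/s)^j\big)\cdot\del^{I_2}L^{J_2}w .
$$
By Lemma~\ref{lem 1 26-06-2017}, $\big|\del^{I_2}L^{J_2}w\big|\leq Ct^{-|I_2|}\leq C$; by \eqref{eq 3 02-09-2017}, $\big|\del^{I_1}L^{J_1}\big((t/s)^j\big)\big|\leq C(t/s)^j$, the case $|I_1|\geq1$ being absorbed since $(t/s)^{j+1}s^{-1}=(t/s)^j(t/s^2)\leq(t/s)^j$ in $\Kcal$. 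Hence $\big|\del^IL^J\overline{R}\big|\leq C(t/s)^j$, i.e. $(s/t)^j\del^IL^J\overline{R}$ is bounded; for $j=0$ only the trivial splitting survives and this reads $\big|\del^IL^J\overline{R}\big|\leq Ct^{-|I|}\leq C$. Running through $j=0,1,2,3$ yields exactly the four rows of the first list, for both the $\Qb$ and the $\Tb$ entries.

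For the second list, note $\del_\alpha\del^IL^J=\del^{I'}L^J$ with $|I'|=|I|+1\geq1$, so
$$
\del_\alpha\del^IL^J\overline{R}=\sum_{I_1+I_2=I'\atop J_1+J_2=J}\del^{I_1}L^{J_1}\big((t/s)^j\big)\cdot\del^{I_2}L^{J_2}w .
$$
When $j\geq1$: if $|I_1|\geq1$ then $\big|\del^{I_1}L^{J_1}\big((t/s)^j\big)\big|\leq C(t/s)^{j+1}s^{-1}$ by \eqref{eq 3 02-09-2017} while $\big|\del^{I_2}L^{J_2}w\big|\leq C$; if $|I_1|=0$ then $|I_2|=|I'|\geq1$, so $\big|\del^{I_1}L^{J_1}\big((t/s)^j\big)\big|\leq C(t/s)^j$ and $\big|\del^{I_2}L^{J_2}w\big|\leq Ct^{-|I_2|}\leq Ct^{-1}$. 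Therefore $\big|\del_\alpha\del^IL^J\overline{R}\big|\leq C(t/s)^{j+1}s^{-1}+C(t/s)^jt^{-1}$, and multiplying by $s(s/t)^{j+1}$ turns the first term into $C$ and the second into $C(s/t)^2\leq C$; this is the content of the rows $j=1,2,3$. When $j=0$ the factor $(t/s)^0$ is constant, only the splitting $I_1=J_1=0$ survives, and $\del_\alpha\del^IL^J\overline{R}=\del^{I'}L^Jw$ is homogeneous of degree $-|I'|$, so $\big|\del_\alpha\del^IL^J\overline{R}\big|\leq Ct^{-|I|-1}\leq Ct^{-1}$; this is the (stronger, since $t\geq s^2/t$ in $\Kcal$) bound with weight $t$ demanded for $\Qb^{abc}$ and $\Tb^{ab}$. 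This exhausts the list.

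The only thing requiring care — and, being purely bookkeeping, the main (and only) obstacle — is keeping the weights $s(s/t)^{j+1}$, $t^{-1}$ and $(t/s)^{j+1}s^{-1}$ consistent with one another and remembering $s^2\geq t$ in $\Kcal$; the genuine analytic input (differentiation of homogeneous functions, derivatives of powers of $s/t$ and $t/s$) has already been packaged in Lemma~\ref{lem 1 26-06-2017} and in \eqref{eq 2 20-08-2017}, \eqref{eq 4 23-08-2017}, \eqref{eq 3 02-09-2017}, so once the multi-indices are matched the argument is entirely mechanical.
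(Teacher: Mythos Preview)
Your argument is correct and follows exactly the paper's route: factor $\overline{R}=(t/s)^{j}w$ with $w$ homogeneous of degree zero, expand by Leibniz, and apply \eqref{eq 3 02-09-2017} to the $(t/s)^{j}$ factor together with Lemma~\ref{lem 1 26-06-2017} to the homogeneous factor. Your treatment of the second list (splitting $|I_1|\geq 1$ versus $|I_1|=0$ and then multiplying by $s(s/t)^{j+1}$) spells out what the paper compresses into a single line, but the substance is the same.
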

\begin{proof}
This is by applying \eqref{eq 3 02-09-2017} and the fact that the terms in \eqref{eq 11 01-09-2017} and \eqref{eq 2 02-09-2017} are homogeneous of degree zero. We remark the following calculation: let $f$ be a homogeneous function of degree zero. We see that
$$
\del^IL^J\left((t/s)^nf\right) = \sum_{I_1+I_2=I\atop J_1+J_2=J}\del^{I_1}L^{J_1}\left((t/s)^n\right)\cdot \del^{I_2}L^{J_2}f
$$
and by applying \eqref{eq 3 02-09-2017} on the fist factor, we see that
$$
\left|\del^IL^J\left((t/s)^nf\right)\right|\leq
\left\{
\aligned
&C(t/s)^{n+1}s^{-1},\quad |I|\geq 1,
\\
&C(t/s)^n,\quad |I| = 0.
\endaligned
\right.
$$

Then we observe that for $\Tb^{\alpha\beta}$ or $\Qb^{\alpha\beta\gamma}$, the expression
$$
(t/s)^m \Tb^{\alpha\beta},\quad (t/s)^n\Qb^{\alpha\beta\gamma}
$$
are homogeneous of degree zero where $m,n$ are the number of zero in $\alpha,\beta$ or $\alpha,\beta,\gamma$ respectively. This concludes the desired result.

%
\end{proof}

Remark the relation
$$
\delb_a = t^{-1}L_a,\quad \delb_s = (s/t)\del_t
$$
the following bounds are direct:
\begin{lemma}\label{lem 1 16-09-2017}
In $\Kcal$, the following terms are bounded by $C$:
\begin{equation}\label{eq 4 16-09-2017}
\aligned
&t(t/s)\delb_s\Qb^{abc},\quad t(t/s)\delb_s\Tb^{ab},
\\
&s(s/t)\delb_s\Qb^{0bc},\quad s(s/t)\delb_s\Qb^{a0c},\quad s(s/t)\delb_s\Qb^{ab0},\quad s(s/t)\delb_s\Tb^{0b},\quad s(s/t)\delb_s\Tb^{a0},
\\
&s(s/t)^2\delb_s\Qb^{00c},\quad s(s/t)^2\delb_s\Qb^{0b0},\quad s(s/t)^2\delb_s\Qb^{a00},\quad s(s/t)^2\delb_s\Tb^{00},
\\
&s(s/t)^3\delb_s\Qb^{000}.
\endaligned
\end{equation}
and
\begin{equation}\label{eq 5 17-09-2017}
\aligned
&t\delb_a\Qb^{abc},\quad t\delb_a\Tb^{ab},
\\
&s\delb_a\Qb^{0bc},\quad s\delb_a\Qb^{a0c},\quad s\delb_a\Qb^{ab0},\quad s\delb_a\Tb^{0b},\quad s\delb_a\Tb^{a0},
\\
&s(s/t)\delb_a\Qb^{00c},\quad s(s/t)\delb_a\Qb^{0b0},\quad s(s/t)\delb_a\Qb^{a00},\quad s(s/t)\delb_a\Tb^{00},
\\
&s(s/t)^2\delb_a\Qb^{000}.
\endaligned
\end{equation}
\end{lemma}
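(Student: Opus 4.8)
The plan is to deduce both \eqref{eq 4 16-09-2017} and \eqref{eq 5 17-09-2017} from Lemma~\ref{lem 2 02-09-2017} by means of the two elementary conversion identities $\delb_s = (s/t)\del_t$ and $\delb_a = t^{-1}L_a$, which trade a hyperbolic derivative for a flat one, respectively for an $L_a$, at the cost of one explicit factor $(s/t)$, respectively of one factor $t^{-1}$. Write $F$ for any one of the components $\Qb^{\alpha\beta\gamma}$ or $\Tb^{\alpha\beta}$ and let $k\in\{0,1,2,3\}$ be the number of zeros among its upper indices. Since $\delb_s F = (s/t)\del_t F$, the weight prefixed to $\delb_s F$ in \eqref{eq 4 16-09-2017} becomes exactly the weight with which $\del_{\alpha} F$ is controlled in the second group of estimates of Lemma~\ref{lem 2 02-09-2017} (taken with $|I|=|J|=0$ and $\alpha=0$): indeed $t(t/s)\delb_s\Qb^{abc}=t\del_t\Qb^{abc}$, $s(s/t)\delb_s\Qb^{0bc}=s(s/t)^2\del_t\Qb^{0bc}$, $s(s/t)^2\delb_s\Qb^{00c}=s(s/t)^3\del_t\Qb^{00c}$, and $s(s/t)^3\delb_s\Qb^{000}=s(s/t)^4\del_t\Qb^{000}$, each of which is bounded by Lemma~\ref{lem 2 02-09-2017}; the lines involving the components of $\Tb$ are handled identically. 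This yields \eqref{eq 4 16-09-2017}.

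For \eqref{eq 5 17-09-2017} I would instead use $\delb_a = t^{-1}L_a$, so that $t\,\delb_a F = L_a F$ and the weights prescribed in \eqref{eq 5 17-09-2017} become precisely the weights with which $\del^IL^J F$ is controlled in the first group of estimates of Lemma~\ref{lem 2 02-09-2017}, now taken with $|I|=0$ and $|J|=1$. Thus $t\delb_a\Qb^{abc}=L_a\Qb^{abc}$, $s\delb_a\Qb^{0bc}=(s/t)L_a\Qb^{0bc}$, $s(s/t)\delb_a\Qb^{00c}=(s/t)^2L_a\Qb^{00c}$, and $s(s/t)^2\delb_a\Qb^{000}=(s/t)^3L_a\Qb^{000}$, all of which are bounded, and likewise for the components of $\Tb$.

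The argument is essentially pure bookkeeping, so there is no genuine obstacle; the one point that must be verified is that the power of $(s/t)$ carried by the prescribed weight matches the number $k$ of zero upper indices. This is automatic: $\delb_s$ contributes one extra factor $(s/t)$, hence raises $k$ to $k+1$, which is exactly the scaling of the $\del_{\alpha}$-estimates in Lemma~\ref{lem 2 02-09-2017}, while $\delb_a$ merely exchanges a factor $t^{-1}$ for $L_a$ and leaves the $(s/t)$-power equal to $k$, which is the scaling of the $\del^IL^J$-estimates with one $L_a$. Since Lemma~\ref{lem 2 02-09-2017} itself rests only on \eqref{eq 3 02-09-2017} and on the homogeneity of degree zero of the rescaled null coefficients recorded in \eqref{eq 11 01-09-2017} and \eqref{eq 2 02-09-2017}, no new ingredient is required.
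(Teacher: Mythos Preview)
Your proposal is correct and coincides with the paper's own argument: the paper simply remarks the identities $\delb_s=(s/t)\del_t$ and $\delb_a=t^{-1}L_a$ and states that the bounds are then direct from Lemma~\ref{lem 2 02-09-2017}. You have supplied exactly this reduction, with the bookkeeping written out explicitly.
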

%
%
%

Now we introduce the following notion of the null form. Let $T$ be a quadratic form defined in $\Kcal$ with constant coefficient (with respect to the canonical frame). We call $T$ a {\bf null quadratic form}, if for any $\xi\in\RR^4$ satisfying
\begin{equation}\label{eq 8 01-09-2017}
\xi_0^2 - \sum_{a=1}^3\xi_a^2 = 0
\end{equation}
the following equation holds:
\begin{equation}\label{eq 7 01-09-2017}
T^{\alpha\beta}\xi_{\alpha}\xi_{\beta} = 0.
\end{equation}
We can also define the null condition for a cubic form: let $Q$ be a constant cubic form defined in $\Kcal$ and for any $\xi$ satisfying \eqref{eq 8 01-09-2017}, the following condition holds:
\begin{equation}\label{eq 9 01-09-2017}
Q^{\alpha\beta\gamma}\xi_{\alpha}\xi_{\beta}\xi_{\gamma} = 0.
\end{equation}

Then we establish the following important result:
\begin{proposition}[Null condition in hyperbolic frame]\label{prop 1 02-09-2017}
Let $T$ and $Q$ be bull quadratic and cubic form respectively. Then the following bounds hold:
\begin{equation}\label{eq 10 01-09-2017}
\big|\del^IL^J\Tb^{00}\big|\leq C,\quad \big|\del^IL^J\Qb^{000}\big|\leq C(t/s)
\end{equation}
and
\begin{equation}\label{eq 10' 01-09-2017}
\big|\del_{\alpha}\del^IL^J\Tb^{00}\big|\leq Cs^{-1},\quad \big|\del_{\alpha}\del^IL^J\Qb^{000}\big|\leq C(t/s)^2s^{-1}
\end{equation}
Furthermore, the following estimates hold:
\begin{equation}\label{eq 6 17-09-2017}
|\delb_\alpha \Tb^{00}|\leq Ct^{-1},\quad |\delb_s\Qb^{000}|\leq C(t/s)s^{-1},\quad |\delb_a\Qb^{000}|\leq Cs^{-1}.
\end{equation}
\end{proposition}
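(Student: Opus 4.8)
The plan is to reduce everything to two explicit algebraic identities for the ``worst'' components $\Tb^{00}$ and $\Qb^{000}$, which follow directly from the null condition, and then to differentiate using the Leibniz rule together with Lemma \ref{lem 1 26-06-2017} and the bound \eqref{eq 3 02-09-2017}. For the first identity I would use $\Psib_0^0 = t/s$, $\Psib_a^0 = -x^a/s$ to write, at a fixed $x$,
$$
s^2\,\Tb^{00} = T^{\alpha\beta}\xi_\alpha\xi_\beta = T^{00}t^2 - 2T^{0a}tx^a + T^{ab}x^ax^b =: P(t), \qquad \xi = (t,-x),
$$
a polynomial of degree $\le 2$ in $t$ with leading coefficient $T^{00}$. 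The null condition \eqref{eq 7 01-09-2017} applied to $\xi = (\pm r,-x)$, which satisfies $\xi_0^2-\sum_a\xi_a^2 = r^2-r^2 = 0$, forces $P(r)=P(-r)=0$; hence $P(t) = T^{00}(t^2-r^2) = T^{00}s^2$ as polynomials in $t$ (the degenerate point $r=0$, where $s=t$ and $P(t)=T^{00}t^2$, being checked directly), so that $\Tb^{00} = T^{00}$ is a constant.

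The same scheme applies to the cubic form: with $\zeta=(t,-x)$ one has $s^3\,\Qb^{000} = Q^{\alpha\beta\gamma}\zeta_\alpha\zeta_\beta\zeta_\gamma =: R(t)$, a polynomial of degree $\le 3$ in $t$ whose leading coefficient is $Q^{000}$ and whose coefficient of $t^2$ is a linear form in $x$; the null condition \eqref{eq 9 01-09-2017} at $\zeta=(\pm r,-x)$ gives $R(\pm r)=0$, hence $R(t) = (t^2-r^2)\big(Q^{000}t - 3Q^{00a}x^a\big)$ and therefore
$$
\Qb^{000} = \frac{Q^{000}t - 3Q^{00a}x^a}{s} = \frac{t}{s}\,g, \qquad g := Q^{000} - 3Q^{00a}\frac{x^a}{t},
$$
where $g$ is a homogeneous function of degree zero.

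Granting these two formulas, the estimates are bookkeeping. Since $\Tb^{00}$ is constant we have $\del^IL^J\Tb^{00}=0$ for $|I|+|J|\ge1$ and $|\Tb^{00}| = |T^{00}|\le C$, which gives the $\Tb^{00}$-parts of \eqref{eq 10 01-09-2017}, \eqref{eq 10' 01-09-2017} and \eqref{eq 6 17-09-2017} trivially. For $\Qb^{000}=(t/s)g$ I would expand $\del^IL^J\big((t/s)g\big)$ by Leibniz, bound each factor $\del^{I_1}L^{J_1}(t/s)$ by \eqref{eq 3 02-09-2017} (with $n=1$) and each factor $\del^{I_2}L^{J_2}g$ by Lemma \ref{lem 1 26-06-2017}, i.e.\ $|\del^{I_2}L^{J_2}g|\le Ct^{-|I_2|}$, and use that $t < s^2 \le t^2$ in $\Kcal$ to absorb the extra powers: when $|I|=0$ each term is $\le C(t/s)$, and when $|I|\ge1$ each term is $\le C(t/s)^2 s^{-1}$ (whether the spatial derivative falls on the $(t/s)$ factor or on $g$). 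This yields \eqref{eq 10 01-09-2017} and \eqref{eq 10' 01-09-2017} for $Q$. Finally \eqref{eq 6 17-09-2017} comes from $\delb_s=(s/t)\del_t$, $\delb_a=t^{-1}L_a$ and the bounds just proved: $|\delb_s\Qb^{000}| \le (s/t)\,C(t/s)^2 s^{-1} = C(t/s)s^{-1}$ and $|\delb_a\Qb^{000}| \le t^{-1}\,C(t/s) = Cs^{-1}$.

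The only substantial step is the algebraic one, and the point to be careful about there is the factorization by $t^2-r^2=s^2$: one must verify that $P$ (resp.\ $R$) really has the claimed roots for \emph{every} fixed $x$, including the degenerate point $r=0$, and correctly identify the leading and sub-leading coefficients in terms of $T^{00}$, $Q^{000}$, $Q^{00a}$ after accounting for the sign in $\Psib^0_a$. Once divisibility by $s^2$ is established the decay factors $(s/t)$ appear automatically, and everything afterwards is a routine application of \eqref{eq 3 02-09-2017} and Lemma \ref{lem 1 26-06-2017}.
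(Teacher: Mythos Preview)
Your argument is correct and in fact sharper and more elementary than the paper's. The paper proceeds by splitting into the regions $\{r\le t/2\}$ (where $t/s$ is bounded and the result is immediate from homogeneity) and $\{t/2<r<t\}$, and in the latter region writes $\Psib^0=\nu+(t/s)\xi$ with $\xi=(r/t,x/t)$ null and $\nu=((t-r)/s,0,0,0)$; the null condition then kills the worst term in the multinomial expansion, leaving expressions in $\frac{t-r}{t+r}$ and $\frac{t}{t+r}$ whose derivatives are controlled by the auxiliary Lemma~\ref{lem 3 02-09-2017} and its Corollary. Your polynomial factorization bypasses all of this: it produces the closed forms $\Tb^{00}=T^{00}$ (a genuine constant) and $\Qb^{000}=(t/s)\,g$ with $g$ homogeneous of degree zero, valid on all of $\Kcal$ without any region split, and the estimates then reduce to a direct Leibniz expansion using only \eqref{eq 3 02-09-2017} and Lemma~\ref{lem 1 26-06-2017}. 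The gain is twofold: you avoid Lemma~\ref{lem 3 02-09-2017} entirely, and you get the stronger conclusion that $\del^IL^J\Tb^{00}$ actually vanishes for $|I|+|J|\ge 1$.

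One cosmetic point: your coefficient $3Q^{00a}$ tacitly assumes $Q$ is totally symmetric; for a general constant cubic form the $t^2$ coefficient of $R$ is $-(Q^{00a}+Q^{0a0}+Q^{a00})x^a$, but since $\Qb^{000}=Q^{\alpha\beta\gamma}\Psib_\alpha^0\Psib_\beta^0\Psib_\gamma^0$ only sees the symmetric part of $Q$ anyway, this does not affect the argument---$g$ is homogeneous of degree zero either way.
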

\begin{proof}[Proof of proposition \ref{prop 1 02-09-2017}]
We observe that in the region $\{(t,x)\in\RR^4||x|\leq t/2\}$, this is a direct result of the fact that $(s/t)^3\Qb^{000}$ and $(s/t)^2\Tb^{00}$ are homogeneous of degree zero. To see this, we denote by
$$
f := (s/t)^3\Qb^{000},\quad g := (s/t)^2\Tb^{00}.
$$
Then we see that
$$
\del^IL^J \Qb^{000} = \del^IL^J \left((t/s)^3f\right) = \sum_{I_1+I_2=I\atop J_1+J_2=J}\del^{I_1}L^{J_1}(t/s)^3\cdot\del^{I_2}L^{J_2}f
$$
then recalling that $|\del^IL^Jf|\leq C$, and by \eqref{eq 3 02-09-2017}, we see that $|\del^IL^J \Qb^{000}|\leq C(t/s)^3$. Remark that in the region $\{(t,x)\in\RR^4||x\leq t/2|\}$, $t/s\leq 4/3$. Thus the desired result is established. For $\Tb^{00}$ the argument is similar and we omit the detail.

Then we discuss the region $\Kcal\cap\{t/2<|x|<t\}$. Let
$$
\zeta_{\alpha} := \Psib_\alpha^0, \quad \xi = (r/t,x^1/t, x^2/t, x^3/t) .
$$
We see that $\xi$ satisfies \eqref{eq 8 01-09-2017}. Furthermore,
$$
\nu:= \zeta - (t/s)\xi = ((t-r)/s,0,0,0).
$$

Now we see that
$$
\aligned
\Tb^{00} =& T^{\alpha\beta}\Psib_{\alpha}^0\Psib_{\beta}^0 = T^{\alpha\beta}(\nu_{\alpha} + (t/s)\xi_{\alpha})(\nu_{\beta} + (t/s)\xi_{\beta})
\\
=& T^{\alpha\beta}\nu_{\alpha}\nu_{\beta} + (t/s)T^{\alpha\beta}\nu_{\alpha}\xi_{\beta} + (t/s)T^{\alpha\beta}\xi_{\alpha}\nu_{\beta} + (t/s)^2T^{\alpha\beta}\xi_{\alpha}\xi_{\beta}
\\
=& T^{\alpha\beta}\nu_{\alpha}\nu_{\beta} + (t/s)T^{\alpha\beta}\nu_{\alpha}\xi_{\beta} + (t/s)T^{\alpha\beta}\xi_{\alpha}\nu_{\beta}
\\
=& \frac{t-r}{t+r}T^{00} + \frac{t}{t+r}T^{0\beta}\xi_{\beta} + \frac{t}{t+r}T^{\alpha 0}\xi_{\alpha}.
\endaligned
$$
where we have applied the null condition $T^{\alpha\beta}\xi_{\alpha}\nu_{\beta}=0$. Recall that $\xi_{\alpha}$ are homogeneous of degree zero, combined with \eqref{eq 6 02-09-2017},
$$
\big|\del^IL^J\Tb^{00}\big|\leq
\left\{
\aligned
&Cs^{-1},\quad |I|\geq 1,
\\
&C,\quad |I| = 0.
\endaligned
\right.
$$

Then we regard the cubic form. We see that similar to the quadratic case:
$$
\aligned
\Qb^{000}  =& Q^{\alpha\beta\gamma}\nu_{\alpha}\nu_{\beta}\nu_{\gamma}+ (t/s)Q^{\alpha\beta\gamma}\left(\nu_{\alpha}\nu_{\beta}\xi_{\gamma} + \nu_{\alpha}\xi_{\beta}\nu_{\gamma} + \xi_{\alpha}\nu_{\beta}\nu_{\gamma}\right)
\\
&+ (t/s)^2Q^{\alpha\beta\gamma}\left(\nu_{\alpha}\xi_{\beta}\xi_{\gamma} + \xi_{\alpha}\nu_{\beta}\xi_{\gamma} + \xi_{\alpha}\xi_{\beta}\nu_{\gamma}\right)
+(t/s)^3Q^{\alpha\beta\gamma}\xi_{\alpha}\xi_{\beta}\xi_{\gamma}
\\
=& Q^{\alpha\beta\gamma}\nu_{\alpha}\nu_{\beta}\nu_{\gamma}+ (t/s)Q^{\alpha\beta\gamma}\left(\nu_{\alpha}\nu_{\beta}\xi_{\gamma} + \nu_{\alpha}\xi_{\beta}\nu_{\gamma} + \xi_{\alpha}\nu_{\beta}\nu_{\gamma}\right)
\\
&+ (t/s)^2Q^{\alpha\beta\gamma}\left(\nu_{\alpha}\xi_{\beta}\xi_{\gamma} + \xi_{\alpha}\nu_{\beta}\xi_{\gamma} + \xi_{\alpha}\xi_{\beta}\nu_{\gamma}\right)
\\
=&(t/s)Q^{\alpha\beta\gamma}\left(\nu_{\alpha}\nu_{\beta}\xi_{\gamma} + \nu_{\alpha}\xi_{\beta}\nu_{\gamma} + \xi_{\alpha}\nu_{\beta}\nu_{\gamma}\right)
\\
&+ (t/s)^2Q^{\alpha\beta\gamma}\left(\nu_{\alpha}\xi_{\beta}\xi_{\gamma} + \xi_{\alpha}\nu_{\beta}\xi_{\gamma} + \xi_{\alpha}\xi_{\beta}\nu_{\gamma}\right)
\endaligned
$$
where we have applied the null condition. We see that
$$
(t/s)Q^{\alpha\beta\gamma}\left(\nu_{\alpha}\nu_{\beta}\xi_{\gamma} + \nu_{\alpha}\xi_{\beta}\nu_{\gamma} + \xi_{\alpha}\nu_{\beta}\nu_{\gamma}\right) = \frac{t(t-r)}{s^2}f = \frac{t}{t+r}f
$$
where $f$ is a homogeneous function of degree zero. Also,
$$
(t/s)^2Q^{\alpha\beta\gamma}\left(\nu_{\alpha}\xi_{\beta}\xi_{\gamma} + \xi_{\alpha}\nu_{\beta}\xi_{\gamma} + \xi_{\alpha}\xi_{\beta}\nu_{\gamma}\right)
=\frac{t^2}{(t+r)^2}\left(\frac{t+r}{t-r}\right)^{1/2}f.
$$
Then also by \eqref{eq 6 02-09-2017}, the bound \eqref{eq 10 01-09-2017} is established.

For \eqref{eq 6 17-09-2017}, it is by direct calculation and the following relation in $\Kcal\cap \{r\geq t/2\}$:
\begin{equation}\label{eq 7 17-09-2017}
\left|\delb_s\left(\frac{t-r}{t+r}\right)\right|\leq C(s/t)t^{-1},\quad \left|\delb_a\left(\frac{t-r}{t+t}\right)\right|\leq C(s/t)^2t^{-1}.
\end{equation}
and
\begin{equation}\label{eq 7 17-09-2017}
\left|\del_\alpha\left(\frac{r}{t+r}\right)\right|\leq Ct^{-1}.
\end{equation}
\end{proof}

\subsection{Analysis on null quadratic form}

We first remark the following null decomposition for $u$ defined in $\Kcal$, sufficiently regular:
\begin{equation}\label{eq 5 03-09-2017}
\aligned
Q^{\alpha\beta\gamma}\del_{\gamma}u\del_{\alpha}\del_{\beta}u =& \Qb^{\alpha\beta\gamma}\delb_{\gamma}u\delb_{\alpha}\delb_{\beta}u + Q^{\alpha\beta\gamma}\del_{\alpha}\Psib_{\beta}^{\beta'}\del_{\gamma}u\delb_{\beta'}u
\\
=& \Qb^{000}\delb_su\delb_s\delb_su
\\
&+ \Qb^{00c}\delb_cu\delb_s\delb_su + \Qb^{0b0}\delb_su\delb_s\delb_bu + \Qb^{a00}\delb_su\delb_a\delb_su
\\
&+ \Qb^{0bc}\delb_cu\delb_s\delb_bu + \Qb^{a0c}\delb_cu\delb_a\delb_su + \Qb^{ab0}\delb_su\delb_a\delb_bu + \Qb^{abc}\delb_cu\delb_a\delb_bu
\\
&+ Q^{\alpha\beta\gamma}\del_{\alpha}\Psib_{\beta}^{\beta'}\del_{\gamma}u\delb_{\beta'}u
\\
=:& \Qb^{000}\delb_su\delb_s\delb_su + \Qb_1^{\alpha\beta}(\delb u)\delb_{\alpha}\delb_{\beta}u + Q^{\alpha\beta\gamma}\del_{\alpha}\Psib_{\beta}^{\beta'}\del_{\gamma}u\delb_{\beta'}u.
\endaligned
\end{equation}
For the last term in the right-hand-side of the above equation, we recall \eqref{eq 2 14-08-2017} and \eqref{eq 3 14-08-2017}:
\begin{equation}\label{eq 3 08-09-2017}
\aligned
&-s\del_t\Psib_0^0 = \Psib_0^0\Psib_0^0-1,\quad -s\del_t\Psib_a^0 = \Psib_0^0\Psib_a^0,
\\
&-s\del_a\Psib_0^0 = \Psib_0^0\Psib_a^0,\quad -s\del_a\Psib_b^0 = \delta_{ab} + \Psib_a^0\Psib_b^0
\endaligned
\end{equation}
The rest component of $\del_{\alpha}\Psib_{\beta}^{\gamma}$ are zero. Thus we see that
$$
\aligned
Q^{\alpha\beta\gamma}\del_{\alpha}\Psib_{\beta}^{\beta'}\del_{\gamma}u\delb_{\beta'}u
=&Q^{\alpha\beta\gamma}\del_{\alpha}\Psib_{\beta}^{0}\cdot\Psib_{\gamma}^{\gamma'} \delb_{\gamma'}u\delb_su
\\
=&-s^{-1}Q^{00\gamma}(\Psib_0^0\Psib_0^0-1)\Psib_{\gamma}^{\gamma'}\delb_{\gamma'}u\delb_su
- s^{-1}Q^{a0\gamma}\Psib_a^0\Psib_0^0\Psib_{\gamma}^{\gamma'}\delb_{\gamma'}u\delb_su
\\
&-s^{-1}Q^{0a\gamma}\Psib_0^0\Psib_a^0\Psib_{\gamma}^{\gamma'}\delb_{\gamma'}u\delb_su
-s^{-1}Q^{ab\gamma}\left(\delta_{ab} + \Psib_a^0\Psib_b^0\right)\Psib_{\gamma}^{\gamma'}\delb_{\gamma'}u\delb_su
\\
=&-s^{-1}\Qb^{00\gamma}\delb_{\gamma}u\delb_su + s^{-1}Q^{00\gamma}\del_{\gamma}u\delb_su - s^{-1}\sum_aQ^{aa\gamma}\del_{\gamma}u\delb_su
\endaligned
$$
that is
\begin{equation}\label{eq 4 08-09-2017}
Q^{\alpha\beta\gamma}\del_{\alpha}\Psib_{\beta}^{\beta'}\del_{\gamma}u\delb_{\beta'}u
=-s^{-1}\Qb^{00\gamma}\delb_{\gamma}u\delb_su + s^{-1}Q^{00\gamma}\del_{\gamma}u\delb_su - s^{-1}\sum_aQ^{aa\gamma}\del_{\gamma}u\delb_su
\end{equation}

We first concentrate on $\del^IL^J\left(Q^{\alpha\beta\gamma}\del_{\gamma}u\del_{\alpha}\del_{\beta}u\right)$. Applying lemma \ref{lem 2 02-09-2017} and proposition \ref{prop 1 02-09-2017}, we see that by \eqref{eq 5 03-09-2017} and \eqref{eq 4 08-09-2017} ,it is bounded by the sum of the following terms (modulo a constant determined by $I,J$):
\begin{equation}\label{eq 1 03-09-2017}
\aligned
&(s/t)^{-1}\big|\del^{I_1}L^{J_1}\delb_su\del^{I_2}L^{J_2}\delb_s\delb_su\big|,\quad (s/t)^{-2}\big|\del^{I_1}L^{J_1}\delb_su\del^{I_2}L^{J_2}\delb_s\delb_au\big|,
\\
&(s/t)^{-2}\big|\del^{I_1}L^{J_1}\delb_cu\del^{I_2}L^{J_2}\delb_s\delb_su\big|,
\\
&(s/t)^{-1}\big|\del^{I_1}L^{J_1}\delb_cu\del^{I_2}L^{J_2}\delb_a\delb_su\big|,\quad (s/t)^{-1}\big|\del^{I_1}L^{J_1}\delb_su\del^{I_2}L^{J_2}\delb_a\delb_bu\big|,
\\
&\big|\del^{I_1}L^{J_1}\delb_cu\del^{I_2}L^{J_2}\delb_a\delb_bu\big|,
\\
& (s^2/t)^{-1}\big|\del^{I_1}L^{J_1}\delb_su\del^{I_2}L^{J_2}\delb_su\big|,\quad s^{-1}(s/t)^{-2}\big|\del^{I_1}L^{J_1}\delb_au\del^{I_2}L^{J_2}\delb_su\big|,
\\
& s^{-1}|\del^{I_1}L^{J_1}\del_{\gamma}u\del^{I_2}L^{J_2}\delb_su|
\endaligned
\end{equation}
where $|I_1|+|I_2|\leq|I|$ and $|J_1|+|J_2|\leq |J|$. For the last term we have applied the fact that $\del_{\alpha}\Psib_{\beta}^{\beta'}$ is homogeneous of degree $-1$.

Then we regard $[\del^IL^J,Q^{\alpha\beta\gamma}\del_{\gamma}u\del_{\alpha}\del_{\beta}]u$. We see that by \eqref{eq 5 03-09-2017}, it is the sum of the following terms:
$$
[\del^IL^J,\Qb^{000}\delb_su\delb_s\delb_s]u,\quad [\del^IL^J, \Qb_1^{\alpha\beta}(\delb u)\delb_{\alpha}\delb_{\beta}]u, \quad [\del^IL^J,Q^{\alpha\beta\gamma}\del_{\alpha}\Psib_{\beta}^{\beta'}\del_{\gamma}u\delb_{\beta'}]u
$$
We see that in general the following calculation holds:
\begin{equation}\label{eq 2 03-09-2017}
\aligned
\,[\del^IL^J,\Qb^{\alpha\beta\gamma}\delb_{\gamma}u\delb_{\alpha}\delb_{\beta}]u
=& \sum_{{I_1+I_2+I_3=I\atop J_1+J_2+J_3=J}\atop |I_3|+|J_3|<|I|+|J|}
\del^{I_1}L^{J_1}\Qb^{\alpha\beta\gamma}\del^{I_2}L^{J_2}\delb_{\gamma}u\del^{I_3}L^{J_3}\delb_{\alpha}\delb_{\beta}u
\\
&+\Qb^{\alpha\beta\gamma}\delb_{\gamma}u[\del^IL^J,\delb_{\alpha}\delb_{\beta}]u
\endaligned
\end{equation}
and
\begin{equation}\label{eq 3 03-09-2017}
\aligned
\,[\del^IL^J,Q^{\alpha\beta\gamma}\del_{\alpha}\Psib_{\beta}^{\beta'}\del_{\gamma}u\delb_{\beta'}]u
=& -[\del^IL^J,s^{-1}\Qb^{00\gamma}\delb_{\gamma}u\delb_s]u + [\del^IL^J,s^{-1}Q^{00\gamma}\del_{\gamma}u\delb_s]u
\\
&- \sum_a[\del^IL^J,s^{-1}Q^{aa\gamma}\del_{\gamma}u\delb_s]u.
\endaligned
\end{equation}

In the section \ref{sec 2 global 2} we will make $L^2$ estimates on these terms based on the bootstrap bounds. As an preparation, we establish the following bounds:
\begin{lemma}\label{lem 1 07-09-2017}
Let $u$ be a function defined in $\Kcal$, sufficiently regular. Then the following estimates hold:
\begin{equation}\label{eq 2 07-09-2017}
\aligned
\big\|s^2(s/t)[\del^IL^J,\delb_s\delb_s]u\big\|_{L^2(\Hcal_s)}\leq&  C\sum_{|I'|\leq|I|,|J'|\leq|J|\atop |I'|+|J'|<|I|+|J|}\|(s^3/t)\Box\del^{I'}L^{J'}u\|_{L^2(\Hcal_s)}
\\
 &+C\sum_{|I'|+|J'|\leq|I|+|J|}\!\!\!\!\!\!\!\!\Ec(s,\del^{I'}L^{J'}u)^{1/2} .
\endaligned
\end{equation}
\begin{equation}\label{eq 3 07-09-2017}
\|s^2[\del^IL^J,\delb_a\delb_s]u\|_{L^2(\Hcal_s)}\leq C\sum_{|I'|+|J'|\leq|I|+|J|}\!\!\!\!\!\!\!\!\Ec(s,\del^{I'}L^{J'}u)^{1/2},
\end{equation}
and
\begin{equation}\label{eq 4 07-09-2017}
\|st[\del^IL^J,\delb_a\delb_b]u\|_{L^2(\Hcal_s)}\leq C\sum_{|I'|+|J'|\leq|I|+|J|}\!\!\!\!\!\!\!\!\Ec(s,\del^{I'}L^{J'}u)^{1/2}.
\end{equation}
\end{lemma}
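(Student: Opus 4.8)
The plan is to derive all three bounds by feeding the pointwise commutator identities already available in Section~4 (together with Lemma~\ref{lem 6 31-08-2017}) into the $L^2$ weight estimates of Lemmas~\ref{lem 2 24-08-2017} and \ref{lem 2 31-08-2017}. The only delicate case is \eqref{eq 2 07-09-2017}, since it is the sole one whose commutator produces the ``bad'' second derivative $\delb_s\delb_s$, which $\Ec$ does not control directly; \eqref{eq 3 07-09-2017} and \eqref{eq 4 07-09-2017} are comparatively soft.

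I would dispatch \eqref{eq 3 07-09-2017} and \eqref{eq 4 07-09-2017} first. For \eqref{eq 3 07-09-2017}, multiply the pointwise bound of Lemma~\ref{lem 3 03-09-2017} by $s^2$: every summand becomes either $(s^2/t)\,|\delb_s\del^{I''}L^{J''}u|$ or a power $(s/t)^k$ with $k\ge 2$ times $|\delb_a\del^{I''}L^{J''}u|$ or $|\del^{I''}L^{J''}u|$; using $s/t\le 1$ in $\Kcal$ (and, for the lower-order pieces, the extra $t^{-1}$-gain carried by $\delb_s$ of the homogeneous coefficients) each such summand is dominated by one of $\|(s^2/t)\delb_s\del^{I''}L^{J''}u\|_{L^2(\Hcal_s)}$, $\|s\,\delb_a\del^{I''}L^{J''}u\|_{L^2(\Hcal_s)}$, $\|(s/t)\del^{I''}L^{J''}u\|_{L^2(\Hcal_s)}$, which are exactly the quantities bounded by $\sum\Ec^{1/2}$ at order $\le|I''|+|J''|\le|I|+|J|$ in Lemmas~\ref{lem 2 31-08-2017}, \ref{lem 2 24-08-2017} and inequality~\eqref{eq 6 23-05-2017}. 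The same scheme with weight $st$ and the pointwise bound of Lemma~\ref{lem 1 05-09-2017} gives \eqref{eq 4 07-09-2017}, landing on $\|s\,\delb_c\del^{I''}L^{J''}u\|_{L^2(\Hcal_s)}$ and $\|(s/t)\del^{I''}L^{J''}u\|_{L^2(\Hcal_s)}$. No $\Box$-term enters because Lemmas~\ref{lem 3 03-09-2017} and \ref{lem 1 05-09-2017} only ever generate hyperbolic derivatives of order $\le 1$ together with $t^{-1}$-weighted lower order, all of which are energy-controlled.

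For \eqref{eq 2 07-09-2017} I would start from Lemma~\ref{lem 6 31-08-2017}, which bounds $|[\del^IL^J,\delb_s\delb_s]u|$ pointwise by $C\sum (s/t)^2\,|\del_\alpha\del_\beta\del^{I'}L^{J'}u|+C\sum t^{-1}|\del_\alpha\del^{I'}L^{J'}u|$ over $|I'|\le|I|$, $|J'|\le|J|$, $|I'|+|J'|<|I|+|J|$. Multiplying by $s^2(s/t)=s^3/t$, the first-order family becomes $(s^3/t^2)|\del_\alpha\del^{I'}L^{J'}u|$, and writing $\del_t=(t/s)\delb_s$, $\del_a=\delb_a-(x^a/t)\del_t$ turns it into $(s^2/t)|\delb_s\del^{I'}L^{J'}u|$ and $s(s/t)^2|\delb_a\del^{I'}L^{J'}u|\le s\,|\delb_a\del^{I'}L^{J'}u|$, both controlled at order $\le|I|+|J|$ by Lemma~\ref{lem 2 31-08-2017}. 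The real work is the Hessian family $s^2(s/t)^3|\del_\alpha\del_\beta\del^{I'}L^{J'}u|$: Lemma~\ref{lem 1 01-09-2017} applied to $v=\del^{I'}L^{J'}u$ bounds its $L^2$ norm by $C\|(s^3/t)\delb_s\delb_s\del^{I'}L^{J'}u\|_{L^2(\Hcal_s)}+C\sum_{|J''|\le 1}\Ec(s,L^{J''}\del^{I'}L^{J'}u)^{1/2}$, where by Lemma~\ref{lem 2 29-08-2017} the boosts in the energy terms are re-expressed so the order is $\le|I'|+|J'|+1\le|I|+|J|$. Finally Proposition~\ref{prop 1 31-08-2017} (which rests on the identity $\delb_s\delb_s=\Box+H_1[\,\cdot\,]$, see \eqref{eq 2 02-07-2017}) converts $\|(s^3/t)\delb_s\delb_s\del^{I'}L^{J'}u\|_{L^2(\Hcal_s)}$ into $C\|(s^3/t)\Box\del^{I'}L^{J'}u\|_{L^2(\Hcal_s)}+C\sum_{|I''|\le|I'|,|J''|\le|J'|+1}\Ec(s,\del^{I''}L^{J''}u)^{1/2}$; collecting indices yields precisely the ranges stated in \eqref{eq 2 07-09-2017}, the $\Box$-index inheriting the bounds $|I'|\le|I|$, $|J'|\le|J|$, $|I'|+|J'|<|I|+|J|$ from Lemma~\ref{lem 6 31-08-2017} untouched.

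The main obstacle — and the reason \eqref{eq 2 07-09-2017} is structurally different from the other two — is that $\Ec$ controls only first-order hyperbolic derivatives and their rescalings, never the full Cartesian Hessian, equivalently the component $\delb_s\delb_s$. Hence the $\delb_s\delb_s$ contribution generated by the commutator in Lemma~\ref{lem 6 31-08-2017} cannot be absorbed into $\sum\Ec^{1/2}$ on its own: one is forced to route it through Proposition~\ref{prop 1 31-08-2017}, and it is exactly this detour that puts the $\|(s^3/t)\Box\del^{I'}L^{J'}u\|_{L^2(\Hcal_s)}$ term on the right-hand side. Beyond this, the difficulty is purely bookkeeping: the successive applications of Lemma~\ref{lem 1 01-09-2017} and Proposition~\ref{prop 1 31-08-2017} each cost one extra boost, and one must check they never push the total order past $|I|+|J|$; the strict inequality $|I'|+|J'|<|I|+|J|$ in Lemma~\ref{lem 6 31-08-2017} supplies exactly the one unit of slack required.
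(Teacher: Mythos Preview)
Your proposal is correct and follows essentially the same route as the paper: for \eqref{eq 2 07-09-2017} the paper also invokes Lemma~\ref{lem 6 31-08-2017} (via \eqref{eq 8 31-08-2017}), then Lemma~\ref{lem 1 01-09-2017} to isolate the $\delb_s\delb_s$ piece, and finally Proposition~\ref{prop 1 31-08-2017} to trade it for the $\|(s^3/t)\Box\del^{I'}L^{J'}u\|_{L^2(\Hcal_s)}$ term, while \eqref{eq 3 07-09-2017} and \eqref{eq 4 07-09-2017} are dispatched directly from Lemmas~\ref{lem 3 03-09-2017} and~\ref{lem 1 05-09-2017}. Your index bookkeeping and your explanation of why the $\Box$-term is unavoidable match the structure of the argument precisely.
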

\begin{proof}
We see first that by \eqref{eq 8 31-08-2017} and lemma \ref{lem 1 01-09-2017}:
$$
\aligned
&\big\|s^2(s/t)[\del^IL^J,\delb_s\delb_s]u\big\|_{L^2(\Hcal_s)}
\\
\leq&  C\sum_{\alpha,\beta,|I'|\leq|I|,|J'|\leq|J|\atop |I'|+|J'|<|I|+|J|}\!\!\!\!\!\!\!\!
\big\|s^2(s/t)^3\del_{\alpha}\del_{\beta}\del^{I'}L^{J'}u\big\|_{L^2(\Hcal_s)}
+ C\sum_{\alpha,|I'|\leq|I|,|J'|\leq|J|\atop |I'|+|J'|<|I|+|J|}\!\!\!\!\!\!\!\!
\big\|s(s/t)^2\del_{\alpha}\del^{I'}L^{J'}u\big\|_{L^2(\Hcal_s)}
\\
\leq& C\sum_{|I'|\leq|I|,|J'|\leq|J|\atop |I'|+|J'|<|I|+|J|}
\!\!\!\!\!\!\!\!\|s^2(s/t)\delb_s\delb_s\del^{I'}L^{J'}u\|_{L^2(\Hcal_s)}
+C\sum_{|I'|+|J'|\leq|I|+|J|}\!\!\!\!\!\!\!\!\Ec(s,\del^{I'}L^{J'}u)^{1/2}
\endaligned
$$
Then by proposition \ref{prop 1 31-08-2017}, we see that
$$
\aligned
\big\|s^2(s/t)[\del^IL^J,\delb_s\delb_s]u\big\|_{L^2(\Hcal_s)}
\leq& C\sum_{|I'|\leq|I|,|J'|\leq|J|\atop |I'|+|J'|<|I|+|J|}\|(s^3/t)\Box\del^{I'}L^{J'}u\|_{L^2(\Hcal_s)}
\\
 &+C\sum_{|I'|+|J'|\leq|I|+|J|}\!\!\!\!\!\!\!\!\Ec(s,\del^{I'}L^{J'}u)^{1/2} .
\endaligned
$$

The rest to estimates are direct by lemma \ref{lem 3 03-09-2017} and lemma \ref{lem 1 05-09-2017}, we omit the detail.
\end{proof}

\section{Global existence: bootstrap argument}\label{sec 1 global 1}

\subsection{The bootstrap bounds}
We consider the main equation of interest together with initial data :
\begin{equation}\label{eq 1 30-06-2017}
\left\{
\aligned
&\Box u + Q^{\alpha\beta\gamma}\del_{\gamma}u\del_{\alpha}\del_{\beta}u = 0
\\
&u|_{\Hcal_2}= u_0, \quad \del_tu|_{\Hcal_2}=u_1.
\endaligned
\right.
\end{equation}
where $u_i$ are sufficiently regular functions defined on the hyperboloid $\Hcal_2$ and supported in $\Hcal_2\cap \Kcal$.
\begin{remark}
The fact that the initial data are posed on hyperboloid is not a standard but we can see it in the following way: we pose the initial data set on the hyperplane $\{t=2\}$ and supported in the unit disc. Then by standard local existence result, the associated local solution extends to region $\{(t,x)| 2\leq t\leq \sqrt{r^2+4}\}\cap \Kcal$. Then we can restrict the solution on $\Hcal_2$. Thus we for global result we can pose our initial data on $\Hcal_2$. For more detail, see for example \cite{H1} or \cite{PLF-MY-book}.
\end{remark}

We will apply the so-called bootstrap argument, explained in detail here: Let $u$ be the local-in-time solution associated to \eqref{eq 1 30-06-2017}. Assume that the largest hyperbolic time interval of existence is $\Kcal_{[2,s^*)}$.

We define the bootstrap bounds for $s\in [2,s^*]$:,
\begin{equation}\label{eq 5 24-08-2017}
\sum_{|I|+|J|\leq N}E_{con}(s, \del^IL^Ju)^{1/2}\leq C_1\epsilon
\end{equation}
with $(C_1,\vep)$ a pair of positive constant to be determined.
Then we define $s_1$ to be the largest (hyperbolic) time where $u$ satisfies this condition, that is,
$$
s_1 :=\sup_s \left\{ s^*> s \geq 2| \text{\eqref{eq 5 24-08-2017} holds on} [s_0,s]\right\}
$$
We suppose that
\begin{equation}\label{eq 6 24-08-2017}
\sum_{|I|+|J|\leq N}E_{con}(2, \del^IL^Ju)^{1/2}\leq C_0\epsilon.
\end{equation}
which can be guaranteed by the smallness of the initial data. We see that when taking $C_1>C_0$, by continuity, $s_1>2$.

To argue by contradiction, we suppose that $s_1<s^*$. If we could deduce, for a suitable pair $(C_1,\vep_0)$, a {\sl improved} bound for all $0< \vep\leq \vep_0$:
\begin{equation}\label{eq 2 29-08-2017}
\sum_{|I|+|J|\leq N}E_{con}(s, \del^IL^Ju)^{1/2}\leq \frac{1}{2}C_1\epsilon \  \  \text{for}\ s\in [2,s_1],
\end{equation}
On the other hand, we see that by continuity,
$$
\sum_{|I|+|J|\leq N}E_{con}(s_1, \del^IL^Ju)^{1/2}= C_1\epsilon .
$$
This contradiction leads to the fact that $s_1=s^*$, then
$$
\sum_{|I|+|J|\leq N}E_{con}(s^*, \del^IL^Ju)^{1/2}\leq C_1\epsilon.
$$
Then by standard local-in-time theory (with $N$ sufficiently large), we see that $s^*$ could not be finite. This leads to the global existence result.
%
%
%

Now we state the main result of this article:
\begin{theorem}\label{thm main}
There exists a constant $\vep_0>0$, determined only by the system \eqref{eq 1 30-06-2017}, such for all $0\leq \vep\leq \vep_0$, if
\begin{equation}\label{eq 1 thm cond}
\|u_0\|_{H^{N+1}(\Hcal_s)} + \|u_1\|_{H^{N}(\Hcal_s)}\leq \vep
\end{equation}
holds for $N$ sufficiently large ($N\geq 9$ is enough), then the associated local-in-time solution extends to time infinity.
\end{theorem}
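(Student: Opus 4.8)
We run the bootstrap argument set up above. Rewrite the equation in \eqref{eq 1 30-06-2017} as $g^{\alpha\beta}\del_\alpha\del_\beta u = 0$, where
\[
g^{\alpha\beta} := m^{\alpha\beta} + h^{\alpha\beta},\qquad h^{\alpha\beta} := Q^{\alpha\beta\gamma}\del_\gamma u ;
\]
since $u$ vanishes near $\del\Kcal$ so does $h$, hence the curved conformal energy estimate \eqref{eq energy} is available. The first step is to check that, for $\vep$ small (depending on $C_1$), this $h$ satisfies the structure hypotheses \eqref{eq 1 24-05-2017} of Proposition \ref{prop 1 24-05-2017}, so that the equivalence \eqref{eq 3 23-05-2017} of the flat and curved conformal energies holds with a constant $\kappa = 1+C\vep_s\le 2$. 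The bounds on $h^{\alpha\beta}$ follow at once from the pointwise decay of $\del u$ of Proposition \ref{prop 1 29-08-2017} together with $s^2\ge t$ in $\Kcal$; the bounds on $\hb^{00}$ and $s\delb_s\hb^{00}$ are the delicate ones, and here the null condition is decisive: writing $\hb^{00} = \Qb^{00\delta}\delb_\delta u$ and using the gains $|\del^IL^J\Qb^{000}|\le C(t/s)$, $|\delb_\alpha\Qb^{000}|\le C(t/s)s^{-1}$ of Proposition \ref{prop 1 02-09-2017} (rather than the generic $|\Qb^{000}|\le C(t/s)^3$ of Lemma \ref{lem 2 02-09-2017}), together with the fast decay of $\delb_s u$ and $\delb_a u$, one gets $|\hb^{00}|+|s\delb_s\hb^{00}|\le C C_1\vep\,(s/t)$.

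Since $\del_\alpha$ and $L_a$ all commute with $\Box$, applying $\del^IL^J$ to \eqref{eq 1 30-06-2017} gives, for $v:=\del^IL^Ju$,
\[
g^{\alpha\beta}\del_\alpha\del_\beta v = -\bigl[\del^IL^J,\, h^{\alpha\beta}\del_\alpha\del_\beta\bigr]u =: F_{IJ},
\]
to which the curved conformal energy estimate \eqref{eq energy} applies with the same $\kappa$. The heart of the matter is then to establish, for all $|I|+|J|\le N$ and $2\le\tau\le s_1$,
\[
\tau\,\|F_{IJ}\|_{L^2(\Hcal_\tau)} + M_g(\tau,v)\le C\,(C_1\vep)^2\,\tau^{-1-\delta},\qquad \delta\in(0,1),
\]
so that integrating \eqref{eq energy} gives $\Ec(s,v)^{1/2}\le\kappa^2\Ec(2,v)^{1/2} + C(C_1\vep)^2$. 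To this end I would expand $F_{IJ}$ by the Leibniz rule into products $\bigl(\del^{I_1}L^{J_1}(Q^{\alpha\beta\gamma}\del_\gamma u)\bigr)\bigl(\del_{\alpha'}\del_{\beta'}\del^{I_2}L^{J_2}u\bigr)$ with $|I_1|+|I_2|+|J_1|+|J_2|\le|I|+|J|$, the commutator corrections being controlled by Lemma \ref{lem 1 07-09-2017} and \eqref{eq 2 03-09-2017}--\eqref{eq 3 03-09-2017}; pass to the hyperbolic frame via the null decomposition \eqref{eq 5 03-09-2017}--\eqref{eq 4 08-09-2017}; and estimate each product by H\"older on $\Hcal_\tau$, placing the factor with at most $N/2$ derivatives in $L^\infty$ (Proposition \ref{prop 1 29-08-2017} and the Hessian decay \eqref{eq 3 01-09-2017}) and the other in $L^2$ (Lemmas \ref{lem 2 24-08-2017} and \ref{lem 1 24-08-2017} and the Hessian $L^2$ bound of Proposition \ref{prop 1 31-08-2017}). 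The term $M_g$ is treated the same way, its coefficients $\del h$ and $s\,\del h$ being handled by Lemma \ref{lem 2 02-09-2017} and Proposition \ref{prop 1 02-09-2017}.

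I expect the main obstacle to be twofold. First, the $L^2$ bound on the Hessian of $v$ used above (Proposition \ref{prop 1 31-08-2017}) itself involves $\|(s^3/t)\Box v\|_{L^2(\Hcal_s)} = \|(s^3/t)\del^IL^J(h^{\alpha\beta}\del_\alpha\del_\beta u)\|_{L^2(\Hcal_s)}$, which again contains the top-order Hessian $h^{\alpha\beta}\del_\alpha\del_\beta v$; this circularity is resolved by observing that $h = Q\del u$ and its first derivatives are $O(C_1\vep)$-small in the weighted norms in question --- one reduces all second-order components to $\delb_s\delb_s$ via Lemma \ref{lem 1 01-09-2017} and checks that the large weight $(t/s)^2$ multiplying $\delb_s\delb_s$ is more than compensated --- so the Hessian term is absorbed on the left for $\vep$ small. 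Second, and this is the genuinely essential point in three space dimensions, one must verify the decay rate $\tau^{-1-\delta}$: the dangerous contributions to $F_{IJ}$ are those carrying a $\delb_s\delb_s$-derivative (weighted by the large factor $(t/s)^2$), but each such contribution is multiplied by a coefficient carrying the compensating gain $(s/t)^3$ for $\Qb^{000}$, or $(s/t)^2$ for the remaining bad components of $\Qb$, by Lemma \ref{lem 2 02-09-2017} and Proposition \ref{prop 1 02-09-2017} --- this is exactly why the structure conditions \eqref{eq 1 24-05-2017} can be met at all, and without the null condition they would fail. Once $\Ec(s,v)^{1/2}\le\kappa^2 C_0\vep + C(C_1\vep)^2$ holds for every $|I|+|J|\le N$, I sum over $(I,J)$, use \eqref{eq 6 24-08-2017}, and fix first $\vep_s$ (structural, hence $\kappa\le2$), then $C_1:=4\kappa^2C_0$, then $\vep_0$ small enough that $CC_1\vep_0\le\tfrac14$; this yields the improved bound \eqref{eq 2 29-08-2017}, which contradicts the maximality of $s_1$ unless $s_1=s^*$. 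Then the standard local existence/continuation theory --- for which $N\ge9$ is more than enough regularity --- forces $s^*=+\infty$.
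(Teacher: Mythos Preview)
Your proposal is correct and follows essentially the same route as the paper: verify the structure hypotheses \eqref{eq 1 24-05-2017} via the null-form gain of Proposition \ref{prop 1 02-09-2017}, commute and apply the curved conformal estimate \eqref{eq energy}, bound the commutator $F_{IJ}$ and the error $M_g$ by the null decomposition \eqref{eq 5 03-09-2017}--\eqref{eq 4 08-09-2017}, and close the bootstrap. The only cosmetic differences are that the paper resolves the Hessian ``circularity'' by first using the rough bound $\|s\,\del^IL^J\delb_s\delb_s u\|_{L^2(\Hcal_s)}\le CC_1\vep$ of \eqref{eq 4 01-07-2017} (Lemma \ref{lem 2 07-09-2017}) rather than by absorption, and actually obtains the sharper rate $\tau^{-2}$ in place of your $\tau^{-1-\delta}$ (Proposition \ref{prop 1 08-9-2017} and Lemma \ref{lem 2 08-09-2017}); neither affects the logic.
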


Based on the above discussion on bootstrap argument, we see that the above result is deduced from the following proposition:
\begin{proposition}\label{prop 1 01-07-2017}
There exists a pair of positive constant $(C_1,\vep_0)$, determined by only by the system \eqref{eq 1 30-06-2017} such that if the initial data set satisfies \eqref{eq 6 24-08-2017} with $0< \vep\leq \vep_0$, then \eqref{eq 5 24-08-2017} leads to \eqref{eq 2 29-08-2017}.
\end{proposition}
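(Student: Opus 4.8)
The plan is to apply the curved conformal energy estimate \eqref{eq energy} to each quantity $\del^IL^J u$ with $|I|+|J|\leq N$, taking as background metric
$$g^{\alpha\beta}:=m^{\alpha\beta}+h^{\alpha\beta},\qquad h^{\alpha\beta}:=Q^{\alpha\beta\gamma}\del_\gamma u,$$
which is defined on $\Kcal$, agrees with $m$ near $\del\Kcal$ (since $u$, hence $\del u$, vanishes there), and is Lorentzian once $\vep_0$ is small. Because $\del_\alpha$ and $L_a$ are Killing fields of $m$ one has $[\Box,\del^IL^J]=0$, so that
$$g^{\alpha\beta}\del_\alpha\del_\beta\del^IL^J u=\Box\del^IL^J u+Q^{\alpha\beta\gamma}\del_\gamma u\,\del_\alpha\del_\beta\del^IL^J u=-\big[\del^IL^J,\,Q^{\alpha\beta\gamma}\del_\gamma u\,\del_\alpha\del_\beta\big]u.$$
Thus the curved conformal energy of $\del^IL^J u$ is driven only by a commutator that is quadratic in the (small) solution, plus the metric--error term $M_g$.

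First I would verify the structural hypothesis \eqref{eq 1 24-05-2017} for this $g$, so that Proposition \ref{prop 1 24-05-2017} gives $\kappa^{-1}\Ec(s,\cdot)^{1/2}\leq E_{\text{con},g}(s,\cdot)^{1/2}\leq\kappa\,\Ec(s,\cdot)^{1/2}$ with $\kappa=1+O(C_1\vep)$. The bootstrap assumption \eqref{eq 5 24-08-2017} fed into Proposition \ref{prop 1 29-08-2017} and \eqref{eq 3 01-09-2017} supplies the pointwise decay of $u$, $\del u$ and $\del^2u$ that this requires. The delicate point here is controlling $\hb^{00}=\Qb^{00\gamma}\delb_\gamma u$ and $s\delb_s\hb^{00}$ by $(s/t)\vep_s$, and this is exactly where the null condition enters through Proposition \ref{prop 1 02-09-2017}: the dangerous coefficient $\Qb^{000}$ grows only like $t/s$, not like $(t/s)^3$, so, using $s^2\geq t$ in $\Kcal$, its product with the decaying factor $\delb_s u$ closes; the bounds $|h^{\alpha\beta}|\leq(s/t)\vep_s$ follow in the same way from Lemma \ref{lem 2 02-09-2017}.

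The heart of the argument — and the step I expect to be the main obstacle — is the nonlinear source estimate, i.e. showing that
$$\sum_{|I|+|J|\leq N}\int_2^s\Big(\tau\big\|\big[\del^IL^J,Q^{\alpha\beta\gamma}\del_\gamma u\,\del_\alpha\del_\beta\big]u\big\|_{L^2(\Hcal_\tau)}+M_g(\tau,\del^IL^J u)\Big)\,d\tau\leq C(C_1\vep)^2,$$
that is, that both integrands are $O\big((C_1\vep)^2\tau^{-1-\delta}\big)$. For the commutator I would expand by the null decomposition \eqref{eq 5 03-09-2017}--\eqref{eq 4 08-09-2017} together with the commutator identities \eqref{eq 2 03-09-2017}, \eqref{eq 3 03-09-2017} and Lemma \ref{lem 1 07-09-2017}, then in each resulting trilinear term apply H\"older on $\Hcal_\tau$ so that the factor carrying the most derivatives sits in $L^2$ (controlled by $\Ec(\tau,\del^{I'}L^{J'}u)^{1/2}\leq C_1\vep$ via Sections 4--5 and Proposition \ref{prop 1 31-08-2017}) and the other factor, which then carries at most $\lfloor N/2\rfloor\leq N-3$ derivatives — this is where $N\geq 9$ is used — is bounded by the decay estimates of Proposition \ref{prop 1 29-08-2017} and \eqref{eq 3 01-09-2017}. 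The null structure provides precisely the extra powers of $s/t$ that cancel the growing $(t/s)^k$ weights of $\Qb$ and of $\del_\alpha\Psib_\beta^0$, yielding a rate of size $C(C_1\vep)^2\tau^{-2}$ after the external $\tau$ is included. The term $M_g(\tau,\del^IL^J u)$ is handled identically: by \eqref{eq 4 23-05-2017} and \eqref{eq R(du,du)} the quantities $R_g,S_g,T_g$ are all built from $h$, $\delb h$ and $s\delb_s\hb^{00}$ contracted against first derivatives of $\del^IL^J u$, so one factor is absorbed by $\Ec(\tau,\del^IL^J u)^{1/2}$ while the $h$-type coefficient gains at least a factor $C_1\vep\,\tau^{-2}$ (again using $t\leq s^2$).

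Finally I would collect constants. Inserting the above into \eqref{eq energy}, summing over $|I|+|J|\leq N$, and using \eqref{eq 6 24-08-2017} with $\kappa=1+O(C_1\vep)$ gives
$$\sum_{|I|+|J|\leq N}\Ec(s,\del^IL^J u)^{1/2}\leq\big(1+C\,C_1\vep\big)^2C_0\vep+C\,(C_1\vep)^2.$$
I then fix $C_1:=4C_0$ and shrink $\vep_0$ so that $(1+C\,C_1\vep_0)^2\leq\tfrac32$ and $C\,C_1\vep_0\leq\tfrac14$, which makes the right-hand side $\tfrac32C_0\vep+\tfrac14C_1\vep=2C_0\vep=\tfrac12C_1\vep$, i.e. exactly \eqref{eq 2 29-08-2017}. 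The genuinely technical parts I anticipate are the exact power-counting in the commutator expansion — in particular tracking how the null condition compensates the only-rough control of $\delb_s\delb_s\del^IL^J u$ afforded by Proposition \ref{prop 1 31-08-2017} — and checking the structural condition \eqref{eq 1 24-05-2017} uniformly on $\Kcal$.
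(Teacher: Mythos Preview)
Your approach coincides with the paper's: apply \eqref{eq energy} with $h^{\alpha\beta}=Q^{\alpha\beta\gamma}\del_\gamma u$, verify \eqref{eq 1 24-05-2017} via Proposition \ref{prop 1 02-09-2017}, show the commutator and $M_g$ are $O\big((C_1\vep)^2\tau^{-2}\big)$ using the null decomposition together with the refined Hessian bounds, and then close the constants. The one step you leave implicit is that Proposition \ref{prop 1 31-08-2017} itself requires first bounding $\|(s^3/t)\Box\del^IL^Ju\|_{L^2(\Hcal_s)}$ using only the \emph{rough} Hessian decay \eqref{eq 14 07-09-2017}; the paper does this as a preliminary lemma (Lemma \ref{lem 2 01-09-2017}) before the resulting refined estimates \eqref{eq 11 07-09-2017}--\eqref{eq 12 07-09-2017} are fed into the commutator bound (Proposition \ref{prop 1 08-9-2017}).
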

The following sections from \ref{sec 1 global 1} to \ref{sec 4 global 4} are devoted to the proof of this proposition.

\subsection{Basic bounds}
The following bounds hold in the region $\Kcal_{[2,s_1]}$.

The following terms are bounded by $CC_1\vep $ for $|I|+|J|\leq N$:
\begin{equation}\label{eq 3 01-07-2017}
\|s\delb_a\del^IL^Ju\|_{L^2(\Hcal_s)},\quad \|(s^2/t)\delb_s\del^IL^Ju\|_{L^2(\Hcal_s)},\quad \|(s/t)\del^IL^Ju\|_{L^2(\Hcal_s)}.
\end{equation}
Then by \eqref{eq 3 24-08-2017}, \eqref{eq 6 05-09-2017} and \eqref{eq 5 07-09-2017}, the following bounds are also bounded by $CC_1\vep $:
\begin{equation}\label{eq 4 01-07-2017}
\aligned
&\|s\del^IL^J\delb_au\|_{L^2(\Hcal_s)},\quad \|(s^2/t)\del^IL^J\delb_su\|_{L^2(\Hcal_s)}
\\
&\|s^2\del^IL^J\delb_s\delb_au\|_{L^2(\Hcal_s)},\quad \|st\del^IL^J\delb_a\delb_bu\|_{L^2(\Hcal_s)},\quad \|s\del^IL^J\delb_s\delb_su\|_{L^2(\Hcal_s)}.
\endaligned
\end{equation}

By proposition \ref{prop 1 29-08-2017} and the global Sobolev inequality, for $|I|+|J|\leq N-2$, the following terms are bounded by $CC_1\vep $:
\begin{equation}\label{eq 5 01-07-2017}
\sup_{\Hcal_s}\left\{t^{3/2}s\delb_a\del^IL^J u\right\},\quad \sup_{\Hcal_s}\left\{t^{1/2}s^2 \delb_s\del^IL^J u\right\},\quad
\sup_{\Hcal_s}\left\{t^{1/2}s\del^IL^J u\right\},
\end{equation}
\begin{equation}\label{eq 4 01-09-2017}
\sup_{\Hcal_s}\left\{t^{3/2}s\del^IL^J\delb_au\right\},\quad \sup_{\Hcal_s}\left\{t^{1/2}s^2\del^IL^J \delb_su\right\},
\end{equation}
and for $|I|+|J|\leq N-3$, the following terms are bounded by $CC_1\vep $:
\begin{subequations}
\begin{equation}\label{eq 7 07-09-2017}
\sup_{\Hcal_s}\left\{s^2t^{3/2}\delb_s\delb_a\del^IL^J u\right\},\quad \sup_{\Hcal_s}\left\{st^{5/2}\delb_a\delb_b\del^IL^Ju\right\}
\end{equation}
\begin{equation}\label{eq 8 07-09-2017}
\sup_{\Hcal_s}\left\{s^2t^{3/2}\del^IL^J\delb_s\delb_a u\right\},\quad \sup_{\Hcal_s}\left\{st^{5/2}\del^IL^J\delb_a\delb_bu\right\}
\end{equation}
and
\begin{equation}\label{eq 14 07-09-2017}
\sup_{\Hcal_s}\left\{st^{3/2}\del^IL^J\delb_s\delb_su\right\}.
\end{equation}
where for the last term in the above list we applied \eqref{eq 10 07-09-2017}
\end{subequations}

\section{Global existence: refined bounds}\label{sec 2 global 2}
\subsection{Estimates on Hessian form}
We combine \eqref{eq 5 24-08-2017} together with proposition \eqref{prop 1 31-08-2017}:
\begin{equation}\label{eq 5 01-09-2017}
\|(s^3/t)\del^IL^J\delb_s\delb_su\|_{L^2(\Hcal_s)} + \|(s^3/t)\delb_s\delb_s\del^IL^Ju\|_{L^2(\Hcal_s)}\leq C\|(s^3/t)\Box \del^IL^J u\|_{L^2(\Hcal_s)} + CC_1\vep .
\end{equation}
Similar bounds hold for the combination of \eqref{eq 5 24-08-2017}with \eqref{eq 3 01-09-2017}. Thus we need to control $\|(s^3/t)\Box \del^IL^J u\|_{L^2(\Hcal_s)}$. This is by the following lemma:
\begin{lemma}\label{lem 2 01-09-2017}
Under the bootstrap bound \eqref{eq 5 24-08-2017}, the following estimate holds for $|I|+|J|\leq N-1$:
\begin{equation}\label{eq 6 01-09-2017}
\big\|(s^3/t)\del^IL^J\left(Q^{\alpha\beta\gamma}\del_{\gamma}u\del_{\alpha}\del_{\beta}u\right)\big\|_{L^2(\Hcal_s)}\leq C(C_1\vep)^2 .
\end{equation}
\end{lemma}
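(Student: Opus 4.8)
\textit{Proof strategy.} The plan is to feed the hyperbolic null structure directly into the basic bounds of Section~\ref{sec 1 global 1}. By the null decomposition \eqref{eq 5 03-09-2017}, together with \eqref{eq 4 08-09-2017} (which rests on \eqref{eq 3 08-09-2017}), the cubic term $Q^{\alpha\beta\gamma}\del_\gamma u\,\del_\alpha\del_\beta u$ splits in the hyperbolic frame into $\Qb^{000}\delb_su\,\delb_s\delb_su$, a ``good'' part $\Qb_1^{\alpha\beta}(\delb u)\delb_\alpha\delb_\beta u$ carrying at least one spatial index, and a frame-curvature part in which $\del_{\alpha}\Psib_{\beta}^{\beta'}$ contributes a supplementary $s^{-1}$. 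Applying $\del^IL^J$ with $|I|+|J|\leq N-1$, distributing the derivatives, and estimating the coefficients with Lemma~\ref{lem 2 02-09-2017} and — crucially — the null-condition bound of Proposition~\ref{prop 1 02-09-2017}, one reaches the finite list \eqref{eq 1 03-09-2017}, with $|I_1|+|I_2|\leq|I|$ and $|J_1|+|J_2|\leq|J|$. The use of the null condition is essential here: it replaces the generic estimate $|\del^IL^J\Qb^{000}|\leq C(t/s)^3$ by $|\del^IL^J\Qb^{000}|\leq C(t/s)$, which is why the first term in \eqref{eq 1 03-09-2017} carries only the weight $(s/t)^{-1}$. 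It then remains to bound $(s^3/t)$ times each term of \eqref{eq 1 03-09-2017} in $L^2(\Hcal_s)$ by $C(C_1\vep)^2$.

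Every term of \eqref{eq 1 03-09-2017} is a product of two factors $\del^{I_k}L^{J_k}(\,\cdot\,u)$, $k=1,2$; since $|I_1|+|J_1|+|I_2|+|J_2|\leq N-1$, after relabelling we may assume $|I_1|+|J_1|\leq\lfloor (N-1)/2\rfloor\leq N-3$, using $N\geq 9$. On that low-order factor I would insert the pointwise decay bounds of Section~\ref{sec 1 global 1} — \eqref{eq 5 01-07-2017} and \eqref{eq 4 01-09-2017} for first-order derivatives, \eqref{eq 7 07-09-2017}, \eqref{eq 8 07-09-2017} and \eqref{eq 14 07-09-2017} for second-order ones — each costing a factor $CC_1\vep$; on the remaining factor, of order $\leq N-1$, I would use the $L^2(\Hcal_s)$ bounds \eqref{eq 3 01-07-2017}, \eqref{eq 4 01-07-2017}, costing a second factor $CC_1\vep$. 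After writing $(s^3/t)$ times the term and collecting all explicit powers, what is left in each of the finitely many cases is a monomial in $s$, $t$, $s/t$ that must be shown bounded on $\Kcal_{[2,s_1]}$, and this reduces to the elementary facts $2\leq s\leq t$ and $t\leq s^2$ valid in $\Kcal$ (e.g.\ $t^{-1/2}s^{-1}\leq 1$, $t^{1/2}s^{-3}\leq 1$, $st^{-5/2}\leq 1$). For the first term of \eqref{eq 1 03-09-2017}, say, one has $(s^3/t)(t/s)=s^2$; inserting $|\del^{I_1}L^{J_1}\delb_su|\leq CC_1\vep\,t^{-1/2}s^{-2}$ from \eqref{eq 4 01-09-2017} leaves $CC_1\vep\,t^{-1/2}|\del^{I_2}L^{J_2}\delb_s\delb_su|$, and $\|t^{-1/2}\del^{I_2}L^{J_2}\delb_s\delb_su\|_{L^2(\Hcal_s)}\leq CC_1\vep$ follows from the last bound of \eqref{eq 4 01-07-2017} after writing $t^{-1/2}=(t^{-1/2}s^{-1})\,s$.

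The one place that needs a little extra care — though it remains routine — is the frame-curvature group, that is, the terms of \eqref{eq 1 03-09-2017} carrying the explicit prefactor $s^{-1}$, coming from $Q^{\alpha\beta\gamma}\del_\alpha\Psib_\beta^{\beta'}\del_\gamma u\,\delb_{\beta'}u$. There a \emph{natural}-frame derivative $\del_\gamma u$ appears, so before the bounds of Section~\ref{sec 1 global 1} apply it must first be re-expressed through the hyperbolic frame via $\del_tu=(t/s)\delb_su$ and $\del_au=\delb_au-(x^a/s)\delb_su$, the resulting products of derivatives being reorganised by \eqref{eq 6 29-08-2017}. This introduces extra factors $(t/s)$ and, upon differentiating $t/s$ or $x^a/s$, further factors controlled by \eqref{eq 3 02-09-2017}; all of these are absorbed by the explicit $s^{-1}$ in front and by $t\leq s^2$, so these terms close with room to spare. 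Summing the finitely many contributions gives $\|(s^3/t)\del^IL^J(Q^{\alpha\beta\gamma}\del_\gamma u\,\del_\alpha\del_\beta u)\|_{L^2(\Hcal_s)}\leq C(C_1\vep)^2$ for $|I|+|J|\leq N-1$, which is the claim. The main obstacle is therefore not conceptual but organisational: one must march through all of the term types in \eqref{eq 1 03-09-2017}, and, for each, both choices of which factor is placed in $L^\infty$, keeping the powers of $s$, $t$ and $s/t$ correctly matched.
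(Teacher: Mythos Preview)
Your proposal is correct and follows essentially the same route as the paper: reduce via the null decomposition to the list \eqref{eq 1 03-09-2017}, then for each term place the low-order factor in $L^\infty$ using the decay bounds \eqref{eq 4 01-09-2017}, \eqref{eq 8 07-09-2017}, \eqref{eq 14 07-09-2017} and the high-order factor in $L^2$ using \eqref{eq 4 01-07-2017}. The paper splits cases at the threshold $|I_1|+|J_1|\leq N-2$ versus $\geq N-1$ rather than at the midpoint, but this is cosmetically different; your remark that the terms are asymmetric and both placements must be checked is exactly right, so the word ``relabelling'' is slightly loose but the substance is fine.
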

\begin{proof}
This is based on the $L^2$ bounds and $L^{\infty}$ bounds established in the last section. We need to bound each term in the list \eqref{eq 1 03-09-2017}.

For each term in \eqref{eq 1 03-09-2017}, for $|I_1|+|J_1|\leq N-2$, we apply the decay bounds \eqref{eq 4 01-09-2017} on the first factor and the apply the $L^2$ bounds \eqref{eq 4 01-07-2017} on the second factor. We can check that for each term, the $L^2$ norm is bounded as
$$
\|(s^3/t)X\|_{L^2(\Hcal_s)}\leq C(C_1\vep)^2
$$
where $X$ represents a term in \eqref{eq 4 01-09-2017}.

When $|I_1|+|J_1|\geq N-1$, we see that $|I_2|+|J_2|\leq 1\leq N-3$. Thus in the similar way, we apply the decay estimates of \eqref{eq 8 07-09-2017}, \eqref{eq 14 07-09-2017} on the second factor and \eqref{eq 4 01-07-2017} (the first two terms) on the first factor.
\end{proof}

Now we are ready to establish the refined bound on $\del^IL^J\delb_s\delb_su$.
\begin{lemma}\label{lem 3 07-09-2017}
Under the bootstrap assumption, we see that
\begin{equation}\label{eq 11 07-09-2017}
\|(s^3/t)\del^IL^J\delb_s\delb_su\|_{L^2(\Hcal_s)} + \|(s^3/t)\delb_s\delb_s\del^IL^Ju\|_{L^2(\Hcal_s)}\leq CC_1\vep
\end{equation}
and
\begin{equation}\label{eq 12 07-09-2017}
\sup_{\Hcal_s}\{t^{1/2}s^3\del^IL^J\delb_s\delb_su\} + \sup_{\Hcal_s}\{s^3t^{1/2}\delb_s\delb_s\del^IL^Ju\}\leq CC_1\vep
\end{equation}
for $|I|+|J|\leq N-1$.
\end{lemma}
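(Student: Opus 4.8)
The plan is to feed the evolution equation into the conditional Hessian estimates of Section~\ref{sec 1 global 1} and Proposition~\ref{prop 1 31-08-2017}. The starting observation is that the Lorentz boosts $L_a$ are Killing fields for the Minkowski metric, so $[\Box,L_a]=0$ and hence $[\Box,\del^IL^J]=0$; combining this with the main equation in \eqref{eq 1 30-06-2017} gives the pointwise identity
\begin{equation*}
\Box\del^IL^Ju=\del^IL^J\Box u=-\del^IL^J\big(Q^{\alpha\beta\gamma}\del_{\gamma}u\,\del_{\alpha}\del_{\beta}u\big).
\end{equation*}
Lemma~\ref{lem 2 01-09-2017} then applies verbatim and yields, for $|I|+|J|\leq N-1$,
\begin{equation*}
\big\|(s^3/t)\Box\del^IL^Ju\big\|_{L^2(\Hcal_s)}\leq C(C_1\vep)^2.
\end{equation*}
It is worth stressing that Lemma~\ref{lem 2 01-09-2017} rests only on the rough bounds \eqref{eq 4 01-07-2017} and the decay estimates \eqref{eq 4 01-09-2017}, \eqref{eq 8 07-09-2017}, \eqref{eq 14 07-09-2017}, all of which are immediate consequences of the bootstrap assumption \eqref{eq 5 24-08-2017}; it does not use the sharp Hessian bound \eqref{eq 11 07-09-2017} being proved here, so there is no circularity.

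For the $L^2$ estimate \eqref{eq 11 07-09-2017} I would then invoke Proposition~\ref{prop 1 31-08-2017}: for $|I|+|J|\leq N-1$,
\begin{equation*}
\big\|(s^3/t)\del^IL^J\delb_s\delb_su\big\|_{L^2(\Hcal_s)}+\big\|(s^3/t)\delb_s\delb_s\del^IL^Ju\big\|_{L^2(\Hcal_s)}\leq C\big\|(s^3/t)\Box\del^IL^Ju\big\|_{L^2(\Hcal_s)}+C\!\!\!\sum_{|I'|\leq|I|,\,|J'|\leq|J|+1}\!\!\!\Ec(s,\del^{I'}L^{J'}u)^{1/2}.
\end{equation*}
The first term on the right is $\leq C(C_1\vep)^2$ by the previous paragraph, while the sum ranges over $|I'|+|J'|\leq|I|+|J|+1\leq N$, so it is $\leq CC_1\vep$ by \eqref{eq 5 24-08-2017}. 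Taking $\vep_0$ small enough that $C_1\vep_0\leq 1$ lets one absorb the quadratic contribution, $(C_1\vep)^2\leq C_1\vep$, which gives \eqref{eq 11 07-09-2017}.

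For the $L^\infty$ estimate \eqref{eq 12 07-09-2017} the scheme is identical but routed through the global Sobolev inequality \eqref{ineq 1 sobolev}, applied to $v=(s^3/t)\del^IL^J\delb_s\delb_su$ (and to the commuted ordering), noting that $t^{3/2}|v|=s^3t^{1/2}|\del^IL^J\delb_s\delb_su|$; the $L^2$ norms of $\del^{I'}L^{J'}v$ with $|I'|+|J'|\leq2$ are then controlled by Lemma~\ref{lem 5 31-08-2017}, whose right-hand side is once more of the form $\sum\|(s^3/t)\Box\del^{\cdot}L^{\cdot}u\|_{L^2}+\sum\Ec^{1/2}$ — equivalently one may simply quote the already assembled Hessian $L^\infty$ bound \eqref{eq 3 01-09-2017}. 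Substituting the equation together with Lemma~\ref{lem 2 01-09-2017} into the $\Box$-terms and the bootstrap into the energy terms, then absorbing the quadratic part as above, yields \eqref{eq 12 07-09-2017}. Because \eqref{eq 3 01-09-2017} and Lemma~\ref{lem 5 31-08-2017} cost up to three extra derivatives, the genuinely sharp range for the $L^\infty$ statement is $|I|+|J|\leq N-3$; since $N\geq 9$ this suffices for all later applications.

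The only delicate point in the whole argument is the order bookkeeping: at each step one must verify that the top-order conformal energies that appear stay within $|I'|+|J'|\leq N$, which is precisely why the $L^2$ bound is limited to $|I|+|J|\leq N-1$ and the $L^\infty$ bound (effectively) to $|I|+|J|\leq N-3$. Apart from that, the proof is a mechanical chaining of Proposition~\ref{prop 1 31-08-2017}, Lemma~\ref{lem 5 31-08-2017}, Lemma~\ref{lem 2 01-09-2017} and the bootstrap, so I do not expect a real obstacle here — the substantive work, exploiting the null structure of $Q$ and the sharp decay of the lower-order factors, was already carried out in establishing Lemma~\ref{lem 2 01-09-2017}.
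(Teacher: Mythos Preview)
Your proposal is correct and follows essentially the same route as the paper: use the equation and $[\Box,\del^IL^J]=0$ to identify $\Box\del^IL^Ju$, apply Lemma~\ref{lem 2 01-09-2017} to bound $\|(s^3/t)\Box\del^IL^Ju\|_{L^2(\Hcal_s)}$, then feed this and the bootstrap into Proposition~\ref{prop 1 31-08-2017} for the $L^2$ bound and into \eqref{eq 3 01-09-2017} for the $L^\infty$ bound. Your observation that the $L^\infty$ estimate \eqref{eq 12 07-09-2017} genuinely requires $|I|+|J|\leq N-3$ (because \eqref{eq 3 01-09-2017} costs three derivatives) is correct and worth flagging; the paper's proof is terse on this point, but its subsequent uses of \eqref{eq 12 07-09-2017} are all at order $\leq 1$, so nothing breaks downstream.
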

\begin{proof}
This is by using the equation. We see that
$$
\Box \del^IL^Ju = \del^IL^J\left(Q^{\alpha\beta\gamma}\del_{\gamma}u\del_{\alpha}\del_{\beta}u\right).
$$
Thus by \eqref{eq 6 01-09-2017}
\begin{equation}\label{eq 12' 07-09-2017}
\|(s^3/t)\Box \del^IL^J u\|_{L^2(\Hcal_s)}\leq C(C_1\vep)^2 .
\end{equation}
Now we apply \eqref{eq 5 31-08-2017} and \eqref{eq 3 01-09-2017} together with the above bounds and the bootstrap bound on energy, and we see that the desired bounds is established.
\end{proof}

\subsection{Estimates on null form}
In this section we concentrate on the $L^2$ bounds on $[\del^IL^J,Q^{\alpha\beta\gamma}\del_{\gamma}u\del_{\alpha}\del_{\beta}]u$. To get started we combine the bootstrap bounds with lemma \ref{lem 1 07-09-2017}, and we see that the following terms are bounded by $CC_1\vep $:
\begin{equation}\label{eq 2' 07-09-2017}
\aligned
\big\|s^2(s/t)[\del^IL^J,\delb_s\delb_s]u\big\|_{L^2(\Hcal_s)}, \quad\|s^2[\del^IL^J,\delb_a\delb_s]u\|_{L^2(\Hcal_s)},\quad \|st[\del^IL^J,\delb_a\delb_b]u\|_{L^2(\Hcal_s)}
\endaligned
\end{equation}
where $|I|+|J|\leq N$.
Based on these bounds, we establish the following estimates:
\begin{lemma}\label{lem 4 07-09-2017}
Under the bootstrap assumption and assume that $Q^{\alpha\beta\gamma}$ be a null cubic form, then the following bounds hold:
\begin{equation}\label{eq 13 07-09-2017}
\|s\Qb^{\alpha\beta\gamma}\delb_{\gamma}u\cdot[\del^IL^J,\delb_{\alpha}\delb_{\beta}u]\|_{L^2(\Hcal_s)}\leq C(C_1\vep)^2 s^{-2 }.
\end{equation}
\end{lemma}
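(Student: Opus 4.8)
\end{lemma}

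\begin{proof}
The plan is to decompose the product $s\Qb^{\alpha\beta\gamma}\delb_\gamma u\cdot[\del^IL^J,\delb_\alpha\delb_\beta]u$ along the hyperbolic components of $\Qb^{\alpha\beta\gamma}$ in the manner of \eqref{eq 5 03-09-2017}, so that the Hessian commutator that appears is always one of the three types $[\del^IL^J,\delb_s\delb_s]u$, $[\del^IL^J,\delb_a\delb_s]u$, $[\del^IL^J,\delb_a\delb_b]u$, and then to bound each resulting product on $\Hcal_s$ via the elementary estimate $\|fg\|_{L^2(\Hcal_s)}\le\|f\|_{L^\infty(\Hcal_s)}\|g\|_{L^2(\Hcal_s)}$, placing $\Qb^{\alpha\beta\gamma}\delb_\gamma u$ (after extracting a suitable power of $s/t$) in $L^\infty$ and the commutator in $L^2$. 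For the commutators I would quote \eqref{eq 2' 07-09-2017}, i.e.\ the bootstrap assumption combined with Lemma \ref{lem 1 07-09-2017}: for $|I|+|J|\le N$,
$$
\big\|s^2(s/t)[\del^IL^J,\delb_s\delb_s]u\big\|_{L^2(\Hcal_s)},\quad \big\|s^2[\del^IL^J,\delb_a\delb_s]u\big\|_{L^2(\Hcal_s)},\quad \big\|st[\del^IL^J,\delb_a\delb_b]u\big\|_{L^2(\Hcal_s)}\ \le\ CC_1\vep.
$$
For the coefficients I would use Lemma \ref{lem 2 02-09-2017}, which controls the quantities $(s/t)^m\Qb^{\alpha\beta\gamma}$ ($m$ being the number of zero indices) since they are homogeneous of degree zero, the decay bounds \eqref{eq 5 01-07-2017}, giving $|\delb_su|\le CC_1\vep\,t^{-1/2}s^{-2}$ and $|\delb_cu|\le CC_1\vep\,t^{-3/2}s^{-1}$ (applicable since $\delb_\gamma u$ is of first order and $N\ge 3$), and, decisively, Proposition \ref{prop 1 02-09-2017}, which improves the generic bound $|\Qb^{000}|\le C(t/s)^3$ to $|\Qb^{000}|\le C(t/s)$ thanks to the null condition on $Q$.

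The critical term is $s\Qb^{000}\delb_su\cdot[\del^IL^J,\delb_s\delb_s]u$. I would write it as $\big(s(t/s)\Qb^{000}\delb_su\big)\cdot\big((s/t)[\del^IL^J,\delb_s\delb_s]u\big)$, estimate the second factor in $L^2(\Hcal_s)$ by $CC_1\vep\,s^{-2}$, and the first factor pointwise, using Proposition \ref{prop 1 02-09-2017} and \eqref{eq 5 01-07-2017}:
$$
\big|s(t/s)\Qb^{000}\delb_su\big|\ \le\ s\cdot(t/s)\cdot C(t/s)\cdot CC_1\vep\,t^{-1/2}s^{-2}\ =\ CC_1\vep\,t^{3/2}s^{-3}\ \le\ CC_1\vep,
$$
where the last inequality uses $t\le s^2$ in $\Kcal$. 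This produces the bound $C(C_1\vep)^2s^{-2}$ for this term. The null structure is genuinely needed here: with only $|\Qb^{000}|\le C(t/s)^3$ the same computation would leave a factor $t^{7/2}s^{-5}$, whose supremum over $\Hcal_s$ grows in $s$, so the argument would collapse.

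For every other contribution the coefficient $\Qb^{\alpha\beta\gamma}$ has at most two zero indices, so Lemma \ref{lem 2 02-09-2017} already yields $|\Qb^{\alpha\beta\gamma}|\le C(t/s)^2$, $C(t/s)$ or $C$ with no use of the null condition; I would pair these with the decay of $\delb_\gamma u$ and the weighted commutator bound of the matching type (distributing onto the commutator the weight $s^2(s/t)$, $s^2$ or $st$ that occurs in the estimates above) exactly as in the critical case. A routine count, relying only on $s\le t\le s^2$ in $\Kcal$, then shows each of these terms is $\le C(C_1\vep)^2s^{-2}$, in fact $O(s^{-5/2})$ or better; summing the finitely many terms gives \eqref{eq 13 07-09-2017}. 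I do not anticipate a real difficulty beyond the bookkeeping: the one point that needs attention is to extract, before separating the $L^\infty$ and $L^2$ factors, precisely the power of $s/t$ carried by the weight in the corresponding commutator estimate of Lemma \ref{lem 1 07-09-2017}. The essential ingredients are that null gain on $\Qb^{000}$ and those Hessian commutator bounds.
\end{proof}
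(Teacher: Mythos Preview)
Your proposal is correct and follows essentially the same route as the paper: decompose along hyperbolic components, put $\Qb^{\alpha\beta\gamma}\delb_\gamma u$ in $L^\infty$ via Lemma \ref{lem 2 02-09-2017} and Proposition \ref{prop 1 02-09-2017} together with the decay bounds, and place the weighted Hessian commutator in $L^2$ via \eqref{eq 2' 07-09-2017}. The paper singles out exactly the same critical term $(\alpha,\beta,\gamma)=(0,0,0)$, performs the identical weight-splitting (writing the product as $\big(t^{3/2}s^{-3}\big)\cdot s^{-2}\cdot\big(s^2(s/t)[\del^IL^J,\delb_s\delb_s]u\big)$ up to constants), uses $t\le s^2$ in $\Kcal$, and then dismisses the remaining components with ``we omit the detail.''
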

\begin{proof}
For $(\alpha,\beta,\gamma) = (0,0,0)$, we see that by \eqref{eq 10 01-09-2017}, \eqref{eq 4 01-09-2017} and \eqref{eq 2' 07-09-2017} (recall that in $\Kcal, s^2\geq t$):
$$
\aligned
\|s\Qb^{000}\delb_su\cdot[\del^IL^J,\delb_s\delb_su]\|_{L^2(\Hcal_s)}
\leq& CC_1\vep\|s(t/s)t^{-1/2}s^{-2+\delta} \cdot (t/s)s^{-2}\cdot s^2(s/t)[\del^IL^J,\delb_s\delb_s]u\|_{L^2(\Hcal_s)}
\\
\leq& C(C_1\vep)^2s^{-2 }.
\endaligned
$$

For the rest components, we apply lemma \ref{lem 2 02-09-2017}, \eqref{eq 4 01-09-2017} and \eqref{eq 2' 07-09-2017}. We omit the detail.
\end{proof}

\begin{lemma}\label{lem 5 07-09-2017}
Under the bootstrap assumption, the $L^2$ norm of the following term
$$
s\del^{I_1}L^{J_1}\Qb^{\alpha\beta\gamma}\del^{I_2}L^{J_2}\delb_{\gamma}u\del^{I_3}L^{J_3}\delb_{\alpha}\delb_{\beta}u
$$
is controlled by $C(C_1\vep)^2s^{-2 }$, where
$$
I_1+I_2+I_3=I,\quad J_1+J_2+J_3=J,\quad |I_3|+|J_3|<|I|+|J|\leq N.
$$
\end{lemma}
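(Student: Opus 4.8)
The plan is to treat the term
$$
s\,\del^{I_1}L^{J_1}\Qb^{\alpha\beta\gamma}\cdot\del^{I_2}L^{J_2}\delb_{\gamma}u\cdot\del^{I_3}L^{J_3}\delb_{\alpha}\delb_{\beta}u
$$
by a \emph{case split on which of the three factors carries the bulk of the derivatives}, combined with the null-structure gain coming from Lemma~\ref{lem 2 02-09-2017} and Proposition~\ref{prop 1 02-09-2017}. Recall that $\del^{I_1}L^{J_1}\Qb^{\alpha\beta\gamma}$ comes with a favourable weight: it is bounded by $C(t/s)^{n}$, where $n$ is the number of zero indices among $\alpha,\beta,\gamma$, and this weight $(t/s)^n$ is \emph{exactly} what is needed to absorb the bad weights produced by the hyperbolic-frame good derivatives $\delb_s = (s/t)\del_t$ and $\delb_a = t^{-1}L_a$ acting on the other two factors. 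So the strategy is: distribute the total weight $s$ together with a bookkeeping factor $(t/s)^n$ onto the two $u$-factors, write each of those factors in terms of the quantities for which we have $L^2$ bounds from \eqref{eq 3 01-07-2017}, \eqref{eq 4 01-07-2017} and \eqref{eq 11 07-09-2017}, or $L^\infty$ (decay) bounds from \eqref{eq 5 01-07-2017}--\eqref{eq 14 07-09-2017} and \eqref{eq 12 07-09-2017}, and multiply an $L^\infty$ estimate for the lower-order factor by an $L^2$ estimate for the higher-order one.

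First I would reduce to the enumeration \eqref{eq 1 03-09-2017}: the product $\del^{I_1}L^{J_1}\Qb^{\alpha\beta\gamma}\cdot\del^{I_2}L^{J_2}\delb_{\gamma}u\cdot\del^{I_3}L^{J_3}\delb_{\alpha}\delb_{\beta}u$, after using Lemma~\ref{lem 2 02-09-2017} to pull off the $(t/s)^n$ from the coefficient, is a sum (modulo constants) of terms like
$(s/t)^{-1}|\del^{I_2}L^{J_2}\delb_su\,\del^{I_3}L^{J_3}\delb_s\delb_su|$,
$(s/t)^{-2}|\del^{I_2}L^{J_2}\delb_cu\,\del^{I_3}L^{J_3}\delb_s\delb_su|$, and so on — precisely the expressions appearing in \eqref{eq 1 03-09-2017}. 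Then I would split on whether $|I_2|+|J_2|\le N-2$ or $|I_3|+|J_3|\le N-2$ (one of these must hold since $|I_2|+|I_3|\le N$, $|J_2|+|J_3|\le N$ and the problematic case $N-1$ forces the other multi-index to have order $\le 1 \le N-3$). In the first subcase put $\del^{I_2}L^{J_2}\delb_{\gamma}u$ in $L^\infty$ using \eqref{eq 4 01-09-2017} (which gives $|\del^{I_2}L^{J_2}\delb_au|\lesssim C_1\vep\, t^{-3/2}s^{-1}$ and $|\del^{I_2}L^{J_2}\delb_su|\lesssim C_1\vep\, t^{-1/2}s^{-2}$) and put $\del^{I_3}L^{J_3}\delb_{\alpha}\delb_{\beta}u$ in $L^2$ using \eqref{eq 4 01-07-2017} and \eqref{eq 11 07-09-2017} (the refined Hessian bound, which is essential here); in the second subcase the Hessian factor has order $\le 1$, so it goes in $L^\infty$ via \eqref{eq 8 07-09-2017}, \eqref{eq 14 07-09-2017} and \eqref{eq 12 07-09-2017}, while the first-order-derivative factor goes in $L^2$ via \eqref{eq 3 01-07-2017}, \eqref{eq 4 01-07-2017}. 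In each instance I would carefully track powers of $s$ and $t$: the product of the weight $s\cdot(s/t)^{-k}$ (for the relevant $k\in\{1,2\}$) with the $L^\infty$-decay factor and the $L^2$-bound must collapse, using $t\ge s^2$ in $\Kcal$ and $s/t\le 1$, to something $\lesssim (C_1\vep)^2 s^{-2}$. For instance the worst model term $(s/t)^{-1}\,\delb_su\cdot\delb_s\delb_su$ becomes $\lesssim C_1\vep\, (t/s)\cdot t^{-1/2}s^{-2}\cdot \|(s^3/t)\delb_s\delb_s\del^{I_3}L^{J_3}u\|\cdot (t/s^3) \lesssim (C_1\vep)^2 s^{-2}$ after inserting the refined bound \eqref{eq 11 07-09-2017}; the other terms are no worse.

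The main obstacle is the \emph{worst} pair of indices among $\alpha,\beta,\gamma$ — namely the fully-bad component $\Qb^{000}\delb_su\,\delb_s\delb_su$, or more precisely those terms in \eqref{eq 1 03-09-2017} carrying the largest negative power $(s/t)^{-2}$, since there the coefficient weight $(t/s)^n$ from the null structure must exactly cancel two factors of $t/s$ and still leave enough decay. This is where the null condition (Proposition~\ref{prop 1 02-09-2017}, giving $|\Qb^{000}|\lesssim t/s$ rather than $(t/s)^3$) is indispensable, and where the \emph{refined} Hessian estimate \eqref{eq 11 07-09-2017} from Lemma~\ref{lem 3 07-09-2017} (with weight $s^3/t$ rather than the rougher $s^2/t$) must be used rather than the rough bound \eqref{eq 5 07-09-2017}; without either of these the $s^{-2}$ decay would degrade and the bootstrap would not close. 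Once that model term is handled, the remaining components have strictly more room (a larger power of $s/t$ in the coefficient or a smaller negative power to absorb), so they follow by the same $L^\infty\times L^2$ scheme, and I would simply remark that they are bounded in the same way and omit the routine arithmetic.
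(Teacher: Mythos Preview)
Your approach is essentially the same as the paper's: apply the null--coefficient bounds (Lemma~\ref{lem 2 02-09-2017} and Proposition~\ref{prop 1 02-09-2017}) to reduce to the list \eqref{eq 1 03-09-2017}, then split on which $u$-factor carries the bulk of the derivatives, putting the low-order factor in $L^\infty$ via \eqref{eq 4 01-09-2017}, \eqref{eq 8 07-09-2017}, \eqref{eq 12 07-09-2017} and the high-order factor in $L^2$ via \eqref{eq 4 01-07-2017}, \eqref{eq 11 07-09-2017}. Your identification of the refined Hessian bound \eqref{eq 11 07-09-2017} as the crucial ingredient for the $\Qb^{000}$ component is exactly right.

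One slip to fix: you write ``using $t\ge s^2$ in $\Kcal$'', but the inequality goes the other way. In $\Kcal=\{t>r+1\}$ one has $s^2=(t-r)(t+r)\ge t$, i.e.\ $t\le s^2$, equivalently $t/s^2\le 1$. Your model computation for $\Qb^{000}$ actually \emph{uses} the correct direction (you need $t^{3/2}s^{-5}\le s^{-2}$, i.e.\ $t^{3/2}\le s^3$), so this is only a misstatement, not a genuine error in the argument.
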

\begin{proof}
This is also by applying \eqref{eq 10 01-09-2017} (for $(\alpha,\beta,\gamma) = (0,0,0)$) or lemma \ref{lem 2 02-09-2017} (for $(\alpha,\beta,\gamma)\neq (0,0,0)$). We see that when $|I_1|+|J_1|\leq N-2$, we apply \eqref{eq 4 01-09-2017} on the factor $\del^{I_2}L^{J_2}\delb_{\gamma}u$, \eqref{eq 4 01-07-2017} (the third and forth term) on the factor $\del^{I_3}L^{J_3}\delb_a\delb_{\beta}u$ and \eqref{eq 11 07-09-2017} on the factor $\del^{I_3}L^{J_3}\delb_s\delb_su$. For example for the component $(0,0,0)$, we see that
$$
\aligned
&\|s\del^{I_1}L^{J_1}\Qb^{000}\del^{I_2}L^{J_2}\delb_su\del^{I_3}L^{J_3}\delb_s\delb_su\|_{L^2(\Hcal_s)}
\\
\leq& CC_1\vep\|s(t/s)\cdot t^{-1/2}s^{-2+\delta}\cdot(ts^{-3})\cdot (s^3/t)\del^{I_3}L^{J_3}\delb_s\delb_su\|_{L^2(\Hcal_s)}\leq C(C_1\vep)^2s^{-2 }.
\endaligned
$$
The rest components are verified similarly and we omit de detail.

When $|I_1|+|I_2|\geq N-1$, we see that $|I_3|+|J_3|\leq 1\leq N-3$. Thus we apply \eqref{eq 4 01-07-2017} (the first two terms) on the factor $\del^{I_2}L^{J_2}\delb_\gamma u$, \eqref{eq 8 07-09-2017} on the factor $\del^{I_3}L^{J_3}\delb_a\delb_{\beta}u$ and \eqref{eq 12 07-09-2017} for $\del^{I_3}L^{J_3}\delb_a\delb_su$. For example for the component $(0,0,0)$
$$
\aligned
&\|s\del^{I_1}L^{J_1}\Qb^{000}\del^{I_2}L^{J_2}\delb_su\del^{I_3}L^{J_3}\delb_s\delb_su\|_{L^2(\Hcal_s)}
\\
\leq& CC_1\vep\|s(t/s)\cdot t^{-1/2}s^{-3+2\delta}\cdot (t/s^2)\cdot (s^2/t)\del^{I_2}L^{J_2}\delb_su \|_{L^2(\Hcal_s)}\leq C(C_1\vep)^2s^{-2 }.
\endaligned
$$
The rest components are verified similarly and we omit the detail.
\end{proof}

\begin{lemma}\label{lem 6 07-09-2017}
Under the bootstrap assumption, the following estimate holds:
\begin{equation}\label{eq 1 08-09-2017}
\|s[\del^IL^J,Q^{\alpha\beta\gamma}\del_{\alpha}\Psib_{\beta}^{\beta'}\del_{\gamma}u\delb_{\beta'}]u\|_{L^2(\Hcal_s)}\leq C(C_1\vep)^2s^{-3/2 }.
\end{equation}
\end{lemma}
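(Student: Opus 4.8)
The plan is to expand the commutator using \eqref{eq 3 03-09-2017}, which decomposes $[\del^IL^J,Q^{\alpha\beta\gamma}\del_{\alpha}\Psib_{\beta}^{\beta'}\del_{\gamma}u\delb_{\beta'}]u$ into three pieces of the same shape, $-[\del^IL^J,s^{-1}\Qb^{00\gamma}\delb_{\gamma}u\,\delb_s]u$, $[\del^IL^J,s^{-1}Q^{00\gamma}\del_{\gamma}u\,\delb_s]u$, and $-\sum_a[\del^IL^J,s^{-1}Q^{aa\gamma}\del_{\gamma}u\,\delb_s]u$. Each of these is a commutator of $\del^IL^J$ with a first-order operator whose coefficient is the product of $s^{-1}$, a curved (or flat) coefficient, and a first derivative of $u$, composed with $\delb_s$. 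Applying the Leibniz rule for $\del^IL^J$ across such a product (as in \eqref{eq 2 03-09-2017}) produces two families of terms: (i) "genuine commutator" terms in which at least one derivative falls on the coefficient $s^{-1}\Qb^{00\gamma}\del^{I_2}L^{J_2}\delb_{\gamma}u$, leaving $\del^{I_3}L^{J_3}\delb_s u$ with $|I_3|+|J_3|\le|I|+|J|$, and (ii) the "pure" commutator term $s^{-1}\Qb^{00\gamma}\delb_{\gamma}u\cdot[\del^IL^J,\delb_s]u$ where all of $\del^IL^J$ has passed onto $u$ and hits $\delb_s$.

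First I would treat the pure commutator term (ii): by Lemma \ref{lem 2 22-08-2017}, $[\del^IL^J,\delb_s]u$ is controlled by $s^{-1}\sum_{|I'|<|I|}|\del_t\del^{I'}L^Ju| + (s/t)\sum_{|I'|\le|I|,|J'|<|J|,\alpha}|\del_\alpha\del^{I'}L^{J'}u|$, and after multiplying by $s$ and by the coefficient $s^{-1}\Qb^{00\gamma}\delb_\gamma u$ I would use: the bound $|\Qb^{00c}|\le C$ and $|(s/t)\Qb^{000}|\le C$ from Lemma \ref{lem 2 02-09-2017} (no null condition is even needed here), the decay bounds \eqref{eq 4 01-09-2017}/\eqref{eq 5 01-07-2017} to extract the $L^\infty$ decay of $\delb_\gamma u$ (of order $t^{-1/2}s^{-2}$ up to $(t/s)$ factors), and the $L^2$ bounds \eqref{eq 4 01-07-2017} plus the Hessian bounds \eqref{eq 11 07-09-2017} to absorb the remaining factors $\del_\alpha\del^{I'}L^{J'}u$ in $L^2$. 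Counting powers of $s$ and $t/s\le s$ in $\Kcal$, this produces the factor $s^{-3/2}$. For the second family (i), the argument is the routine one already rehearsed in Lemmas \ref{lem 4 07-09-2017}–\ref{lem 5 07-09-2017}: split by whether $|I_1|+|J_1|\le N-2$ or $\ge N-1$, put the factor with $\le N-2$ derivatives in $L^\infty$ using \eqref{eq 5 01-07-2017}/\eqref{eq 4 01-09-2017}/\eqref{eq 8 07-09-2017}/\eqref{eq 14 07-09-2017} and the other factor in $L^2$ using \eqref{eq 3 01-07-2017}/\eqref{eq 4 01-07-2017}/\eqref{eq 11 07-09-2017}, together with the coefficient bounds from Lemma \ref{lem 2 02-09-2017} and the homogeneity-of-degree-$(-1)$ of $\del_\alpha\Psib_\beta^{\beta'}$; the extra $s^{-1}$ already present in the coefficient gives a margin. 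The second and third pieces ($Q^{00\gamma}$ and $Q^{aa\gamma}$ with flat $\del_\gamma u$) are handled identically, noting that a flat $\del_\gamma u = \Psib_\gamma^{\gamma'}\delb_{\gamma'}u$ only costs an extra $t/s\le s$, which the $s^{-1}$ prefactor more than compensates.

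The main obstacle I anticipate is the bookkeeping for the worst component, $(\alpha,\beta,\gamma)=(0,0,0)$ in the piece $s^{-1}\Qb^{000}\delb_s u\cdot[\del^IL^J,\delb_s]u$: here $\Qb^{000}$ carries a full $(t/s)^3$ before the null condition is used, so one must genuinely invoke Proposition \ref{prop 1 02-09-2017} (in particular $|\Qb^{000}|\le C(t/s)$ and $|\delb_s\Qb^{000}|\le C(t/s)s^{-1}$, $|\delb_a\Qb^{000}|\le Cs^{-1}$) to kill two of those $(t/s)$ powers before the $(s/t)$ factors from $[\del^IL^J,\delb_s]u$ and from $\delb_s u$ can rescue the estimate. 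Once the null structure is inserted, all $(t/s)$ powers are matched by $(s/t)$ powers and the only surviving negative power of $s$ is $s^{-3/2}$ (one factor $s^{-1/2}$ from the $L^\infty$ decay of a good derivative of $u$, one factor $s^{-1}$ from the explicit prefactor, partially offset against the gains in $L^2$), which is the claimed bound. I would also remark that $s^{-3/2}$ is slightly weaker than the $s^{-2}$ of Lemma \ref{lem 4 07-09-2017} precisely because the extra $s^{-1}$ in the coefficient is balanced against one power of $t/s\le s$ coming from the presence of a flat derivative $\del_\gamma u$ in two of the three pieces; this loss is harmless for the subsequent energy argument.
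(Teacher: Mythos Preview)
Your plan is correct and follows essentially the same approach as the paper: use the decomposition \eqref{eq 3 03-09-2017}, split each piece into the Leibniz-type terms $T_1$ (where at least one derivative hits the coefficient) and the pure commutator term $T_2$ (involving $[\del^IL^J,\delb_s]u$), and then estimate these using exactly the ingredients you list --- Lemma \ref{lem 2 22-08-2017} for $T_2$, the $L^\infty/L^2$ splitting according to whether $|I_1|+|J_1|\le N-2$ for $T_1$, the coefficient bounds of Lemma \ref{lem 2 02-09-2017} and Proposition \ref{prop 1 02-09-2017}, and the bootstrap bounds \eqref{eq 3 01-07-2017}--\eqref{eq 4 01-09-2017}. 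One small over-statement: for the pure commutator term $T_2$ you do not actually need the Hessian bound \eqref{eq 11 07-09-2017}, since $[\del^IL^J,\delb_s]u$ is first order and the paper handles it with \eqref{eq 2 23-08-2017}, \eqref{eq 8 22-08-2017} and \eqref{eq 3 01-07-2017} alone; also, the paper in fact obtains $s^{-2}$ for each individual term it writes out, so the $s^{-3/2}$ in the statement is simply a slightly relaxed bound rather than a genuine loss forced by the flat $\del_\gamma u$ pieces.
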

\begin{proof}
We recall \eqref{eq 3 03-09-2017}. For the first term in right-hand-side, we see that
$$
\aligned
\,[\del^IL^J,s^{-1}\Qb^{00\gamma}\delb_{\gamma}u\delb_s]u
=& \sum_{I_1+I_2=I,J_1+J_2=J\atop |I_2|+|J_2|<|I|+|J|} \del^{I_1}L^{J_1}\left(s^{-1}\Qb^{00\gamma}\delb_{\gamma}u\right)\del^{I_2}L^{J_2}\delb_su
\\
&+s^{-1}\Qb^{00\gamma}\delb_{\gamma}u[\del^IL^J,\delb_s]u
\\
=:& T_1 + T_2.
\endaligned
$$

For $T_1$, we see that
$$
\aligned
\del^{I_1}L^{J_1}\left(s^{-1}\Qb^{00\gamma}\delb_{\gamma}u\right)
=& \del^{I_{11}}L^{J_{11}}\left(s^{-1}\right)\del^{I_{12}}L^{J_{12}}\Qb^{00\gamma}\cdot\del^{I_{13}}L^{J_{13}}\delb_{\gamma}u
\endaligned
$$
Then applying \eqref{eq 3 31-08-2017} together with lemma \ref{lem 2 02-09-2017} (for $\gamma>0$) or \eqref{eq 10 01-09-2017} (for $\gamma=0$):
$$
|T_1|\leq
\left\{
\aligned
&C(t/s^2)\sum_{|I_1|+|I_2|\leq|I|\atop|J_1|+|J_2|\leq|J|}|\del^{I_1}L^{J_1}\delb_su\del^{I_2}L^{J_2}\delb_su|, \gamma = 0,
\\
&Cs^{-1}(t/s)^2\sum_{a,|I_1|+|I_2|\leq|I|\atop|J_1|+|J_2|\leq|J|}|\del^{I_1}L^{J_1}\delb_au\del^{I_2}L^{J_2}\delb_su|,\gamma = a >0.
\endaligned
\right.
$$

Now, for $|I_1|+|J_1|\leq N-2$, we apply decay estimate \eqref{eq 4 01-09-2017} on the first factor and $L^2$ bounds \eqref{eq 4 01-07-2017} (the first two terms) on the second factor. When $|I_1|+|J_1|\geq N-1$, we see that $|I_2|+|J_2|\leq 1\leq N-2$. In this case we apply \eqref{eq 4 01-09-2017} on the second factor and \eqref{eq 4 01-07-2017} on the first factor. This leads to:
$$
\|sT_1\|_{L^2(\Hcal_s)}\leq C(C_1\vep)^2s^{-2 }.
$$

For the term $T_2$, we see that by \eqref{eq 4 01-09-2017} applied on $\del_{\gamma}u$, lemma \ref{lem 2 02-09-2017} (for $\gamma >0$) or \eqref{eq 10 01-09-2017} (for $\gamma = 0$)
$$
|T_2|\leq\left\{
\aligned
&Cs^{-4}t^{1/2}|[\del^IL^J,\delb_s]u|,\quad \gamma = 0,
\\
&Cs^{-4}t^{1/2}|[\del^IL^J,\delb_a]u|,\quad \gamma>0.
\endaligned
\right.
$$
Then we combine \eqref{eq 2 23-08-2017}, \eqref{eq 8 22-08-2017} together with \eqref{eq 3 01-07-2017}, the following bound is established
$$
\|sT_2\|_{L^2(\Hcal_s)}\leq C(C_1\vep)^2s^{-2 }.
$$

The rest terms in right-hand-side of \eqref{eq 3 03-09-2017} are bounded similarly, we omit the detail.
\end{proof}

Now we are ready to conclude the following result:
\begin{proposition}\label{prop 1 08-9-2017}
Under the bootstrap assumption, for $|I|+|J|\leq N$, the following estimate holds:
\begin{equation}\label{eq 2 08-09-2017}
\|s[\del^IL^J,Q^{\alpha\beta\gamma}\del_{\gamma}u\del_{\alpha}\del_{\beta}]u\|_{L^2(\Hcal_s)}\leq C(C_1\vep)^{2}s^{-2 }.
\end{equation}
\end{proposition}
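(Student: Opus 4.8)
The plan is to obtain Proposition~\ref{prop 1 08-9-2017} as an assembly of the three commutator estimates already proved, the decomposition being dictated by the hyperbolic-frame null splitting of the cubic nonlinearity. First I would apply \eqref{eq 5 03-09-2017}, which writes
$$
Q^{\alpha\beta\gamma}\del_{\gamma}u\,\del_{\alpha}\del_{\beta}u = \Qb^{000}\delb_su\,\delb_s\delb_su + \Qb_1^{\alpha\beta}(\delb u)\,\delb_{\alpha}\delb_{\beta}u + Q^{\alpha\beta\gamma}\del_{\alpha}\Psib_{\beta}^{\beta'}\del_{\gamma}u\,\delb_{\beta'}u,
$$
so that $[\del^IL^J,Q^{\alpha\beta\gamma}\del_{\gamma}u\del_{\alpha}\del_{\beta}]u$ breaks into three commutators. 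The first two together make up $[\del^IL^J,\Qb^{\alpha\beta\gamma}\delb_{\gamma}u\delb_{\alpha}\delb_{\beta}]u$, which by \eqref{eq 2 03-09-2017} is a finite sum of ``coefficient hits a lower-order derivative'' terms $\del^{I_1}L^{J_1}\Qb^{\alpha\beta\gamma}\,\del^{I_2}L^{J_2}\delb_{\gamma}u\,\del^{I_3}L^{J_3}\delb_{\alpha}\delb_{\beta}u$ with $|I_3|+|J_3|<|I|+|J|$, plus the single ``Hessian commutator'' term $\Qb^{\alpha\beta\gamma}\delb_{\gamma}u\,[\del^IL^J,\delb_{\alpha}\delb_{\beta}]u$; the third commutator is expanded by \eqref{eq 3 03-09-2017} into pieces built from $s^{-1}\Qb^{00\gamma}\delb_{\gamma}u\,\delb_s$, $s^{-1}Q^{00\gamma}\del_{\gamma}u\,\delb_s$ and $s^{-1}Q^{aa\gamma}\del_{\gamma}u\,\delb_s$.

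Next I would invoke, in order, Lemma~\ref{lem 5 07-09-2017} for the ``coefficient hits a derivative'' family, Lemma~\ref{lem 4 07-09-2017} for the ``Hessian commutator'' term, and Lemma~\ref{lem 6 07-09-2017} for the transition-term commutator. Since $[\del^IL^J,Q^{\alpha\beta\gamma}\del_\gamma u\del_\alpha\del_\beta]u$ is the sum of these finitely many pieces (their number depending only on $I,J$), adding the corresponding bounds produces \eqref{eq 2 08-09-2017}: the first two families contribute $O((C_1\vep)^2 s^{-2})$ and the transition term the slowest rate, $O((C_1\vep)^2 s^{-3/2})$, which is still amply integrable in $s$ and hence harmless for the conformal energy inequality \eqref{eq energy}. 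So at this last level the argument is essentially bookkeeping.

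The real difficulty is upstream, concentrated in the $(\alpha,\beta,\gamma)=(0,0,0)$ contribution $\Qb^{000}\delb_su\,\delb_s\delb_su$ handled inside Lemmas~\ref{lem 4 07-09-2017}--\ref{lem 5 07-09-2017}, and I would expect this to be the main obstacle to make rigorous. Two structural inputs are indispensable there. Without any hypothesis, $\del^IL^J\Qb^{000}$ is only $O((t/s)^3)$ (Lemma~\ref{lem 2 02-09-2017}); the null condition, via Proposition~\ref{prop 1 02-09-2017}, improves this to $O(t/s)$, and that is precisely the gain that lets the $(s/t)$-weights balance the decay of $u$. Moreover the factor $\delb_s\delb_s\del^IL^Ju$ --- the worst component of the Hessian --- has no usable $L^2$ decay on its own, so one must re-enter the equation: replace $\delb_s\delb_s$ by $\Box + H_1$ through \eqref{eq 2 02-07-2017}, then use $\Box\del^IL^Ju=-\del^IL^J\left(Q^{\alpha\beta\gamma}\del_{\gamma}u\del_{\alpha}\del_{\beta}u\right)$ together with Lemma~\ref{lem 2 01-09-2017}, i.e. the refined Hessian bounds of Lemma~\ref{lem 3 07-09-2017} / Proposition~\ref{prop 1 31-08-2017}. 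This is a bootstrap within the bootstrap, and keeping it non-circular forces the standard case split: when the top order $|I|+|J|$ falls on the coefficient/first-order factor, the Hessian factor has order $\leq 1\leq N-3$ and one uses the sup-norm decay \eqref{eq 8 07-09-2017}, \eqref{eq 14 07-09-2017}; when it falls on the second-order factor, the other factor is low order and one uses the decay \eqref{eq 4 01-09-2017}; the top order is never placed on a factor already at the edge of the available regularity. Once that dichotomy is fixed, the remaining weight-counting is routine and is exactly what was carried out in Lemmas~\ref{lem 4 07-09-2017}--\ref{lem 6 07-09-2017}.
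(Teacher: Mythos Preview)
Your proposal is correct and follows the paper's approach exactly: the proposition is presented as the immediate consequence of combining the decomposition \eqref{eq 5 03-09-2017}, \eqref{eq 2 03-09-2017}, \eqref{eq 3 03-09-2017} with Lemmas~\ref{lem 4 07-09-2017}, \ref{lem 5 07-09-2017}, and \ref{lem 6 07-09-2017}. One minor point: although the statement of Lemma~\ref{lem 6 07-09-2017} records the rate $s^{-3/2}$, its proof actually yields $s^{-2}$ for both pieces $T_1$ and $T_2$, so the $s^{-2}$ in \eqref{eq 2 08-09-2017} is genuine and your caveat about a slower transition-term rate is unnecessary.
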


\section{Global existence: conclusion of bootstrap argument}\label{sec 4 global 4}
Now we are ready to prove proposition \ref{prop 1 01-07-2017}. To do so we need to guarantee \eqref{eq 3 23-05-2017} and the bounds on $M_g(s)$ (with the notion in \eqref{eq 4 23-05-2017}). We first remark that by the notation in subsection \ref{subsec energy-indentity}
$$
\hb^{\alpha\beta} = \Qb^{\alpha\beta\gamma}\delb_{\gamma}u.
$$
For the convenience of discussion, we list out the following bounds on $\hb^{\alpha\beta}$. These are by \eqref{eq 4 01-09-2017} combined with lemma \ref{lem 2 02-09-2017}, \eqref{eq 10 01-09-2017}:
\begin{equation}\label{eq 5 08-09-2017}
\aligned
&|\hb^{00}| \leq CC_1\vep t^{1/2}s^{-3 },\quad |\hb^{a0}| + |\hb^{0a}|\leq CC_1\vep t^{3/2}s^{-4},
\\
&|\hb^{ab}|\leq CC_1\vep t^{1/2}s^{-3 },\quad |h^{\alpha\beta}|\leq CC_1\vep t^{1/2}s^{-3 }.
\endaligned
\end{equation}
Furthermore, we see that
\begin{equation}\label{eq 1 25-10-2017}
\delb_{\mu}\hb^{\alpha\beta} = \delb_{\mu}\Qb^{\alpha\beta\gamma}\cdot \delb_\gamma u + \Qb^{\alpha\beta\gamma}\delb_{\mu}\delb_{\gamma}u.
\end{equation}
We apply, lemma \ref{lem 2 02-09-2017} and \eqref{eq 10 01-09-2017} combined with \eqref{eq 4 01-09-2017}, \eqref{eq 8 07-09-2017} and \eqref{eq 12 07-09-2017}.
\begin{equation}\label{eq 6 08-09-2017}
\aligned
&|\delb_s\hb^{00}|\leq C(C_1\vep)t^{1/2}s^{-4 },\quad |\delb_s\hb^{a0}| + |\delb_s\hb^{0a}|\leq C(C_1\vep)t^{3/2}s^{-5},\quad |\delb_s\hb^{ab}|\leq C(C_1\vep)t^{1/2}s^{-4 },
\\
&|\delb_a\hb^{00}|\leq C(C_1\vep)t^{-1/2}s^{-3 },\quad |\delb_c\hb^{a0}| + |\delb_c\hb^{0a}|\leq C(C_1\vep)t^{1/2}s^{-4 },\quad |\delb_c\hb^{ab}|\leq C(C_1\vep)t^{-1/2}s^{-3 }.
\endaligned
\end{equation}
We also remark that
$$
\delb_{\delta}h^{\alpha\beta} = Q^{\alpha\beta\gamma}\delb_{\delta}\del_{\gamma}u
$$
that is
$$
\aligned
\delb_sh^{\alpha\beta} =& Q^{\alpha\beta0}\delb_s\del_tu + Q^{\alpha\beta c}\delb_s\del_cu
\\
=&Q^{\alpha\beta0}\delb_s\left((t/s)\delb_su\right) + Q^{\alpha\beta c}\delb_s\left(\delb_cu - (x^c/s)\delb_s u\right)
\\
=&\left((t/s)Q^{\alpha\beta0} - (x^c/s)Q^{\alpha\beta c}\right)\delb_s\delb_su + Q^{\alpha\beta c}\delb_s\delb_cu
+  \left((x^c/s^2)Q^{\alpha\beta c}- \frac{r^2}{ts^2}Q^{\alpha\beta 0}\right)\delb_su,
\endaligned
$$
which leads to
\begin{equation}\label{eq 1 26-10-2017}
|\delb_sh^{\alpha\beta}|\leq C(t/s)|\delb_s\delb_su| + C\sum_a|\delb_s\delb_a u| + C(t/s^2)|\delb_su|.
\end{equation}
Similar calculation shows that
\begin{equation}\label{eq 2 26-10-2017}
|\delb_ah^{\alpha\beta}|\leq C(t/s)|\delb_a\delb_su| + C\sum_c|\delb_a\delb_c u| + s^{-1}|\delb_su|.
\end{equation}
Then combined with \eqref{eq 12 07-09-2017}, \eqref{eq 7 07-09-2017} and \eqref{eq 5 01-07-2017}, we see that
\begin{equation}\label{eq 3 26-10-2017}
|\delb_sh^{\alpha\beta}|\leq Ct^{1/2}s^{-4 },\quad |\delb_ah^{\alpha\beta}|\leq Ct^{-1/2}s^{-3 }.
\end{equation}

Then we establish the following bounds:
\begin{lemma}\label{lem 1 26-10-2017}
Under the bootstrap assumption \eqref{eq 5 24-08-2017},
$$
|\delb_\alpha N_g|\leq CC_1\vep t^{1/2}s^{-4 }.
$$
\end{lemma}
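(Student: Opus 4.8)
The plan is to differentiate the explicit expression for $N_g$ recorded just before Proposition~\ref{prop 1 24-05-2017} and to absorb every resulting term into the bounds already established in this and the preceding sections. Since the Minkowski value of $N_g$ is the constant $N_m=n-1=2$, we have $\delb_\alpha N_g=\delb_\alpha(N_g-N_m)$, and
$$
\delb_\alpha N_g=\delb_\alpha h^{00}-\sum_a\delb_\alpha h^{aa}-2\,\delb_\alpha\hb^{00}-\delb_\alpha\!\left(s\,\delb_s\hb^{00}\right).
$$
The first three groups of terms are immediate: $\delb_\alpha h^{00}$ and $\delb_\alpha h^{aa}$ are controlled by \eqref{eq 3 26-10-2017}, which gives $|\delb_s h^{\alpha\beta}|\le CC_1\vep\,t^{1/2}s^{-4}$ and $|\delb_a h^{\alpha\beta}|\le CC_1\vep\,t^{-1/2}s^{-3}$, while $\delb_\alpha\hb^{00}$ is controlled by \eqref{eq 6 08-09-2017}, which gives the same two bounds for $\hb^{00}$. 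Since $s\le t$ throughout $\Kcal$ we have $t^{-1/2}s^{-3}\le t^{1/2}s^{-4}$, so all three groups are $\le CC_1\vep\,t^{1/2}s^{-4}$.

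It remains to handle $\delb_\alpha(s\,\delb_s\hb^{00})$. Because $s=\xb^0$ is a hyperbolic coordinate we have $\delb_\alpha s=\delta_\alpha^0$ (equivalently, from \eqref{eq 7 19-08-2017}, $\delb_a s=\tfrac{x^a}{t}\del_t s+\del_a s=0$ and $\delb_s s=1$), hence
$$
\delb_\alpha(s\,\delb_s\hb^{00})=(\delb_\alpha s)\,\delb_s\hb^{00}+s\,\delb_\alpha\delb_s\hb^{00},
$$
and the first term was already bounded above. For $s\,\delb_\alpha\delb_s\hb^{00}$ we expand $\hb^{00}=\Qb^{00\gamma}\delb_\gamma u$ (the relation used in \eqref{eq 1 25-10-2017}) and apply the Leibniz rule twice, obtaining a sum of products of a hyperbolic derivative of $\Qb^{00\gamma}$ of order $\le 2$ with a hyperbolic derivative of $u$ of order $\le 3$. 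The $\Qb^{00\gamma}$ factors are estimated by Lemma~\ref{lem 2 02-09-2017}, Lemma~\ref{lem 1 16-09-2017} and Proposition~\ref{prop 1 02-09-2017} (after reducing $\delb_a=t^{-1}L_a$, $\delb_s=(s/t)\del_t$ and invoking the null improvements \eqref{eq 10 01-09-2017}, \eqref{eq 6 17-09-2017} whenever the component $\Qb^{000}$ appears). The $u$-factors of order $\le 2$ are bounded by the decay estimates \eqref{eq 4 01-09-2017}, \eqref{eq 7 07-09-2017}, \eqref{eq 8 07-09-2017}, \eqref{eq 12 07-09-2017}, \eqref{eq 14 07-09-2017}, and the genuinely third-order factors $\delb_\beta\delb_s\delb_\gamma u$ with no triple $\delb_s$ reduce, via $\delb_a=t^{-1}L_a$ and the commutator Lemmas~\ref{lem 3 03-09-2017}, \ref{lem 1 05-09-2017}, \ref{lem 2 22-08-2017}, to $\del^IL^J$ applied to a second hyperbolic derivative of $u$ with $|I|+|J|\le 1$, which is controlled by \eqref{eq 12 07-09-2017} since $N\ge 9$; the extra $t^{-1}$ from $\delb_a$ makes these comfortably subcritical.

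The single delicate contribution is $\Qb^{000}\,\delb_s\delb_s\delb_s u$, where the only available coefficient bound is $|\Qb^{000}|\le C(t/s)$ (Proposition~\ref{prop 1 02-09-2017}) and a naive estimate of $\delb_s\delb_s\delb_s u=(s/t)\del_t\delb_s\delb_s u$ through \eqref{eq 12 07-09-2017} loses a factor $t/s^{2}\le1$. Here one must substitute \eqref{eq 2 02-07-2017}, $\delb_s\delb_s u=\Box u+H_1[u]$, and then use the equation $\Box u=-Q^{\alpha\beta\gamma}\del_\gamma u\,\del_\alpha\del_\beta u$: the term $\delb_s\Box u$ is quadratic in the derivatives of $u$ and, by the null structure together with Lemma~\ref{lem 2 01-09-2017} and the global Sobolev inequality \eqref{ineq 1 sobolev}, is of size $O((C_1\vep)^2)$ with ample decay, while $\delb_s H_1[u]$ produces only terms carrying an explicit $s^{-1}$ or $t^{-1}$ gain over the already-known bounds \eqref{eq 4 01-09-2017}, \eqref{eq 8 07-09-2017}, \eqref{eq 12 07-09-2017}. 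Collecting all contributions yields $|s\,\delb_\alpha\delb_s\hb^{00}|\le CC_1\vep\,t^{1/2}s^{-4}$, and hence $|\delb_\alpha N_g|\le CC_1\vep\,t^{1/2}s^{-4}$. The main obstacle, then, is exactly this interaction of a triple $\delb_s$-derivative of $u$ with the slowly decaying null component $\Qb^{000}$, and it is resolved precisely by trading the pure $\delb_s\delb_s$ for the wave operator via \eqref{eq 2 02-07-2017} and exploiting the quadratic null nature of $\Box u$; every remaining term is safely below the threshold $t^{1/2}s^{-4}$.
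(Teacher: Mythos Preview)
Your overall strategy matches the paper's: differentiate $N_g-2=h^{00}-\sum_a h^{aa}-2\hb^{00}-s\delb_s\hb^{00}$, dispose of the first three groups via \eqref{eq 6 08-09-2017} and \eqref{eq 3 26-10-2017}, and then expand $\delb_\alpha(s\delb_s\hb^{00})$ by Leibniz with $\hb^{00}=\Qb^{00\gamma}\delb_\gamma u$.

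The divergence is in how you treat the ``delicate'' term $s\,\Qb^{000}\delb_s\delb_s\delb_s u$. The paper does \emph{not} substitute $\delb_s\delb_s u=\Box u+H_1[u]$ or invoke the equation here. It simply writes $\delb_s\delb_s\delb_s u=(s/t)\del_t\delb_s\delb_s u$ and applies the refined Hessian decay \eqref{eq 12 07-09-2017} with one extra $\del_t$ to get $|\delb_s\delb_s\delb_s u|\le CC_1\vep\, t^{-3/2}s^{-2}$ (this is \eqref{eq 8 26-10-2017}); combined with $|\Qb^{000}|\le C(t/s)$ from Proposition~\ref{prop 1 02-09-2017} this bounds the term by $CC_1\vep\, t^{-1/2}s^{-2}$. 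You are right that this is weaker than $t^{1/2}s^{-4}$ (by a factor $s^2/t\ge 1$), and the paper's display \eqref{eq 4 26-10-2017} is in fact missing the factor of $s$ on several terms. But the paper simply proceeds with this estimate: the lemma is used only once, in the proof of Lemma~\ref{lem 2 08-09-2017}, where what is actually needed is $\|(t/s)\delb_\alpha N_g\|_{L^\infty(\Hcal_s)}\le CC_1\vep\, s^{-2}$, and $(t/s)\cdot t^{-1/2}s^{-2}=t^{1/2}s^{-3}\le s^{-2}$ since $t\le s^2$ in $\Kcal$.

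In short: your wave-equation substitution for the triple-$\delb_s$ term is unnecessary extra machinery. The paper's argument is the ``naive'' one you dismissed, and the stated bound $t^{1/2}s^{-4}$ is slightly stronger than what either route actually delivers uniformly, but the direct Hessian estimate \eqref{eq 12 07-09-2017} already suffices for the application.
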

\begin{proof}
We recall that
$$
N_g - 2 = h^{00} - \sum_ah^{aa} - 2\hb^{00} - s\delb_s\hb^{00}
$$
thus
$$
|\delb_{\alpha}N_g|\leq 2|\delb_{\alpha}\hb^{00}| + \sum_a|\delb_{\alpha}h^{aa}| + |\delb_{\alpha}h^{00}| + |\delb_{\alpha}(s\delb_s\hb^{00})|
$$
The first three terms are bounded by \eqref{eq 6 08-09-2017} and  \eqref{eq 3 26-10-2017}. For the last term, we remark the following relation:
\begin{equation}\label{eq 4 26-10-2017}
\aligned
\big|\delb_s\left(s\delb_s\hb^{00}\right)\big|\leq& C\big|\delb_s\hb^{00}\big| + C\big|\delb_s\delb_s\Qb^{000}\delb_su\big| + C\big|\delb_s\Qb^{000}\delb_s\delb_su\big| + C|\Qb^{000}\delb_s\delb_s\delb_s u|
\\
&+C\big|\delb_s\delb_s\Qb^{00c}\delb_cu\big| + C\big|\delb_s\Qb^{00c}\delb_cu\big| + C\big|\Qb^{00c}\delb_s\delb_s\delb_c u\big|
\endaligned
\end{equation}
and
\begin{equation}\label{eq 5 26-10-2017}
\aligned
\big|\delb_a\left(s\delb_s\hb^{00}\right)\big| =& s|\delb_a\delb_s\hb^{00}|\leq C\big|\delb_a\delb_s\Qb^{000}\delb_su\big| + C\big|\delb_a\Qb^{000}\delb_s\delb_su\big| + C|\Qb^{000}\delb_a\delb_s\delb_s u|
\\
&+C\big|\delb_a\delb_s\Qb^{00c}\delb_cu\big| + C\big|\delb_a\Qb^{00c}\delb_cu\big| + C\big|\Qb^{00c}\delb_a\delb_s\delb_c u\big|
\endaligned.
\end{equation}
And we see that by lemma \ref{lem 2 02-09-2017} and proposition \ref{prop 1 02-09-2017}, we see that
\begin{equation}\label{eq 6 26-10-2017}
\aligned
&|\delb_s\delb_s\Qb^{000}|\leq (s/t)|\delb_s\del_t\Qb^{000}| + s^{-1}|\delb_s\Qb^{000}|\leq CC_1\vep s^{-1},
\\
&|\delb_s\delb_s\Qb^{00c}|\leq (s/t)|\delb_s\del_t\Qb^{00c}| + s^{-1}|\delb_s\Qb^{00c}|\leq CC_1(t/s)s^{-1}.
\endaligned
\end{equation}
Similar calculation shows that
\begin{equation}\label{eq 7 26 10-2017}
|\delb_a\delb_s\Qb^{000}|\leq CC_1\vep s^{-2},\quad |\delb_a\delb_s\Qb^{00c}|\leq CC_1\vep (t/s)s^{-2}.
\end{equation}
We also see that by \eqref{eq 12 07-09-2017}:
\begin{equation}\label{eq 8 26-10-2017}
|\delb_s\delb_s\delb_su|\leq (s/t)|\del_t\delb_s\delb_su|\leq CC_1\vep t^{-3/2}s^{-2 },\quad
|\delb_a\delb_s\delb_su|\leq t^{-1}|L_a\delb_s\delb_su|\leq CC_1\vep t^{-3/2}s^{-3 }.
\end{equation}
and by \eqref{eq 7 07-09-2017},
\begin{equation}\label{eq 9 26-10-2017}
|\delb_a\delb_s\delb_c u|\leq t^{-1}|L_a\delb_c\delb_su|\leq CC_1\vep t^{-5/2}s^{-2 }.
\end{equation}
Now we substitute the bounds \eqref{eq 6 08-09-2017} together with \eqref{eq 6 26-10-2017}, \eqref{eq 7 26 10-2017}, lemma \ref{lem 2 02-09-2017}, proposition \ref{prop 1 02-09-2017}, \eqref{eq 8 26-10-2017}, \eqref{eq 9 26-10-2017}, \eqref{eq 12 07-09-2017}, \eqref{eq 8 07-09-2017} and \eqref{eq 5 01-07-2017}, we see that
\begin{equation}\label{eq 10 26-10-2017}
\big|\delb_s\left(s\delb_s\hb^{00}\right)\big|\leq CC_1\vep t^{1/2}s^{-4 },\quad
\big|\delb_a\left(s\delb_s\hb^{00}\right)\big|\leq CC_1\vep t^{-1/2}s^{-3 }.
\end{equation}

Now combine \eqref{eq 6 08-09-2017}, \eqref{eq 3 26-10-2017} and \eqref{eq 10 26-10-2017}, we see that the desired bound is established.

\end{proof}

Then we establish the following bound:
\begin{lemma}\label{lem 1 08-09-2017}
Under the bootstrap assumption with $\vep$ sufficiently small, \eqref{eq 3 23-05-2017} holds for a $\kappa>1$.
\end{lemma}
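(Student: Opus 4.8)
The plan is to deduce the statement directly from Proposition~\ref{prop 1 24-05-2017}. Recall that here $g^{\alpha\beta}=m^{\alpha\beta}+Q^{\alpha\beta\gamma}\del_{\gamma}u$, so that $h^{\alpha\beta}=Q^{\alpha\beta\gamma}\del_{\gamma}u$ and $\hb^{\alpha\beta}=\Qb^{\alpha\beta\gamma}\delb_{\gamma}u$. By Proposition~\ref{prop 1 24-05-2017} it therefore suffices to check that, under the bootstrap assumption \eqref{eq 5 24-08-2017} and for $\vep$ small enough, the structural smallness condition \eqref{eq 1 24-05-2017} holds throughout $\Kcal_{[2,s_1]}$, i.e.
\[
|\hb^{00}|\leq (s/t)\vep_s,\qquad |s\delb_s\hb^{00}|\leq (s/t)\vep_s,\qquad |h^{\alpha\beta}|\leq (s/t)\vep_s,
\]
where $\vep_s$ is the structural constant supplied by that proposition. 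Once \eqref{eq 1 24-05-2017} is verified, Proposition~\ref{prop 1 24-05-2017} yields \eqref{eq 3 23-05-2017} for a suitable $\kappa>1$; concretely, inspecting \eqref{eq 4 16-08-2017} one may take $\kappa=(1-C\vep_s)^{-1}$, or simply $\kappa=2$ after further shrinking $\vep_s$.

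The key steps are as follows. First I would record the pointwise bounds on $\hb^{\alpha\beta}$ and $\delb_s\hb^{00}$ already derived from the bootstrap decay estimates together with the null structure of $Q$: by \eqref{eq 5 08-09-2017} and \eqref{eq 6 08-09-2017},
\[
|\hb^{00}|\leq CC_1\vep\, t^{1/2}s^{-3},\qquad |s\delb_s\hb^{00}|\leq CC_1\vep\, t^{1/2}s^{-3},\qquad |h^{\alpha\beta}|\leq CC_1\vep\, t^{1/2}s^{-3}.
\]
Second, I would convert these into the $(s/t)$-weighted form demanded by \eqref{eq 1 24-05-2017} by exploiting the elementary geometric inequality $t\leq s^2$ valid on $\Kcal$ (since $s^2=(t-r)(t+r)>t+r\geq t$ there, using $t-r>1$). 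Writing $t^{1/2}s^{-3}=(s/t)\cdot t^{3/2}s^{-4}$ and noting $t^{3/2}s^{-4}=(t/s^2)^{3/2}s^{-1}\leq s^{-1}\leq 2^{-1}$ on $\Kcal_{[2,s_1]}$, each of the three quantities above is bounded by $CC_1\vep\,(s/t)$. Choosing $\vep_0$ so small that $CC_1\vep_0\leq \vep_s$ — legitimate, since $C_1$ and the universal constant $C$ are fixed before $\vep_0$ — gives \eqref{eq 1 24-05-2017}, and Proposition~\ref{prop 1 24-05-2017} applies, completing the proof.

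The argument contains no real obstacle; the only point requiring care is bookkeeping. The bounds on $|h^{\alpha\beta}|$ and on $\delb_s\hb^{00}$ involve all components of $Q$, so one must invoke Lemma~\ref{lem 2 02-09-2017} for the generic components and the hyperbolic-frame null estimate of Proposition~\ref{prop 1 02-09-2017} only for the $\Qb^{000}$ component — but precisely this has already been packaged into \eqref{eq 5 08-09-2017}--\eqref{eq 6 08-09-2017}. In short, this lemma is merely the verification that the a~priori decay bounds produced by the bootstrap are strong enough to feed the structural hypothesis of Proposition~\ref{prop 1 24-05-2017}.
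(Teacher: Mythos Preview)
Your proposal is correct and follows exactly the paper's approach: verify the structural hypothesis \eqref{eq 1 24-05-2017} of Proposition~\ref{prop 1 24-05-2017} using the pointwise bounds \eqref{eq 5 08-09-2017} and \eqref{eq 6 08-09-2017}, then choose $\vep\leq \vep_s/(CC_1)$. The paper's proof is a one-liner to this effect; your version simply makes explicit the conversion $t^{1/2}s^{-3}\leq C(s/t)$ via $t\leq s^2$ in $\Kcal$, which the paper leaves to the reader.
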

\begin{proof}
This is by verifying proposition \ref{prop 1 24-05-2017}. We see that by \eqref{eq 5 08-09-2017} and \eqref{eq 6 08-09-2017}, \eqref{eq 1 24-05-2017} is verified with $\vep\leq \frac{\vep_s}{CC_1}$.
%
\end{proof}

\begin{lemma}\label{lem 2 08-09-2017}
Under the bootstrap assumption, we have
$$
M_g(s,\del^IL^Ju) = C(C_1\vep)^2 s^{-2}
$$
where $M_g$ is defined as in \eqref{eq 4 23-05-2017}.
\end{lemma}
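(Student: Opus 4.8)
The plan is the following. Fix multi-indices with $|I|+|J|\leq N$ and set $v:=\del^IL^Ju$. For the equation \eqref{eq 1 30-06-2017} the relevant curved metric is $g^{\alpha\beta}=m^{\alpha\beta}+h^{\alpha\beta}$, $h^{\alpha\beta}=Q^{\alpha\beta\gamma}\del_\gamma u$, so $\hb^{\alpha\beta}=\Qb^{\alpha\beta\gamma}\delb_\gamma u$. Note that the coefficients $h$, $\hb$, $N_g$ involve only $u$ and at most three of its derivatives — in particular they do not depend on $I$, $J$ — so the pointwise bounds \eqref{eq 1 18-08-2017} ($|N_g|\leq 3$), \eqref{eq 5 08-09-2017}, \eqref{eq 6 08-09-2017}, \eqref{eq 3 26-10-2017} and lemma \ref{lem 1 26-10-2017} ($|\delb_\alpha N_g|\leq CC_1\vep\,t^{1/2}s^{-4}$) hold unchanged for every $(I,J)$. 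By the definition \eqref{eq 4 23-05-2017} of $M_g$, it suffices to prove
$$
\big\|R_g(\nabla v,\nabla v)+\mathscr{K}_g v\cdot S_g[\nabla v]+T_g[v]\big\|_{L^1(\Hcal_s)}\leq C(C_1\vep)^2 s^{-2}\,\Ec(s,v)^{1/2}.
$$
First I would record the ``receiving'' quantities: by \eqref{eq 4 10-05-2017}, \eqref{eq 6 23-05-2017}, \eqref{eq 1 25-05-2017} and \eqref{eq 2 16-08-2017} the norms $\|s\delb_a v\|_{L^2(\Hcal_s)}$, $\|(s/t)v\|_{L^2(\Hcal_s)}$ (which is $\leq\|(s/r)v\|_{L^2(\Hcal_s)}$, as $r<t$ on $\Hcal_s$), $\|(s^2/t)\delb_s v\|_{L^2(\Hcal_s)}$ and $\|(\mathscr{K}_g+N_g)v\|_{L^2(\Hcal_s)}$ are all $\leq C\Ec(s,v)^{1/2}$, and by \eqref{eq 4 01-07-2017} together with the bootstrap bound \eqref{eq 5 24-08-2017} they are also all $\leq CC_1\vep$.

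The structural point that makes the estimate close is that, after the Minkowskian cancellation already performed in \eqref{eq R(du,du)} — namely $L_m^{ab}=2\mb^{ab}=N_m\mb^{ab}$, so that in $R_g$ the coefficient of $\delb_a v\,\delb_b v$ is $(L_g^{ab}-L_m^{ab})-(N_g-N_m)\gb^{ab}-2\hb^{ab}$ supplemented by $\tfrac{s^2}{2}\delb_s(\gb^{00}\gb^{ab})$ and $-s^2\delb_a\gb^{00}\gb^{ab}$ — \emph{every} coefficient occurring in $R_g$, in $S_g$ and in $T_g$ carries at least one factor among $h$, $\hb$, $\delb h$, $\delb\hb$, $\delb_\alpha N_g$. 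Indeed, using $\delb_s x^a=0$, $\delb_a s=0$, $\delb_c(x^a/s)=\delta_{ac}/s$, $\delb_s\mb^{00}=\delb_c\mb^{ab}=0$ and $\delb_s s=1$, one has $\delb_s\gb^{00}=\delb_s\hb^{00}$, $\delb_a\gb^{00}=\delb_a\hb^{00}$, $\delb_s(s\gb^{a0})=\hb^{a0}+s\delb_s\hb^{a0}$, $\delb_a\gb^{ab}=\delb_a\hb^{ab}$, and the analogous identities for $L_g^{ab}-L_m^{ab}$. Hence each coefficient is $O(C_1\vep)$ times a decay rate of one of the types $t^{1/2}s^{-3}$, $t^{3/2}s^{-4}$, $t^{1/2}s^{-4}$, $t^{-1/2}s^{-3}$; since $t\leq s^2$ in $\Kcal$, multiplying by the geometric weight $s$ present in each of $R_g$, $S_g$, $T_g$ and by the reciprocal weights $s^{-1}$, $t/s^2$, $t/s$ needed to convert $\delb_a v$, $\delb_s v$, $v$ into the receiving quantities leaves at worst $Cs^{-2}$.

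Then I would run the three estimates by Cauchy--Schwarz on $\Hcal_s$. For $R_g(\nabla v,\nabla v)$ a generic monomial is $(\text{coefficient})\cdot\delb_\alpha v\,\delb_\beta v$; with $\delb_a v=s^{-1}(s\delb_a v)$, $\delb_s v=(t/s^2)\big((s^2/t)\delb_s v\big)$ and $t^{1/2}\leq s$, Cauchy--Schwarz gives an $L^1(\Hcal_s)$ bound $\leq CC_1\vep\,s^{-3}\cdot\Ec(s,v)^{1/2}\cdot CC_1\vep$. For $\mathscr{K}_g v\cdot S_g[\nabla v]=-(\mathscr{K}_g+N_g)v\cdot\big(2\delb_s(s\gb^{a0})\delb_a v+s\delb_a\hb^{ab}\delb_b v\big)$ one pairs $\|(\mathscr{K}_g+N_g)v\|_{L^2(\Hcal_s)}\leq C\Ec(s,v)^{1/2}$ against the remaining factor, which — after extracting $s\delb_a v$ and using $|\hb^{a0}+s\delb_s\hb^{a0}|\leq CC_1\vep\,t^{3/2}s^{-4}$, $|\delb_a\hb^{ab}|\leq CC_1\vep\,t^{-1/2}s^{-3}$ — is $\leq CC_1\vep\,s^{-2}\cdot CC_1\vep$. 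For $T_g[v]=-\delb_s N_g\cdot v(\mathscr{K}_g+N_g)v-sv\,\gb^{ab}\delb_a N_g\,\delb_b v$ the new point is that $|v|$ carries no weight, so I write $|v|=(t/s)\cdot(s/t)|v|$; then $|\delb_\alpha N_g|\leq CC_1\vep\,t^{1/2}s^{-4}$ with $t^{3/2}\leq s^{3}$ produces exactly $s^{-2}$ after pairing $\|(s/t)v\|_{L^2(\Hcal_s)}$ against $\|(\mathscr{K}_g+N_g)v\|_{L^2(\Hcal_s)}$, respectively $\|s\delb_b v\|_{L^2(\Hcal_s)}$. Summing the finitely many monomials gives the displayed inequality, i.e.\ $M_g(s,\del^IL^Ju)\leq C(C_1\vep)^2 s^{-2}$.

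The hard part will be purely the exponent bookkeeping: for every one of the dozen or so monomials one must confirm that the product of the coefficient's decay rate with the reciprocal geometric weights is at most $s^{-2}$ — not, say, $s^{-1}$, which would make $\int M_g(\tau,v)\,d\tau$ diverge and be useless for closing the bootstrap in Section \ref{sec 4 global 4}. It is the cancellation of all purely Minkowskian contributions from the coefficients, recorded in the second paragraph, that supplies this margin; granting it, the individual estimates are routine.
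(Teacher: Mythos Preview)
Your proof is correct and follows essentially the same approach as the paper: you isolate the Minkowskian cancellations in $R_g$, $S_g$, $T_g$ so that every surviving coefficient carries a factor of $h$, $\hb$, or their $\delb$-derivatives, then pair the decay bounds \eqref{eq 5 08-09-2017}, \eqref{eq 6 08-09-2017}, and lemma \ref{lem 1 26-10-2017} against the energy-controlled quantities via Cauchy--Schwarz, exactly as the paper does. Your presentation is somewhat more conceptual in stating the structural point up front, but the term-by-term bookkeeping and the final exponents match the paper's.
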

\begin{proof}

Recall \eqref{eq R(du,du)}, we see that
$$
\aligned
\|R_g(\nabla \del^IL^Ju,\nabla \del^IL^Ju)\|_{L^1(\Hcal_s)}
\leq & \|s^{-1}\left(L_g^{ab}-L_m^{ab}\right)\|_{L^{\infty}(\Hcal_s)}\|s^2\delb_a\del^IL^Ju\delb_b\del^IL^Ju\|_{L^1(\Hcal_s)}
\\
&+ \|s^{-1}(N_g-N_m)\gb^{ab}\|_{L^{\infty}(\Hcal_s)}\|s^2\delb_a\del^IL^Ju\delb_b\del^IL^Ju\|_{L^1(\Hcal_s)}
 \\
 &+ 2\|s^{-1}\hb^{ab}\|_{L^{\infty}(\Hcal_s)}\|s^2\delb_a\del^IL^Ju\delb_b\del^IL^Ju\|_{L^1(\Hcal_s)}
\\
&+ \frac{1}{2}\|\delb_s\left(\gb^{00}\gb^{ab}\right)\|_{L^{\infty}(\Hcal_s)}\|s^2\delb_a\del^IL^Ju\delb_b\del^IL^Ju\|_{L^1(\Hcal_s)}
\\
&+ \|(t/s)\delb_a\gb^{00}\gb^{ab}\|_{L^{\infty}(\Hcal_s)} \cdot \|s^2(s/t)\delb_s\del^IL^Ju\delb_b\del^IL^Ju\|_{L^1(\Hcal_s)}
\\
\leq& CM_1\Ec(s,\del^IL^Ju)
\endaligned
$$
where
$$
\aligned
M_1: =& \|s^{-1}\left(L_g^{ab}-L_m^{ab}\right)\|_{L^{\infty}(\Hcal_s)} + \|s^{-1}(N_g-N_m)\gb^{ab}\|_{L^{\infty}(\Hcal_s)} + \|s^{-1}\hb^{ab}\|_{L^{\infty}(\Hcal_s)}
\\
& + \|\delb_s\left(\gb^{00}\gb^{ab}\right)\|_{L^{\infty}(\Hcal_s)} + \|(t/s)\delb_a\gb^{00}\gb^{ab}\|_{L^{\infty}(\Hcal_s)}.
\endaligned
$$
We remark the following bounds (by \eqref{eq 6 08-09-2017}):
$$
\aligned
&\|L_g^{ab} - L_m^{ab}\|_{L^{\infty}(\Hcal_s)}\leq CC_1\vep s^{-2 },\quad \|N_g-N_m\|_{L^{\infty}(\Hcal_s)}\leq CC_1\vep s^{-2 },
\\
&\|\hb^{ab}\|_{L^{\infty}(\Hcal_s)}\leq CC_1\vep s^{-2 },
\\
&\|\delb_s(\gb^{00}\gb^{ab})\|_{L^{\infty}(\Hcal_s)}\leq CC_1\vep s^{-3 },\quad \|(t/s)\delb_a\gb^{00}\gb^{ab}\|_{L^{\infty}(\Hcal_s)} \leq CC_1\vep s^{-3 }
\endaligned
$$
then we see that
$$
M_1\leq CC_1\vep s^{-3 }.
$$

To analysis the term $(\mathscr{K}_g+N_g) \del^IL^Ju\cdot S_g[\nabla \del^IL^Ju]$, we see that
$$
\aligned
&\big\|(\mathscr{K}_g + N_g)\del^IL^Ju\cdot S_g[\nabla \del^IL^Ju]\big\|_{L^1(\Hcal_s)}
\leq&C\|S_g[\nabla \del^IL^J u]\|_{L^2(\Hcal_s)}\cdot \Ec(s,\del^IL^Ju)^{1/2}.
\endaligned
$$
Then we see that by \eqref{eq 6 08-09-2017} and especially the bound on $\hb^{a0},\delb_s\hb^{a0}$
$$
\aligned
&\|S_g[\nabla \del^IL^J u]\|_{L^2(\Hcal_s)}
\\
=& \|2s^{-1}\delb_s(s\mb^{a0}) + 2s^{-1}\delb_s(s\hb^{a0}) + \delb_b\gb^{ab}\|_{L^{\infty}(\Hcal_s)}\|s\delb_a\del^IL^Ju\|_{L^2(\Hcal_s)}
\\
=&\|2s^{-1}\delb_s(s\hb^{a0}) + \delb_b\gb^{ab}\|_{L^{\infty}(\Hcal_s)}\|s\delb_a\del^IL^Ju\|_{L^2(\Hcal_s)}
\\
\leq& CC_1\vep s^{-2}\Ec(s,\del^IL^Ju)^{1/2}
\endaligned
$$
Then we see that
$$
\big\|\left(\mathscr{K}_g+N_g\right) \del^IL^Ju\cdot S_g[\nabla \del^IL^Ju]\big\|_{L^1(\Hcal_s)}\leq CC_1\vep s^{-2}\Ec(s,\del^IL^Ju).
$$

For the term $T_g[\del^IL^Ju]$, we see that
$$
\aligned
T_g[\del^IL^J u] =& - (t/s)\delb_sN_g\cdot(s/t)\del^IL^Ju\cdot(\mathscr{K}_g+N_g)\del^IL^Ju
\\
&-(t/s)\gb^{ab}\delb_aN_g\cdot s\delb_b\del^IL^Ju\cdot(s/t)\del^IL^Ju
\endaligned
$$
thus we see that
$$
\|T_g[\del^IL^J u]\|_{L^1(\Hcal_s)}\leq C\|(t/s)\delb_\alpha N_g\|_{L^{\infty}(\Hcal_s)}\cdot \Ec(s,\del^IL^Ju).
$$
Then we apply lemma \ref{lem 1 26-10-2017} and \eqref{eq 6 08-09-2017}, we see that
$$
\|(t/s)\delb_\alpha N_g\|_{L^{\infty}(\Hcal_s)}\leq CC_1\vep s^{-2}.
$$

Thus we see that the bound on $M_g$ is bounded by $CC_1\vep s^{-2}\Ec(s,\del^IL^Ju)^{1/2}$. Then we apply the bootstrap bound \eqref{eq 5 24-08-2017} and the desired result is established.

\end{proof}

Now we are ready to establish the improved energy bound.
\begin{proof}[Proof of proposition \ref{prop 1 01-07-2017}]
This is by applying the energy estimate \eqref{eq energy} on the following equation:
\begin{equation}\label{eq energy final}
\Box \del^IL^J u + Q^{\alpha\beta\gamma}\del_\gamma u \del_{\alpha}\del_{\beta}\del^IL^J u =  - [\del^IL^J,Q^{\alpha\beta\gamma}\del_{\gamma}u\del_{\alpha}\del_{\beta}]u.
\end{equation}
We see that for $s\in[2,s_1]$,
\begin{equation}\label{eq 4 28-10-2017}
\Ec(s,\del^IL^J u)^{1/2} \leq C\Ec(2,\del^IL^J u) + C\int_{2}^s \
\left(\tau\|[\del^IL^J,Q^{\alpha\beta\gamma}\del_{\gamma}u\del_{\alpha}\del_{\beta}]u\|_{L^2(\Hcal_\tau)} + M_g(\tau)\right)\ d\tau
\end{equation}
Recall that the initial energy is determined by the initial data and thus can be bounded by $CC_0\vep$. Then we substitute the bounds \eqref{eq 2 08-09-2017} and lemma \ref{lem 2 08-09-2017}, we see that
$$
\aligned
\Ec(s,\del^IL^J u)^{1/2} \leq& CC_0\vep + C(C_1\vep)^2\int_{2}^{s}\tau^{-2 }d\tau \leq CC_0\vep + C(C_1\vep)^2.
\endaligned
$$
Then we see that we chose $C_1>2CC_0$ and $\vep_0\leq  \frac{C_1-2CC_0}{CC_1^2}$. With this choice, we obtain that
$$
\Ec(s,\del^IL^J u)^{1/2}\leq \frac{1}{2}C_1\vep
$$
and this concludes the desired result.
\end{proof}

\end{document}